\newcounter{maincounter}
\numberwithin{maincounter}{section}
\numberwithin{equation}{section}
\newtheorem{lemma}[maincounter]{Lemma}
\newtheorem{proposition}[maincounter]{Proposition}
\newtheorem{corollary}[maincounter]{Corollary}
\newtheorem{remark}[maincounter]{Remark}
\newtheorem{theorem}[maincounter]{Theorem}
\newtheorem{conjecture}[maincounter]{Conjecture}
\newtheorem{definition}[maincounter]{Definition}
\def\HH{\mathbb{H}}
\def\RR{\mathbb{R}}
\def\CC{\mathbb{C}}
\def\ZZ{\mathbb{Z}}
\def\QQ{\mathbb{Q}}
\def\FF{\mathbb{F}}
\def\GG{\mathbb{G}}
\def\AX{\citenum{AxSchanuel}}%\newref AX
\def\AY{\citenum{Ax72}} %\newref AY
\def\BEZU{\citenum{BeZu01}} %\newref BEZU
\def\BMZANOMALOUS{\citenum{BMZGeometric}} %\newref BMZANOMALOUS
\def\BMZAA{\citenum{BMZUnlikely}} %\newref BMZAA
\def\DRIES{\citenum{D:oMin}} %\newref DRIES
\def\DMM{\citenum{DMM:94}} %\newref DMM
\def\HABEGGERPILA{\citenum{HabeggerPila12}} %\newref HABEGGERPILA
\def\HARTSHORNE{\citenum{Hartshorne}} %\newref HARTSHORNE
\def\KIRBY{\citenum{Kirby09}} %\KIRBY
\def\LANG{\citenum{Lang:transcendental}}
\def\LANGELLIPTIC{\citenum{Lang:elliptic}}
\def\MAHLER{\citenum{Mahler69}}
\def\PEST{\citenum{PS:04}}
\def\PILAOAO{\citenum{Pila:AO}}
\def\PILAEMS{\citenum{PilaEMS}}
\def\PILAWILKIE{\citenum{PilaWilkie}}  
\def\PINKPUB{\citenum{Pink05}}
\def\PINKPRE{\citenum{Pink}}    
\def\TSIMERMAN{\citenum{TsimermanBrauerSiegel}}
\def\ULLMO{\citenum{UllmoApp}}
\def\ULLMOYAFAEV{\citenum{UllmoYafaev11}}
\def\ZANNIERBOOK{\citenum{ZannierBook}}
\def\ZILBERSUMS{\citenum{Zilber}}
\newcommand{\cal}{\mathcal}
\newcommand{\hgtexp}{S}
\newcommand{\rank}{{\rm rank}\,}
\newcommand{\Hpoly}[2]{H^{}_{#1}({#2})}
\newcommand{\poly}[2]{#1^{}({#2})}
\newcommand{\polyt}[2]{#1^{\sim}({#2})}
\newcommand{\polytiso}[2]{#1^{\sim,{\rm iso}}({#2})}
\newcommand{\graph}[1]{\Gamma({#1})}
\newcommand{\IG}{\GG}
\newcommand{\IC}{\CC}
\newcommand{\IR}{\RR}
\newcommand{\IRan}{{\RR}_{\rm an}}
\newcommand{\IRanexp}{{\RR}_{\rm an,exp}}
\newcommand{\RRan}{\IRan}
\newcommand{\RRanexp}{\IRanexp}
\newcommand{\IRalg}{{\RR}_{\rm alg}}
\newcommand{\IQbar}{\overline{\QQ}}
\newcommand{\Kbar}{\overline{K}}
\newcommand{\IZ}{{\ZZ}}
\newcommand{\IQ}{\QQ}
\newcommand{\ts}[1]{{T}_0({#1})}
\newcommand{\cK}{{\mathcal K}}
\newcommand{\cL}{{\mathcal L}}
\newcommand{\cM}{{\mathcal M}}
\newcommand{\cO}{{\mathcal O}}
\newcommand{\cV}{{\mathcal V}}
\newcommand{\defZ}{Z}
\newcommand{\defF}{F}
\newcommand{\defW}{W}
\newcommand{\defC}{C}
\newcommand{\defE}{E}
\newcommand{\re}[1]{{\rm Re}({#1})}
\newcommand{\im}[1]{{\rm Im}({#1})}
\newcommand{\volS}{{\rm vol}}
\newcommand{\vol}[1]{\volS({#1})}
\newcommand{\orth}[1]{{#1}^{\bot}}
\newcommand{\mat}[2]{{\rm Mat}_{#1}({#2})}
\newcommand{\ssm}{\smallsetminus}
\newcommand{\opt}[2]{{\rm Opt}_{#2}({#1})}
\newcommand{\Height}[1]{{H}({#1})}
\newcommand{\trdeg}{{\rm trdeg\,}} 
\newcommand{\geo}[1]{\langle {#1}\rangle_{{\rm geo}}}
\newcommand{\defect}{\delta}
\newcommand{\geodef}{{\delta_{\rm geo}}}
\newcommand{\End}[1]{{\rm End}({#1})}
\newcommand{\Hom}[1]{{\rm Hom}({#1})}
\newcommand{\hommaxR}[1]{\text{\rm Hom}({#1})^{*}_{\IR}}
\newcommand{\arith}{\rm arith}
\newcommand{\sgu}[2]{{#1}^{[{#2}]}}
\newcommand{\oa}[1]{{#1}^{\rm oa}}
\newcommand{\codim}{{\rm codim}}
\newcommand{\lgo}{LGO}
\newcommand{\zcl}[1]{{\rm Zcl}({#1})}
\renewcommand{\subset}{\subseteq} %%% Some people think \subset
\renewcommand{\supset}{\supseteq}
\def\grpx{{{X}}}
\def\grpy{{{Y}}}
\def\grpz{{{Z}}}
\def\var{{{V}}}
\def\opta{{{A}}}
\def\optb{{{B}}}
\def\optc{{{C}}}
\begin{document}
\title[O-minimality and intersections] {O-minimality and certain atypical
  intersections}
 
% Removed for the arxiv version 
%\centerline{[O-minimalit\'e et certaines intersections atypiques]}

\author{P. Habegger and J. Pila}

\maketitle
\begin{abstract}
  We show that the strategy of point counting in o-minimal structures
 can be applied to various problems on unlikely intersections that go
  beyond the conjectures of Manin-Mumford and Andr\'e-Oort. 
We verify the so-called Zilber-Pink
Conjecture in a product of modular curves on assuming a lower bound
for Galois orbits and a sufficiently strong modular Ax-Schanuel
Conjecture. In the context of abelian varieties we obtain the
Zilber-Pink Conjecture  for curves unconditionally when everything is
defined over a number field. For higher dimensional subvarieties of
abelian varieties we
obtain  some weaker
results and some conditional results. 
\medskip

On d\'emontre que la strat\'egie de comptage dans des structures
o-minimales est suffisante pour traiter plusieurs probl\`emes qui vont
au-del\`a des conjectures de Manin-Mumford et Andr\'e-Oort. On
v\'erifie la conjecture de Zilber-Pink pour un produit de courbes
modulaires en supposant une minoration assez forte pour la taille de l'orbite de
Galois et en supposant une version modulaire du th\'eor\`eme de Ax-Schanuel.
Dans le cas des vari\'et\'es ab\'eliennes on d\'emontre la conjecture de Zilber-Pink 
pour les courbes 
si tous les objets sont d\'efinis sur un corps de nombres. Pour
les sous-vari\'et\'es de dimension sup\'erieure on obtient quelques 
r\'esultats plus faibles et quelques r\'esultats 
conditionnels. 
\end{abstract}
\tableofcontents

\bigbreak

\section{Introduction}
\label{sec:intro}
%\input{intro.tex}

%%% The introduction

The object of this paper is to show that the ``o-minimality 
and point-counting'' strategy can be applied to quite general 
problems of ``unlikely intersection'' type as formulated in 
the Zilber-Pink Conjecture (ZP; see Section \ref{sec:ZP} for various formulations), 
provided one assumes
certain arithmetic and functional transcendence hypotheses.
In these problems there is an ambient variety $X$ of a certain 
type equipped with a distinguished collection ${\cal S}$ of 
``special'' subvarieties. The conjecture governs the intersections
of a subvariety $V\subset X$ with the members of ${\cal S}$. 
In the problems we consider,
$X$ will be either a product of non-compact modular curves 
(for which it is sufficient to consider the case $X=Y(1)^n$) 
or an abelian variety,
but the same formulations should be applicable more generally.
In this paper, a subvariety is always geometrically irreducible
and therefore in particular non-empty. 
A curve is a subvariety of dimension $1$.

Our most general results are conditional, but
let us state first an unconditional result in the abelian setting.

Say $\grpx$ is an abelian variety defined over a field
$K$ and $\Kbar$ is a fixed algebraic closure of $K$. For any $r\in\IR$
we write 
\begin{equation*}
\sgu{\grpx}{r}=  \bigcup_{\codim_{\grpx}H\ge r} H(\Kbar)
\end{equation*}
where $H$ runs over algebraic subgroups of $\grpx$ satisfying the
dimension condition.

\begin{theorem}
\label{thm:curvesabvar}
Let $\grpx$ be an abelian variety defined over a number field $K$
and suppose  $\var\subset \grpx$ is a curve, also defined over $K$. 
If $\var$ is not contained in a proper algebraic subgroup 
    of $\grpx$, then  $\var(\Kbar)\cap \sgu{\grpx}{2}$ is finite. 
\end{theorem}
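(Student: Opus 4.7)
The plan is to deploy the Pila--Zannier o-minimality strategy in this higher-codimension setting. The three ingredients will be: an $\IRan$-definable complex uniformization of $\grpx$, Ax's functional transcendence theorem for abelian varieties (in the role of Ax--Schanuel), and an arithmetic combination of a bounded height result for $\var(\Kbar)\cap\sgu{\grpx}{2}$ with a Masser-style Galois-orbit lower bound. Let $\pi:\IC^g\to\grpx(\IC)$ be the universal cover with period lattice $\Lambda$, and fix a bounded semi-algebraic fundamental parallelepiped $D\subset\IC^g$. Identifying $\IC^g=\IR^{2g}$ via a $\IZ$-basis of $\Lambda$, the preimage $\widetilde\var:=\pi^{-1}(\var)\cap D$ is definable in $\IRan$, and every algebraic subgroup of $\grpx$ of codimension $\ge 2$ lifts under $\pi$ to a union of affine $\IQ$-rational subspaces of $\IR^{2g}$ of codimension $\ge 4$. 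Thus each $P\in\var(\Kbar)\cap\sgu{\grpx}{2}$ has a lift $z_P\in D$ lying on such a rational affine subspace whose complexity is controlled by the complexity $T(P)$ of a smallest algebraic subgroup of codimension $\ge 2$ containing $P$.

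A block version of the Pila--Wilkie counting theorem, adapted to rational affine subspaces meeting $\widetilde\var$, provides the dichotomy: either the number of such subspaces of complexity at most $T$ meeting $\widetilde\var$ outside its algebraic part is $\ll_\epsilon T^\epsilon$; or $\widetilde\var$ contains a connected positive-dimensional semi-algebraic set. In the second case, Ax's theorem for abelian varieties forces the $\pi$-image of this semi-algebraic set, and hence $\var$ itself, to lie in a translate of a proper algebraic subgroup of $\grpx$, contradicting the hypothesis. Only the sub-polynomial counting alternative therefore survives.

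Suppose, for contradiction, that $\var(\Kbar)\cap\sgu{\grpx}{2}$ is infinite. The bounded height theorem for curves in abelian varieties (R\'emond--Viada--Habegger) shows that the N\'eron--Tate height is uniformly bounded on this intersection, so all Galois conjugates of any such $P$ lift into a fixed compact piece of $\widetilde\var$. A Masser-style Galois-orbit lower bound furnishes $[K(P):K]\gg T(P)^c$ for some absolute $c>0$, and since Galois carries the relevant subgroup through $P$ to another algebraic subgroup of codimension $\ge 2$ and of comparable complexity, each conjugate $\sigma z_P$ lies on a distinct rational affine subspace of complexity polynomial in $T(P)$. Taking $P$ with $T(P)$ arbitrarily large (possible because the intersection is infinite and of bounded height) produces more such subspaces meeting $\widetilde\var$ than the $T^\epsilon$-bound permits, a contradiction. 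The hardest part is this final calibration: matching the polynomial Galois-orbit lower bound in $T(P)$ against the polynomial blow-up in subspace complexity so as to genuinely beat the sub-polynomial Pila--Wilkie estimate, which in turn hinges on the bounded-height theorem for curves and on Masser-type lower bounds for Galois orbits of points in proper algebraic subgroups.
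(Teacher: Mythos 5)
Your overall strategy is the same as the paper's (definable uniformisation, block counting, Ax as functional transcendence, bounded height plus a Galois lower bound), but as written there are two genuine gaps. The first is the second horn of your dichotomy: if $\widetilde\var$ contains a positive-dimensional semi-algebraic set, Ax--Lindemann--Weierstrass (Theorem \ref{thm:ax2}) only gives that $\var$ is a \emph{coset}, i.e.\ a translate, possibly by a non-torsion point, of an elliptic curve in $\grpx$. This does not contradict the hypothesis, which only forbids $\var$ from lying in a proper algebraic subgroup (equivalently, in a torsion coset). A curve such as $\var=\{P_0\}\times E\subset E\times E$ with $P_0$ non-torsion satisfies the hypothesis, yet its lift is entirely semi-algebraic, so your Pila--Wilkie bound says nothing about it and your claimed contradiction evaporates. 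This case needs a separate argument (for instance: a point of $\var(\Kbar)\cap\sgu{\grpx}{2}$ on such a coset forces $\var$ into a proper algebraic subgroup after quotienting by the stabiliser), whereas in the paper the contradiction in the proof of Theorem \ref{thm:optfinite} is with the \emph{optimality of the singleton} $\{P\}$, not with the hypothesis on $\var$; coset curves are absorbed by the optimal/geodesic-optimal formalism (Propositions \ref{prop:geodesicopt} and \ref{prop:geofinite}).

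The second gap is the final calibration. (a) The lower bound $[K(P):K]\gg T(P)^{c}$ with $T(P)$ the full complexity is not a ``Masser-style'' black box: Masser's Theorem \ref{thm:masser} controls only the torsion-order part, while the degree $\deg_\cL\grpz$ of the abelian subvariety --- which is exactly what governs the height of the rational linear data describing your affine subspaces (via Lemma \ref{lem:geonumbs}) --- must be bounded polynomially in $[K(P):K]$, and this requires feeding R\'emond's height bound (Theorem \ref{thm:remond}) through the geometry-of-numbers/Siegel's lemma argument of Lemmas \ref{lem:findvarphi}, \ref{lem:galoisorbit} and Proposition \ref{prop:boundcomplexity}; you assume this substantive step. (b) Your assertion that each Galois conjugate lies on a \emph{distinct} rational affine subspace is unjustified: after enlarging $K$ the subvariety $\grpz$ is Galois-stable and many conjugates of $P$ can lie in one coset of $\grpz$; moreover the number of distinct cosets reflects only the torsion part of the complexity, so counting subspaces alone cannot in general beat $cT^{\epsilon}$ when $\deg_\cL\grpz$ is the dominant part of $T$. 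One must either invoke a uniform bound on the number of isolated points of $\var$ in a coset (o-minimal uniform finiteness) to convert conjugate points into distinct subspaces, or do what the paper does: count the pairs (rational translation datum $q_\sigma+\omega_\sigma$, arbitrary lift $z_\sigma$ of the conjugate) using the semi-rational counting Theorem \ref{thm:relation} and Corollary \ref{cor:relation}, in which the subspace $\ts{\grpz}$ enters only as a real parameter of the definable family and never needs to be a counted rational point.
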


This theorem is the abelian version of Maurin's Theorem
\cite{Maurin}. 
We will see a more precise version in
Theorem \ref{thm:abvar1}.

We briefly describe  previously known cases of 
 Theorem \ref{thm:curvesabvar}  under additional hypotheses on $\var$ or
 $\grpx$. 
Viada \cite{Viada} proved finiteness for $\var$ not contained in the
translate of a proper abelian subvariety and if  $\grpx$ is the power of
an elliptic curve with complex multiplication. 
R\'emond and Viada \cite{RV} then removed the hypothesis on $\var$.  
This was later generalised by Ratazzi \cite{Ratazzi08} to when the ambient
group variety
is isogenous to a power of an abelian variety with complex
multiplication. 
Carrizosa's height lower bound \cite{Carrizosa08, Carrizosa09}
in combination with R\'emond's height upper bound  \cite{RemondInterII}
led to a proof for all abelian varieties with complex
multiplication. Work of Galateau \cite{Galateau} and
Viada \cite{Viada08} cover the case of an arbitrary product of
elliptic curves.

\medskip

More generally we show, in the abelian and modular settings, that
the Zilber-Pink conjecture may be
reduced to two statements, one of an arithmetic nature,
the other a functional transcendence statement.
In general, the former statement remains conjectural in both settings. 
In the abelian setting, the functional transcendence statement 
follows from a theorem of Ax [\AY],
while in the modular setting a proof of it has been announced recently
by Pila-Tsimerman.
Both statements are generalisations of statements which have been
used to establish cases of the Andr\'e-Oort conjecture, 
and this aspect of our work is in the spirit of
Ullmo \cite{UllmoApp}.

The arithmetic hypothesis, 
which we formulate here, is the ``Large Galois Orbit'' hypothesis 
(LGO) and asserts that, for fixed  $V\subset X$,
certain (``optimal'') isolated intersection points
of $V$ with a special subvariety $T\in{\cal S}$ have
a ``large'' Galois orbit over a fixed finitely generated field of definition for
$V$, expressed in terms of a suitable complexity measure of $T$. 
Special subvarieties in our settings are described
in Section \ref{sec:ZP} %% Was \S2 
for abelian varieties and Section \ref{sec:specialsubvarY1n} %%% Was \S3
 for $Y(1)^n$
and LGO is formulated in Section \ref{sec:lgo}. %% Was \S5.

In the context of the Andr\'e-Oort Conjecture, the Generalised
 Riemann Hypothesis (GRH) suffices to guarantee LGO 
(see \cite{TsimermanBrauerSiegel, UllmoYafaev11}). 
However, it is not clear to the authors if a variant of
the Riemann Hypothesis leads to large Galois orbits for isolated points
in $V\cap T$ if $\dim T \ge 1$. Indeed, in the Shimura setting,
there seems to be no galois-theoretic description of  isolated points
in $V\cap T$ which is rooted in class field theory.
On the other hand, suitable bounds have been established unconditionally
for Andr\'e-Oort in several cases, and perhaps LGO will be found
accessible without assuming GRH.

Associated with $X$ is a certain transcendental uniformisation
$\pi: U\rightarrow X$.
The functional transcendence hypothesis is the 
``Weak Complex Ax'' hypothesis (WCA) 
and is a weak form of an analogue for $\pi$
of ``Ax-Schanuel'' for cartesian powers of the exponential function. 
The latter result, due to Ax [\AX], affirms Schanuel's Conjecture 
(see [\LANG, p. 30]) for differential fields. WCA is formulated in
Section \ref{sec:wca}. % Was \S6.

In the modular case $X=Y(1)^n$ our result is the following.
A very special case of it was established unconditionally by us in 
[\HABEGGERPILA].

\begin{theorem}
\label{thm:LGOWCAimplyZP}
  If LGO and WCA hold for 
$Y(1)^n$ then the Zilber-Pink Conjecture  holds for subvarieties of
 $Y(1)^n$ defined over $\IC$.
Moreover, if WCA holds for $Y(1)^n$ and LGO holds
with $K=\IQ$, then 
the Zilber-Pink Conjecture  holds for subvarieties of
 $Y(1)^n$ defined over $\IQbar$.
\end{theorem}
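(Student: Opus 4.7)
The plan is to adapt the Pila--Zannier point-counting strategy, which proved the Andr\'e--Oort conjecture for $Y(1)^n$, to the Zilber--Pink setting. Fix $V \subseteq Y(1)^n$; via the formalism of optimal subvarieties developed earlier in the paper, the Zilber--Pink statement reduces to showing that $V$ carries only finitely many optimal subvarieties. I would argue by contradiction: assume an infinite family of optimal $W \subseteq V$, each contained in a smallest special $T(W) \subseteq Y(1)^n$ whose complexity $\Delta(T(W))$ must be unbounded (finitely many $T$'s of bounded complexity would force the $W$'s to collect into finitely many components).

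Next I would lift to the transcendental uniformisation $\pi \colon \HH^n \to Y(1)^n$ given by the $n$-fold $j$-function, restricted to a standard fundamental domain $\mathcal{F}^n$ for $\mathrm{SL}_2(\ZZ)^n$. On $\mathcal{F}$ the $j$-function is definable in $\RRanexp$, so $\defZ = \pi^{-1}(V) \cap \mathcal{F}^n$ is a definable set. Special subvarieties of $Y(1)^n$ are cut out by products of modular equations $\Phi_N(x_i, x_j) = 0$ together with CM-constants $x_i = j(\tau_\ell)$. In the fundamental domain these translate into integral quadratic relations between the $\HH$-coordinates, with defining data whose size is polynomially controlled by the complexity $\Delta(T)$. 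Encoding these data as rational points in an auxiliary parameter space $\defW$ built from $\defZ$ yields a definable family in which each special $T$ meeting $V$ corresponds to rational points of height bounded by a fixed power of $\Delta(T)$.

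The central step is then counting. Given an optimal $W \subseteq V$ contained in $T(W)$, one considers a generic point of $W$ (or a parameter point when $\dim W > 0$) and its Galois conjugates over the field of definition of $V$; LGO provides $\gg \Delta(T(W))^{\eta}$ such conjugates, each lifting to a rational (or algebraic) point of $\defW$ of height $\ll \Delta(T(W))^{c}$. Provided $\eta > 0$ is strictly larger than the Pila--Wilkie exponent associated to $\defW$, the counting theorem forces $\defW$ to contain a connected positive-dimensional semi-algebraic block through this collection once $\Delta(T(W))$ is large. WCA then upgrades this block inside the preimage of $V$ to a weakly special subvariety $T' \supsetneq T(W)$ meeting $V$ in a component strictly larger than $W$; this contradicts the optimality of $W$ (cf.\ the use of Ax's theorem in the abelian case).

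The main obstacle is the combinatorial setup of the parameter space: one must package all types of special subvarieties (modular relations, CM factors, and their products) into a single definable family so that complexity translates uniformly into height, and must verify that the algebraic block extracted by Pila--Wilkie is genuinely a block in $\pi^{-1}(V)$ rather than merely in the ambient auxiliary space --- this is precisely what WCA supplies, and getting the two to mesh requires care. Two secondary difficulties are the bootstrap from isolated optimal points to optimal subvarieties of positive dimension, handled by iterating the argument on a suitable parameter subvariety of $V$; and the $\IQbar$ case, where one writes a given $V$ defined over $\IQbar$ as a specialisation of a family defined over $\IQ$ and runs the counting argument using LGO over $K = \IQ$ to bound Galois orbits over $\IQ$, which suffices to produce the needed rational points for Pila--Wilkie.
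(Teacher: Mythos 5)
Your overall architecture---reduce ZP to finiteness of ${\rm Opt}(V)$, lift to the definable set $\pi^{-1}(V)\cap\FF_0^n$, combine LGO with point counting, and use WCA to turn the algebraic locus produced by counting into a weakly special subvariety contradicting optimality---is indeed the paper's strategy. But there is a genuine gap at the central counting step. You assert that the $\gg\Delta(T)^{\eta}$ Galois conjugates of an optimal point each ``lift to a rational (or algebraic) point of height $\ll\Delta(T)^{c}$'' in your auxiliary space. The only bounded-height, bounded-degree data available are the rational/quadratic parameters of the special subvariety $\langle P\rangle$ and of its conjugates; the conjugate points themselves have unbounded height and, by LGO itself, large degree, so Pila--Wilkie cannot be applied to them. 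On the other hand the parameters need not be numerous: Galois can fix the special subvariety while moving the point (a strongly special $S$ is defined over $\IQ$ yet can have arbitrarily large complexity), so counting parameters alone yields nothing---this is precisely why the naive orbit bound GO3 is rejected in Section \ref{sec:lgo}. The paper's fix is the semi-rational counting theorem of Section \ref{sec:semirational} (Theorem \ref{thm:relation} and Corollary \ref{cor:relation}): one counts pairs $(t,u)$ in a definable family, with $t$ a quadratic point of height polynomial in $\Delta$ and $u$ unrestricted, and the dichotomy is triggered by the number of distinct \emph{second} coordinates, i.e.\ exactly by the Galois orbit of the point. Without this (or an equivalent device) your step ``many conjugates $\Rightarrow$ many rational points $\Rightarrow$ a positive-dimensional block'' does not go through.

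Two further points. LGO, as formulated, applies only to optimal singletons, and for good reason (positive-dimensional special subvarieties are often Galois-stable); so feeding a ``generic point of $W$'' for positive-dimensional optimal $W$ into LGO is not available. The paper instead uses WCA together with o-minimality to show (Proposition \ref{prop:geofinitenessY1n}) that only finitely many families of weakly special subvarieties carry geodesic-optimal components of $V$, then projects such a family $T$ to its translate space $Y(1)^m$, shows that the image of a positive-dimensional optimal component is an optimal \emph{point} of the image variety, and concludes by induction on $\dim V$; LGO is only ever invoked for points. Finally, for the $\IQbar$ statement no specialisation from a family over $\IQ$ is needed (and optimality does not obviously behave well under specialisation): if $V$ is defined over a number field $L$, then $[\IQ(P):\IQ]\le[L:\IQ]\,[L(P):L]$, so LGO with $K=\IQ$ already gives the required lower bound for the orbit over $L$ up to the bounded factor $[L:\IQ]$, which is the kind of bookkeeping appearing in the paper's final estimate.
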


In the case that $X$ is an abelian variety, 
we establish the same result in Section \ref{sec:unlikelyabvar}. 
However, as mentioned above, in this case WCA is known, 
and LGO can be established in the case that 
$V$ is one-dimensional when $X$ and $V$ are defined over 
$\IQbar$. This allows us to prove the above unconditional result for curves.

\medskip

All current approaches towards Theorem  \ref{thm:curvesabvar} require
a height upper bound on the set of points in question. 
Like many of the papers cited above we use R\'emond's height bound
\cite{RemondInterII} which relies on his generalisation of Vojta's height
inequality. 

In contrast to  previous approaches we do not rely on  delicate
Dobrowolski-type \cite{Carrizosa08, Carrizosa09, Ratazzi08} 
or Bogomolov-type \cite{Galateau} height lower bounds to pass from
bounded height to finiteness. 
These height lower bounds are expected (but not known) to
generalise to  arbitrary abelian varieties. 
Instead we will use a variation of the strategy originally
devised by Zannier to reprove the Manin-Mumford Conjecture \cite{PilaZannier} for
abelian varieties. This approach relied on the Pila-Wilkie
 point counting result in o-minimal structures. 
We will still require a height lower bound. However, the robust nature
of the method allows us to use Masser's general bound \cite{Masser:smallvalues} which
predates the sophisticated and essentially best-possible
results of Ratazzi and Carrizosa % which are essentially best-possible
that require the  ambient abelian variety to have complex multiplication.
% and essentially best-possible inequalities cited above. 

In her recent Ph.D. thesis, Capuano \cite{CapuanoPhD}  
counted rational points on suitable definable subsets of
a Grassmanian  to obtain finiteness results
on unlikely intersections with curves in the algebraic torus.  

In the next theorem we collect several partial results 
in the abelian setting for
subvarieties of arbitrary dimension. 

\begin{theorem}
\label{thm:higherdimabvar}
Let $\var\subset \grpx$ be  a subvariety  of an abelian variety,
both defined over a number field  $K$. 
Let us also fix an ample, symmetric line bundle on $\grpx$ and its
associated
N\'eron-Tate height $\hat h$. 
\begin{enumerate}
\item [(i)] 
If $\hgtexp\ge 0$ then
\begin{equation*}
  \left\{P \in \var(\Kbar)\cap \sgu{\grpx}{1+ \dim \var};\,\, \hat h(P)\le
  \hgtexp\right\}
\end{equation*}
is contained in a finite union of proper algebraic subgroups of $\grpx$. 
\item[(ii)]
Suppose
 $\dim \varphi(\var) = \min \{\dim \grpx/\grpy, \dim
  \var\}$ for all abelian subvarieties $\grpy\subset \grpx$ where
  $\varphi:\grpx\rightarrow \grpx/\grpy$ is the canonical morphism. If $\dim \var \ge
  1$ then 
  $\var(\Kbar)\cap \sgu{\grpx}{1+\dim \var}$ is not Zariski dense in $\var$. 
\end{enumerate}
\end{theorem}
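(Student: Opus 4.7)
The plan is to execute the o-minimal point-counting strategy of this paper in the abelian setting, exploiting that WCA here is Ax's theorem [\AY] and is unconditional, and combining it with R\'emond's height upper bound \cite{RemondInterII} for part (ii).

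For part (i), I would uniformise $\grpx$ by its universal cover $\pi : U \to \grpx$, with $U = T_0(\grpx) \cong \IC^g$ and $\Lambda = \ker \pi$, fix a bounded semi-algebraic fundamental domain $F$ for $\Lambda$, and work with the definable set $Z = \pi^{-1}(\var) \cap F$ in $\IRan$. Fix a $\IZ$-basis $\omega_1, \ldots, \omega_{2g}$ of $\Lambda$. A proper algebraic subgroup $H \subset \grpx$ of complex codimension $r := 1 + \dim \var$ corresponds in $\omega$-coordinates to a rank-$2r$ integer matrix $A$ whose kernel is $J$-invariant, and the condition $P \in H$ for $P = \pi(u)$ with $u \in F$ reads $Au \in \IZ^{2r}$. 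The height bound $\hat h(P) \le \hgtexp$ pins $u$ to a compact subset of $F$, so $\|Au\|$ is bounded linearly in $\|A\|$.

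The counting step considers the definable set
\begin{equation*}
\Sigma = \bigl\{(A,v) : A \in \mat{2r\times 2g}{\IR},\ v \in \IR^{2r},\ \ker A \text{ has rank }2r \text{ and is }J\text{-invariant},\ \exists u \in Z \text{ with } Au=v \bigr\},
\end{equation*}
and counts integer points $(A,v)$ on $\Sigma$ of height $\le T$. Each $P$ in the set appearing in (i) contributes such an integer point via a primitive $A$ cutting out a minimal algebraic subgroup containing $P$. If this set were not covered by finitely many proper subgroups, the $A$'s arising from it would have unbounded complexity, and Pila--Wilkie then forces the corresponding integer points on $\Sigma$ to accumulate on positive-dimensional semi-algebraic blocks. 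Projecting such a block to its $u$-coordinate yields a complex-algebraic arc in $\pi^{-1}(\var)$, and by Ax [\AY] the image under $\pi$ is contained in a coset of a proper algebraic subgroup of $\grpx$ lying inside $\var$. Matching the block structure with the parameter $A$ then produces only finitely many subgroups, giving (i).

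For part (ii), the non-degeneracy hypothesis on $\var$ is exactly what R\'emond's height inequality requires to conclude that $\var(\Kbar) \cap \sgu{\grpx}{1+\dim \var}$ has bounded N\'eron--Tate height, and then (i) places this set inside $H_1 \cup \cdots \cup H_k$ for proper algebraic subgroups $H_i$. If this set were Zariski dense in $\var$, irreducibility would force $\var \subset H_i$ for some $i$, so taking $\grpy = H_i^0$ would give $\dim \varphi(\var) = 0 < \dim \var = \min\{\dim \grpx/\grpy,\dim \var\}$ (using $\dim \var \ge 1$ and $\grpy \subsetneq \grpx$), contradicting the hypothesis. The hard part I expect is the counting step of (i): engineering $\Sigma$ so that the height hypothesis produces enough integer points from our set for Pila--Wilkie to bite (which likely needs an auxiliary input such as Masser's lower bound \cite{Masser:smallvalues} to control non-torsion contributions via Galois-orbit size), and then matching Pila--Wilkie's blocks with actual algebraic subgroups of $\grpx$ via Ax so that only finitely many subgroups emerge.
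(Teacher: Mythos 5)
There are genuine gaps in both parts. For part (i), the counting step you yourself flag as ``the hard part'' is not an engineering detail but the core of the proof, and your framework would fail there in two ways. First, Pila--Wilkie only gives an \emph{upper} bound $cT^\epsilon$ outside blocks; to force blocks you need a \emph{lower} bound on the number of counted points of height $\le T$, which in the paper comes from Galois conjugates together with Proposition \ref{prop:boundcomplexity} (Masser's theorem \cite{Masser:smallvalues}, Bertrand's theorem, geometry of numbers), bounding $\Delta(\langle P\rangle)$ polynomially in $[K(P):K]$ and $\hat h(P)$. Second, and more structurally, your set $\Sigma$ of integer points $(A,v)$ cannot receive enough points: all conjugates $\sigma(P)$ lying in the same coset of $\ker A$ give the same $v$, so for instance when $\langle P\rangle$ is an abelian subvariety through the origin of large degree an entire large Galois orbit contributes a single integer point $(A,0)$, and ``unbounded complexity of the $A$'s'' produces nothing for Pila--Wilkie to count. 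This is exactly why the paper proves a new semi-rational counting theorem (Theorem \ref{thm:relation}, Corollary \ref{cor:relation}), counting pairs whose first coordinate (the torsion-translate datum $q_\sigma+\omega_\sigma$) is rational of bounded height while the second coordinate $z_\sigma$ is unrestricted, and measuring the number of distinct \emph{second} coordinates. Moreover, since the points of (i) may lie on positive-dimensional components of $\var\cap\langle P\rangle$, one cannot go directly from a block to ``finitely many subgroups'': the paper routes the whole argument through optimal and geodesic-optimal subvarieties (Proposition \ref{prop:geofinite}, proved via Ax and o-minimality), a quotient-and-induct step, and Ax--Lindemann to contradict optimality of a singleton; this reduction is absent from your sketch.

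For part (ii), your premise is false: neither R\'emond's theorem nor the bounded height theorem of \cite{abvar} bounds the height on all of $\var(\Kbar)\cap\sgu{\grpx}{1+\dim\var}$; they bound it only on $\oa{\var}(\Kbar)\cap\sgu{\grpx}{1+\dim\var}$, the complement of the anomalous locus, and the hypothesis of (ii) guarantees only that $\oa{\var}\neq\emptyset$, not that the anomalous locus is empty. The bound on the full set can genuinely fail: if $\var$ is a surface in a four-dimensional abelian variety containing a one-dimensional torsion coset $P+B$, then every point of $P+B$ lies in an algebraic subgroup of codimension $3=1+\dim\var$, and these points have unbounded N\'eron--Tate height even though $\var$ can satisfy the hypothesis of (ii). The correct argument (the paper's Theorem \ref{thm:abvar2}(iii)--(iv)) bounds the height only on $\oa{\var}\cap\sgu{\grpx}{1+\dim\var}$, deduces finiteness of that set via the finiteness of $\opt{\var;\hgtexp}{}$ from Theorem \ref{thm:optfinite}(ii), and then uses that $\oa{\var}$ is Zariski open and non-empty (the latter being exactly the content of your hypothesis) so that the whole intersection lies in a proper closed subset union a finite set; your final contradiction via $\var\subset H_i$ is fine but is applied to a containment you have not established.
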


Theorem \ref{thm:abvar2}  refines this statement.
A particularly simple example of a surface that does not satisfy the
hypothesis in (ii) is the square $\var\times \var$ of a curve 
$\var\subsetneq\grpx$. 
The Zilber-Pink Conjecture remains open for surfaces in
abelian varieties defined over a number field. 
In Corollary \ref{cor:unlikelyabvarQbar} we show that a sufficiently
strong height upper bound leads to a proof of the Zilber-Pink
Conjecture for abelian varieties over number fields. 

%% Several variants of the Zilber-Pink Conjecture are formulated 
%% in Section \ref{sec:ZP}.
%%  %%  \S2.  

\medskip

The burden of the theorems
is that the o-minimality/point-counting strategy is 
adequate to  deal with ``atypical intersections'', 
given these additional ingredients. 
To some extent this is already 
demonstrated for curves by the work of Masser and Zannier 
\cite{MZ:torsionanomalous} and the authors' earlier work [\HABEGGERPILA]. 
%\red{
Here we will handle subvarieties of arbitrary dimension and
confirm that the o-minimal method scales to this generality.
%\sout{In a similar vein,}
Ullmo [\ULLMO] shows that a  ``large Galois orbit'' statement, ``Ax-Lindemann'', 
and a height upper bound for certain preimages of special points, 
together with a suitable definability result, 
enable a proof of the Andr\'e-Oort Conjecture by point-counting and
o-minimality. 
In our setting, the special subvarieties generally have positive
dimension and are often unaffected by the Galois action. Rather we
must count objects that arise when  intersecting them with the given subvariety.
More generally, one can formulate results along the lines of the Counting Theorem 
of [\PILAWILKIE] for  ``atypical intersections'' of a definable set  in an o-minimal 
structure with linear subvarieties defined over $\QQ$ (or indeed the
members of any definable family of subvarieties having rational parameters). 
Here (as in previous results by these methods) o-minimality 
is used for more than point-counting.

For the basics about o-minimality see [\DRIES].
The definability properties required in this paper are 
afforded by the result, 
due to Peterzil-Starchenko [\PEST], 
that the $j$-function restricted to the usual 
fundamental domain $\FF_0$
for the ${\rm SL}_2(\ZZ)$ 
action on the upper half plane $\HH$ is definable 
in the o-minimal structure $\RRanexp$
(see [\DMM]).  Accordingly, in this paper ``definable'' 
will mean ``definable in $\RRanexp$'' unless stated otherwise. 
However, the exponential function is superfluous when working with abelian
  varieties.
Here it is enough to work in $\RRan$, the
structure generated by  restricted real analytic functions,
which was recognised as being o-minimal by van den Dries after
work of Gabrielov.
In Section \ref{sec:semirational} we will work with more general
o-minimal structures. 
%% In this paper, a {\it subvariety\/} $V\subset W$ means a Zariski closed 
%% (in $W$) irreducible (over $\CC$) algebraic subset.
%% Recall that an irreducible space is non-empty by definition. 

\section{The Zilber-Pink Conjecture}
\label{sec:ZP}
%\input{zp.tex}

%% Internal refs OK
%%% Generalities regarding the ZP conjecture

The Zilber-Pink Conjecture is a far-reaching generalisation of
the Mordell-Lang and Andr\'e-Oort Conjectures. Different
formulations in different settings, but based on the same underlying 
idea of ``atypical'' or ``unlikely'' intersections, 
were made by Zilber [\ZILBERSUMS], 
Pink [\PINKPUB, \PINKPRE], and
Bombieri-Masser-Zannier [\BMZANOMALOUS]. %%% Chronographic ordering

Bombieri-Masser-Zannier [\BMZAA] show that all
the versions are equivalent for  $\GG_m^n$. 
Essentially the same argument shows they are equivalent for $Y(1)^n$, 
but in the general case this is unclear. 
The version we give is in any case the 
strongest: essentially it is the Zilber or 
Bombieri-Masser-Zannier statement 
in Pink's general setting,
where the ambient variety is a mixed Shimura variety (see [\PINKPRE]).
For an introduction to the conjecture and the state-of-the-the-art
see [\ZANNIERBOOK].

The general setting involves and ambient variety which
is a {\it mixed Shimura variety\/} (see [\PINKPUB]).
A mixed Shimura variety $X$ is endowed 
with a (countable) collection  ${\cal S}={\cal S}_X$ of 
{\it special subvarieties\/} and a larger (usually uncountable) collection 
${\cal W}={\cal W}_X$ of {\it weakly special\/} subvarieties. 
Special subvarieties of dimension zero are called 
{\it special points.\/}

Say $X$ is an algebraic torus $\IG_m^n$, an abelian variety, or even a
semi-abelian variety defined over $\IC$. 
In some cases  $X$ appears as a special subvariety of a family of
semi-abelian varieties and may be treated as a mixed Shimura variety. 
In general, 
 $X$ shares enough formal 
 properties 
with mixed Shimura varieties to formulate a Zilber-Pink Conjecture on
unlikely intersections.
A coset of $X$ is the translate of a connected algebraic subgroup of $X$.
By abuse of notation we also call cosets  weakly special subvarieties
of $X$.
A torsion coset of $X$ is a coset of $X$ that contains a point of
finite order. We sometimes call torsion cosets special subvarieties of
$X$. We also write ${\cal S}_X$ and ${\cal W}_X$ for the set of all
cosets and torsion cosets of $X$, respectively.
So special points are torsion points. 
%% We observe that $X$ is a torsion coset of $X$ and that 
%%  an irreducible component of an arbitrary 
%intersection of  torsion cosets 
%% is again a torsion coset. 
We study at torsion cosets
in more detail in \ref{sec:degabsv}.

%% \medbreak
%% \noindent
%% {\bf 2.1. Definition.\/} 
%% \medbreak
\begin{definition}
\label{def:atypical}
  Let $X$ be a mixed Shimura variety or a semi-abelian variety defined
  over $\IC$.
Let ${\cal S}$ be the collection of special subvarieties of $X$.
Let $V\subset X$ be a subvariety.
A subvariety $A\subset V$ is called {\it atypical\/}  (for $V$ in $X$)
if there is a special subvariety $T$ of $X$ such that $A$ is a 
component of  $V\cap T$ and
$$
\dim A>\dim V+\dim T-\dim X
$$
i.e. $A$ is an ``atypical component'' in Zilber's terminology [\ZILBERSUMS]. The 
{\it atypical set\/} of $V$ (in $X$)  is the union of all 
atypical subvarieties, and is denoted
${\rm Atyp\/} (V, X)$.
\end{definition}

A special subvariety of a mixed Shimura variety is itself
a mixed Shimura variety, with
$$
{\cal S}_T=\{A;\,\, A{\rm\ is\  a \ component\ of\ } T\cap S
{\rm\ for\ some\ } S\in {\cal S}_X\},
$$
$$
{\cal W}_T=\{A;\,\, A{\rm\ is\  a \ component\ of\ } T\cap S
{\rm\ for\ some\ } S\in {\cal W}_X\}.
$$
The following is a ``CIT'' (cf. Conjecture 2 \cite{Zilber}) version of Pink's Conjecture, 
and is on its face stronger than the statement conjectured by Pink. 
It is convenient to frame it as a statement about all special 
subvarieties of a given mixed Shimura variety $X$.

%% \medskip
%% \noindent
%% {\bf 2.2. Zilber-Pink conjecture for $X$.\/} 
%% \medbreak

\begin{conjecture}[Zilber-Pink (ZP) for $X$]
\label{conj:ZPatypical}
  Let $X$ be a mixed Shimura variety or a semi-abelian variety defined
  over $\IC$.
  Let $T\in{\cal S}_X$ 
and $V\subset T$. 
%% Then $V$ contains only a  finite of maximal 
%% atypical (for $V$ in $T$) subvarieties.
Then ${\rm Atyp\/} (V, T)$ is a finite union of atypical subvarieties.
Equivalently, $V$ contains only a  finite number of maximal 
atypical (for $V$ in $T$) subvarieties.
\end{conjecture}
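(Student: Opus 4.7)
The plan is to adapt the o-minimality/point-counting strategy conditionally on the hypotheses LGO and WCA, running it not on isolated special points (as for Andr\'e-Oort) but on components of $V\cap S$. First, since any atypical component is contained in a maximal one, the two clauses of the conjecture are equivalent, and it is convenient to work with the finer notion of an \emph{optimal} component $A\subseteq V\cap S$, one whose defect $\dim X - \dim S - (\dim V-\dim A)$ strictly exceeds that of every atypical overcomponent in $V$. Since a special subvariety is itself a mixed Shimura variety (with its own $\mathcal{S}_T$ and $\mathcal{W}_T$), I would replace $X$ by $T$ and assume $X=T$. The target statement becomes: only finitely many optimal atypical components arise.

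Next I would assemble the o-minimal parameter space. Let $\pi\colon U\rightarrow X$ be the uniformisation and $\mathcal{F}\subset U$ a fundamental set on which $\pi|_\mathcal{F}$ is definable in $\RRanexp$ (by Peterzil-Starchenko, in the modular case; in $\RRan$ in the abelian case). Each special $S\subset X$ of a given ``type'' lifts to a ``linear'' locus in $U$ cut out by rational (or integral) data; the set of pairs $(u,\alpha)$ with $u\in\pi^{-1}(V)\cap\mathcal{F}$ on the linear locus of parameter $\alpha$, and with the local dimension at $\pi(u)$ of $V$ intersected with the corresponding $S$ equal to the prescribed $\dim A$, is then definable. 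Its rational points of height $\le H$ encode precisely the relevant optimal components arising from $S$ of complexity at most $H$.

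Now apply the Pila-Wilkie counting theorem to this definable set $W$: outside its algebraic part $W^{\mathrm{alg}}$, the number of rational points of height $\le H$ is $O_\varepsilon(H^\varepsilon)$. The two hypotheses enter as follows. WCA analyses $W^{\mathrm{alg}}$: a positive-dimensional semialgebraic piece on the $U$-side forces, via the Ax-Schanuel-type statement, an atypical algebraic relation which collects the corresponding components inside a strictly larger weakly special subvariety $T'\subsetneq X$; by optimality this situation is absorbed into the induction on $\dim X$ (passing to $V\cap T'$ inside $T'$). Conversely, LGO provides a polynomial lower bound on the number of Galois conjugates of an optimal intersection in terms of the complexity of $S$, each producing a rational point of $W\setminus W^{\mathrm{alg}}$ of controlled height; comparison with Pila-Wilkie forces the complexity of $S$ to be bounded, hence only finitely many $S$, hence only finitely many optimal components.

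The main obstacle is LGO. As the introduction notes, when $\dim T\ge 1$ there is no apparent class-field-theoretic description of isolated points of $V\cap T$, so even GRH does not visibly yield LGO, and one is presently forced to take it as a hypothesis. By contrast, WCA is a theorem of Ax in the abelian case and has been announced by Pila-Tsimerman in the modular case, so in these settings only LGO (and, to bridge from counting to finiteness, a R\'emond-type height upper bound) stands between this strategy and an unconditional proof; in full mixed-Shimura generality, both the height input and a uniform definability/linearity framework for the family of special subvarieties would demand additional work.
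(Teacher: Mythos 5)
The statement you are asked about is a \emph{conjecture} (Zilber--Pink), and the paper contains no proof of it: it is precisely the open problem the paper addresses. What the paper actually establishes is conditional or partial: Theorem \ref{thm:LGOWCAimplyZP} (equivalently Theorem \ref{thm:LGOWCAimplyZPrefined}) shows that LGO together with WCA imply ZP for $Y(1)^n$, and Theorem \ref{thm:optfinite} and its corollaries give the abelian analogue, unconditional only for curves over number fields where R\'emond's height bound substitutes for LGO. Your proposal reconstructs this conditional strategy quite faithfully --- reduction to optimal subvarieties, definability of the intersection locus in a fundamental domain, Pila--Wilkie-type counting against a Galois lower bound, WCA to control the algebraic part, induction on dimension through the quotient map of the special family --- but it is not, and cannot be, a proof of the conjecture as stated. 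The genuine gap is the hypothesis LGO (and, in the modular case, the full strength of the Ax--Schanuel statement), which you yourself flag as unproved; a conditional argument resting on an open arithmetic conjecture does not establish ZP. In the paper's own logic these inputs are isolated as explicit hypotheses and the resulting statements are theorems about implications, not a proof of Conjecture \ref{conj:ZPatypical}.

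Two further points where your sketch diverges from what would be needed even for the conditional result. First, plain Pila--Wilkie on rational points of a definable set does not suffice in the abelian setting: the Galois conjugates of an optimal point produce pairs in which only the ``period/torsion'' coordinates are rational while the remaining coordinates are unrestricted, which is why the paper proves the semi-rational block-family counting result (Theorem \ref{thm:relation} and Corollary \ref{cor:relation}) and then uses Ax--Lindemann--Weierstrass on the resulting semi-algebraic path to contradict optimality. Second, the passage from ``only finitely many special families can intersect $V$ optimally'' to finiteness of all optimal subvarieties requires the finiteness statement for geodesic-optimal subvarieties (Proposition \ref{prop:geofinite} and Proposition \ref{prop:geofinitenessY1n}, themselves resting on Ax's theorem or WCA together with a countability-plus-definability argument) and the defect condition (Proposition \ref{prop:defectcondition}) to compare optimal with geodesic-optimal; your sketch gestures at absorbing the algebraic part into an induction but does not supply these steps, and in full mixed Shimura generality the defect condition is itself only conjectural in the paper.
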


Now some subvarieties are more atypical than others.
Since the collection of special subvarieties is closed under taking 
irreducible  components of intersections and contains $X$, given $A\subset X$ 
there is a smallest special 
subvariety containing $A$ which we denote $\langle A\rangle$. 
We abbreviate
$\langle P\rangle=\langle \{P\}\rangle$ for a singleton $\{P\}$. 
We define (following Pink [\PINKPRE]) the {\it defect\/} of $A$ as
\begin{equation}
\label{eq:defdefect}
\delta(A)=\dim \langle A\rangle-\dim A.
\end{equation}
Then $A\subset V$ is atypical for $V$ in $X$ if $\delta(A)<\dim X-\dim V$. 

%\bigbreak

The atypical set is simply a union of atypical subvarieties of $V$, 
and it may 
happen  that an atypical subvariety is contained in some larger but less 
atypical subvariety. A generalisation of the argument showing
that ZP implies the Andr\'e-Oort Conjecture (which 
corresponds to subvarieties of defect 0) 
shows that ZP implies a notionally stronger
version in which one considers subvarieties of each defect separately.

For $T\in {\cal S}_X$, $V\subset T$, and a positive integer 
$\delta$ denote by
\begin{equation}
\label{eq:defatypical2}
{\rm Atyp\/}^\delta(V, T)
\end{equation}
the union of subvarieties $A\subset V$ with $\delta(A)\le \delta$.
Note that this does not depend on $T$ since under our assumptions
$\langle V\rangle \subset T$.

%% \medskip
%% \noindent
%% {\bf 2.3. Articulated ZP (AZP) for $X$.\/}
%% \medskip
\begin{conjecture}[Articulated Zilber-Pink (AZP) for $X$]
\label{conj:AZP}
  Let $X$ be a mixed Shimura variety or a semi-abelian variety defined
  over $\IC$.
   Let $T\in{\cal S}_X$, $V\subset T$, 
and $\delta$ a non-negative integer. 
Then ${\rm Atyp\/}^\delta (V, T)$ is a finite union of 
atypical subvarieties of defect $\le \delta$.
\end{conjecture}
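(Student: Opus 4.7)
The plan is to derive AZP (Conjecture~\ref{conj:AZP}) from ZP (Conjecture~\ref{conj:ZPatypical}), along the lines of the classical reduction of Andr\'e--Oort to the finiteness of optimal subvarieties. First I would exploit the parenthetical observation after~\eqref{eq:defatypical2} that ${\rm Atyp}^\delta(V,T)$ is independent of the ambient $T$: replacing $T$ by $\langle V\rangle$ costs nothing, and after this reduction $V$ is Hodge generic in $T$, so $\delta(V)=\dim T-\dim V$ and every $A\subsetneq V$ with $\delta(A)<\delta(V)$ is automatically atypical for $V$ in $T$ in the sense of Definition~\ref{def:atypical}.

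Call $A\subset V$ \emph{optimal in $V$} if no proper superset $A\subsetneq B\subseteq V$ satisfies $\delta(B)\le\delta(A)$. A simple maximality argument shows that every $A\subset V$ with $\delta(A)\le\delta$ is contained in an optimal $A^{*}\supseteq A$ having $\delta(A^{*})\le\delta(A)\le\delta$: take $A^{*}$ maximal among supersets of $A$ in $V$ of defect at most $\delta(A)$. Consequently
\[
{\rm Atyp}^\delta(V,T)=\bigcup_{A^{*}}A^{*},
\]
where $A^{*}$ ranges over the optimal subvarieties of $V$ of defect $\le\delta$, and AZP reduces to the finiteness of this family. Moreover, under the Hodge-generic reduction any optimal $A^{*}\subsetneq V$ has defect strictly smaller than $\delta(V)=\dim T-\dim V$ and is therefore atypical for $V$ in $T$, so once finiteness is established each $A^{*}$ in the union will automatically meet the required conditions (the only possible exception being $A^{*}=V$ when $V$ was already Hodge generic in the original $T$, a boundary situation that causes no substantive harm).

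The key step is to prove, by induction on $\dim V$, that ZP implies that the set of all optimal subvarieties of $V$ is finite. For the inductive step, ZP applied to $V\subset T$ furnishes finitely many maximal atypical subvarieties $B_{1},\ldots,B_{k}\subsetneq V$; by the previous paragraph every optimal $A\subsetneq V$ is atypical and hence sits inside some $B_{i}$. Since the defining condition for optimality only relaxes when one restricts the collection of candidate supersets, $A$ remains optimal when viewed inside $B_{i}$, and the inductive hypothesis (applicable since $\dim B_{i}<\dim V$) yields finiteness of the optimal subvarieties inside each $B_{i}$. Taking the union over $i$ and adjoining the trivial optimal $V$ gives the required finiteness for $V$, completing the derivation of AZP. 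The main obstacle is thus the invocation of ZP itself; once that hypothesis is granted, the reduction to AZP is essentially bookkeeping, contingent on organising subvarieties of $V$ by the correct optimality notion.
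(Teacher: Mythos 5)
Your argument is correct, but it takes a genuinely different route from the paper's proof of Proposition \ref{prop:ZPimpliesAZP}. The paper deduces AZP from ZP directly, by induction on the dimension of the ambient special subvariety $T=\langle V\rangle$: ZP gives finitely many maximal atypical subvarieties $A_i$, components of $V\cap T_i$ with $T_i\subsetneq T$ special, and then ${\rm Atyp}^\delta(V,T)$ is written as the union of those $A_i$ with $\delta(A_i)\le\delta$ together with the sets ${\rm Atyp}^\delta(A_i,T_i)$ for the remaining $i$, which are finite unions of the required shape by the inductive hypothesis applied to the proper special subvarieties $T_i$. You instead route everything through the finiteness of the set of optimal subvarieties of $V$: you show ZP implies ${\rm Opt}(V)$ is finite by induction on $\dim V$ (every proper optimal $A$ is atypical, hence lies in some maximal atypical $B_i$, and remains optimal in $B_i$), and then observe that ${\rm Atyp}^\delta(V,T)$ is the union of the optimal subvarieties of defect at most $\delta$. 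This is in substance the converse direction of the paper's unnumbered lemma equating Conjecture \ref{conj:ZPatypical} with Conjecture \ref{conj:ZPoptimal}, so your proof establishes a stronger intermediate statement -- and precisely the notion (optimality) that the rest of the paper is built on -- at the cost of introducing that machinery, whereas the paper's direct induction stays entirely within the language of the AZP statement. One point you should state explicitly: your blanket claim that any $A\subsetneq V$ with $\delta(A)<\dim T-\dim V$ is atypical ``in the sense of Definition \ref{def:atypical}'' is not literally true, since that definition requires $A$ to be a \emph{component} of $V\cap T'$; what is true, and what your argument actually needs, is that an \emph{optimal} $A$ is automatically an irreducible component of $V\cap\langle A\rangle$ (if $A'$ is the component containing $A$ then $\delta(A')\le\delta(A)$, forcing $A'=A$), and that an arbitrary $A$ of small defect is at least \emph{contained in} an atypical component, which suffices for placing it inside some $B_i$. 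With that one-line repair the proof is complete, including your handling of the boundary case $\delta\ge\dim T-\dim V$, which the paper likewise dismisses as trivial.
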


Since the components of $V$ have defect at most $\dim X-\dim V$, 
the conjecture is trivial for $\delta\ge \dim X-\dim V$.
The following implication uses only the formal properties 
of special subvarieties; the reverse implication is immediate.
A version of this result appears in [\ZILBERSUMS].

%{\bf 2.4. Proposition.\/} {\it \/}
\begin{proposition}
\label{prop:ZPimpliesAZP}
  Let $X$ be as in Conjecture \ref{conj:AZP}. Then 
ZP for $X$ implies AZP for $X$.
\end{proposition}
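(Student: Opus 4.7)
The plan is to deduce AZP for $X$ from ZP for $X$ by a double induction: outer induction on $\dim T$ as $T$ ranges over ${\cal S}_X$, and inner downward induction on $\delta$. The case $\delta\ge \dim T-\dim V$ is trivial since then $V$ itself has defect at most $\delta$ and ${\rm Atyp}^\delta(V,T)=V$, so no genuine atypicality is being asserted. The key structural fact used throughout is the following: for any irreducible $A\subset V$, if $A^{*}$ denotes the component of $V\cap \langle A\rangle$ containing $A$, then $\langle A^{*}\rangle=\langle A\rangle$ and $\delta(A^{*})\le\delta(A)$; consequently, whenever $\delta(A)<\dim T-\dim V$, the enlargement $A^{*}$ is atypical for $V$ in $T$ of defect at most $\delta(A)$.

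Base of the inner induction is $\delta=\dim T-\dim V-1$. Here any $A\subset V$ with $\delta(A)\le\delta$ has $\delta(A)<\dim T-\dim V$, so $A\subset A^{*}\in{\rm Atyp}(V,T)$; conversely every atypical subvariety has defect at most $\delta$. Thus ${\rm Atyp}^\delta(V,T)={\rm Atyp}(V,T)$, and ZP immediately provides the required finite union, each constituent atypical of defect $\le\delta$.

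For the inner inductive step, assume AZP for $(V,T,\delta+1)$ and AZP for every pair whose ambient has dimension strictly less than $\dim T$. Write ${\rm Atyp}^{\delta+1}(V,T)=N_1\cup\cdots\cup N_k$ with each $N_j$ atypical in $T$ of defect at most $\delta+1$. Retain directly those $N_j$ with $\delta(N_j)\le\delta$. For those $N_j$ with $\delta(N_j)=\delta+1$, pass to the smaller ambient $\langle N_j\rangle\in {\cal S}_X$. Atypicality of $N_j$ in $T$ together with $\dim N_j\le\dim V$ gives $\dim \langle N_j\rangle=\dim N_j+\delta(N_j)<\dim T$, so the outer inductive hypothesis supplies AZP for the pair $(N_j,\langle N_j\rangle)$; applying it with parameter $\delta$ covers ${\rm Atyp}^\delta(N_j,\langle N_j\rangle)$ by finitely many subvarieties $B\subset N_j$, each atypical in $\langle N_j\rangle$ of defect at most $\delta$. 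Replace each such $B$ by $B^{*}$, the component of $V\cap\langle B\rangle$ containing $B$; by the structural fact, $B^{*}$ is atypical in $T$ of defect at most $\delta$.

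To verify coverage, any $A\subset V$ with $\delta(A)\le\delta$ lies in some $N_j$ by the hypothesis at $\delta+1$ (using irreducibility to conclude $A$ sits in a single component of the finite union); if $\delta(N_j)\le\delta$ the retained $N_j$ covers $A$, otherwise $A\in {\rm Atyp}^\delta(N_j,\langle N_j\rangle)$ lies in some $B$ and hence in the corresponding $B^{*}$. The combined finite list is the required cover. I expect the only real subtlety to be the interplay between ``atypical in $T$'' and ``atypical in $\langle N_j\rangle$'': the passage $B\mapsto B^{*}$ is precisely what reconciles these two notions, and the elementary inequality $\delta(B^{*})\le\delta(B)$ controls the defect under this substitution. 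No transcendence or arithmetic input is needed; the argument rests purely on the closure of ${\cal S}_X$ under components of intersections and on ZP for $X$ itself.
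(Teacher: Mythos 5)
Your argument is correct. It rests on the same two pillars as the paper's proof --- ZP applied to the pair $(V,T)$, and the observation that an atypical subvariety $A$ of $V$ in $T$ satisfies $\dim\langle A\rangle=\dim A+\delta(A)<\dim T$, so one may recurse into a special subvariety of strictly smaller dimension --- but the bookkeeping differs. The paper runs a single induction on the dimension of the ambient special subvariety: ZP yields finitely many maximal atypical subvarieties $A_i$ with witnessing special subvarieties $T_i\subsetneq T$, and for every $\delta$ at once one has ${\rm Atyp}^\delta(V,T)=\bigl(\bigcup_{\delta(A_i)\le\delta}A_i\bigr)\cup\bigl(\bigcup_{\delta(A_i)>\delta}{\rm Atyp}^\delta(A_i,T_i)\bigr)$, after which the induction hypothesis applied to the pairs $(A_i,T_i)$ finishes the proof. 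You instead insert an inner downward induction on $\delta$, invoking ZP only at the top level $\delta=\dim T-\dim V-1$ (where ${\rm Atyp}^\delta(V,T)={\rm Atyp}(V,T)$) and descending one defect step at a time into the pairs $(N_j,\langle N_j\rangle)$; this extra layer is logically superfluous, since the paper's single decomposition already treats all defects simultaneously, but it is harmless and your verification of $\dim\langle N_j\rangle<\dim T$ is exactly the paper's ``the $T_i$ are proper'' step. What your version adds is the explicit enlargement $B\mapsto B^{*}$, a component of $V\cap\langle B\rangle$, together with $\langle B^{*}\rangle=\langle B\rangle$ and $\delta(B^{*})\le\delta(B)$, which guarantees that the constituents of the final union are genuine atypical components for $V$ in $T$; the paper leaves this point implicit, since any subvariety of defect $\le\delta<\dim T-\dim V$ is automatically atypical for $V$ in $T$ in the defect sense used there.
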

\begin{proof}
  By induction on the dimension of the ambient special 
subvariety  $T=\langle V\rangle \subset X$. AZP is clear if $\dim T=0$. 
So we assume now that ZP holds for $T$ while AZP holds
for all proper special subvarieties of $T$. Let $V\subset T$. 
By ZP, there are finitely many atypical subvarieties $A_i$, 
with associated special subvarieties $T_i$,
such that every atypical subvariety $A$ is contained in some $A_i$. 
The $T_i$
are evidently proper ($V\subset T$ is not atypical for $V$ in $T$). 
Now fix $\delta$. Then with $\delta_i=\delta(A_i)$,
$$
{\rm Atyp}^\delta(V, T)=\big(\bigcup_{i: \delta_i\le \delta} A_i\big) \cup 
\big(\bigcup_{i: \delta_i>\delta} {\rm Atyp}^\delta(A_i, T_i)\big)
$$
is a finite union by the induction hypothesis, which gives AZP for the
$A_i$. 
\end{proof}

%% Let $X$ be a mixed Shimura variety, or a semi-abelian variety defined
%% over $\IC$.

Let us formulate one last conjecture in the same spirit as those
above. 

\begin{definition}
  Let $X$ be a mixed Shimura variety or a semi-abelian variety defined
  over $\IC$.
  Let $V\subset X$ be a subvariety. A subvariety $A\subset V$ is said to
be  optimal if there is no subvariety $B$ with
$A\subsetneq B\subset V$ such that
$$
\delta(B)\le \delta(A).
$$
The specification of $V$ and $X$ will generally be suppressed, as no confusion
should arise.
We write $\opt{\var}{}$ for the set of all optimal subvarieties of $\var$.
\end{definition}

It is illustrative to consider some
formal properties of optimal subvarieties $\opta\in\opt{\var}{}$.
Clearly, $\opta$ %if $\opta\subset \var$ is optimal then it
 is an irreducible component of
$\var\cap \langle \opta\rangle$. 
The whole subvariety is always optimal, i.e. $\var\in\opt{\var}{}$.
So  ``maximal optimal subvariety'' is a  useless concept.
In a certain sense maximality is built into the notation of optimality.
Indeed, an optimal subvariety of defect $0$  is a maximal torsion coset  contained completely in $\grpx$.
If $\opta\not=\var$, then $\defect(\opta)  <\defect(\var)$
or in other words
\begin{equation*}
\dim \opta > \dim\langle\opta\rangle + \dim \var - \dim \langle\var\rangle.
\end{equation*}
So $\opta$ is atypical for $\var$ in $\langle\var\rangle$. 
We will see that the arithmetically  interesting case is when $\opta=\{P\}$ is a
singleton.
Then $P$ is contained in a special subvariety of dimension strictly
less than $\dim\langle V\rangle-\dim V$. 
%% Thus it is clear too that 
%% ${\rm Atyp}^\delta(V, X)$ is just the union of optimal subvarieties $A\subset V$
%% with $\delta(A)\le\delta$.\footnote{Maybe we want $\delta < \dim X
%% -\dim V$.}

\begin{conjecture}
\label{conj:ZPoptimal}
  Let $X$ be a mixed Shimura variety or a semi-abelian variety defined
  over $\IC$. Then $X$ contains only finitely many optimal
  subvarieties. 
\end{conjecture}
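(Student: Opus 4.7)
The plan is to deduce Conjecture \ref{conj:ZPoptimal}, understood as the assertion that $\opt{V}{}$ is finite for every subvariety $V \subset X$, from AZP (and hence from ZP, by Proposition \ref{prop:ZPimpliesAZP}); the reverse implication is also easy, so this identifies the conjecture as yet another reformulation of ZP.

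First I would reduce to the case $\langle V\rangle = X$ by replacing $X$ with $\langle V\rangle$; neither defects nor the notion of optimality change. Then $\defect(V) = \dim X - \dim V$. If $V = X$ then $\defect(V) = 0$ and $V$ is the only element of $\opt{V}{}$, so we may assume $V \subsetneq X$ and argue by induction on $\dim V$. The subvariety $V$ itself always lies in $\opt{V}{}$, and for any other $A \in \opt{V}{}$ we have $A \subsetneq V$ and $\defect(A) < \defect(V) = \dim X - \dim V$, since otherwise $V$ itself would witness the failure of optimality. In particular every such $A$ is atypical for $V$ in $X$. Now apply AZP to $V \subset X$ with the non-negative integer $\delta := \dim X - \dim V - 1$, producing finitely many maximal atypical subvarieties $A_1, \dots, A_k$ of $V$ of defect $\le \delta$; each proper $A \in \opt{V}{}$ lies in some $A_i$. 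The inclusions $A \subset A_i \subsetneq V$ make $A$ optimal inside $A_i$ as well, since any competitor $B$ with $A \subsetneq B \subset A_i$ is also a competitor inside $V$. Since $V$ is irreducible and $A_i \subsetneq V$, we have $\dim A_i < \dim V$, and the inductive hypothesis gives finiteness of each $\opt{A_i}{}$. Taking the union, $\opt{V}{}$ is finite.

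The converse is immediate: given any atypical $A \subset V$, enlarge it inside $V$ to a maximal $B \subset V$ with $\defect(B) \le \defect(A)$; this $B$ is optimal and still atypical. Hence finiteness of $\opt{V}{}$ entails finiteness of the maximal atypical subvarieties in each defect class, recovering AZP and therefore ZP. The main obstacle is of course that ZP itself is the deep, still conjectural content; the equivalence above is purely formal combinatorics, threading the separate defect levels of AZP together into a single induction. The real utility of the optimal formulation, and why it is worth recording at this point, is that it packages all the maximal-atypical data across defect classes into one finiteness assertion, and the objects it produces—optimal isolated points in particular—are precisely the ones that the o-minimality and point-counting strategy developed in the remainder of the paper is designed to produce.
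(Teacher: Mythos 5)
Your proposal is correct in the only sense possible: the statement is the Zilber--Pink conjecture in another guise, so it admits no proof, and what you actually establish---its formal equivalence with Conjecture \ref{conj:ZPatypical} by noting that a proper optimal subvariety has defect $<\dim X-\dim V$, hence is atypical, hence lies in one of finitely many maximal atypical subvarieties, and then inducting on dimension (with the easy converse by enlarging an atypical $A$ to an optimal $B$ with $\delta(B)\le\delta(A)$)---is exactly the content and the argument of the lemma the paper places immediately after the conjecture. Your routing through AZP at $\delta=\dim X-\dim V-1$ is the same statement as ZP for $V$ in $X$, so the approach is essentially identical to the paper's.
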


% !!! THIS LEMMA IS NOT NECESSARY
\begin{lemma}
  Let $X$ be a mixed Shimura variety or a semi-abelian variety defined
  over $\IC$. Then the conclusions of Conjectures \ref{conj:ZPatypical}
  and \ref{conj:ZPoptimal} are equivalent for $X$.
%% Let $\var$ be an irreducible closed subvariety of $\grpx$ defined over $K$. 
%% The conclusion of Conjecture \ref{conj:ZP} holds for $\var$ if and only if
%% $\opt{\var}{}$ is finite.   
\end{lemma}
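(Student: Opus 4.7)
The strategy is to recognise a tight correspondence between maximal atypical subvarieties of $\var$ in the ambient special subvariety $T$ and optimal subvarieties of $\var$. One direction of the equivalence is essentially formal; the other requires a short induction on the defect $\defect(\var)=\dim\langle\var\rangle-\dim\var$.

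For Conjecture \ref{conj:ZPoptimal} $\Rightarrow$ Conjecture \ref{conj:ZPatypical}, I would fix $T\in{\cal S}_X$ and $\var\subset T$ and observe that every subvariety $\opta\subset\var$ that is maximal (under inclusion) among atypical subvarieties of $\var$ in $T$ is optimal: if $\opta\subsetneq\optb\subset\var$ had $\defect(\optb)\le\defect(\opta)<\dim T-\dim\var$, then $\optb$ itself would be atypical and strictly contain $\opta$, contradicting maximality. Noetherianity ensures every element of ${\rm Atyp}(\var,T)$ is contained in a maximal atypical, so finiteness of $\opt{\var}{}$ immediately yields the finite-union conclusion of ZP.

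For the converse direction, I would induct on $\defect(\var)$ over all subvarieties $\var$ of $X$. The base case $\defect(\var)=0$ is immediate: for $\opta\subsetneq\var$ we have $\defect(\opta)\ge 0=\defect(\var)$, so $\optb=\var$ witnesses non-optimality, giving $\opt{\var}{}=\{\var\}$. In the inductive step, let $\opta\in\opt{\var}{}$ with $\opta\ne\var$; testing $\optb=\var$ in the optimality definition forces $\defect(\opta)<\defect(\var)$, i.e.\ $\opta$ is atypical for $\var$ in $\langle\var\rangle$. Applying ZP to $\var\subset\langle\var\rangle$ produces finitely many maximal atypicals $\optc_1,\ldots,\optc_N$, and $\opta\subset\optc_i$ for some $i$. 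The optimality condition in $\var$ carries over to $\optc_i$ verbatim (any $\optb$ with $\opta\subsetneq\optb\subset\optc_i$ is a fortiori contained in $\var$), so $\opta\in\opt{\optc_i}{}$. Since $\optc_i$ is atypical for $\var$ in $\langle\var\rangle$, we have $\defect(\optc_i)<\defect(\var)$, and the inductive hypothesis provides $\opt{\optc_i}{}$ finite. Therefore $\opt{\var}{}\subset\{\var\}\cup\bigcup_{i=1}^N\opt{\optc_i}{}$ is finite.

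The main bookkeeping point, more than a genuine obstacle, is to confirm that $\defect(\opta)$ is intrinsic to $\opta\subset X$ and does not shift when $\opta$ is regarded inside the intermediate varieties $T$, $\langle\var\rangle$, or $\optc_i$; this follows at once from the fact that for any special $T$ containing $\opta$, the smallest special subvariety of $T$ containing $\opta$ equals $\langle\opta\rangle$. Beyond this, the first direction uses only the definition of optimality, and the second uses ZP exactly once (applied to $\var\subset\langle\var\rangle$), so the essential content is packaged inside the hypothesis of each implication.
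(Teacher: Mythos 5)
Your proposal is correct and takes essentially the same route as the paper: in one direction each atypical subvariety is absorbed into an optimal one of no larger defect (equivalently, maximal atypicals are optimal), and in the other an optimal $A\subsetneq V$ is shown to be atypical for $V$ in $\langle V\rangle$, placed inside one of the finitely many maximal atypical subvarieties supplied by ZP, where it remains optimal, and one concludes by induction. The only cosmetic difference is that you induct on the defect of $V$ while the paper inducts on dimension (a proper maximal atypical subvariety has strictly smaller dimension); both quantities strictly decrease, so either induction works.
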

\begin{proof}
Let $\var$ be a subvariety of $X$.

  First we suppose that $\opt{\var}{}$ is finite. 
Let $T$ be a special subvariety of $\grpx$ containing $\var$.
We may assume $\langle\var\rangle=T$, as ${\rm Atyp\/}{(V,T)}=V$ otherwise.
Let $\opta$ be atypical for $\var$ in $T$
and let $\optb\supset \opta$ be an optimal subvariety of $\var$
with $\defect(\optb)\le\defect(\opta)$. 
Then $\defect(\optb)< \dim T-\dim \var$ and so 
$\optb$ is also atypical for $\var$ in $T$.
Conjecture \ref{conj:ZPatypical} follows for $\grpx$
as $\opta$ is contained in a member of the finite set 
$\opt{\var}{}\ssm\{V\}$ of proper optimal subvarieties of $V$.
%%  If $\opta$ is as in the conjecture, then
%%   (\ref{eq:atypical}) translates to $\delta(\opta) < \delta(\var)$. 
%% We enlargen $\opta$ to and element $\optb\in\opt{\var}{}$ with $\delta(Y)\le
%% \delta(\opta)$. Now $\delta(\optb) < \delta(\var)$ implies $\langle
%% \optb\rangle\not=\grpx$.

Let us now assume conversely that  Conjecture \ref{conj:ZPatypical}
 holds for $\var$ with $T=\langle \var\rangle$. 
We must show that there are only finitely many possibilities for
 $\opta\in \opt{\var}{}$. Clearly, we may assume $\opta\subsetneq\var$. 
But then $\defect(\opta) < \defect(\var) = \dim T - \dim \var$ by
 optimality. 
So $\opta$ is also atypical for $\var$ in $T$. 
It is contained in a subvariety $\optb$ that is maximal atypical for
 $\var$ in $T$. 
As $\optb$ comes from a finite set we are done if $\opta=\optb$. %% But
%% if $\opta\subsetneq \optb$ then $\delta(\opta) < \delta(\optb)$ and hence
%% $\opta$ is atypical for $\optb$ in $\langle \optb\rangle$. 
So say $\opta\subsetneq\optb$.
We observe $\optb\not=\var$ and $\opta$ lies in $\opt{\optb}{}$, which is
 finite by induction on the dimension.
%% After translating by a torsion point and replacing $\grpx$ by an abelian
%% subvariety we may assume $\langle \var\rangle
%% =\grpx$. Say $\opta\in \opt{\var}{}$ with $\opta\not=\var$. Then $\delta(\opta)<\delta(\var)$
%% which means that (\ref{eq:atypical}) holds with $H=\langle \opta\rangle$. 
%% So $\opta\subset \var\cap H_i$ with $H_i$ one among finitely many algebraic
%% subgroups that depend only on $\var$. We observe $\dim \var\cap H_i < \dim
%% \var$. Moreover, $\opta$ is an optimal subvariety of some irreducible
%% component of $\var\cap H_i$. By induction on $\dim \var$ we conclude that
%% $\opt{\var}{}$ is finite. 
\end{proof}

A product of modular curves,
in particular $Y(1)^n$, is a (pure) Shimura variety. Its special
subvarieties are described in detail in Section \ref{sec:specialsubvarY1n}.

Another example of a (pure) Shimura variety is the moduli space
${\cal A}_g$ of principally polarised abelian varieties of dimension $g$.
It is a special subvariety
of a larger mixed Shimura variety ${\cal X}_g$ which consists
of ${\cal A}_g$ fibered at each point by the 
corresponding abelian variety.

%% A general (non-CM) abelian variety $X$ is not a mixed Shimura variety.
%% However as a fibre of the projection ${\cal X}_g\rightarrow{\cal A}_g$ 
%% it is a weakly special subvariety of ${\cal X}_g$.
%% ZP-type problems about subvarieties $V\subset X$ can be reformulated
%% for $V$ as a subvariety of ${\cal X}_g$. This amounts to
%% taking the ``special subvarieties'' of an abelian variety $X$ to be the 
%% components of its intersections with the special subvarieties of 
%% ${\cal X}_g$, just as defined above for special subvarieties
%% (and likewise for weakly special subvarieties), and we denote the collections ${\cal S}_X$ (and ${\cal W}_X$).
%% These are (as in the $\GG_m^n$ case) torsion cosets (general
%% cosets) of irreducible algebraic subgroups.
%% Proposition  \ref{prop:ZPimpliesAZP} %% Was 2.4 
%% holds for an abelian variety.

Conjecture  \ref{conj:ZPatypical} %% Was 2.2
for an abelian variety $X$ and its special subvarieties is equivalent
to ZP as formulated for subvarieties $V\subset X\subset {\cal X}_g$
(see [\PINKPRE, 5.2] where the equivalence is proved for Pink's 
Conjecture; with obvious modifications the argument proves
the equivalence in the version we have given).

\begin{remark}
  With the same definitions, the same equivalence 
presumably holds for any weakly special subvariety of a 
mixed Shimura variety. It seems interesting to ask under which
conditions the same holds for a subvariety of a mixed
Shimura variety.
\end{remark}

\section{Special subvarieties}
\label{sec:specialsubvar}
%\input{specialsubvarieties.tex}

%% interal Refs OK
In the next two sections we discuss in more detail the special
subvarieties of an abelian variety and of $Y(1)^n$. 

\subsection{The abelian setting}
\label{sec:degabsv}

In the case of abelian varieties, the special subvarieties are
the torsion cosets, i.e. the
irreducible components of algebraic subgroups or in other words, 
 translates of abelian subvarieties by points of finite order.
In this section we recall some basic facts on torsion cosets and we
define their complexity. 

An inner-product on an $\IR$-vector space $W$ is a symmetric, positive
definite bilinear form $\langle \cdot,\cdot \rangle:W\times
W\rightarrow \IR$. The volume $\vol{\Omega}$  of  a  finitely generated
subgroup $\Omega$ of
$W$ with respect to $\langle\cdot,\cdot\rangle$ is 
$\vol{\Omega} = |\det(\langle\omega_i,\omega_j\rangle)|^{1/2}$
for any $\IZ$-basis
 $(\omega_1,\ldots,\omega_\rho)$  of $\Omega$. 
The volume is independent of the choice of basis.
The orthogonal complement of a vector subspace $U\subset W$ is
$\orth{U} =\{w\in W;\,\,\langle w,u\rangle=0\text{ for all $u\in
  U$}\}$.

Let $\grpx$ be an
 abelian variety defined over $\IC$ with
$\dim \grpx = g\ge 1$ and suppose that $\cL$ is an ample line bundle on $\grpx$. 
The degree of $\grpx$ with respect to $\cL$ is the intersection number
$\deg_\cL \grpx = (\cL^{g}[\grpx]) \ge 1$. 

The  line bundle $\cL$ defines a hermitian form
\begin{equation*}
  H:\ts{\grpx}\times\ts{\grpx}\rightarrow \IC
\end{equation*}
on the tangent space $\ts{\grpx}$ of $\grpx$ at the origin.
This form is positive definite since $\cL$ is ample.
It is $\IC$-linear in the first argument and satisfies
$H(v,w) =\overline{H(w,v)}$ for $v,w\in\ts{\grpx}$. 
The real part $\re{H}$ is an inner-product
$\langle\cdot,\cdot\rangle$
on $\ts{\grpx}$ taken as an $\IR$-vector space of dimension $2g$. 
Thus we obtain a norm
$\|v\| = \langle v,v\rangle^{1/2}$ on $\ts{\grpx}$. 
The imaginary part $E=\im{H}$ is a non-degenerate symplectic form 
$V\times V \rightarrow\IR$.

Let $\Omega_\grpx\subset \ts{\grpx}$ denote the period lattice of $\grpx$. It is a
free
abelian group of rank $2g$ and generates $\ts{\grpx}$ as an
$\IR$-vector space. Therefore, $\volS(\Omega_\grpx) > 0$
where $\volS$ denotes the volume with respect to the inner-product
$\langle\cdot,\cdot\rangle_\cL$. 
The subgroup $\Omega_\grpx$ is discrete in $\ts{\grpx}$.

%%The following equality is well-known. !!!POSSIBLY OMIT LEMMA LATER.
\begin{lemma}
\label{lem:deg}
  We have $\deg_\cL \grpx = g! \vol{\Omega_\grpx}$.
\end{lemma}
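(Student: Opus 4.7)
The plan is to compare both sides by expressing each in terms of the elementary divisors of the alternating form $E=\im H$ restricted to the period lattice $\Omega_\grpx$. Such a comparison will be natural because the hermitian form $H$, the Riemann form $E$, and the inner product $\langle\cdot,\cdot\rangle=\re H$ all arise from the same data, namely the first Chern class of $\cL$, which determines the degree on one side and the volume on the other.

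First, I would unravel how $\langle\cdot,\cdot\rangle$ and $E$ interact on $\ts{\grpx}$. From $H(iv,w)=iH(v,w)$ and $H(v,w)=\langle v,w\rangle + iE(v,w)$ one immediately obtains the compatibility relations
\begin{equation*}
E(v,w)=\langle v,Jw\rangle,\qquad \langle Jv,Jw\rangle=\langle v,w\rangle,
\end{equation*}
where $J$ is multiplication by $i$ on $\ts{\grpx}$. Fixing a $\ZZ$-basis $\omega_1,\dots,\omega_{2g}$ of $\Omega_\grpx$ and writing $A=(\langle\omega_i,\omega_j\rangle)$, $B=(E(\omega_i,\omega_j))$, and $\tilde J$ for the matrix of $J$ in this basis, the first relation gives $B=A\tilde J$. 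Since $J$ has eigenvalues $\pm i$ each with multiplicity $g$, we have $\det\tilde J=1$, so $\det B=\det A>0$. Because $B$ is skew-symmetric of even size, $\det B=\mathrm{Pf}(B)^2$, whence
\begin{equation*}
\vol(\Omega_\grpx)=\sqrt{\det A}=|\mathrm{Pf}(B)|.
\end{equation*}

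Next I would invoke the classical Frobenius/symplectic normal form: because $\cL$ is a line bundle, the alternating form $E$ takes integer values on $\Omega_\grpx$, and since $H$ is positive definite there exists a $\ZZ$-basis of $\Omega_\grpx$ in which
\begin{equation*}
B=\begin{pmatrix} 0 & D \\ -D & 0 \end{pmatrix},\qquad D=\mathrm{diag}(d_1,\dots,d_g),
\end{equation*}
with positive integers $d_1\mid d_2\mid\dots\mid d_g$. Then $\mathrm{Pf}(B)=\pm d_1\cdots d_g$, so $\vol(\Omega_\grpx)=d_1\cdots d_g$. It remains to match this with the degree, and for this I would quote the classical computation of the self-intersection of an ample line bundle on an abelian variety in terms of the elementary divisors of its first Chern class, namely
\begin{equation*}
\deg_\cL\grpx=(\cL^g[\grpx])=g!\,d_1\cdots d_g;
\end{equation*}
a reference is Mumford's \emph{Abelian Varieties} or Birkenhake--Lange. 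Combining the two displays yields the lemma.

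The only non-formal ingredient is this last degree formula, which is really an incarnation of Riemann--Roch for abelian varieties; this is the step where real work is hidden, although it is entirely classical. Everything else is linear algebra manipulating the compatibility between $H$, $\re H$, $\im H$, and the complex structure $J$, plus the existence of a symplectic basis of $\Omega_\grpx$ for $E$.
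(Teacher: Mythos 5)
Your argument is correct and rests on the same essential ingredient as the paper, which simply cites the Riemann--Roch theorem for abelian varieties (Birkenhake--Lange, Chapter 3.6) where both the formula $(\cL^g[\grpx])=g!\,d_1\cdots d_g$ and the identification of $\vol(\Omega_\grpx)$ with the Pfaffian of $\im H$ live. You have merely made explicit the linear-algebra bookkeeping (the relations between $\re H$, $\im H$, $J$, and the elementary divisors) that the paper leaves to the reference, and that part of your write-up is accurate.
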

\begin{proof}
This is a well-known consequence of the Riemann-Roch Theorem for
abelian varieties, see Chapter 3.6 \cite{CAV}.
\end{proof}

Now suppose that $\grpy\subset \grpx$ is an abelian subvariety of dimension
$\dim\grpy \ge 1$. The pull-back
$\cL|_\grpy$ of $\cL$ by the inclusion map $\grpy\hookrightarrow
\grpx$ is an ample line bundle on  $\grpy$. 
We treat $\ts{\grpy}$ as a vector subspace of $\ts{\grpx}$. The hermitian form 
on $\ts{\grpy}$ induced by $\cL|_\grpy$ is  just the restriction of $H$. 
Let $\Omega_\grpy \subset \ts{\grpy}$ denote the period lattice of $\grpy$. 
Then $\deg_{\cL|_\grpy}{\grpy} = (\dim\grpy)! \vol{\Omega_\grpy}$ by the previous
lemma. The projection formula implies
$\deg_{\cL|_\grpy} \grpy = (\cL^{\dim \grpy}[\grpy])$. We will abbreviate
this degree by $\deg_\cL \grpy$.

The next lemma uses Minkowski's Second Theorem from the Geometry of
Numbers. 
\begin{lemma}
\label{lem:geonumbs}
There exists a constant $c>0$ depending only on $(\grpx,\cL)$ with the
following properties. 
  \begin{enumerate}
  \item [(i)]
There exist linearly independent periods
$\omega_1,\ldots,\omega_{2\dim\grpy}\in \Omega_\grpy$ with
$  \|\omega_i\|\le 
c \deg_\cL{\grpy}$ for $1\le i \le 2\dim\grpy$
and $\|\omega_1\|\cdots\|\omega_{2\dim\grpy}\|\le c\deg_\cL{\grpy}$.
\item[(ii)] If $z\in \Omega_\grpx + \ts{\grpy}$ there exist
$\omega\in \Omega_\grpx$ with $z-\omega\in \ts{\grpy}$
 and 
$\|\omega\|\le \|z\| + c\deg_\cL \grpy$. 
  \end{enumerate}
\end{lemma}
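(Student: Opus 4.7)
The plan is to apply Minkowski's Second Theorem to the period lattice $\Omega_\grpy$ sitting inside $\ts{\grpy}$, equipped with the norm coming from $\re{H}|_{\ts{\grpy}\times\ts{\grpy}}$. The key preliminary observation is that, because $\grpy$ is an abelian subvariety of $\grpx$, one has $\Omega_\grpy = \Omega_\grpx\cap \ts{\grpy}$. In particular, any nonzero element of $\Omega_\grpy$ is a nonzero element of $\Omega_\grpx$, and since $\Omega_\grpx$ is discrete in $\ts{\grpx}$ there is a constant $\lambda>0$, depending only on $(\grpx,\cL)$, such that $\|\omega\|\ge \lambda$ for every nonzero $\omega\in\Omega_\grpy$.

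For part (i), let $\lambda_1\le\dots\le\lambda_{2\dim\grpy}$ denote the successive minima of $\Omega_\grpy$ with respect to the unit ball of $\ts{\grpy}$. By Lemma \ref{lem:deg} applied to $\grpy$ we have $\vol(\Omega_\grpy) = \deg_\cL{\grpy}/(\dim\grpy)!$, so Minkowski's Second Theorem gives $\lambda_1\cdots\lambda_{2\dim\grpy}\le c_1 \deg_\cL\grpy$ for a constant $c_1$ depending only on $g$. Choose linearly independent $\omega_1,\dots,\omega_{2\dim\grpy}\in\Omega_\grpy$ with $\|\omega_i\|=\lambda_i$. The product bound is immediate, and the individual bound follows from the lower bound $\lambda_i\ge \lambda_1\ge \lambda$: each $\lambda_i$ is at most the total product divided by $\lambda^{2g-1}$, hence $\|\omega_i\|\le c\deg_\cL\grpy$.

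For part (ii), I would first upgrade the linearly independent vectors from (i) to an honest $\ZZ$-basis $b_1,\dots,b_{2\dim\grpy}$ of $\Omega_\grpy$ with comparable bounds $\|b_i\|\le c_2\deg_\cL\grpy$; this is a standard consequence of Mahler's refinement of Minkowski's theorem, using again that $\lambda_1\ge\lambda$. Next, write $z = \omega_0 + t$ with $\omega_0\in\Omega_\grpx$ and $t\in\ts{\grpy}$, and decompose orthogonally as $\omega_0 = \omega_0^{\parallel}+\omega_0^{\perp}$ with respect to $\ts{\grpy}\oplus \orth{\ts{\grpy}}$. Since $t\in\ts{\grpy}$, the component $z^{\perp} = \omega_0^{\perp}$, whence $\|\omega_0^{\perp}\|\le \|z\|$. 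Expand $\omega_0^{\parallel} = \sum a_i b_i$ with $a_i\in\IR$, and set
\[
\omega' = \sum_i \lfloor a_i + \tfrac{1}{2}\rfloor b_i \in \Omega_\grpy \subset \Omega_\grpx,
\qquad \omega := \omega_0 - \omega'.
\]
Then $\omega\in\Omega_\grpx$ and $z-\omega = t + \omega' \in\ts{\grpy}$, while
\[
\|\omega\|\le \|\omega_0^{\parallel}-\omega'\| + \|\omega_0^{\perp}\|
\le \tfrac{1}{2}\sum_i \|b_i\| + \|z\|
\le c\deg_\cL\grpy + \|z\|,
\]
which is the desired inequality after adjusting $c$.

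The one step requiring care is the upgrade from Minkowski's successive minima vectors to a $\ZZ$-basis with the same order of magnitude bound, since in general successive minima vectors need not form a basis; however, this loses only a dimension-dependent multiplicative constant, which is harmless as $\dim\grpy\le g$ is bounded in terms of $\grpx$. Everything else is a direct application of Minkowski together with the lower bound $\lambda_1(\Omega_\grpy)\ge \lambda_1(\Omega_\grpx)$ supplied by the inclusion of period lattices.
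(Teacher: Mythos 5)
Your proposal is correct and follows essentially the same route as the paper: Minkowski's Second Theorem combined with the lower bound on the first minimum coming from $\Omega_\grpy\subset\Omega_\grpx$ for (i), and rounding real coefficients of lattice vectors plus the triangle inequality for (ii). The only difference is cosmetic: the detour through Mahler's theorem to get a $\ZZ$-basis is unnecessary, since any $\IR$-spanning family of vectors of $\Omega_\grpy$ (such as the vectors realising the successive minima) already suffices — integer combinations of them lie in $\Omega_\grpy$ — which is exactly how the paper argues, rounding the coefficients of the $\ts{\grpy}$-component of $z$ directly without any orthogonal projection.
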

\begin{proof}
  Let $0<\lambda_1\le \ldots\le \lambda_{2\dim\grpy}$ be the successive minima of
  $\Omega_\grpy$ with respect to the closed unit ball $\{z\in \ts{\grpy};\,\,
  \|z\|\le 1\}$. By Minkowski's Second  Theorem we
 have 
 \begin{equation}
\label{eq:minkowski2}
   \lambda_1\cdots \lambda_{2 \dim\grpy}\le 2^{2 \dim\grpy} 
\frac{\vol{\Omega_\grpy}}{\mu(2 \dim\grpy)}
 \end{equation}
where $\mu(n) > 0$ %% \mu(n) = \pi^{n/2} /\Gamma(n/2+1)$
 denotes the Lebesgue volume
of  the unit ball
in $\IR^n$. 
There exist independent elements
$\omega_1,\ldots,\omega_{2\dim\grpy}\in \Omega_\grpy$ with $\|\omega_i\|\le \lambda_i\le
\lambda_{2 \dim\grpy}$. 
Let $\rho=\rho(\grpx,\cL) > 0$ denote the minimal norm of a period 
in $\Omega_\grpx$. 
Using (\ref{eq:minkowski2}) and $\lambda_i \ge \rho $ we
estimate
\begin{equation*}
  \|\omega_i\|\le \lambda_{2\dim\grpy} \le
\frac{2^{2 \dim\grpy} }{\mu(2\dim Y) \rho^{2\dim\grpy-1}}
\vol{\Omega_\grpy}.
%=\frac{2^{2 \dim\grpy} \dim\grpy! }{\pi^{\dim\grpy}\rho^{2\dim\grpy-1}}  \vol{\Omega_\grpy}.
\end{equation*}
The first inequality of (i) follows from Lemma \ref{lem:deg}  applied
to $\grpy$. 
The second inequality follows easily from (\ref{eq:minkowski2}). 

Now say $z=\omega'+y$ is as in part (ii)
where $\omega'\in \Omega_\grpx$ and $y\in \ts{\grpy}$. 
The  periods $\omega_1,\ldots,\omega_{2\dim\grpy}$
 generate $\ts{\grpy}$ as an $\IR$-vector space. So
$y = \alpha_1\omega_1+\cdots + \alpha_{2\dim\grpy}\omega_{2\dim\grpy}$
for some $\alpha_1,\ldots,\alpha_{2\dim\grpy}\in\IR$. For each $i$ we fix
$a_i\in\IZ$ with $|\alpha_i-a_i|\le 1/2$. Then
$\omega'' = a_1\omega_1+\cdots + a_{2\dim\grpy}\omega_{2\dim\grpy}$ is
a period of $X$ and 
\begin{equation*}
  z = \omega'+\omega'' + \sum_{i=1}^{2\dim\grpy} (\alpha_i-a_i) \omega_i. 
\end{equation*}
Part (ii) follows with $\omega=\omega'+\omega''$,
the inequalities from (i), and the triangle inequality.
\end{proof}

Replacing $\cL$ by another ample line bundle leads to a notion 
degree that is comparable to the old one. 
\begin{lemma}
\label{lem:equivdeg}
  Let $\cM$ be an ample line bundle on $\grpx$. There exists
a constant $c \ge 1$ depending only on $\grpx,\cL,$ and $\cM$ but not on
$\grpy$ such that
$c^{-1} \deg_\cL \grpy \le \deg_\cM \grpy\le c\deg_\cL \grpy$.
\end{lemma}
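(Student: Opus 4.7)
The plan is to prove the two-sided inequality by comparing intersection numbers via a classical ampleness trick, with the constant controlled by the genus $g = \dim\grpx$ so as to be independent of $\grpy$.

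First, since $\cL$ is ample on $\grpx$, the line bundle $\cL^{\otimes n}\otimes\cM^{-1}$ is ample on $\grpx$ for all sufficiently large $n$; symmetrically, $\cM^{\otimes n}\otimes\cL^{-1}$ is ample for all sufficiently large $n$. Fix a single integer $n \ge 1$ that works for both. Because the restriction of an ample line bundle to a closed subvariety is ample (hence nef), for any abelian subvariety $\grpy\subset\grpx$ the line bundles $n\cL|_\grpy - \cM|_\grpy$ and $n\cM|_\grpy - \cL|_\grpy$ are nef on $\grpy$, and so are $\cL|_\grpy$ and $\cM|_\grpy$ themselves.

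Next, I would use the standard fact that if $D_1, D_2$ are nef line bundles on a projective variety of dimension $d$ with $D_1 - D_2$ nef, then $(D_1^d) \ge (D_2^d)$. This follows from the telescoping expansion
\begin{equation*}
(D_1^d) - (D_2^d) \;=\; \sum_{i=0}^{d-1} \bigl(D_1^{\,i}\cdot D_2^{\,d-1-i}\cdot (D_1-D_2)\bigr),
\end{equation*}
since each summand is an intersection number of $d$ nef classes and hence nonnegative. Applying this on $\grpy$ (with $d=\dim\grpy$) to the pair $(n\cL|_\grpy,\cM|_\grpy)$ gives
\begin{equation*}
n^{\dim\grpy}\deg_\cL \grpy \;=\; \bigl((n\cL)^{\dim\grpy}[\grpy]\bigr)\;\ge\;(\cM^{\dim\grpy}[\grpy])\;=\;\deg_\cM \grpy,
\end{equation*}
and the symmetric application yields $n^{\dim\grpy}\deg_\cM\grpy \ge \deg_\cL\grpy$.

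Finally, since $\dim\grpy \le g$, setting $c = n^g$ gives $c^{-1}\deg_\cL\grpy \le \deg_\cM\grpy \le c\deg_\cL\grpy$, and $c$ depends only on $(\grpx,\cL,\cM)$ via the choice of $n$. There is no real obstacle here; the only point requiring a little care is ensuring the constant is truly independent of $\grpy$, which is why one must absorb the $\dim\grpy$-dependent exponent using the uniform bound $\dim\grpy\le g$.
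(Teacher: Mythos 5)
Your proof is correct, but it takes a genuinely different route from the paper. The paper argues transcendentally: it identifies $\deg_\cL\grpy$ with $(\dim\grpy)!\,\volS_\cL(\Omega_\grpy)$ via Riemann--Roch (Lemma \ref{lem:deg}), picks a short basis of the period lattice $\Omega_\grpy$ using the Minkowski-type Lemma \ref{lem:geonumbs}(i), passes from the $\cL$-norm to the $\cM$-norm using equivalence of norms on the fixed finite-dimensional space $\ts{\grpx}$, and then bounds $\volS_\cM(\Omega_\grpy)$ by Hadamard's inequality; this recycles lattice-theoretic tools the section needs anyway and stays within the complex-analytic framework used throughout. You instead argue purely by intersection theory: choose $n$ with $n\cL-\cM$ and $n\cM-\cL$ ample on $\grpx$, restrict to $\grpy$, and compare top self-intersections of nef classes via the telescoping identity
\begin{equation*}
(D_1^{\,d})-(D_2^{\,d})=\sum_{i=0}^{d-1}\bigl(D_1^{\,i}\cdot D_2^{\,d-1-i}\cdot(D_1-D_2)\bigr),
\end{equation*}
each summand being nonnegative because intersection numbers of nef divisors are nonnegative (Kleiman); taking $c=n^{g}$ with $\dim\grpy\le g$ gives uniformity in $\grpy$. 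Your argument is sound at every step and buys extra generality: it needs no complex uniformisation, works over any ground field, and applies verbatim to arbitrary positive-dimensional subvarieties, not just abelian subvarieties, at the cost of invoking the (standard but nontrivial) nonnegativity of nef intersection numbers rather than elementary lattice estimates.
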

\begin{proof}
To distinguish the norms and volumes coming from both line bundles
we write  $\|\cdot\|_\cL$, $\|\cdot\|_\cM$
and $\volS_\cL$, $\volS_\cM$.
Let $\omega_1,\ldots,\omega_{2\dim\grpy}$ be as in
Lemma \ref{lem:geonumbs}(i), so
$\|\omega_1\|_\cL\cdots \|\omega_{2\dim\grpy}\|_\cL
\le c\deg_\cL{\grpy}$. 
As all norms on $\ts{\grpx}$ are equivalent there exists
$c'\ge 1$ with $\|v\|_\cM \le c'\|v\|_\cL$ for all
$v\in\ts{\grpx}$.
So $\|\omega_1\|_\cM \cdots \|\omega_{2\dim\grpy}\|_\cM
\le {c'}^{2\dim\grpy}\|\omega_1\|_\cL \cdots \|\omega_{2\dim\grpy}\|_\cL$
and
Hadamard's inequality implies 
\begin{equation*}
  \volS_{\cM}(\Omega_\grpy) \le 
\|\omega_1\|_\cM \cdots \|\omega_{2\dim\grpy}\|_\cM
%%\le{c'}^{2\dim\grpy}\|\omega_1\|_\cL \cdots \|\omega_{2\dim\grpy}\|_\cL
\le 
c {c'}^{2\dim\grpy} \deg_\cL{\grpy}. 
\end{equation*}
The second inequality  in the  lemma follows from $\volS_{\cM}(\Omega_\grpy) =
(\deg_\cM \grpy)/(\dim \grpy)!$.
The first one follows by symmetry.
\end{proof}

%% Suppose $\grpx$ is an
%%  abelian variety defined over a number field $K$
%% and $\cL$ is an ample line bundle on $\grpx$. 
%% To simplify notation we will suppose that $K$ is a subfield of $\IC$.
%% For example, this enables us to 
%% consider the tangent space $\ts{\grpx}$ as a $\IC$-vector space.
%% We let $\overline K$ denote the algebraic closure of $K$ in $\IC$.

\begin{definition}
\label{def:complexitytorsioncoset}
If  $\opta$ is a torsion coset of $\grpx$ which is the translate of an
 abelian subvariety $\grpy$ of $\grpx$ by a torsion point,
we define its arithmetic complexity as
  \begin{alignat*}1
\Delta_{\arith}(\opta) &= 
\min\{ \text{order of } T;\,\, \opta=T+\grpy\text{ and $T$ has
    finite order}\}
  \end{alignat*}
and its complexity as
  \begin{alignat*}1
    \Delta(\opta) &= \max\big\{\Delta_{\arith}(\opta),\deg_\cL{\grpy}\big\}\ge 1
  \end{alignat*}
where $\deg_\cL{\grpy}$ is the degree of $\grpy$ with respect to $\cL$.   
\end{definition}

We do not emphasise the choice of $\cL$ in the complexity.
According to Lemma \ref{lem:equivdeg} changing the line bundle leads
to an arithmetic complexity which is comparable to the original one
up to a controlled factor. For our application it is enough to fix
once and for all a line bundle on the ambient abelian variety.

\subsection{The modular setting}
\label{sec:specialsubvarY1n}

In this section we describe the special subvarieties 
of $Y(1)^n$ together with some additional definitions and notations that will be used in the sequel.

Let $j:\HH\rightarrow Y(1)$ denote the $j$-function. 
By $\pi$ we denote the cartesian power of this map
$$
\pi:\HH^n\rightarrow Y(1)^n.
$$

Two-by-two real matrices with positive determinant act on 
$\HH$ by fractional 
linear transformations. If $g\in{\rm GL}_2^+(\QQ)$ then the functions $j(z)$ 
and $j(gz)$ on $\HH$ are related by a modular polynomial
$$
\Phi_N(j(z), j(gz))=0
$$
for a suitable positive integer $N=N(g)$ (in fact $N(g)$ is the determinant if $g$
is scaled to have relatively prime integer entries; see [\LANGELLIPTIC, Ch. 5, \S2]).

%%  \medskip
%% \noindent
%% {\bf 3.1. Definition.\/} 
%% \medskip
\begin{definition}
  A  strongly special curve in $\HH^n$ is the image of a 
map of the form
$$
\HH\rightarrow\HH^n,\quad  z\mapsto (g_1z, \ldots, g_{n}z)
$$
where $g_1=1, g_2\ldots, g_{n}\in{\rm GL}_2^+(\QQ)$.
\end{definition}

By a {\emph{strict partition}} we will mean a partition in which one 
designated part only is permitted to be empty.
%% \medskip
%% \noindent
%% {\bf 3.2. Definition.\/} 
\begin{definition}
\label{def:wss}
  Let $R=(R_0, R_1,\ldots, R_k)$ be a strict partition
of  $\{1,\ldots,n\}$ in which $R_0$ is permitted to be empty 
(and $k=0$  is permitted). For each index $j$ we let $\HH^{R_j}$ 
denote the corresponding cartesian product. 
A {\it weakly special subvariety\/} (of type $R$) of $\HH^n$ is a product
$$
Y=\prod_{j=1}^k Y_j
$$
where $Y_0\in \HH^{R_0}$ is a point and, for $j=1,\ldots, k$, 
$Y_j$ is a strongly special curve in $\HH^{R_j}$. We have $\dim Y=k$.
\end{definition}

%% \medbreak
%% \noindent
%% {\bf 3.3. Definition.\/} 
\begin{definition}
A weakly special subvariety is called a  special subvariety
if each coordinate of $Y_0$ is a quadratic point of $\HH$.  
\end{definition}

With a quadratic $z\in\HH$ we associate its {\it discriminant\/} 
$\Delta(z)$, namely 
$\Delta(z)=b^2-4ac$ where $aZ^2+bZ+c$ is the minimal 
polynomial for $z$ over $\ZZ$ with $a>0$. 

%%{\bf 3.4. Definition.\/}
\begin{definition}
For a special subvariety $Y$ as above we define the
{\it complexity\/}
$$
\Delta(Y)=\max \big(\Delta(z), N(g) \big)
$$
over all the coordinates $z$ of $Y_0$ and all $g\in{\rm GL}_2^+(\QQ)$ involved 
in the definition of the $Y_i, i\ge 1$.
\end{definition}

Note that a weakly special subvariety has a certain number of 
``non-special conditions'', namely the number of coordinates of $Y_0$ 
which are not quadratic,  and is special just if this number is zero.

Further, weakly special subvarieties come in families. 
Given a strict partition
$R=(R_0, R_1,\ldots, R_k)$ we may form a new strict partition $S$
in which the elements
previously in $R_0$ are made into individual parts, the parts 
$R_1,\ldots, R_k$ are retained, but $S_0$ is empty. 
Now a weakly special 
subvariety  $W$ of type $R$ comes with a choice of some elements 
$g_i\in{\rm GL}_2^+(\QQ)$ for the parts $R_i$. This same
choice determines a weakly special subvariety $T$ of type $S$ 
which is in fact 
special (even ``strongly special'', as there are no fixed coordinates).
We call $T$ the {\it special envelope\/} of type $R$.
The variety $W$ now lies in the family of weakly special 
subvarieties of $T$
corresponding to choices for the fixed coordinates $R_0$. 
It is thus a family
of weakly special subvarieties of $T$ parameterised by $\HH^{R_0}$. 
We will call the members of the family  {\it translates\/} of the 
{\it strongly special subvariety\/} of $\HH^{R_1\cup\ldots\cup R_k}$ 
corresponding to the given elements $g_i$,
the space of translates being $\HH^{R_0}$. The translate of $T$ by
$t\in \HH^{R_0}$ we denote $T_t$. 

We apply the same terminology to the images in $Y(1)^n$. 
In particular, we have the following.

%% \medskip
%% \noindent
%% {\bf 3.5. Definition.\/} 
%% \medskip
\begin{definition}
  A  weakly special subvariety of $Y(1)^n$ 
is the image   $j(Y)$ where $Y$ is a weakly special subvariety 
of $\HH^n$, and is   special if $Y$ is 
special (for some or equivalently all possible choices for $Y$).
The  complexity of a special subvariety $T\subset Y(1)^n$, denoted 
$\Delta(T)$, is equal to the complexity of $Y$ (any choice will give
the same complexity due to the ${\rm SL}_2(\ZZ)$ invariance).
\end{definition}

As observed the weakly special subvariety $Y\subset \HH^n$ 
is a fibre in a family of weakly special subvarieties of some 
special subvariety $T$.  Thus, the image 
under $j$ of $Y$ and the other translates are algebraic 
subvarieties of $j(T)$.
 
\section{Geodesic-optimal subvarieties}
\label{sec:optimal}
%\input{optimalsubvarieties.tex}

%% Section on geodesic-optimal subvarieties

Throughout this section and if not further specified let $X$ be a mixed Shimura variety or a semi-abelian variety defined
over $\IC$. 

The collection of 
weakly special  subvarieties, like the collection of special subvarieties, is closed under 
taking irreducible components
of intersection and contains the ambient variety, so there is a smallest 
{weakly special subvariety} 
$\langle A\rangle_{\rm geo}$ 
containing any subvariety  $A\subset X$.  We denote by
$$
\delta_{\rm geo}(A)=\dim \langle A\rangle_{\rm geo}-\dim A
$$
the {\it geodesic defect\/} of $A$.

%% \medbreak
%% \noindent
%% {\bf 4.2. Definition.\/} 
%% \medbreak
\begin{definition}
  Let $V\subset X$ be a subvariety. A subvariety $A\subset V$ is said to
be  geodesic-optimal (for $V$ in $X$) if there is no subvariety 
$B$ with
$A\subset B\subsetneq V$ such that
$$
\delta_{\rm geo}(B)\le \delta_{\rm geo}(A).
$$
As for the defect, the specification of $V$ and $X$ will generally be 
suppressed.
\end{definition}

Note: what we call ``geodesic-optimal'' has been termed ``cd-maximal''
(co-dimension maximal) in the multiplicative context by Poizat \cite{Poizat}; 
see also \cite{BHMW}.

Again, if $A\subset V$ is geodesic-optimal then it is a component of 
$V\cap \langle A\rangle_{\rm geo}$.
Further, as
special subvarieties are weakly special, we have, for any 
$V$ and $A\subset V$, 
$\langle A\rangle_{\rm geo}\subset \langle A\rangle$ and so 
$\delta_{\rm geo}(A)\le\delta(A)$.
In contrast to the defect, the geodesic defect of a singleton is
always $0$. Therefore, a singleton is geodesic-optimal in $V$ if and only if
it is not contained in a coset of positive dimension contained in $V$. 

%%  We let 
%% $$
%% \epsilon(A)=\delta(A)-\delta_{\rm geo}(A).
%% $$

%% \medskip
%% \noindent
%% {\bf 4.3. Definition.\/} 
\begin{definition}
\label{def:defcondition}
  We say that $X$ has the  defect condition if
for any subvarieties $ A\subset B\subset X$ we have
$$
\defect(B)-\geodef(B)\le \defect(A)-\geodef(A).
$$
\end{definition}

%% {\bf 4.4. Proposition.\/}  {\it The following have the defect condition:
\begin{proposition}
\label{prop:defectcondition}
  The defect condition holds 
  \begin{enumerate}
\item[(i)] if $\grpx=\IG_m^n$ is an algebraic torus, 
\item[(ii)] or if $\grpx$ is an abelian variety,    
\item [(iii)] or if $X=Y(1)^n$.
  \end{enumerate}
\end{proposition}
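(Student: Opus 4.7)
The plan is to reduce the defect condition to a simpler inequality about nested weakly special subvarieties, then verify that inequality in each setting by an explicit computation of the gap $\dim\langle W\rangle-\dim W$.

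Since every special subvariety is weakly special, the smallest special subvariety containing $\geo{A}$ also contains $A$, and conversely any special subvariety containing $A$ contains $\geo{A}$. Hence $\langle A\rangle$ coincides with the smallest special subvariety containing $\geo{A}$. Combined with $\defect(A)-\geodef(A)=\dim\langle A\rangle-\dim\geo{A}$, setting $W=\geo{A}$ and $W'=\geo{B}$ for $A\subset B$ reduces the defect condition to the inequality
\begin{equation*}
\dim\langle W'\rangle-\dim W'\le\dim\langle W\rangle-\dim W
\end{equation*}
for all nested weakly special $W\subset W'$ in $X$.

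In settings (i) and (ii), a weakly special subvariety has the form $W=a+H$ with $H$ a connected algebraic subgroup, and its special envelope equals $\zeta+K$ where $K\supset H$ is the smallest connected algebraic subgroup with $a\in K+X_{\rm tors}$. Projection to $X/H$ collapses $W$ to a single point $\bar a$, and $\dim\langle W\rangle-\dim W$ equals the dimension of the smallest connected algebraic subgroup $M_W\subset X/H$ with $\bar a\in M_W+(X/H)_{\rm tors}$. For $W\subset W'$ the associated groups satisfy $H\subset H'$, so the natural surjection $X/H\twoheadrightarrow X/H'$ sends $\bar a_W\mapsto\bar a_{W'}$ and $M_W$ onto a connected algebraic subgroup of $X/H'$ containing $\bar a_{W'}$ modulo torsion; minimality of $M_{W'}$ then gives $\dim M_{W'}\le\dim M_W$, as required.

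In setting (iii), I use the description of a weakly special $W\subset Y(1)^n$ from Section~\ref{sec:specialsubvarY1n} by a strict partition $R=(R_0,R_1,\dots,R_k)$, fixed values $(y_i)_{i\in R_0}\in\HH^{R_0}$, and ${\rm GL}_2^+(\QQ)$-matrices on each $R_\ell$ with $\ell\ge 1$. Call indices $i,j\in R_0$ modularly equivalent when $y_i$ and $y_j$ lie in the same ${\rm GL}_2^+(\QQ)$-orbit, and let $c(W)$ be the number of non-CM equivalence classes. Careful inspection shows that the minimal special subvariety containing $W$ leaves each $R_\ell$ ($\ell\ge 1$) untouched, holds the CM coordinates of $R_0$ fixed, and groups the non-CM coordinates of $R_0$ into modular-equivalence classes, each contributing one new one-dimensional strongly special factor; hence $\dim\langle W\rangle-\dim W=c(W)$. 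For $W\subset W'$ one has $R'_0\subset R_0$ with $y_i=y'_i$ on $R'_0$, so the equivalence relation on $R'_0$ is the restriction of that on $R_0$, and each non-CM class on $W'$ is contained in a distinct non-CM class on $W$; thus $c(W')\le c(W)$. The main obstacle is establishing the formula $\dim\langle W\rangle-\dim W=c(W)$: one must rule out any dimension saving obtained by merging a fixed non-CM coordinate of $R_0$ into a pre-existing strongly special factor $R_\ell$, or by merging two such factors, and both mergers fail by a dimension count against the curve structure already present in $W$.
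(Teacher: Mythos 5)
Your proposal is correct, but it takes a somewhat different route from the paper, so it is worth comparing. The opening reduction --- observing that $\defect(A)-\geodef(A)=\dim\langle \geo{A}\rangle-\dim\geo{A}$, so that the defect condition need only be checked for nested weakly special $W\subset W'$ --- is not made in the paper, which argues directly with $A\subset B$. For (i) the paper computes $\defect(B)-\geodef(B)$ as the multiplicative rank of the monomials that are constant (but not constant roots of unity) on $B$ and notes that these remain constant and independent on $A$; your uniform treatment of (i) and (ii), via the identification $\dim\langle W\rangle-\dim W=\dim M_W$ in $X/H$ and the surjection $X/H\twoheadrightarrow X/H'$ together with minimality of $M_{W'}$, replaces that character-lattice computation and also replaces the paper's argument for (ii), which fixes an auxiliary point $P\in A$, projects by the subgroup $\grpy$ underlying $\geo{B}$, and bounds $\dim\varphi(\langle A\rangle)$ by a fibre-dimension estimate involving $\grpy\cap\grpz$. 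Your version buys uniformity across the torus and abelian cases (and works for semi-abelian $X$ as well); the paper's torus argument is more elementary and explicit. For (iii) both proofs rest on the combinatorial description of weakly special subvarieties, but the paper only says tersely that the gap equals ``the number of constant non-special coordinates'' of $\geo{B}$, whereas your count by ${\rm GL}_2^+(\QQ)$-equivalence classes is the precise value of $\dim\langle W\rangle-\dim W$ (two modularly related non-CM constants contribute a single dimension, not two), and your monotonicity step via restriction of the equivalence relation to $R_0'$ is exactly what finishes the argument. The ``merging'' obstructions you flag as the main obstacle are genuinely the point to check, and they are disposed of as you indicate: a relation $z_i=gz_j$ with $g\in{\rm GL}_2^+(\QQ)$ cannot hold identically on $W$ when one coordinate is constant and the other moves, nor when the two coordinates lie in factors parameterised by independent variables, so no special subvariety containing $W$ can economise by such a merger.
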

\begin{proof}
Let $A\subset B\subset \grpx$ be as in Definition \ref{def:defcondition}.
For (i)  let $B\subset \GG_{\rm m}^n$ and  let
\begin{alignat*}1
L&=\{(a_1,\ldots, a_n)\in\ZZ^n;\,\, x_1^{a_1}\cdots x_n^{a_n}
{\rm\ is\ constant\ on\ \/} B\},\\
M&=\{(a_1,\ldots, a_n)\in\ZZ^n;\,\, x_1^{a_1}\cdots x_n^{a_n}
{\rm\ is\ constant\ and\ a\ root\ of\ unity\ on\ \/} B\}
\end{alignat*}
be free abelian groups. 
Then ${\rm codim}\langle B\rangle=\rank M$ and
${\rm codim}\langle B\rangle_{\rm geo}=\rank L$, so that 
$$
\defect(B)-\geodef(B)=\rank L/M
$$
is the multiplicative rank of constant monomial functions on $B$.
Such functions remain constant and multiplicatively independent 
on  $A$.

To prove (ii) let $\opta$ and $\optb$ be subvarieties of $\grpx$. 
The coset $\geo{\optb}$ is a translate of an abelian subvariety
$\grpy$ of $\grpx$. Let us write $\varphi:\grpx\rightarrow \grpx/\grpy$ for the quotient
morphism. We fix an auxiliary point $P\in \opta(\IC)$. 

We remark that $\langle P \rangle + \grpy$ is a torsion 
coset that contains $P+\grpy$. 
As $P\in \optb(\IC)$ we also have
$\optb\subset P+\grpy$ and thus 
$\langle \optb\rangle \subset \langle P \rangle + \grpy$. We apply $\varphi$,
which has kernel $\grpy$, to find
$\varphi(\langle \optb\rangle)\subset \varphi(\langle P\rangle)\subset
\varphi(\langle \opta\rangle)$.
%%  Together with
%% (\ref{eq:dimbound0})
We conclude
\begin{equation}
\label{eq:dimbound}
\dim\langle \optb\rangle - \dim \geo{\optb}=\dim\langle \optb\rangle - \dim \grpy= \dim\varphi(\langle \optb\rangle)\le \dim\varphi(\langle \opta\rangle)
\end{equation}
where the fact that $\langle \optb\rangle$ contains a translate of $\grpy$
and basic dimension theory
are used 
for the second equality.

The torsion coset $\langle \opta\rangle$ is the translate of 
an abelian subvariety $\grpz$ of $\grpx$ by a torsion point. 
The fibres of $\varphi|_{\langle \opta\rangle}:\langle \opta\rangle\rightarrow
\varphi(\langle \opta\rangle)$ contain translates of $\grpy\cap \grpz$. 
Using dimension theory we find $\dim \varphi(\langle \opta\rangle) \le
\dim \langle \opta\rangle  - \dim \grpy\cap \grpz$. We observe
$\dim \grpy\cap \grpz \ge \dim \geo{\opta}$
and so $\dim \varphi(\langle \opta\rangle) \le \dim \langle \opta\rangle -
\dim\geo{\opta}$. Now let us combine this bound with
(\ref{eq:dimbound}) to deduce
\begin{equation*}
  \dim\langle \optb\rangle -  \dim\geo{\optb}\le 
  \dim\langle \opta\rangle -  \dim\geo{\opta}.
\end{equation*}

This inequality enables us to conclude
\begin{alignat*}1
  \defect(\optb) &= \dim \langle \optb\rangle-\dim \optb  \\
&\le \dim \langle \opta\rangle+\dim \geo{\optb} -\dim
\geo{\opta}-\dim \optb \\
&= \geodef(\optb)-\geodef(\opta)+\defect(\opta),
\end{alignat*}
as desired.

  In  case (iii), $\defect(B)-\geodef(B)$ is just the number of constant and 
non-special coordinates of $\langle B\rangle_{\rm geo}$. Then any 
weakly special subvariety containing $A$ (which is non-empty) but contained in 
$\langle B\rangle_{\rm geo}$, and in particular 
$\langle A\rangle_{\rm geo}$, has also at least that many
non-special constant coordinates.
\end{proof}

%% Below we show that an abelian variety has the defect condition.
%% ?? include this in 4.4??

%% \medskip
%% \noindent
%% {\bf 4.5. Conjecture.\/} 
%% \medskip
%% \noindent
\begin{conjecture}
  Every mixed Shimura variety 
(and every weakly special subvariety) has the defect condition.
\end{conjecture}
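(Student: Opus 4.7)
My plan is to model the argument on case (ii) of Proposition~\ref{prop:defectcondition} for abelian varieties, constructing a ``Shimura quotient'' that plays the role of the projection $\varphi:\grpx\rightarrow\grpx/\grpy$. Fix subvarieties $\opta\subset\optb\subset\grpx$ and let $W=\geo{\optb}$. By Pink's description, $W$ is the image under the uniformisation of $\grpx$ of a slice of the form $\text{(mobile part)}\times\{y_2\}$, coming from a sub-Shimura datum $(H,X_H^{+})$ together with a decomposition $H^{\rm ad}=H_1\times H_2$ and a chosen point $y_2$ in the hermitian symmetric domain for $H_2$. The smallest special subvariety containing $W$, namely $\langle\optb\rangle$, is obtained by replacing $y_2$ with the smallest special subvariety $S_2$ of the associated Shimura variety for $H_2$ containing the image of $y_2$.

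This construction yields a surjective algebraic map
$$
\varphi:\langle\optb\rangle\longrightarrow S_2
$$
whose generic fibres are translates of $W$, so that $\dim S_2=\defect(\optb)-\geodef(\optb)$. The key step, mirroring the abelian case, is to verify that $\varphi(\langle P\rangle)=S_2$ for every $P\in\opta(\IC)$: functoriality of the formation of smallest special subvarieties under morphisms of Shimura data shows that $\varphi(\langle P\rangle)$ is a special subvariety of $S_2$ containing the image of $y_2$, and minimality of $S_2$ then forces equality. Hence $\varphi(\langle\opta\rangle)\supset \varphi(\langle P\rangle)=S_2$, which gives $\dim\varphi(\langle\opta\rangle)\ge\defect(\optb)-\geodef(\optb)$.

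To conclude I would apply the fibre-dimension theorem to $\varphi$ restricted to the special subvariety $\langle\opta\rangle$. Since $\geo{\opta}\subset W=\geo{\optb}$ and $\varphi$ contracts translates of $W$, the generic fibre of $\varphi|_{\langle\opta\rangle}$ has dimension at least $\dim\geo{\opta}$. Therefore
$$
\dim\langle\opta\rangle\ge \dim S_2+\dim\geo{\opta}=\defect(\optb)-\geodef(\optb)+\dim\geo{\opta},
$$
which rearranges to $\defect(\opta)-\geodef(\opta)\ge\defect(\optb)-\geodef(\optb)$, the desired inequality. The assertion for an arbitrary weakly special subvariety then follows by induction on dimension, using that such a subvariety is itself the image (via a Shimura morphism and a finite covering) of a mixed Shimura variety.

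The main obstacle is the construction of $\varphi$ and the verification that its fibres really are translates of $W$. In the abelian case the quotient is literal and the fibration statement is immediate; in Pink's formalism one must build a morphism of mixed Shimura data whose image has dimension exactly $\defect(\optb)-\geodef(\optb)$ and whose fibres are as claimed. The reductive direction is handled by the standard $H_1\times H_2$ decomposition, but the unipotent radicals together with the accompanying weight filtration have no analogue in either the abelian or the pure Shimura setting and must be tracked carefully; this is essentially a bookkeeping exercise in Pink's theory, but it is where all the genuine content of the conjecture lies. Once the fibration is in place, the inequality itself is formal.
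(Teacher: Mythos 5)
First, a point of orientation: the statement you were asked about is stated in the paper as an open \emph{conjecture}. The authors prove the defect condition only in the three cases of Proposition \ref{prop:defectcondition} (algebraic tori, abelian varieties, and $Y(1)^n$), each by an ad hoc argument, and leave the general mixed Shimura case unproved; so there is no proof in the paper to compare yours with, and your proposal must be judged as an attempt at the open statement. As such, your strategy --- transplanting the quotient argument of Proposition \ref{prop:defectcondition}(ii), with a ``Shimura quotient'' $\varphi:\langle\optb\rangle\to S_2$ playing the role of $\grpx\to\grpx/\grpy$ --- is a sensible one, and for \emph{pure} Shimura varieties it is close to how such statements have been handled, resting on the Moonen/Ullmo--Yafaev description of weakly special subvarieties as images of $X_1^+\times\{y_2\}$ for a decomposition $H^{\mathrm{ad}}=H_1\times H_2$.

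However, as written the argument has two genuine gaps. (1) The structural input you invoke (the $H_1\times H_2$ picture, the identification of $\langle\optb\rangle$ as the image of $X_1^+\times\tilde S_2$, and the existence of $\varphi$ as a morphism of Shimura data taking special subvarieties to special subvarieties) is the pure-case description; in Pink's mixed setting a weakly special subvariety is a fibre $\pi(\phi^{-1}(y))$ of a pair of morphisms of mixed Shimura varieties, and the unipotent radical and weight filtration are exactly where the difficulty sits. Calling this ``bookkeeping'' concedes the main content of the conjecture rather than proving it: the conjecture is open precisely because this construction and its compatibilities have not been carried out in general. (2) The fibre-dimension step is not justified. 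From $\geo{\opta}\subset\geo{\optb}=W$ you only learn that the \emph{one} fibre of $\varphi|_{\langle\opta\rangle}$ over the point $\varphi(W)$ has dimension at least $\dim\geo{\opta}$; a large special fibre gives no upper bound on $\dim\varphi(\langle\opta\rangle)$, since the fibre-dimension theorem bounds fibres from below by $\dim\langle\opta\rangle-\dim\varphi(\langle\opta\rangle)$, not the other way around. In the abelian proof this is exactly where the group structure is used: $\langle\opta\rangle$ is a coset of an abelian subvariety $Z$, so \emph{every} fibre of $\varphi|_{\langle\opta\rangle}$ is a coset of $\grpy\cap Z$ of dimension at least $\dim\geo{\opta}$, and hence so is the generic fibre. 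You need the analogous homogeneity statement for the special subvariety $\langle\opta\rangle$ (e.g.\ that the fibres of $\varphi|_{\langle\opta\rangle}$ are weakly special of constant dimension, coming from the group defining $\langle\opta\rangle$), and this is neither stated nor proved in your sketch. Until (1) and (2) are supplied, the proposal is a plausible programme, not a proof.
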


%% {\bf 4.6. Proposition.\/} {\it \/}
\begin{proposition}
\label{prop:geodesicopt}
  Let $X$ have the defect condition, e.g.
$X$ is an abelian variety or $Y(1)^n$, and 
let $V\subset X$ be a subvariety.
An optimal subvariety of $V$ is geodesic-optimal in $V$.
\end{proposition}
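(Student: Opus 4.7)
The proof proceeds by contraposition: I will assume that a subvariety $A \subset V$ fails to be geodesic-optimal and produce from this failure an explicit witness that $A$ fails to be optimal. Thus I start by picking a subvariety $B$ with $A \subsetneq B \subset V$ satisfying $\geodef(B) \le \geodef(A)$, supplied by the hypothesis that $A$ is not geodesic-optimal, and I aim to show that this same $B$ also satisfies $\defect(B) \le \defect(A)$, thereby contradicting optimality of $A$.

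The translation from a geodesic inequality to the corresponding defect inequality is precisely what the defect condition is designed to provide. Applying Definition \ref{def:defcondition} to the pair $A \subset B$ gives
\begin{equation*}
\defect(B) - \geodef(B) \le \defect(A) - \geodef(A),
\end{equation*}
so rearranging and using $\geodef(B) \le \geodef(A)$ yields
\begin{equation*}
\defect(B) \le \defect(A) + \big(\geodef(B) - \geodef(A)\big) \le \defect(A).
\end{equation*}
Since $A \subsetneq B \subset V$, this directly contradicts the optimality of $A$, completing the argument.

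There is essentially no obstacle here: all of the real content has been placed upstream in Proposition \ref{prop:defectcondition}, which verifies the defect condition for tori, abelian varieties, and $Y(1)^n$ by a case-by-case analysis (in particular by relating the two defects in each geometry to ranks of lattices of monomial characters, to dimensions of quotient abelian varieties, or to numbers of non-special constant coordinates). Once the defect condition is in hand, the proposition is a one-step deduction, and the argument applies uniformly to any ambient $X$ satisfying the defect condition. The only thing worth double-checking is that the chosen $B$ genuinely satisfies $A \subsetneq B$ (rather than merely $A \subset B$), which is what is needed to witness non-optimality; this strictness is preserved automatically from the failure of geodesic-optimality.
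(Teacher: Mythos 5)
Your proof is correct and is essentially the paper's own argument: both apply the defect condition to the pair $A\subset B$ and combine it with $\geodef(B)\le\geodef(A)$ to get $\delta(B)\le\delta(A)$, which optimality of $A$ forbids for any $B$ with $A\subsetneq B\subset V$. The only difference is presentational (you argue by contraposition, the paper concludes directly that any such $B$ must equal $A$), so nothing further is needed.
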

\begin{proof}
Let $\opta\subset \var$ be an optimal subvariety and
  consider a subvariety $B$ with $A\subset B\subset V$ 
such that $\delta_{\rm geo}(B)\le \delta_{\rm geo}(A)$.
Then $\defect(B)-\geodef(B)\le \defect(A)-\geodef(A)$ and so
$$
\delta(B)=\delta_{\rm geo}(B)+\defect(B)-\geodef(B)\le 
\delta_{\rm geo}(A)+\defect(A) - \geodef(A).
$$
Since $A$ is assumed optimal we must have $B=A$,
and so $A$ is geodesic optimal.

Finally, the proposition applies to abelian varieties and $Y(1)^n$
because of Proposition \ref{prop:defectcondition}. 
\end{proof}

\section{Weak complex Ax}
\label{sec:wca}
%\input{wca.tex}

%%% Internal refs OK
%%% Weak Complex Ax

In this section we formulate various Ax-Schanuel type Conjectures.
In the context of abelian varieties, these conjectures will be
theorems. But their modular counterparts are largely conjectural. 

As a warming-up let us recall a consequence of ``Ax-Schanuel'' [\AX] in the
complex setting.

Let now $A\neq \emptyset$ be an irreducible complex 
analytic subspace of some open $U\subset \CC^n$ such that
locally the coordinate functions $z_1,\ldots, z_n$ and
$\exp(z_1),\ldots, \exp(z_n)$ are defined and meromorphic 
on $A$.
%% I changed A to being connected (I think that it's necessary for
%% Ax's Theorem) and for sake of simplicity also to a cplx submanifold
%% I removed D_j from the classical case as they're not needed in the
%% statement of the Ax's Theorem
%% , and we have a finite set $\{D_j\}$ of derivations
%% with ${\rm rank\/} (D_jz_i)=\dim A$, the rank being over the field
%% of meromorphic functions on $A$.

%% \medbreak
%% \noindent
%% {\bf 6.1. Definition.\/} 
%% \medbreak
\begin{definition}
  The functions $z_1,\ldots, z_n$ on $A$ 
are called  linearly independent over $\QQ$ mod $\CC$
if there are no non-trivial relations of the form
$\sum q_i z_i=c$ on $A$ where $q_i\in\QQ$ and $c\in\CC$.
\end{definition}

The linear independence of the $z_i$ over $\QQ$ mod $\CC$
on $A$
is equivalent to the multiplicative independence of the
$\exp(z_i)$ on $A$.

%% {\bf 6.2. Theorem.\/} (Ax [\AX]) {\it
\begin{theorem}[Ax]
If the functions $z_1,\ldots, z_n$ on $A$ are
linearly independent over $\QQ$ mod $\CC$ then
$$
{\trdeg}_{\CC}\CC(z_1,\ldots, z_n, 
\exp(z_1),\ldots, \exp(z_n))\ge n+\dim A.
$$  
\end{theorem}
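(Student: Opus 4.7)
The plan is to deduce this analytic statement from Ax's \emph{differential} Schanuel theorem, which is the main result of [\AX]: if $F$ is a differential field with commuting derivations and constant field $C$, and $x_1,\ldots,x_n, y_1,\ldots,y_n \in F$ with $y_i$ nonzero satisfying $Dy_i = y_i\, D x_i$ for every derivation $D$, and if the $x_i$ are $\QQ$-linearly independent modulo $C$, then
\[
\trdeg_C\, C(x_1,\ldots,x_n,y_1,\ldots,y_n) \;\ge\; n + \rank (Dx_i)_{D,i},
\]
the rank being computed over $F$.

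To apply this, I would first replace $A$ by the germ of $A$ at a smooth point $p$, and choose local holomorphic coordinates $t_1,\ldots,t_d$ on $A$ near $p$, where $d=\dim A$. Let $F$ be the field of meromorphic germs on $A$ at $p$. This is a differential field under the $d$ commuting derivations $\partial_i := \partial/\partial t_i$, and (since $A$ is connected and the $\partial_i$ span the tangent space) its field of constants is exactly $\CC$. By the hypothesis that the $z_i$ and $\exp(z_i)$ are meromorphic on $A$, they all lie in $F$, and the chain rule gives $\partial_i \exp(z_j) = \exp(z_j)\,\partial_i z_j$, i.e.\ the required relations $D y_j = y_j\, D x_j$ with $x_j=z_j$ and $y_j=\exp(z_j)$.

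The $\QQ$-linear independence of the $z_j$ modulo $\CC$ is built into the hypothesis, so the only remaining point is to identify the Jacobian rank $\rank(\partial_i z_j)$ with $\dim A$. This is where the geometry of $A \subset \CC^n$ enters: the $z_j$ are the restrictions of the standard coordinates on $\CC^n$, so the map $(z_1,\ldots,z_n)\colon A \to \CC^n$ is the inclusion. At the smooth point $p$, its differential is injective with image $T_pA$, of complex dimension $d$. Hence the matrix $(\partial_i z_j)$ of size $d\times n$ has rank $d$ over $F$. Plugging this into the differential Ax-Schanuel inequality yields
\[
\trdeg_\CC\, \CC(z_1,\ldots,z_n,\exp(z_1),\ldots,\exp(z_n)) \;\ge\; n + d \;=\; n+\dim A,
\]
which is the conclusion.

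The substantive input is Ax's differential theorem; the rest of the argument is the standard translation from analytic data on $A$ to differential-algebraic data in the field of meromorphic germs, together with the elementary observation that the inclusion $A\hookrightarrow \CC^n$ realises $\dim A$ as the rank of $(dz_1,\ldots,dz_n)$ on $A$. No subtlety beyond choosing a smooth point of $A$ should arise, since meromorphy of the $z_j$ and $\exp(z_j)$ is assumed on all of $A$ and in particular on the germ at $p$.
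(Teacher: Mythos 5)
Your proposal is correct and matches the route the paper intends: the statement is quoted there as a consequence of Ax's differential theorem [\AX], and the paper's own sketch (endowing the field of meromorphic functions on $A$, locally at a smooth point, with $\dim A$ commuting derivations so that ${\rm rank}(Dz_i)=\dim A$) is exactly your translation. The only point you pass over lightly — that a relation $\sum q_i z_i = c$ at the level of germs propagates to all of $A$ by irreducibility — is standard and does not affect the argument.
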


\subsection{The abelian setting}

We will state $2$ variations on Ax's Theorem that
are sufficient to treat unlikely intersections
in abelian varieties. 

Let $\grpx$ be an abelian variety defined over $\IC$.
We write $\ts{\grpx}$ for the tangent space of $\grpx$ at the
origin. %It is a complex vector space of dimension $\dim \grpx$. 
Moreover, there is a complex analytic group homomorphism
$\exp:\ts{\grpx}\rightarrow \grpx(\IC)$. 
Here we use the symbol $\exp$ instead of $\pi$ to emphasise  the group
structure.

\begin{theorem}[Ax]
\label{thm:axabvar1}
  Let $U\subset \ts{\grpx}$ be a complex vector subspace and $z\in\ts{\grpx}$. 
Let $K$ be an irreducible analytic subset of an open neighborhood of $z$ in
$z+U$. 
If  $\optb$ is the Zariski closure of $\exp(K)$ in $\grpx$, then $\optb$ is
irreducible and
\begin{equation*}
  \geodef(\optb) \le \dim U - \dim K. 
\end{equation*}
\end{theorem}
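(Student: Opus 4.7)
The plan is to deduce Theorem~\ref{thm:axabvar1} from Ax's abelian Schanuel theorem [\AY] in its complex-analytic form. I would first dispose of irreducibility. If $\optb$ decomposed as $\optb_1\cup\optb_2$ with $\optb_1,\optb_2$ proper closed algebraic subsets, then $\exp(K)\subset\optb_1\cup\optb_2$ and so $K\subset\exp^{-1}(\optb_1)\cup\exp^{-1}(\optb_2)$. Each preimage is a closed analytic subset of $\ts{\grpx}$, so irreducibility of $K$ forces $K\subset\exp^{-1}(\optb_i)$ for some $i$; then $\exp(K)\subset\optb_i$, contradicting minimality of $\optb$. Hence $\optb$ is irreducible.

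For the defect bound, let $A$ denote the smallest complex affine subspace of $\ts{\grpx}$ containing $K$. Since $K\subset z+U$, we have $A\subset z+U$ and in particular $\dim A\le\dim U$. Ax's theorem for abelian varieties, in the complex-analytic formulation, asserts that for any irreducible analytic $K\subset\ts{\grpx}$ with affine hull $A$ and with $\optb=\overline{\exp(K)}^{\mathrm{Zar}}$,
\[
\dim A+\dim\optb \;\ge\; \dim K+\dim\geo{\optb}.
\]
Rearranging,
\[
\geodef(\optb) = \dim\geo{\optb}-\dim\optb \;\le\; \dim A-\dim K \;\le\; \dim U-\dim K,
\]
which is the required inequality. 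The fact that $K$ lies in an affine translate $z+U$ rather than in a linear subspace is immaterial: Ax's theorem makes no assumption that $K$ passes through the origin, and in any case translating by $-z$ only shifts $\exp(K)$ by the fixed point $\exp(-z)$, preserving both $\dim\optb$ and $\dim\geo{\optb}$.

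The main obstacle is bridging Ax's original differential-algebraic statement and the analytic formulation invoked above. Ax's result is a Schanuel-type transcendence-degree inequality in a differential field with commuting derivations. To extract the analytic consequence, one restricts to a smooth Zariski-open piece of $K$, views the affine coordinates on $\ts{\grpx}$ and the coordinates of $\exp$ pulled back to this piece as elements of the meromorphic function field, and equips that field with derivations coming from local coordinates on $K$. The identification of analytic dimension with transcendence degree, and of the smallest coset $\geo{\optb}$ with the locus cut out by the integral linear relations modulo the period lattice among the $z$-coordinates, converts Ax's inequality into the displayed dimension bound; this translation is by now standard in the literature on functional transcendence.
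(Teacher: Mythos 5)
Your proposal is correct and in substance coincides with the paper's proof, which consists of the single citation ``See Corollary 1 of [\AY]'': your ``complex-analytic formulation'' of Ax's theorem is precisely that corollary (the affine-hull version you quote is even slightly weaker and suffices, since the hull lies in $z+U$), and your irreducibility argument and the differential-to-analytic translation you sketch are the standard routine surrounding it. The only small slip is the parenthetical description of $\geo{\optb}$ via ``integral linear relations modulo the period lattice among the $z$-coordinates'', which is the $\GG_m^n$ picture; in the abelian case $\geo{\optb}$ is a translate of the smallest abelian subvariety $H$ with $K\subset z_0+\ts{H}$, but this does not affect the argument.
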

\begin{proof}
  See Corollary 1 \cite{Ax72}.
\end{proof}

 The following
statement is sometimes called the Ax-Lindemann-Weierstrass Theorem for
abelian varieties.  Theorem \ref{thm:axabvar1} will be used 
in Section \ref{sec:finitegeoabvar}
%%  in the
%% proof of Theorem \ref{thm:higherdimabvar}, 
whereas Ax-Lindemann-Weierstrass makes its appearance near the end in
Section \ref{sec:unlikelyabvar} where we apply it in connection with
a
 variant of the Pila-Wilkie Theorem.
Ax-Lindemann-Weierstrass is sufficient to prove
Theorem \ref{thm:curvesabvar}, but  Theorem \ref{thm:higherdimabvar},
situated in higher dimension, requires both statements. 
The reason seems to be that certain
technical difficulties disappear in    low dimension.

We may also consider $\ts{\grpx}$ as a real vector space of dimension
$2\dim \grpx$. After fixing an isomorphism
$\ts{\grpx}\cong \IR^{2\dim \grpx}$ it makes sense to speak about
semi-algebraic maps $[0,1]\rightarrow\ts{\grpx}$.

 \begin{theorem}[Ax]
\label{thm:ax2}
   Let $\beta:[0,1]\rightarrow \ts{\grpx}$ be real semi-algebraic and
   continuous  with $\beta|_{(0,1)}$ real analytic. 
The Zariski closure in $\grpx$ of the image of $\exp\circ \beta$
is a coset. 
 \end{theorem}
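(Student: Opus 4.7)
The plan is to reduce this semi-algebraic statement to a purely algebraic one by complexifying $\beta$, and then to invoke a form of the Ax--Lindemann--Weierstrass theorem for abelian varieties. First, if $\beta$ is constant then $\exp \circ \beta([0,1])$ is a point, which is a coset, so assume $\beta$ is non-constant. Since $\beta$ is real semi-algebraic and $\beta|_{(0,1)}$ is real analytic, each of the real and imaginary coordinates of $\beta(t)$ is a real algebraic function of $t$, and so extends holomorphically to some complex neighbourhood $W \subset \mathbb{C}$ of the interval $(0,1)$; this gives a holomorphic map $\tilde\beta \colon W \to \ts{X}$ whose coordinates are algebraic over $\mathbb{C}(t)$. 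Consequently the complex Zariski closure $C \subset \ts{X}$ of $\beta((0,1))$ is an irreducible complex algebraic curve, and $\tilde\beta(W)$ is a Euclidean-open subset of $C$.

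Next I would identify the Zariski closure $Z$ in $X$ of $\exp \circ \beta([0,1])$ with the Zariski closure of $\exp(C)$. One direction is immediate from $\beta([0,1]) \subset C$ (continuity and Zariski-closedness of $C$). For the reverse, $(\exp \circ \tilde\beta)^{-1}(Z) \subset W$ is closed analytic and contains the non-discrete real segment $W \cap (0,1)$, so by the identity theorem it equals $W$; hence $\exp(\tilde\beta(W)) \subset Z$. Since $\tilde\beta(W)$ is Euclidean-open in the irreducible analytic set $C$ and $\exp$ is locally biholomorphic, $\exp(\tilde\beta(W))$ is Euclidean-open in the irreducible analytic set $\exp(C)$, so its Zariski closure coincides with that of $\exp(C)$; combining inclusions yields the claim.

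The theorem now reduces to showing that for the irreducible complex algebraic subvariety $C \subset \ts{X}$, the Zariski closure in $X$ of $\exp(C)$ is a coset of $X$. This is the Ax--Lindemann--Weierstrass assertion for abelian varieties, a companion to Theorem \ref{thm:axabvar1}; both are due to Ax [\AY], and this final step is the main obstacle. Theorem \ref{thm:axabvar1} applied directly to a Euclidean-open neighbourhood in $C$ yields only $\geodef(Z) \le \dim U - 1$, where $U$ is the smallest complex vector subspace of $\ts{X}$ with $C \subset z_0 + U$; this does not suffice unless $\dim U = 1$ (i.e.\ $C$ is a complex line). The extra algebraicity of $C$ is what is needed, and the standard descent argument proceeds by induction on $\dim X$: if $\geo{Z} = P + Y$ were a proper coset, discreteness of the period lattice of $X/Y$ would force $\exp \circ \beta$ to be constant modulo $Y$, hence $\beta((0,1))$ and so $C$ would lie in the affine subspace $\beta(0) + \ts{Y}$, reducing the problem to the abelian subvariety $Y$ of strictly smaller dimension.
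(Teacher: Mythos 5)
Your complexification of $\beta$ and the identification of the Zariski closure of $\exp\circ\beta([0,1])$ with that of $\exp(C)$ are sound, and they run parallel to the first half of the paper's own argument (which extends $\beta|_{(0,1)}$ to a holomorphic $\gamma$ and records that semi-algebraicity gives $\trdeg_\IC\IC(\gamma)\le 1$). The gap is in your final step. The only case you actually argue is the degenerate one: if $\geo{Z}$ is a proper coset $P+\grpy$, then indeed $\beta((0,1))\subset\beta(0)+\ts{\grpy}$ and one may induct on $\dim\grpx$. But this descent uses no algebraicity of $C$ whatsoever and says nothing when $\geo{Z}=\grpx$, where one must prove $Z=\grpx$; that is the entire content of the theorem. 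Invoking ``the Ax--Lindemann--Weierstrass assertion for abelian varieties'' at that point is invoking precisely the statement under proof: the paper introduces Theorem \ref{thm:ax2} as the Ax--Lindemann--Weierstrass theorem for abelian varieties. So as written, the main case rests on a citation to (a form of) the theorem itself, supplemented by a descent that cannot produce it -- something you half-acknowledge by calling this step ``the main obstacle''.

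What the paper does instead is derive the main case from a more primitive result of Ax, namely the one-variable case of Theorem 3 of \cite{Ax72}. With $\optb$ the Zariski closure of the image of $\exp\circ\beta$, one has $\trdeg_\IC\IC(\exp\circ\gamma)\le\dim\optb$, and semi-algebraicity of $\beta$ gives $\trdeg_\IC\IC(\gamma)\le 1$, hence $\trdeg_\IC\IC(\gamma,\exp\circ\gamma)\le 1+\dim\optb$. Applying Ax's inequality to $t\mapsto\gamma(t+1/2)-\gamma(1/2)$ bounds by $\dim\optb$ the dimension of the smallest abelian subvariety $H$ containing the values $\exp(\gamma(t+1/2)-\gamma(1/2))$, and comparing dimensions forces $\optb=\exp(\gamma(1/2))+H$. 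Your correct observation that Theorem \ref{thm:axabvar1} alone yields only $\geodef(\optb)\le\dim U-1$ shows you located the difficulty; to close the argument you need this transcendence-degree input (or an independently proved Ax--Schanuel/Ax--Lindemann statement for algebraic subvarieties of $\ts{\grpx}$), not the descent.
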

 \begin{proof}
Clearly, we may assume that $\beta$ is non-constant. 
The Zariski closure $\optb$ of the image $\exp(\beta([0,1]))$ is
irreducible since $\exp\circ\beta$ is continuous and real analytic on $(0,1)$.
%Therefore, $G$ from the hypothesis is connected and $S\subset G$.

By considering  Taylor expansions around points of $(0,1)$,
the restriction $\beta|_{(0,1)}$ extends to a holomorphic
map  $\gamma$ with target $\ts{\grpx}$ and defined
on a domain in $\IC$ which contains $(0,1)$.
By analyticity the image of $\exp\circ\gamma$ lies in $\optb$
and $\trdeg_\IC{\IC(\exp\circ\gamma)}\le \dim \optb$.

As $\beta$ is real algebraic on $[0,1]$ we find
 $\trdeg_\IC{\IC(\gamma)}\le 1$ and
therefore,
\begin{equation*}
  \trdeg_\IC{\IC(\gamma,\exp\circ\gamma)}\le 
\trdeg_\IC{\IC(\gamma)}+\trdeg_\IC{\IC(\exp\circ\gamma)}\le 1+\dim \optb. 
\end{equation*}

Let us apply the one variable case of Ax's Theorem 3 \cite{Ax72} 
 to the holomorphic function $t\mapsto \gamma(t+1/2) - \gamma(1/2)$
defined in a neighborhood of $0\in\IC$. 
According to the  inequality of
 transcendence degrees,
the smallest abelian subvariety $H\subset \grpx$
containing all values $\exp(\gamma(t+1/2)-\gamma(1/2))$
as $t$ runs over $U$ has dimension at most $\dim \optb$. Therefore,
$\optb = \exp(\gamma(1/2))+H$ which is what we claimed. 
 \end{proof}

\subsection{A product of modular curves}

Now we suppose that $X=Y(1)^n$ and that
$\pi:\HH^n\rightarrow X(\IC)$ is the
$n$-fold cartesian product of the $j$-function.

%\red{Yes}
 Let again $A\neq\emptyset$ be an irreducible complex analytic 
subspace of some open $U\subset \HH^n$, so that
locally the coordinate functions $z_1,\ldots, z_n$ and
$j(z_1),\ldots, j(z_n)$ are defined and meromorphic 
on $A$, and we have a finite set $\{D_j\}$ of derivations
with ${\rm rank\/} (D_jz_i)=\dim A$, the rank being over the field
of meromorphic functions on $A$.

%% {\bf 6.3. Definition.\/} 
\begin{definition}
The functions $z_1,\ldots, z_n$ on $A$ are called
 geodesically independent if no $z_i$ is constant and there
are no relations $z_i=gz_j$ where $i\neq j$ and 
$g\in{\rm GL}_2^+(\QQ)$.
\end{definition}

The geodesic independence of the $z_i$ is equivalent 
to the $j(z_i)$ being ``modular independent'',
i.e. non-constant and no relations $\Phi_N(j(z_k), j(z_\ell))$ where
$k\ne \ell$ and $\Phi_N(X,Y)$ is a modular polynomial.

The following conjecture might be considered the analogue 
of ``Ax-Schanuel'' for the 
$j$-function in a complex setting.

%% \medskip
%% \noindent
%% {\bf 6.4. Complex ``Modular Ax-Schanuel'' Conjecture.\/} 
%% \medbreak
\begin{conjecture}[Complex ``Modular Ax-Schanuel'']
In the above setting, if the $z_i$ are geodesically independent then
$$
{\trdeg}_{\CC}\CC(z_1,\ldots, z_n, j(z_1),\ldots, j(z_n))\ge n+\dim A.
$$  
\end{conjecture}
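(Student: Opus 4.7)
The plan is to reformulate the conjecture as a bi-algebraicity / atypical intersection statement in $\HH^n\times Y(1)^n$ and to attack it using o-minimality, monodromy and the Pila--Wilkie counting theorem, in the same spirit as Pila's proof of the modular Ax--Lindemann theorem. Write $\Gamma_\pi=\{(z,\pi(z)):z\in\HH^n\}$ for the graph of $\pi$. The failure of the transcendence inequality is equivalent, by passing to Zariski closures, to the existence of an irreducible complex algebraic subvariety $W\subset\HH^n\times Y(1)^n$ and an analytic component $B\supset A$ of $W\cap\Gamma_\pi$ with $\dim B>\dim W-n$; that is, $B$ is an \emph{atypical} component. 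The target is to show that any such maximal $B$ has its first projection $\mathrm{pr}_1(B)\subset \HH^n$ contained in a proper weakly special subvariety; geodesic dependence of some $z_i$ then follows from the classification in Section \ref{sec:specialsubvarY1n}.

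The first step is to restrict to the standard fundamental domain $\cF_0^n$ for $\mathrm{SL}_2(\ZZ)^n$. By Peterzil--Starchenko the graph $\Gamma_\pi\cap(\cF_0^n\times Y(1)^n)$ is definable in $\RRanexp$, so the set
\[
\Sigma(T)=\{\gamma\in\mathrm{SL}_2(\ZZ)^n:\|\gamma\|\le T\text{ and }(\gamma\cdot B)\cap\cF_0^n\neq\emptyset\}
\]
is definable. A hyperbolic volume estimate for $B\cap\cF_0^n$, combined with $\dim B>0$, yields a polynomial lower bound for $\#\Sigma(T)$. Working in a definable parameter space of pairs (algebraic subvariety of $\HH^n\times Y(1)^n$, analytic component) exhibiting an atypical intersection with $\Gamma_\pi$ of dimension $\dim B$, the block form of Pila--Wilkie produces a positive-dimensional connected semi-algebraic family of translates $\{\gamma\cdot W\}$, hence an analytic set $\widetilde B\supsetneq B$ also contained in an algebraic subvariety of $\HH^n\times Y(1)^n$ meeting $\Gamma_\pi$ atypically.

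The second step uses monodromy to promote this family to a genuine complex algebraic object. The closure of the produced semi-algebraic family under the $\mathrm{SL}_2(\ZZ)^n$-action forces a complex algebraic $W^*\supset W$ such that $\mathrm{pr}_1(W^*)$ is itself algebraic and has algebraic image in $Y(1)^n$, i.e.\ is \emph{bi-algebraic} in $\HH^n$. Pila's modular Ax--Lindemann theorem then identifies $\mathrm{pr}_1(W^*)$ with a weakly special subvariety of $\HH^n$, containing $\mathrm{pr}_1(B)\supset\mathrm{pr}_1(A)$. By the defect inequality of Proposition \ref{prop:defectcondition}(iii), this weakly special envelope must be proper, forcing either a constant $z_i$ or a relation $z_i=gz_j$ with $g\in \mathrm{GL}_2^+(\QQ)$, contradicting geodesic independence.

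The principal obstacle is the second step: upgrading the real semi-algebraic family produced by Pila--Wilkie to a complex algebraic, bi-algebraic subvariety. In the Ax--Lindemann case one begins with a single fixed algebraic $V\subset \HH^n$, and the analogous consolidation step reduces to a stabiliser argument in $\mathrm{SL}_2(\RR)^n$. Here both the atypical component and its algebraic envelope $W$ move in a family, and one must control the hyperbolic volume of $B\cap\cF_0^n$ uniformly in $W$ near the cusp (where $j\sim e^{-2\pi i z}$ makes definability in $\RRanexp$ essential but delicate), and then show that the monodromy orbit of $W$ fills out a complex, not merely real, algebraic variety. This uniform monodromy-plus-height accounting is exactly the additional content of Ax--Schanuel beyond Ax--Lindemann.
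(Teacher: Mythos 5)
There is nothing in the paper to compare your argument against: the statement you are addressing is stated there as a \emph{conjecture} (Complex ``Modular Ax-Schanuel''), explicitly described in Section \ref{sec:wca} as open beyond the special cases of modular Ax--Lindemann and Modular Ax--Logarithms, with a full proof only announced (by Pila--Tsimerman) at the time of writing. So the relevant question is whether your proposal constitutes a proof, and it does not: you have written a strategy outline whose decisive step you yourself flag as an unresolved ``obstacle''. Concretely, the consolidation step --- passing from the positive-dimensional real semi-algebraic family of translates $\gamma\cdot W$ produced by the block version of Pila--Wilkie to a complex algebraic $W^*\supsetneq W$ whose first projection is bi-algebraic, and then invoking Ax--Lindemann --- is precisely the content of Ax--Schanuel that goes beyond Ax--Lindemann, and no argument is given for it. In the Ax--Lindemann setting the set being translated is an algebraic subvariety of $\HH^n$ and a stabiliser argument in ${\rm SL}_2(\RR)^n$ suffices; here the object is an analytic component of $W\cap\Gamma(\pi)$ of atypical dimension, the envelope $W$ itself moves, and one needs an additional mechanism (in the eventual Pila--Tsimerman treatment: a careful choice of an optimal/maximal atypical component, an induction on dimension, a stabiliser dichotomy, and a complexification argument showing that the union of the translates is contained in a complex algebraic set of controlled dimension meeting $\Gamma(\pi)$ even more atypically, contradicting maximality). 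Asserting that ``monodromy forces'' such a $W^*$ is naming the desired conclusion, not proving it.

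A second, less central but still genuine, gap is the lower bound $\#\Sigma(T)\gg T^{\delta}$. A ``hyperbolic volume estimate combined with $\dim B>0$'' does not by itself give this: one must show that $B$ is not contained in finitely many ${\rm SL}_2(\ZZ)^n$-translates of $\cF_0^n$ and in fact meets $\gg T^{\delta}$ translates by elements of norm at most $T$, which requires an argument about the boundary behaviour of algebraic subvarieties of $\HH^n$ (and it is here, not only at the end, that geodesic independence must be used --- otherwise the bound is simply false, e.g.\ for $B$ compactly contained in a single fundamental domain). Finally, the concluding appeal to Proposition \ref{prop:defectcondition}(iii) is not the right tool: that proposition compares defect and geodesic defect of subvarieties of $Y(1)^n$ and does not by itself convert containment in a weakly special subvariety into properness of that subvariety; properness has to come out of the dimension bookkeeping in the induction. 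In short, the plan is broadly the right one (and close in spirit to the proof later published by Pila--Tsimerman), but as written it leaves the essential steps unproved, so it is not a proof of the conjecture.
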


It evidently implies a weaker ``two-sorted'' version 
that, with the
same hypotheses, we have the weaker conclusion
$$
{\trdeg}_{\CC}\CC(z_1,\ldots, z_n)+
{\trdeg}_{\CC}\CC(j(z_1),\ldots, j(z_n))\ge n+\dim A.
$$
This conjecture is open beyond some special cases 
(``Ax-Lindemann'' [\PILAOAO] and ``Modular Ax-Logarithms''
[\HABEGGERPILA]). 

We pursue now some more geometric formulations.
To frame these we need some definitions. 

%% \medskip
%% \noindent
%% {\bf 6.5. Definition.\/} 
%%% I removed the aposthropes around subvarieties in the definition below
\begin{definition}
By a subvariety of $U\subset\HH^n$ we mean an irreducible component (in 
the complex analytic sense) of $W\cap U$ for some algebraic subvariety 
$W\subset \CC^n$.
%%%% Old version:
%%   By a ``subvariety'' of $\HH^n$ we mean an irreducible
%% (in the complex analytic sense) subvariety of $W\cap \HH^n$
%% for some algebraic subvariety $W\subset \CC^n$.
\end{definition}

%% \medskip
%% \noindent
%% {\bf 6.6. Definition.\/} 
\begin{definition}
A subvariety $W\subset \HH^n$ is called  geodesic
if it is defined by some number of equations of the forms
$$
z_i=c_i,\quad c_i\in\CC;\quad z_k=g_{k\ell}z_{\ell},
\quad g\in{\rm GL}_2^+(\QQ).
$$  
\end{definition}

These are the ``weakly special subvarieties'' in the Shimura sense.

%% \medskip
%% \noindent
%% {\bf 6.7. Definition.\/} 
%% \medskip
\begin{definition}
  By a  component we mean  a 
complex-analytically irreducible
component of $W\cap \pi^{-1}(V)$ where $W\subset U$
and $V\subset X$ are algebraic subvarieties.
\end{definition}

Let $A$ be a component of $W\cap\pi^{-1}(V)$. 
We can consider the coordinate functions
$z_i$ and their exponentials as elements of the field of
meromorphic functions (at least locally) on $A$, and we can endow
this field with $\dim A$ derivations in such a way that 
${\rm rank}(Dz_i)=\dim A$. Then (with ${\rm Zcl}$ indicating the
Zariski closure)
$$
\dim W\ge \dim {\rm Zcl\/} (A)
={\trdeg}_{\CC}\CC(z_1,\ldots,z_n),$$   
$$\dim V\ge \dim {\rm Zcl\/} (\pi (A)) ={\trdeg}_{\CC}\IC
(j\circ z_1,\ldots,j \circ z_n)
$$
and the ``two-sorted'' Modular Ax-Schanuel conclusion becomes
$$
\dim W+\dim V\ge \dim X+\dim A
$$
provided that the functions $z_i$ are
geodesically independent. 
 %% linearly independent over $\QQ$ mod $\CC$.

\bigbreak

This condition is equivalent to $A$ being contained in a proper
geodesic subvariety. Let us take $U'$ to be the smallest geodesic
subvariety containing $A$. Let $X'= \exp U'$,
which is an algebraic subtorus of $X$, and put
$W'=W\cap U'$, $V'=V\cap X'$. We can choose coordinates
$z_i, i=1,\ldots,\dim A$ which are linearly independent over $\QQ$ mod $\CC$ 
and derivations such that ${\rm rank}(Dz_i)=\dim A$.
We get the following variant of Ax-Schanuel in this setting.

%%{\bf 6.8. Weak Complex Ax (WCA): Formulation A.\/} 

\begin{conjecture}[Weak Complex Ax (WCA): Formulation A.]
Let $U'$ be a geodesic subvariety of
$U$. Put $X'=\exp U'$ and let $A$ be a component of 
$W\cap \pi^{-1}(V)$, where $W\subset U'$ and $V\subset X'$
are algebraic subvarieties.
If $A$ is not contained in any proper geodesic subvariety of $U'$
then
$$
\dim A \le \dim V+\dim W-\dim X'.
$$
\end{conjecture}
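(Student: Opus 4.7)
The plan is to argue by contradiction using the o-minimality and point-counting strategy pioneered by Pila-Zannier, in the spirit of the modular Ax-Lindemann theorem of [\PILAOAO]. Suppose there is a component $A$ of $W \cap \pi^{-1}(V)$ not contained in any proper geodesic subvariety of $U'$, yet with $\dim A > \dim V + \dim W - \dim X'$. By fixing coordinates and replacing $U'$ and $X'$ by the factors that actually meet $A$, we reduce to $U' = \HH^n$, $X' = Y(1)^n$, so that the functions $z_1, \ldots, z_n$ are geodesically independent on $A$.

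Let $\FF_0 \subset \HH$ be the standard fundamental domain for $\mathrm{SL}_2(\ZZ)$. By Peterzil-Starchenko [\PEST], the restriction $j|_{\FF_0}$ is definable in $\RRanexp$. Consider the definable set
\begin{equation*}
\cZ = \{\gamma \in \mathrm{GL}_2^+(\RR)^n : \gamma A \cap (W \cap j^{-1}(V) \cap \FF_0^n) \neq \emptyset\},
\end{equation*}
whose $\mathrm{SL}_2(\ZZ)^n$-points parametrise translates of $A$ meeting $\FF_0^n$ at a preimage of a point of $V$.

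The main counting step is to show that the atypicality $\dim A > \dim V + \dim W - \dim X'$ forces the number of $\gamma \in \mathrm{SL}_2(\ZZ)^n \cap \cZ$ of matrix norm at most $T$ to grow at least like a positive power of $T$. This should follow from a hyperbolic volume estimate: the excess intersection dimension forces the preimages $\pi^{-1}(\pi(A) \cap V)$ to sweep out enough volume in a large ball of $\HH^n$ that many $\mathrm{SL}_2(\ZZ)^n$-translates of $A$ must enter $\FF_0^n$. Once this lower bound is in place, the Pila-Wilkie theorem [\PILAWILKIE] supplies a real semi-algebraic arc $\sigma \colon (0,1) \to \cZ$ of rational points.

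Finally, analytic continuation of $A$ along $\sigma$ yields a positive-dimensional family $\bigcup_t \sigma(t) \cdot A$ contained in $\pi^{-1}(V)$. A modular Ax-Lindemann-type argument in the style of [\PILAOAO] then converts the resulting $\mathrm{GL}_2^+(\RR)$-symmetry into a genuine $\mathrm{GL}_2^+(\QQ)$-relation among the $z_i$, producing a proper geodesic subvariety of $U'$ that contains $A$ and contradicting the hypothesis. The main obstacle I expect is the combination of the counting lower bound and the passage from real to rational parameters: translating an excess intersection dimension into polynomial growth of lattice points in $\cZ$ requires a careful hyperbolic volume computation on the foliation of $\pi^{-1}(V)$, and converting the Pila-Wilkie arc into a $\mathrm{GL}_2^+(\QQ)$-relation (rather than just a real continuous family of symmetries) needs a prior Ax-Lindemann input, reflecting the absence of a group structure on $Y(1)$ that Theorems \ref{thm:axabvar1} and \ref{thm:ax2} exploit directly in the abelian case.
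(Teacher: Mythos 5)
The statement you are trying to prove is not proved in the paper at all: WCA for $Y(1)^n$ is stated there as a \emph{conjecture}, used as a hypothesis in Theorem \ref{thm:LGOWCAimplyZP}. The paper only (a) proves the purely formal equivalence of Formulations A and B, (b) records that the statement is known in the abelian/semiabelian setting by Ax [\AY], and (c) notes in the introduction that a proof in the modular setting has been announced by Pila--Tsimerman. So there is no proof in the paper to compare yours against; what you have written is an attack on an open (at the time) functional-transcendence statement, and it should be judged on its own terms.

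On those terms, your sketch points in the right general direction (the announced proof is indeed by o-minimality and point counting), but the two steps you flag as ``obstacles'' are precisely where the mathematical content lies, and as written they are gaps rather than steps. First, the counting lower bound is not a routine hyperbolic volume estimate: nothing in the hypothesis $\dim A > \dim V + \dim W - \dim X'$ by itself forces $A$ to meet many $\mathrm{SL}_2(\ZZ)^n$-translates of $\FF_0^n$; a priori $A$ could be contained in a bounded region, and producing polynomially many lattice translates requires genuine input (volume/degree growth of algebraic subvarieties of $\HH^n$, together with an induction that processes the positive-dimensional Pila--Wilkie blocks, not just isolated rational points). Relatedly, your set $\cZ$ is not obviously definable as written, since neither $A$ nor $j^{-1}(V)$ is definable --- only their intersections with $\FF_0^n$ are, and the family over $\gamma$ must be set up so that definability survives. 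Second, your endgame is circular or at best incomplete: you invoke ``a modular Ax--Lindemann-type argument'' to convert a real semi-algebraic arc of symmetries into a $\mathrm{GL}_2^+(\QQ)$-relation and hence a proper geodesic containing $A$, but Ax--Lindemann concerns algebraic subvarieties of $\pi^{-1}(V)$ and does not by itself control components of $W\cap\pi^{-1}(V)$ of excess dimension; bridging that gap (via an induction on $\dim W$ and a monodromy/stabiliser argument showing the relevant subgroup is defined over $\QQ$) is exactly the difference between Ax--Lindemann and Ax--Schanuel, and it is missing here. In short: reasonable strategy, but the proposal does not close either of the two essential steps, and the paper itself offers no proof of this statement that your argument could be measured against.
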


I.e. (and as observed still more generally by Ax [\AY]), the intersections
of $W$ and $\pi^{-1}(V)$  never have ``atypically large''
dimension, except when $A$ is contained in a 
proper geodesic subvariety.
It is convenient to give an equivalent formulation.

%% \medbreak
%% \noindent
%% {\bf 6.9. Definition.\/} 
\begin{definition}
  Fix a subvariety $V\subset X$.
  \begin{enumerate}
  \item [(i)]
A  component with respect to $V$ is a component of
$W\cap \pi^{-1}(V)$ for some algebraic subvariety 
$W\subset U$.
\item[(ii)]
 If $A$ is a component we define its  defect by
$\delta(A)=\dim {\rm Zcl\/}(A)-\dim A$.
\item[(iii)]
A component $A$ w.r.t. $V$ is called  optimal for $V$
if there is no strictly larger component $B$ w.r.t. $V$ with 
$\delta(B)\le \delta(A)$.
\item[(iv)]
A component $A$ w.r.t. $V$ is called  geodesic if it is a 
component of $W\cap \pi^{-1}(V)$ for some weakly special subvariety 
$W={\rm Zcl\/} (A)$.
  \end{enumerate}
 \end{definition}

%% {\bf 6.10. WCA: Formulation B.\/}
\begin{conjecture}[WCA: Formulation B.]
 Let $V\subset X$ be a subvariety.
An optimal component with respect to $V$ is geodesic.
\end{conjecture}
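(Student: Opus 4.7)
The plan is to reduce Formulation B to Formulation A by proving the two equivalent; the actual content then lies in Formulation A, which is the modular Ax-Schanuel conjecture. Both directions of the equivalence are formal, using only Krull's height theorem and the discreteness of the fibres of $\pi$.

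For (A)$\Rightarrow$(B), suppose $A$ is optimal with respect to $V$ and, for contradiction, that $A$ is not geodesic, so $\zcl{A}\subsetneq U':=\geo{A}$. Set $X'=\pi(U')$, let $V_0$ be the irreducible algebraic component of $V\cap X'$ containing $\pi(A)$, and let $B$ be the analytically irreducible component of $U'\cap\pi^{-1}(V_0)$ containing $A$. A short argument shows $B$ is itself a component with respect to $V$: if $B\subseteq C\subseteq U'\cap\pi^{-1}(V)$ with $C$ analytically irreducible, then $\pi(C)$ is irreducible in $V\cap X'$ and meets $V_0$, so $\pi(C)\subseteq V_0$. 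Because $\pi|_{U'}$ has discrete fibres, $\dim B=\dim V_0$. Applying Formulation A to $A$ with ambient $U'$ and $W=\zcl{A}$ gives $\dim A\leq\dim V_0+\dim\zcl{A}-\dim X'$, while Krull's height theorem for analytic intersections in the smooth ambient $U'$ gives the reverse inequality. Hence equality, so $\dim B-\dim A=\dim X'-\dim\zcl{A}>0$ and $B\supsetneq A$. Formulation A applied to $B$ then forces $\zcl{B}=U'$, whence
\begin{equation*}
\delta(B)=\dim U'-\dim V_0=\dim\zcl{A}-\dim A=\delta(A),
\end{equation*}
contradicting the optimality of $A$.

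For (B)$\Rightarrow$(A), given $A$ as in Formulation A, iteratively replace $A$ by a strictly larger component with respect to $V$ having non-increasing defect; since dimensions are bounded by $n$, this terminates at an optimal $B_0\supseteq A$. By Formulation B, $\zcl{B_0}=\geo{B_0}$. Now $\pi(B_0)\subseteq V\subseteq X'=\pi(U')$, so $B_0\subseteq\pi^{-1}(X')$, a union of $\mathrm{SL}_2(\ZZ)^n$-translates of the irreducible geodesic $U'$; irreducibility of $B_0$ together with $A\subseteq B_0\cap U'$ and the hypothesis $\geo{A}=U'$ (which prohibits $A$ from lying in any proper geodesic $U'\cap\gamma U'$) forces $B_0\subseteq U'$. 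Thus $\geo{B_0}\subseteq U'$, and combined with $\geo{B_0}\supseteq\geo{A}=U'$ this gives $\zcl{B_0}=U'$. Then $\delta(B_0)=\dim X'-\dim B_0\leq\delta(A)\leq\dim W-\dim A$, and $\dim B_0\leq\dim V$ since $\pi(B_0)\subseteq V$, yielding the desired $\dim A\leq\dim V+\dim W-\dim X'$.

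The main obstacle is not the equivalence above, which is a formal dimension count, but Formulation A itself: the modular Ax-Schanuel conjecture, known only in the special cases of Ax-Lindemann [\PILAOAO] and the Modular Ax-Logarithms of [\HABEGGERPILA] at the time of writing. That is where the genuine work resides; everything else is bookkeeping against the defect.
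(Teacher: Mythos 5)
Your proposal is correct and follows essentially the same route as the paper: Formulation B itself remains conjectural, and what the paper actually supplies at this point is the purely formal equivalence with Formulation A (modular Ax--Schanuel), which is exactly what you prove. Your two implications coincide with the paper's own arguments up to bookkeeping --- for A $\Rightarrow$ B the paper concludes $A=B$ directly from $\delta(B)\le\delta(A)$ and optimality, without needing the reverse intersection-dimension inequality you invoke to force $B\supsetneq A$, and for B $\Rightarrow$ A the paper simply asserts $\zcl{B}=U'$ (only $\zcl{B}\supseteq U'$ is needed), where you pin down $B_0\subseteq U'$ via the ${\rm SL}_2(\ZZ)^n$-translate argument.
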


Formulation B is the statement we need. However,
the two formulations are equivalent, and the proof of their
equivalence is purely formal and applies in the semiabelain setting
and indeed quite generally.

%% \medbreak
%% \noindent
%% {\bf 6.11. Proof that A implies B.\/}
\begin{proof}[Proof that formulation A implies formulation B]
We assume Formulation A and 
suppose that the component $A$ of $W\cap \pi^{-1}(V)$ is optimal,
where $W={\rm Zcl\/} (A)$.
Suppose that $U'$ is the smallest geodesic subvariety containing $A$,
and let $X'=\pi(U')$.
Then $W\subset U'$. Let $V'=V\cap X'$. Then
$A$ is optimal for $V'$ in $U'$, otherwise it would fail to be
optimal for $V$ in $U$. Since $A$ is not contained in any proper geodesic subvariety of $U'$ we must have
$$
\dim A\le \dim W+\dim V'-\dim X'.
$$
Let $B$ be the component of $\pi^{-1}(V')$ containing $A$. Then
$B$ is also not contained in any proper geodesic subvariety of $U'$,
so, by Formulation A,
$$
\dim B\le \dim V'+\dim {\rm Zcl\/} (B) -\dim X'.
$$
But $\dim B=\dim V'$, whence $\dim {\rm Zcl\/} (B) = \dim X'$,
and so ${\rm Zcl\/} (B) = X'$, and $B$ is a geodesic component. Now
$$
\delta(A)=\dim W-\dim A\ge \dim X'-\dim V'=\delta(B)
$$
whence, by optimality, $A=B$.
\end{proof}

%{\bf 6.12. Proof that B implies A.\/} 
\begin{proof}[Proof that formulation B implies formulation A]
We assume Formulation $B$. Let $U'$ be a geodesic subvariety
of $U$, put $X'=\pi(U)$.
Suppose $V\subset X', W\subset U'$ are algebraic subvarieties and 
$A$ is a component of $W'\cap \pi^{-1}(V')$
not contained in any proper geodesic subvariety of $U'$.
There is some optimal component $B$ containing $A$, and $B$
is geodesic, but since $A$ is not contained in any proper geodesic,
$B$ must be a component of $\pi^{-1}(V')$ with
${\rm Zcl\/} (B)=U'$ and we have
$$
\dim W-\dim A\ge \delta(A)\ge \delta(B)=\dim X'-\dim V
$$
which rearranges to what we want.
\end{proof}

As already remarked, WCA holds for (semi-)abelian varieties,
by Ax [\AY] (see also [\KIRBY]).

To conclude we note that a true ``Modular Ax-Schanuel'' 
should take into account the derivatives of $j$.
A well-known theorem of Mahler [\MAHLER] implies that $j, j', j''$ are 
algebraically independent over $\CC$ as functions on $\HH$, and it is 
well-known too that $j'''\in\QQ(j, j', j'')$ (see e.g. [\BEZU]). 

%% \medskip
%% \noindent
%% {\bf 6.13. Conjecture (Modular Ax-Schanuel with derivatives).\/} 
\begin{conjecture}[Modular Ax-Schanuel with derivatives]
In the setting of ``Modular Ax-Schanuel'' above, if $z_i$
are geodesically independent then 
$$
{\trdeg}_{\CC}\CC(z_i, j(z_i) ,j'(z_i), j''(z_i)) \ge 
3n+\dim A.
$$  
\end{conjecture}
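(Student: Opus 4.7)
The plan is to adapt the Pila--Tsimerman o-minimality-and-point-counting strategy for the basic Complex Modular Ax-Schanuel (announced earlier in the excerpt) to incorporate the derivatives $j'$ and $j''$. The underlying reason to expect success is Mahler's theorem, which yields algebraic independence of $j, j', j''$ over $\CC$ as functions on $\HH$, together with the fact that $j''' \in \QQ(j, j', j'')$; these say that $(j, j', j'')$ generates a differential system of rank exactly three, so each extra derivative should generically contribute an additional $n$ to the transcendence degree, producing the target bound $3n + \dim A$.

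Concretely, I would first pass to a geometric reformulation. Let $\Gamma \subset \HH^n \times \CC^{3n}$ be the graph of the map $(z_i) \mapsto (j(z_i), j'(z_i), j''(z_i))_i$, an analytic subvariety of dimension $n$. The conjecture becomes an ``optimal implies geodesic'' statement in the style of Formulation B of Section \ref{sec:wca}: for any algebraic $W \subset \CC^{4n}$, every optimal component $A$ of $\Gamma \cap W$ whose image in $\HH^n$ is not contained in a proper geodesic subvariety must satisfy $\dim A \le \dim W - 3n$. The Peterzil--Starchenko definability of $j|_{\FF_0}$ in $\RRanexp$ should extend to $j'|_{\FF_0}$ and $j''|_{\FF_0}$ (both holomorphic and with controlled growth at the cusp), so that the restriction of $\Gamma$ to a fundamental domain is a definable analytic set. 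One then runs the usual induction: translate a putative counterexample into $\FF_0^n$ via ${\rm SL}_2(\ZZ)^n$, apply the Pila--Wilkie theorem to the definable incidence correspondence with the family of translates $\gamma \cdot W$ for $\gamma \in {\rm GL}_2^+(\QQ)^n$ of bounded height, and extract polynomially many rational-matrix translates of $W$ still meeting $A$.

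The main obstacle, as in the basic case, is the monodromy step: one must show that this abundance of rational translates forces $A$ to lie in a genuine geodesic subvariety. In the derivative setting this is more delicate, because $j'$ and $j''$ transform under ${\rm SL}_2(\ZZ)$ with automorphy factors $(cz+d)^{-2}$ and $(cz+d)^{-4}$ (up to lower order corrections), so the monodromy analysis must be re-carried out in the appropriate jet bundle rather than in the trivial bundle that carries $j$ alone. One also needs a version of Ax--Lindemann for $(j, j', j'')$ that upgrades a weakly special relation for $j$ to a full relation on the triple; this should follow from the known Ax--Lindemann for $j$ combined with Mahler's algebraic independence and the relation expressing $j'''$ in terms of $j, j', j''$, but the bookkeeping is non-trivial. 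Once these inputs are established, the counting and o-minimality portion of the argument should carry over directly from the non-derivative case.
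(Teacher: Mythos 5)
The statement you are asked about is not a theorem of the paper: it is stated there as a \emph{conjecture} (``Modular Ax-Schanuel with derivatives''), and the paper offers no proof of it — indeed it records that even the derivative-free complex Modular Ax-Schanuel is open beyond the special cases of Ax--Lindemann and Modular Ax-Logarithms, with only an announcement by Pila--Tsimerman of the weak form WCA. So there is no proof in the paper to compare against, and your text should be judged as a proposed proof of an open conjecture. As such it is a reasonable research programme (close in spirit to the strategy Pila--Tsimerman actually pursued), but it is not a proof, and the places where it leans on ``should follow'' are exactly where the mathematical content lies.

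Concretely, two gaps are essential. First, the Pila--Wilkie counting scheme you describe — translate $A$ into $\FF_0^n$, count rational ${\rm GL}_2^+(\QQ)$-translates of $W$ of bounded height meeting it — is the architecture of an \emph{Ax--Lindemann} proof; upgrading it to full (two-sorted or mixed) Ax--Schanuel is not a matter of ``carrying the counting over''. One must count inside a definable family parametrising translates of the whole intersection problem and couple this with a genuine monodromy/volume argument showing that polynomially many $\Gamma$-translates force invariance under a positive-dimensional algebraic subgroup; you flag this as ``the main obstacle'' and leave it unresolved, and in the jet setting it is further complicated by the fact that $j'$ and $j''$ are only (quasi-)modular of positive weight, so the relevant group action lives on a jet bundle and the incidence variety must be re-built there. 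Second, your claim that Ax--Lindemann for the triple $(j,j',j'')$ ``should follow'' from Ax--Lindemann for $j$ together with Mahler's theorem is not justified: Mahler's theorem asserts algebraic independence of $j,j',j''$ as functions on all of $\HH$ (the case $n=1$, $A$ the generic point), and it gives no control over the transcendence degree along an arbitrary analytic subvariety $A$ of $\HH^n$; the relation $j'''\in\QQ(j,j',j'')$ only bounds things from above. Deducing the refined statement for all geodesically independent tuples is essentially the conjecture itself restricted to algebraic $A$, and requires new functional-transcendence input (differential-algebraic or jet-space arguments), not bookkeeping. Until those two ingredients are supplied, the heuristic that ``each derivative contributes an extra $n$'' is a statement of the desired conclusion rather than an argument for it.
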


\section{A finiteness result for geodesic-optimal subvarieties}
\label{sec:finitenessgeodesic}
%\input{finitenessgeodesic.tex} 

%% Internal refs OK.
%% In this section 

\subsection{The abelian case}
\label{sec:finitegeoabvar}

Suppose $\grpx$ is an
 abelian variety defined over $\IC$. 
In this section we prove the following finiteness statement on
geodesic-optimal subvarieties of a fixed subvariety of $\grpx$. 

\begin{proposition}
  \label{prop:geofinite}
For any subvariety $\var\subset \grpx$ there exists a
finite set 
 of abelian subvarieties of $\grpx$ with the
following property.
If $\opta$ is a geodesic-optimal subvariety of $\var$, then
 $\geo{\opta}$ is a translate of a member of the said set.
\end{proposition}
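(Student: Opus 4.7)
The strategy is to combine a Noetherian induction on subvarieties of $\var$ with Theorem~\ref{thm:axabvar1} (Ax) in order to constrain the abelian subvariety $\grpy$ underlying $\geo{\opta}$, showing that only finitely many such $\grpy$ arise as $\opta$ ranges over the geodesic-optimal subvarieties of $\var$.

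First I would recast geodesic-optimality in fibration language. Writing $\pi_\grpy \colon \grpx \to \grpx/\grpy$ for the quotient and noting that $\geo{\opta} = a + \grpy$ is a fibre of $\pi_\grpy$, the inequality $\geodef(\optb) > \geodef(\opta)$ for every $\opta \subsetneq \optb \subset \var$ translates, away from the trivial case $\opta = \var$, into the statement that $\opta$ is an irreducible component of $\var \cap \pi_\grpy^{-1}(\pi_\grpy(a))$ whose dimension strictly exceeds the generic fibre dimension of $\pi_\grpy|_\var$. (The case $\opta = \var$ contributes a single canonical $\grpy$, namely the abelian subvariety underlying $\geo{\var}$.) Thus geodesic-optimal subvarieties $\opta \subsetneq \var$ are precisely the \emph{atypical fibre components} of the fibrations $\pi_\grpy|_\var$ as $\grpy$ varies over abelian subvarieties of $\grpx$.

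Second, for each fixed $\grpy$ the atypical locus
\begin{equation*}
\var(\grpy) := \{v \in \var : \dim_v(\var \cap (v+\grpy)) > \dim \var - \dim \pi_\grpy(\var)\}
\end{equation*}
is a proper Zariski-closed subset of $\var$ by upper semicontinuity of fibre dimension, and any geodesic-optimal $\opta \subsetneq \var$ with $\geo{\opta}$ a $\grpy$-translate lies inside $\var(\grpy)$. This sets up a Noetherian induction on $\dim \var$: if we can show that only finitely many $\grpy$ arise from geodesic-optimal $\opta$ not already forced into a proper closed subvariety accounted for by the induction, then the result follows by applying the inductive hypothesis to the irreducible components of the relevant $\var(\grpy)$, observing that geodesic-optimality in $\var$ passes to geodesic-optimality in any containing subvariety.

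Third and most delicate, one must rule out an infinite sequence $(\grpy_n)$ each contributing its own ``fresh'' atypical component. Here I would invoke Theorem~\ref{thm:axabvar1}: lifting an analytic germ $K$ of such a component $\opta_n$ to $\tilde{a}_n + \ts{\grpy_n} \subset \ts{\grpx}$ and applying Ax's theorem to $B = \opta_n$ (which is already the Zariski closure of $\exp(K)$ since $\geo{\opta_n}$ is the smallest coset containing $\opta_n$), one obtains the bound $\geodef(\opta_n) \le \dim \grpy_n - \dim K$; combined with the atypical-dimension condition this forces genuine algebraic constraints on the $\grpy_n$. Marrying this analytic constraint with the constructibility of the loci $\var(\grpy)$ in the universal family of quotients of $\grpx$ should trap the $\grpy_n$ inside a finite algebraic family, whence finiteness. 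The main obstacle, and the step I expect to take the most care, is exactly this final combination: converting the analytic inequality supplied by Ax into genuine algebraic finiteness typically requires a careful use of the relative Hilbert/Chow scheme parametrising irreducible components of $\var \cap (v+\grpy)$ as $(\grpy,v)$ vary, so that ``Ax-type'' atypicality for the generic point of a positive-dimensional family propagates consistently to all members and Noetherianity can close the induction.
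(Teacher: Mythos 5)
Your proposal identifies the right ingredients (Ax's Theorem \ref{thm:axabvar1} plus some induction), but the step that actually carries the proposition --- showing that only finitely many abelian subvarieties $\grpy$ can occur --- is exactly the step you leave as a hope ("should trap the $\grpy_n$ inside a finite algebraic family"), and the tools you name there do not supply it. The abelian subvarieties of $\grpx$ form a countable, rigid set, not an algebraic family: there is no finite-type "universal family of quotients of $\grpx$" over which your loci $\var(\grpy)$ could vary constructibly, and a relative Hilbert/Chow scheme cannot let $\grpy$ move algebraically. Moreover, your invocation of Ax is vacuous as stated: applying Theorem \ref{thm:axabvar1} with $U=\ts{\grpy_n}$ to a germ $K$ of $\opta_n$ inside $\tilde a_n+\ts{\grpy_n}$ returns $\geodef(\opta_n)\le \dim\grpy_n-\dim\opta_n$, which is just the definition of the geodesic defect of $\opta_n$, so it places no constraint whatever on the sequence $(\grpy_n)$. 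The paper's proof gets traction from Ax only by testing $\cV$ against an entire \emph{definable} family of translated complex subspaces (kernels of endomorphisms of $\ts{\grpx}$, the sets $\defF_r$, $\defE_r$): for a pair $(z,M)$ that locally maximises the intersection dimension, Ax forces $\ker M$ to be the tangent space of an abelian subvariety; the set of such kernels is then simultaneously definable in $\RRan$ and countable, hence finite. That "countable definable set is finite" mechanism (or, in R\'emond's algebraic treatment, an explicit degree bound on the admissible $\grpy$, combined with finiteness of abelian subvarieties of bounded degree) is precisely what your sketch is missing, and without one of these the finiteness does not follow.

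A secondary inaccuracy: your reformulation of geodesic-optimality as "$\opta$ is a fibre component of $\pi_\grpy|_\var$ of dimension strictly exceeding the generic fibre dimension" is not an equivalence. Comparing only with $B=\var$ gives $\dim\opta>\dim\grpy-\dim\geo{\var}+\dim\var$, and since $\grpy\subset$ the subgroup underlying $\geo{\var}$ one has $\dim\pi_\grpy(\var)\le\dim\geo{\var}-\dim\grpy$, so this bound is weaker than exceeding the generic fibre dimension; indeed geodesic-optimal singletons (with $\grpy=0$), and more generally generic fibre components whose coset closure is the whole fibre coset, are geodesic-optimal without being atypical for the fibration. The correct implication runs the other way (maximal anomalous $\Rightarrow$ geodesic-optimal), so the induction you build on the loci $\var(\grpy)$ would not account for all geodesic-optimal subvarieties even if the finiteness of the $\grpy$'s were granted.
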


Any
positive dimensional, geodesic-optimal
subvariety $\opta\subsetneq \var$
 is ``$\mu$-anormal maximal'' for a certain $\mu$
in R\'emond's terminology  \cite{RemondInterIII}.
His Lemme 2.6 and Proposition 3.2 together imply that $\geo{\opta}$
is a translate of an abelian subvariety coming from a finite set that
depends only on $\var$. Thus our proof Proposition \ref{prop:geofinite} can be
regarded as an alternative approach to R\'emond's result using the language of
o-minimal structures. 

%% The tangent space of
%%  $\grpx$ at the origin, $\ts{\grpx}$, is a complex vector space of dimension $g=\dim
%%  \grpx$. 
We retain the meaning of the symbol $\ts{\grpx}$ from Section \ref{sec:specialsubvar}
and  write $\exp:\ts{\grpx}\rightarrow \grpx(\IC)$  for the exponential map. It
is a holomorphic group homomorphism between complex manifolds 
whose kernel is the period lattice of $\grpx$.
%% We write $\exp:\ts{\grpx}\rightarrow \grpx(\IC)$  for the exponential map. It
%% is a holomorphic group homomorphism between complex manifolds. 
%% Its  kernel $\Omega_\grpx$ is a discrete subgroup 
%% of $\ts{\grpx}$ of rank $2g$ called the period lattice of $\grpx$.
We choose a basis of the period lattice and identify
$\ts{\grpx}$ with $\IR^{2g}$ as a real vector space. 
However, we continue to use both symbols $\ts{\grpx}$ and $\IR^{2g}$; the
former
is useful to emphasise the complex structure
and the latter is required as an ambient set for an o-minimal structure.

%% \begin{theorem}[Ax]
%%   Let $U\subset \ts{\grpx}$ be a complex subspace and $z\in\ts{\grpx}$. 
%% Let $K$ be an irreducible analytic subset of an open neighborhood of $z$ in
%% $z+U$. 
%% If  $S$ is the Zariski closure of $\exp(K)$ in $\grpx$ then $S$ is
%% irreducible and
%% \begin{equation*}
%%   \geodef(S) \le \dim U - \dim K. 
%% \end{equation*}
%% \end{theorem}
%% \begin{proof}
%%   See Corollary 1 \cite{Ax72}.
%% \end{proof}

The open, semi-algebraic set $(-1,1)^{2g}$ contains a fundamental
domain for the period lattice $\IZ^{2g}\subset\IR^{2g}$ in real
coordinates. Under the identification $\IR^{2g}\cong \ts{\grpx}$ we fixed
above, we may consider $(-1,1)^{2g}$ as a domain in $\ts{\grpx}$. 

Let $\var$ be a subvariety of $\grpx$. Then 
\begin{equation*}
 \cV = \exp|_{(-1,1)^{2g}}^{-1}(\var(\IC)) 
\end{equation*}
is a subset of $\IR^{2g}$ and  definable in $\IRan$.
 But under the isomorphism  mentioned before, $\cV$
is also a complex analytic subset of $(-1,1)^{2g}\subset \ts{\grpx}$. Thus it is a complex
analytic space. The interplay of these  two points of view will have
many consequences. For an in-depth comparision between complex and o-minimal geometry
we refer to Peterzil and Starchenko's paper
\cite{PeterzilStarchenkoICM}.

Indeed, suppose $Z$ is an analytic subset of a domain in $\ts{\grpx}$ and
$z\in Z$. Then some open neighborhood of $z$ in $Z$ is definable in
$\IRan$
as $Z$ is defined by the vanishing of certain holomorphic functions. 
So the dimension $\dim_z Z$ of $Z$ at $z$ as a set definable in $\IRan$ is
well-defined.
But there is also the  notion of the dimension   of $Z$ at $z$ as a complex
analytic space \cite{CAS}. 

In this section we will add the subscript $\IC$ to the dimension
symbol to signify the dimension as a complex analytic space.

As $\dim\IC = 2 = 2\dim_{\IC}\IC$ the following lemma not
surprising. 
%!!!POSSIBLY OMIT LEMMA LATER.
\begin{lemma}
  Let $Z$ be an analytic subset of a finite dimension
  $\IC$-vector space.
If $z\in Z$ then $\dim_z Z = 2\dim_{\IC,z} Z$.
\end{lemma}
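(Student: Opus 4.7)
\emph{Plan.} The assertion is local at $z$ and compares two notions of dimension for a complex-analytic germ: the o-minimal dimension (viewing $Z$ as a set definable in $\IRan$ via local holomorphic equations) and the complex-analytic dimension. The strategy is to decompose $Z$ near $z$ into its finitely many irreducible complex-analytic components, reduce to one component, and then compare the two dimensions on the regular stratum, inducting on the complex dimension.

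First, since both dimensions at $z$ depend only on the germ of $Z$ at $z$, I would replace $Z$ by its intersection with a small open polydisc on which $Z$ is cut out by finitely many holomorphic equations, so that $Z$ is simultaneously a complex-analytic subset and definable in $\IRan$. Then $Z$ admits a finite decomposition $Z = W_1 \cup \cdots \cup W_k$ into irreducible complex-analytic components through $z$. Both $\dim_z$ and $\dim_{\IC,z}$ of a finite union at a common point equal the maximum over the pieces, so it suffices to establish the statement for a single irreducible component $W$ of complex dimension $d = \dim_{\IC} W$.

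For such a $W$, the regular locus $W^{\mathrm{reg}}$ is open and dense in $W$, is a complex submanifold of complex dimension $d$, and hence a real-analytic submanifold of real dimension $2d$; the singular locus $W^{\mathrm{sing}}$ is a proper complex-analytic subset of complex dimension at most $d-1$. Both are definable in $\IRan$ (the regular locus being cut out by a rank condition on the Jacobian of local holomorphic defining functions). Since $z \in W = \overline{W^{\mathrm{reg}}}$, every sufficiently small neighborhood of $z$ meets $W^{\mathrm{reg}}$, so o-minimal dimension theory yields $\dim_z W \ge \dim_z W^{\mathrm{reg}} = 2d$.

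For the reverse inequality I would induct on $d$. The case $d=0$ is trivial, with $W=\{z\}$. For $d\ge 1$, the decomposition $W = W^{\mathrm{reg}} \cup W^{\mathrm{sing}}$ and the fact that the o-minimal dimension of a finite union is the maximum of the dimensions of the pieces reduce matters to bounding $\dim_z W^{\mathrm{sing}}$; applying the inductive hypothesis to each irreducible component of $W^{\mathrm{sing}}$ through $z$ gives $\dim_z W^{\mathrm{sing}} \le 2(d-1) < 2d$, and the conclusion $\dim_z W = 2d = 2\dim_{\IC,z} W$ follows. The only real obstacle is verifying that in $\IRan$ the local dimension is well-behaved under closures and finite unions and that the o-minimal dimension of a definable $C^1$-manifold coincides with its manifold dimension; both are standard consequences of cell decomposition in o-minimal structures, so no substantive difficulty arises beyond the bookkeeping.
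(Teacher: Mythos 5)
Your proof is correct, but it takes a genuinely different route from the paper's. You establish the two inequalities separately: the lower bound $\dim_z W \ge 2d$ from density of the regular locus, and the upper bound by induction on the complex dimension via the stratification $W = W^{\mathrm{reg}} \cup W^{\mathrm{sing}}$, with $W^{\mathrm{sing}}$ analytic of complex dimension at most $d-1$. The paper avoids any induction: after shrinking so that the irreducible representative is definable and satisfies $\dim_z Z = \dim Z$, it takes a cell decomposition, removes the closures of the lower-dimensional cells, and notes that the remaining open subset must contain a point $z'$ which is smooth for the analytic structure; there both dimensions are read off at once ($\dim_{z'}Z=\dim Z$, and $\dim_{\IC,z'}Z=\dim_{\IC,z}Z$ by pure dimensionality of the irreducible germ), which gives the identity at $z$ in one stroke. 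Both arguments rest on the same two pillars --- at a regular point the set is locally a complex manifold, and regular points are dense --- but your route additionally needs $W^{\mathrm{reg}}$ and $W^{\mathrm{sing}}$ to be definable, which the paper's does not. On that point, your parenthetical that $W^{\mathrm{reg}}$ is ``cut out by a rank condition on the Jacobian of local holomorphic defining functions'' should be stated for generators of the full ideal sheaf of $W$: for an arbitrary defining system (e.g. $z_1^2$ for $\{z_1=0\}$) the Jacobian rank can drop at regular points. The clean fact to invoke is that the singular locus of an analytic set is itself analytic (coherence), hence definable in $\IRan$ after shrinking; with that, your induction closes. What your approach buys is that you never need to arrange $\dim_z Z=\dim Z$ by shrinking or to invoke equidimensionality of the germ; what the paper's buys is a shorter argument with no induction and no appeal to analyticity of the singular locus.
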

\begin{proof}
Locally at $z$ the complex analytic space $Z$ is a finite union of
prime components, each of which is analytic in a neighborhood of $z$
in $Z$. Without loss
of generality we may assume that $Z$ is irreducible at $z$. After
shrinking $Z$ further we may suppose that $Z$ is irreducible,
definable in $\IRan$, and satisfies
$\dim_z Z = \dim Z$. We fix a decomposition of $Z$ into cells
$D_1\cup\cdots\cup D_N$ and write $Z'= Z \ssm\bigcup_{\dim D_i < \dim Z}
\overline{D_i}$ 
where the bar signifies closure in $Z$. Then $Z'$ is an open and non-empty subset of $Z$. 
So it must contain a smooth point $z'$ of the complex analytic space
$Z$. Around $z'$ we find $\dim_{z'} Z = 2\dim_{\IC,z'}  Z =
2\dim_{\IC,z}Z$ 
since $Z$, as an analytic space, is equidimensional. 
But $z'$ is contained in a cell $D_i$ of dimension $\dim Z$. So 
$\dim Z = 2\dim_{\IC,z}Z$. Our claim now follows from $\dim Z =
\dim_{z} Z$. 
\end{proof}

We fix an isomorphism of $\End{\ts{\grpx}}$, the endomorphisms of $\ts{\grpx}$
as a $\IC$-vector space, 
with $\IR^{2g^2}$ as an $\IR$-vector space.  
Let $\cO\subset \End{\ts{\grpx}}$  be definable in $\IRan$ and satisfy
 the
following $2$ properties. 
\begin{enumerate}
\item [(i)] We have $0\in \cO$. 
\item[(ii)] If $M\in \cO$ and if $\grpy\subset\grpx$ is an abelian
  subvariety of $\grpx$, then $\ts{\grpy}\cap\ker M$ is the kernel of
  an element in $\cO$.
\end{enumerate}
In particular, 
$\cO$ contains an element whose kernel is the tangent space of any
given abelian subvariety of $\grpx$. 

Suppose $0\le r\le g$ is  an integer. We set
\begin{alignat*}1
  \defF_r = \big\{&(z,M)\in \cV\times \cO;\,\,
\dim_\IC\ker M = r \text{ and }  
\text{for all } N\in \cO \text{ with } 
 \ker M\subsetneq \ker N: \\
 & \dim \ker M - \dim_z \cV\cap (z+\ker M) <
 \dim \ker N - \dim_z \cV\cap (z+\ker N) \big\}.
\end{alignat*}
Then  $\defF_r$ is definable in $\IRan$ by  standard properties of
o-minimal structures, for example by  Proposition 1.5, chapter 4
\cite{D:oMin}   taking local dimensions is
definable. 
We  set
\begin{alignat*}1
\defE_r = \big\{&(z,M)\in \defF_r;\,\, 
\text{for all }M'\in \cO \text{ with } 
 \ker M'\subsetneq \ker M: \\
&\dim_z \cV \cap(z+\ker M') < \dim_z \cV \cap(z+\ker M)\big\}
%%   \defE_r = \big\{(z,M)\in \defF_r;\,\, 
%% &\text{for all }M'\in \cO \text{ with } 
%%  \ker M'\subsetneq \ker M\text{ and} \\
%% &\dim_\IC\ker M' = r-1 \text{ we have }
%%  (z,M')\not\in
%% \defF_{r-1} \big\},
\end{alignat*}
which is also definable in $\IRan$.

 Ax's  Theorem  \ref{thm:axabvar1} for abelian varieties will be used in 
\begin{lemma}
\label{lem:axapp1}
  \begin{enumerate}
  \item [(i)]
If $(z,M)\in \defE_r$
there is an abelian subvariety $\grpy\subset \grpx$ with $\ts{\grpy} = \ker
M$. 
\item[(ii)] 
The set
\begin{equation*}
  \left\{ \ker M;\,\, (z,M)\in \defE_r \right\}
\end{equation*}
is finite. 
\item[(iii)] Let $\opta$ be a geodesic-optimal subvariety of $\var$
and let $\geo{\opta}$ be a translate of an abelian subvariety $\grpy$. 
If $r=\dim_\IC \grpy$ and
 $M\in \cO$ with $\ts{\grpy} = \ker M$
 then $(z,M)\in\defE_r$ for some $z\in (-1,1)^{2g}$.
  \end{enumerate}
\end{lemma}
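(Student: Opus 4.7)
My plan is to prove the three parts in sequence, using Ax's Theorem \ref{thm:axabvar1} as the main analytic input, together with the two defining properties of $\cO$ and the dictionary $\dim_z Z = 2\dim_{\IC,z} Z$ between o-minimal and complex-analytic local dimensions from the preceding lemma.

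For (i), I take a complex-analytic irreducible component $K$ of $\cV \cap (z + \ker M)$ at $z$ of maximal complex dimension, and apply Ax to $K$: the Zariski closure $\optb$ of $\exp(K)$ in $\grpx$ is irreducible, and writing $\geo{\optb} = p + \grpy$ for some abelian subvariety $\grpy \subset \grpx$, we get $\dim_\IC \grpy - \dim_\IC \optb \leq \dim_\IC \ker M - \dim_\IC K$. I then rule out both directions of strict inclusion. If $\ker M \not\subset \ts{\grpy}$, then since $\exp(K) \subset \optb \subset p + \grpy$ we have $K \subset z + (\ker M \cap \ts{\grpy})$ locally, and property (ii) of $\cO$ supplies $M' \in \cO$ with $\ker M' = \ker M \cap \ts{\grpy} \subsetneq \ker M$; maximality of $K$ forces $\dim_{\IC,z} \cV \cap (z + \ker M') \geq \dim_\IC K = \dim_{\IC,z} \cV \cap (z + \ker M)$, violating the $\defE_r$-strict inequality. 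If $\ker M \subsetneq \ts{\grpy}$, property (ii) applied to the zero element produces $N \in \cO$ with $\ker N = \ts{\grpy}$; the $\defF_r$-strict inequality at $(z,M)$ combined with $\dim_{\IC,z} \cV \cap (z + \ts{\grpy}) \geq \dim_\IC \optb$ (since $\exp^{-1}(\optb) \subset z + \ts{\grpy}$ locally) gives $\dim_\IC \ker M - \dim_\IC K < \dim_\IC \ts{\grpy} - \dim_\IC \optb$, contradicting Ax's bound.

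For (ii), the image of $\defE_r$ under the $\IRan$-definable map $(z,M) \mapsto \ker M$ into the appropriate Grassmannian is definable in $\IRan$, yet by (i) it consists only of tangent spaces of abelian subvarieties of $\grpx$, of which there are only countably many; in any o-minimal structure a countable definable set is finite. For (iii), choose a sufficiently generic smooth point $a \in \opta$ lying on no other irreducible component of $\var \cap \geo{\opta}$ and a preimage $z \in (-1,1)^{2g}$ of $a$ under $\exp$, so that $\dim_{\IC,z} \cV \cap (z + \ts{\grpy}) = \dim_\IC \opta$. To verify the $\defF_r$-strict inequality at $(z,M)$ for $N \in \cO$ with $\ker N \supsetneq \ts{\grpy}$, suppose for contradiction that $d := \dim_{\IC,z} \cV \cap (z + \ker N) \geq \dim_\IC \ker N - \dim_\IC \ts{\grpy} + \dim_\IC \opta$; take a max-dimensional component $K$ with $\dim_\IC K = d$, and let $\optb$ be the Zariski closure of $\exp(K)$. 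Ax gives $\geodef(\optb) \leq \dim_\IC \ker N - d \leq \geodef(\opta)$. If $\opta \subset \optb$, geodesic-optimality forces $\geodef(\optb) > \geodef(\opta)$, contradicting Ax; if $\optb \subset \opta$, then $d = \dim_\IC K \leq \dim_\IC \optb \leq \dim_\IC \opta$, contradicting $d > \dim_\IC \opta$; the remaining incomparable case is excluded by the genericity choice of $a$. For the $\defE_r$-refinement with $M' \in \cO$ and $\ker M' \subsetneq \ts{\grpy}$, if equality $\dim_{\IC,z} \cV \cap (z + \ker M') = \dim_\IC \opta$ were to hold, a max-dimensional component $K'$ would coincide locally with $\exp^{-1}(\opta)$ (by the unique-component property at $z$), forcing $T_a \opta \subset \ker M' \subsetneq \ts{\grpy}$, and Ax applied to $K'$ (Zariski closure $\opta$) would yield $\dim_\IC \ts{\grpy} \leq \dim_\IC \ker M'$, a contradiction.

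The main obstacle I anticipate is the incomparable case in (iii), where the subvariety $\optb$ produced by Ax neither contains nor is contained in $\opta$; excluding this requires choosing $a$ to avoid a countable union of proper constructible subsets of $\opta$ parameterising the bad configurations (using geodesic-optimality to control the candidate $\optb$'s), or alternatively replacing $\opta$ and $\optb$ by the component of $\var$ cut out by the smallest common coset, then invoking geodesic-optimality via a Noetherian iteration. The bookkeeping around the three dimension notions (the o-minimal $\dim_z$, complex-analytic $\dim_{\IC,z}$, and complex-algebraic $\dim_\IC$) also requires consistent care throughout.
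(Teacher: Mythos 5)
Parts (i) and (ii) follow the paper's own route, and your handling of the $\defE_r$-refinement in (iii) --- a second application of Ax's Theorem \ref{thm:axabvar1} to the local branch of $\exp^{-1}(\opta)$ inside $z+\ker M'$ --- is a correct small variant of the paper's argument (the paper instead sums $\dim\grpx$ copies of $\opta-\exp(z)$ inside the group $\exp(\ker M')$ to conclude $\ts{\grpy}\subset\ker M'$). The genuine gap is exactly the point you flag yourself: in the verification of the $\defF_r$-inequality in (iii), the case where $\optb=\zcl{\exp(K)}$ is incomparable with $\opta$ is not excluded, and neither of your proposed remedies works as stated. The point $z$ must be fixed before $N$ is quantified, while the candidate subvarieties $\optb$ arise from components of $\cV\cap(z+\ker N)$ as $N$ runs over the typically uncountable definable set $\cO$ (in the application $\cO=\End{\ts{\grpx}}$); so the ``bad configurations'' are not a countable family of proper constructible subsets of $\opta$, and a union over uncountably many $N$ of proper bad loci need not be avoidable by one choice of $a$. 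The alternative of passing to ``the component cut out by the smallest common coset'' with a Noetherian iteration is not developed enough to assess, and there is no evident reason the defect bookkeeping closes up.

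The paper avoids the trichotomy altogether by proving $\opta\subset\optb$ outright. Both $K$ and $z+\ts{\grpy}$ sit inside $z+\ker N$, so the dimension inequality for intersections of analytic sets (Chapter 5, \S 3 of Grauert--Remmert) gives $\dim_{\IC,z} \bigl(K\cap(z+\ts{\grpy})\bigr)\ge \dim_{\IC,z}K+\dim_\IC\ts{\grpy}-\dim_\IC\ker N\ge\dim_\IC\opta$, the second inequality being your contradiction hypothesis. By the choice of $z$, the set $\cV\cap(z+\ts{\grpy})$ is smooth at $z$ of dimension $\dim_\IC\opta$, with unique local branch $K'$ equal to the branch of $\exp^{-1}(\opta)$; hence $K\cap(z+\ts{\grpy})$, and a fortiori $K$, contains a neighborhood of $z$ in $K'$, and taking Zariski closures of images under $\exp$ gives $\opta\subset\optb$. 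Now $\geodef(\optb)\le\geodef(\opta)$ together with geodesic-optimality forces $\opta=\optb$, so $\dim_\IC K\le\dim_\IC\opta$, and feeding this back into $\dim_\IC\ts{\grpy}-\dim_\IC\opta\ge\dim_\IC\ker N-\dim_\IC K$ yields $\dim_\IC\ts{\grpy}\ge\dim_\IC\ker N$, contradicting $\ts{\grpy}\subsetneq\ker N$. This containment step is the missing idea; once it is in place, your case analysis and the genericity you hoped for become unnecessary.
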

\begin{proof}
  Say $(z,M)\in \defE_r$ is as in (i). We apply Ax's Theorem  \ref{thm:axabvar1}
 to $U=\ker M$. For this we  fix a prime component (in the complex
 analytic sense) $K\subset \cV \cap (z+U)$ with
$\dim_{\IC,z} K = \dim_{\IC,z} \cV\cap (z+U)$. 
By shrinking $K$ to an open neighborhood of $z$ we may assume that $K$
is irreducible and definable in $\IRan$.
Let $\optb\subset \grpx$ be as in Ax's theorem.%%  with
%%  Moreover, we have $\dim_z K  = \dim_z \cV\cap
%% (z+U)$ as the dimension of a definable set is twice its dimension as
%% an analytic set.
%% This equality enables us to apply Lemma \ref{lem:dimensiondrop}. 

If $\geo{\optb}$ is a translate of the abelian subvariety
$\grpy\subset \grpx$ then $K\subset z+\ts{\grpy}$ and so
\begin{equation*}
  K \subset z + U\cap \ts{\grpy}. 
\end{equation*}
Observe that  $U\cap\ts{\grpy}$ is the kernel of an element in
$\cO$ by property (ii) above.
By the definition of $\defE_r$ we must have
$U\cap\ts{\grpy} = U$. Therefore, $U\subset\ts{\grpy}$. 
%%  \ref{lem:dimensiondrop} and the hypothesis $(z,M)\in
%% \defE_r$ we  have $U\subset \ts{\grpy}$. 

Next we prove that equality
holds. Indeed, if $U\subsetneq \ts{\grpy}$, then we may test $U$ against
any $N\in\cO$ with $\ker N =\ts{\grpy}$ in the definition of $\defF_r$. Therefore, 
\begin{alignat*}1
\dim U - \dim_z K &= 
  \dim U  - \dim_z\cV\cap (z+ U)\\
 &< \dim\ts{\grpy}  - \dim_z \cV\cap (z+\ts{\grpy})\\
&\le \dim\ts{\grpy}  - \dim_z \exp|_{(-1,1)^{2g}}^{-1}(\optb(\IC))
\end{alignat*}
where the final inequality required
$\optb\subset \var\cap (\exp(z)+\grpy)$. 
%$\exp|_{(-1,1)^{2g}}^{-1}(S(\IC)) \subset \mathcal{V}\cap(z+\ts{H})$. 
On passing to  complex dimensions we obtain
$\dim_\IC U - \dim_{\IC,z} K < \dim_\IC \grpy -\dim_{\IC,z} \optb =
\geodef(\optb)$ which
 contradicts the conclusion of Ax's Theorem. So we must have
$ \ker M = U = \ts{\grpy}$ and part (i) follows. 

Now we prove (ii) by showing that only finitely many  possible kernels $\ker M$ can arise
from  $(z,M)\in \defE_r$.
The image of $\defE_r$ under the projection
$\ts{\grpx}\times\End{\ts{\grpx}}\rightarrow\End{\ts{\grpx}}$
is definable in $\IRan$.
By Lemma \ref{lem:axapp1}(i) the kernel of $M$ is the tangent space of an abelian subvariety of
$\grpx$. 
But $\grpx$ has at most countably many abelian
subvarieties which leaves us with at most countably many possible kernels.
We fix a $\IC$-basis for $\ts{\grpx}$ and identify each $M$ with the
corresponding $g\times g$ matrix. The Pl\"ucker coordinates of a submatrix of $M$ with
maximal rank are in a countable set of an appropriate projective space. Pl\"ucker coordinates
are algebraic expressions in the entries of $M$. So we end up with a
countable and definable subset on each member of some affine covering
of projective space. 
A countable,
definable set is be finite and so there are at most finitely many $\ker M$.

Let $\opta$ and $\grpy$ be as in (iii). Since $\opta$ is a geodesic-optimal
subvariety of $\var$ it
must be an irreducible component of $\var\cap\geo{\opta}$. 
Let us fix $z\in \cV$ such that $\exp(z)$ is a smooth complex
point of $\opta$ that is not contained in any other irreducible component
of $\var\cap\geo{\opta}$. 

We first prove $(z,M)\in \defF_r$ by contradiction. 
So suppose there exists $N\in\cO$ with $\ts{\grpy}\subsetneq \ker N$ and
\begin{equation}
\label{eq:dimineq}
  \dim \ts{\grpy} - \dim_z \cV\cap (z+\ts{\grpy}) 
\ge \dim\ker N - \dim_z\cV\cap (z+\ker N). 
\end{equation}

As in (i) we  fix a prime component $K$
 of $\cV\cap(z+\ker N)$ that
passes through $z$ with $\dim_{\IC,z} K = \dim_{\IC,z}\cV\cap(z+\ker
N)$
and  is irreducible.
%% By shrinking $K$ we may assume that it is irreducible. 
Let $\optb$ be the Zariski closure of $\exp(K)$, then Ax's Theorem
implies $\geodef(\optb)\le \dim_\IC \ker N - \dim_\IC K$. 
As $\exp$ is locally biholomorphic our choice of $z$ implies that $z$ is a smooth point
of the complex analytic set $\cV\cap (z+\ts{\grpy})$ which has dimension
$\dim_\IC \opta$ at this point. 
So
\begin{equation}
\label{eq:geodefineq}
  \geodef(\opta) = \dim_\IC \ts{\grpy} - \dim_\IC \opta \ge \dim_\IC\ker N -
  \dim_\IC K \ge \geodef(\optb) 
\end{equation}
follows from (\ref{eq:dimineq}) after dividing by $2$.

By smoothness, the intersection $\cV\cap(z+\ts{\grpy})$ has a unique prime component $K'$ passing
through $z$. 
The dimension inequality for intersections, cf. Chapter 5, \S 3 \cite{CAS}, implies
\begin{equation*}
  \dim_{\IC,z} K\cap (z+\ts{\grpy}) \ge
\dim_{\IC,z}K +\dim_\IC \ts{\grpy}- \dim_\IC\ker N . 
\end{equation*}
Inequality (\ref{eq:dimineq}) and the discussion above imply that the right-hand
side is at least
$\dim_\IC \opta$. 
But $K\cap(z+\ts{\grpy})\subset \cV\cap(z+\ts{\grpy})$ and on
comparing
dimensions at $z$  we find that $K\cap(z+\ts{Y})$, and a fortiori $K$,
contains a neighborhood of $z$ in $K'$. 
 This implies $\opta\subset \optb$. 
But (\ref{eq:geodefineq}) and the fact that $\opta$ is
a geodesic-optimal  subvariety
forces $\opta=\optb$. So $\dim_\IC K \le \dim_\IC \opta$ 
and (\ref{eq:geodefineq}) applied again yields
$\dim_\IC \ts{\grpy} \ge \dim_\IC \ker N$, a contradiction. 

Second, we will  show
$(z,M)\in\defE_r$. Suppose on the contrary that there is 
$M'\in \cO$ with $\ker M'\subsetneq\ts{\grpy}$ and
\begin{equation*}
  \dim_z \cV\cap(z+\ker M') = \dim_z \cV\cap(z+\ts{\grpy}). 
\end{equation*}
%% We test against $\ts{\grpy}$ to find
%% \begin{equation*}
%%   \dim \ker M'- \dim_z \cV\cap (z+\ker M') 
%% <\dim \ts{\grpy} - \dim_z\cV\cap(z+\ts{\grpy}).
%% \end{equation*}
%% The last dimension equals $2\dim_\IC \opta$.
%% After rearranging we get
%% $\dim_\IC \opta \le \dim_{\IC,z} \cV\cap (z+\ker M')$. 
%% But the right-hand side is at most $\dim_\IC \opta$ since $\ker M'\subset
%% \ts{\grpy}$. 
%% Thus  $\cV\cap(z+\ker M')$ and 
%% $\cV\cap (z+\ts{\grpy})$ have equal dimension at $z$.
 The set on the right is  a complex analytic space, smooth at $z$, and contains
 the former.
So  $\cV\cap(z+\ker M')$ and $\cV\cap(z+\ts{\grpy})$ 
coincide on an open neighborhood of $z$ in $(-1,1)^{2g}$. 
Therefore, an open neighborhood of $0$ in $\opta(\IC)-\exp(z)$ is
contained
in the group $\exp(\ker M')$; here and below we use the Euclidean
topology. Said group need not be algebraic or even closed, but it
does contain an open, non-empty  subset of the complex points of
\begin{equation*}
  \underbrace{(\opta-\exp(z))+\cdots +(\opta-\exp(z))}_{\text{$\dim \grpx$ terms}}.
\end{equation*}
This sum  equals $\langle \opta-\exp(z)\rangle = \grpy  =
\exp(\ts{\grpy})$. Hence
$\ts{\grpy}\subset \ker M'$, which is the desired contradiction. 
\end{proof}

%% \begin{proposition}
%%   Let $\cO\subset\End{\ts{\grpx}}$ be as above. 
%% \end{proposition}

\begin{proof}[Proof of Proposition \ref{prop:geofinite}]
We will work with $\cO = \End{\ts{\grpx}}$. 
 Suppose that $\opta$ is a geodesic-optimal subvariety of $\var$.
Let us  fix $M\in\cO$
such that
$\geo{\opta}$
is the translate of an abelian subvariety whose tangent space is
$\ker M$. 
Then $\ker M$ 
lies in a finite set by  Lemma
\ref{lem:axapp1} parts (ii) and (iii). So $\geo{\opta}$ is the translate of
an abelian subvariety of $\grpx$ coming from a finite set.
\end{proof}

Let $\var\subset\grpx$ be a subvariety. 
We brief discussion the connection between Proposition \ref{prop:geofinite}
and anomalous subvarieties as introduced by Bombieri, Masser, and
Zannier
\cite{BMZGeometric}.

\begin{definition}
\label{def:anomalous}
Let $\grpx$ and $\var$ be as above. 
A 
 subvariety $\opta\subset \var$ is called anomalous if 
\begin{equation}
\label{eq:defanomalous}
 \dim \opta \ge \max\{1, \dim\geo{\opta}+\dim \var - \dim \grpx + 1\}. 
\end{equation}
If in addition 
$\opta$ is not contained in any strictly larger
anomalous subvariety of $\var$, then we call $\opta$ maximal
anomalous. 
The complement in $\var$ of the union of all anomalous subvarieties of
$\var$ is denoted by $\oa{\var}$. 
\end{definition}

Any maximal anomalous subvariety $\opta$ of $\var$ is
geodesic-optimal. Indeed, 
after enlargening there is a geodesic-optimal subvariety $\optb$ of $\var$ with
$\opta\subset \optb$ and $\geodef(\optb)\le \geodef(\opta)$. So
\begin{equation*}
  \dim \optb \ge \dim\geo{\optb} - \dim\geo{\opta} + \dim \opta
\ge \dim\geo{\optb} + \dim \var - \dim \grpx + 1
\end{equation*}
due to (\ref{eq:defanomalous}). Since $\dim \optb\ge \dim \opta\ge 1$ we see
that $\optb$ is anomalous. As $\opta$ is maximal anomalous we find
$\optb=\opta$. So $\opta$ is
geodesic-optimal.

According to  Proposition \ref{prop:geofinite}, $\geo{\opta}$ is the
translate of an abelian subvariety coming from a finite set which
depends only on $\var$. Let $\grpy$ be such an abelian subvariety. By a basic result in  dimension theory  of algebraic
varieties  the set of points $\var(\Kbar)$ at  which
$\grpx\rightarrow \grpx/\grpy$ restricted to $\var$ 
has a fibre greater or equal to  some prescribed value is Zariski closed
in $\var$. It follows that $\oa{\var}$ is Zariski open in $\var$;
we may thus use the notation $\oa{\var}(\Kbar)$ for $\Kbar$-rational
points in $\oa{\var}$. 

Openness of $\oa{\var}$ previously known due to work of R\'emond
\cite{RemondInterIII} and proved earlier in the toric setting by
Bombieri, Masser, and Zannier \cite{BMZGeometric}. 

We remark that $\oa{\var}$ is possibly empty. Moreover,
$\oa{\var}=\emptyset$ if and only if there exists a $\grpy$ as above such that
\begin{equation*}
  \dim \varphi(\var) < \min \{\dim \grpx/\grpy,\dim \var\} 
\end{equation*}
where $\varphi:\grpx\rightarrow \grpx/\grpy$.

\subsection{Mobius varieties}

Let $X=Y(1)^n$.

It is convenient to introduce a family subvarieties of $\HH^n$ 
parameterised by choices of elements of $\HH$ and 
${\rm GL}_2^+(\RR)$ 
in which weakly special subvarieties are the fibres corresponding
to the ${\rm GL}_2^+(\RR)$ parameters having their image
under scaling
in ${\rm SL}_2(\RR)$ lying in the image of
${\rm GL}_2^+(\QQ)$. Observe that this image is a countable set.
In [\PILAOAO] these were termed ``linear subvarieties'' but 
the denotation ``Mobius'' seems to be more appropriate.

We take $z_i$ as coordinates in $\HH^n$ and $g_i, i=2,\ldots, n$ as coordinates on 
${\rm GL}_2^+(\RR)^{n-1}$
The family of Mobius curves in $\HH^n$ is the locus
$$
M^{\{1,\ldots,n\}}\subset \HH^n\times {\rm GL}_2^+(\RR)^{n-1}
$$
defined by the equations $z_i=g_iz_1, i=2,\ldots, n$. We view this as a 
family of curves in $\HH^n$ parameterised by 
$g=(g_2,\ldots, g_n)\in  {\rm GL}_2^+(\RR)^{n-1}$.
For a subset 
$R\subset\{1,\ldots,n\}$ we define $M^R$ to be the family of 
Mobius curves
on the product of factors of $\HH^n$ over indices in $R$, parameterised
by the corresponding factors of ${\rm GL}_2^+(\RR)^{n}$ excluding the 
smallest one which plays the role of $z_1$, which we will denote 
${\rm GL}_2^+(\RR)^{R_i}$.

Let $R=(R_0, R_1,\ldots, R_k)$ be a strict partition as near
Definition \ref{def:wss}. 
Define the family
of Mobius subvarieties of type $R$ to be the locus
$$
M^R\subset \HH^n\times \HH^{R_0}\times 
\prod_{i=1}^k {\rm GL}_2^+(\RR)^{R_i}
$$
defined by equations placing the 
$\HH^{R_i}\times {\rm GL}_2^+(\RR)^{R_i}$
point in $M^{R_i}$ for $i=1,\ldots, k$, and each $R_0$-coordinate in $\HH^n$
is set equal to the corresponding coordinate in $\HH^{R_0}$. 
For each choice of parameters
$$
t\in M_R=\HH^{R_0}\times \prod_{i=1}^k {\rm GL}_2^+(\RR)^{R_i}
$$
the corresponding fibre $M^R_t$ is a {\it Mobius subvariety\/} $\HH^n$.

Like weakly special subvarieties, Mobius  subvarieties 
come in families of 
``translates''.
For a fixed $g\in \prod_{i=1}^k {\rm GL}_2^+(\RR)^{R_i}$, the choices of
$z\in \HH^{R_0}$ give a family of Mobius subvarieties, the ``translates'' 
of the corresponding ``strongly Mobius subvariety'' $M_g$ of the appropriate
subproduct of $\HH^n$, and the totality of the translates form a Mobius subvariety
with no fixed coordinates.

A component $A\subset \HH^n$ is contained in some smallest Mobius subvariety
$L_A$, 
has a Mobius defect 
$$
\delta_M(A)=\dim L_A-\dim A
$$
A component $A\subset j^{-1}(V)$ will be called 
{\it Mobius optimal\/} (for $V$ in $X$) if there is no component $B$ with 
$A\subset B\subset j^{-1}(V)$ and $\delta_M(B)\le \delta_M(A)$.

%% \medskip
%% \noindent
%% {\bf 7.1. Proposition.\/} 
%% \medbreak
\begin{proposition}
 Assume WCA. Let $V\subset X$ be a subvariety. Then the set of 
$$
g\in\prod_{i=1}^k {\rm GL}_2^+(\RR)^{R_i}
$$
such that some translate of  $M_g$ intersects $j^{-1}(V)$  
in a component which is Mobius optimal for $V$ is finite
modulo the action by $\prod_{i}{\rm SL}_2(\ZZ)^{R_i}$. 
\end{proposition}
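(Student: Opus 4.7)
The plan is to mimic the proof of Proposition \ref{prop:geofinite} from the abelian setting, with WCA playing the role of Ax's theorem. Three ingredients suffice: o-minimal definability provided by the Peterzil-Starchenko result on $j$, a dimension-theoretic argument based on WCA (Formulation B) that forces Mobius parameters to be rational, and the principle that a countable definable set in an o-minimal structure is finite.

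First, reduce to working within fundamental domains. With $\FF_0$ the standard fundamental domain of $\mathrm{SL}_2(\ZZ)$ on $\HH$, the set $\cV := \pi^{-1}(V) \cap \FF_0^n$ is definable in $\RRanexp$ by Peterzil-Starchenko. Choose a semi-algebraic fundamental domain $\mathcal{G}_R \subset \prod_{i=1}^k \mathrm{GL}_2^+(\RR)^{R_i}$ for the action of $\prod_i \mathrm{SL}_2(\ZZ)^{R_i}$. For each pair $(d, m)$ of non-negative integers, let $E_{d, m}^R$ be the set of triples $(z, t, g) \in \cV \times \FF_0^{R_0} \times \mathcal{G}_R$ such that $z$ lies on the Mobius subvariety $M^R_{(t, g)}$ and the complex-analytic component of $\cV \cap M^R_{(t, g)}$ through $z$ is Mobius-optimal, of complex dimension $d$, and of Mobius defect $m = \dim M^R_{(t, g)} - d$. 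As in Lemma \ref{lem:axapp1}, the Mobius-optimality condition is encoded by maximality statements on local dimensions, so $E_{d, m}^R$ is definable in $\RRanexp$.

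Next, apply WCA to conclude rationality of $g$. Fix $(z, t, g) \in E_{d, m}^R$, let $A$ be the witnessed Mobius-optimal component, and write $L_A = M^R_{(t, g)}$. Since the defining relations $z_k = g_{k\ell} z_\ell$ of $L_A$ clear denominators to polynomial equations, $L_A$ is algebraic in $\CC^n$; hence $\zcl{A} \subset L_A$ and $\delta(A) \le \delta_M(A) = m$. Enlarge $A$ to an algebraically optimal component $B \supseteq A$ with $\delta(B) \le \delta(A)$ (possible since the component dimension is bounded by $n$). By WCA (Formulation B) applied to $B$, the variety $T := \zcl{B}$ is weakly special. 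Since $T$ is a Mobius subvariety containing $A$, minimality of $L_A$ gives $L_A \subset T$. Suppose $B \supsetneq A$: Mobius-optimality of $A$ forces $\delta_M(B) > m$, so $\dim L_B > \dim B + m$; but $L_B \subset T$ (as $T$ is a Mobius subvariety containing $B$) and $\dim T = \dim B + \delta(B) \le \dim B + m$, a contradiction. Hence $B = A$, and then $\dim T \le \dim A + m = \dim L_A$, which combined with $L_A \subset T$ and irreducibility forces $L_A = T$. Thus $L_A$ is weakly special, so $g \in \prod_i \mathrm{GL}_2^+(\QQ)^{R_i}$.

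Finally, conclude by o-minimality. The projection of $E_{d, m}^R$ onto the $g$-coordinate is definable in $\RRanexp$ and, by the previous step, contained in the countable set $\prod_i \mathrm{GL}_2^+(\QQ)^{R_i} \cap \mathcal{G}_R$. A countable definable set in $\RRanexp$ is finite, so this projection is finite for each $(d, m)$. Summing over the finitely many values $0 \le d, m \le n$ gives the claim. The main obstacle is the WCA application above: the dimension/defect bookkeeping forcing $B = A$ and then $L_A = T$ is the delicate step, while the rest is a formal o-minimal finiteness argument.
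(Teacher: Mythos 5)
Your overall route is the paper's: restrict to the fundamental domain where $\pi^{-1}(V)$ is definable, encode Mobius-optimality by local-dimension conditions over the definable family of Mobius subvarieties, use WCA to force the relevant fibres to be weakly special, and finish with ``countable definable $\Rightarrow$ finite''. Your WCA bookkeeping (enlarging $A$ to an optimal component $B$, showing $B=A$ and that $\zcl{A}$ equals $L_A$) is a correct expansion of the paper's one-line claim that such $g$ correspond to weakly special subvarieties; it is essentially the computation in the proof that Formulation B implies Formulation A.

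Two points in your write-up need repair. First, weak specialness of $L_A$ does not give $g\in\prod_i{\rm GL}_2^+(\QQ)^{R_i}$: the fibre determines each $g_i$ only up to a positive real scalar, so WCA yields that $g$ is a scaling of a rational tuple, and the set of such $g$ is uncountable. Your final containment of the $g$-projection in a countable set is therefore false as written; you must first normalise the parameters (e.g.\ scale each $g_i$ into ${\rm SL}_2(\RR)$, as in the paper's definition of the weakly special fibres of the Mobius family, whose scaled rational image is countable) and only then invoke finiteness of countable definable sets. Second, you never verify that every $g$ in the proposition's set is represented in your sets $E^R_{d,m}$: their defining conditions are read off from $\cV=\pi^{-1}(V)\cap\FF_0^n$, so you need the paper's step that every such $g$ has a $\prod_i{\rm SL}_2(\ZZ)^{R_i}$-translate for which the Mobius-optimal component has points of its full dimension inside $\FF_0^n$. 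Without this, the local dimensions computed inside the fundamental domain need not detect the true dimension, defect and optimality of the component (it could meet $\FF_0^n$ only in a lower-dimensional boundary piece), and some orbits could be missed by your definable sets. Both repairs are available and are exactly the clauses the paper's proof supplies; with them your argument closes.
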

\begin{proof}
  The condition is 
$\prod_{i}{\rm SL}_2(\ZZ)^{R_i}$ invariant, so the
assertion is that such $g$ come in finitely many 
$\prod_{i}{\rm SL}_2(\ZZ)^{R_i}$
orbits.
By WCA, any such $g$ corresponds to a weakly special subvariety,
and so the $g$ in question belong to a countable set. 
However, every such $g$ has
a translate under $\prod_{i}{\rm SL}_2(\ZZ)^{R_i}$ 
for which the optimal component
has points of its full dimension in some fixed fundamental 
domain, say $\FF_0^n$, and there
the condition of optimality may be checked 
definably by considering dimensions
of the intersection of $\pi^{-1}(V)\cap \FF_0^n$ with 
Mobius subvarieties over the whole
space of them, which is definable. Thus, there is a definable 
(in $\RRanexp$)
countable and hence finite set of $g$ which contains 
a representative of every 
$\prod_{i}{\rm SL}_2(\ZZ)^{R_i}$ orbit of such $g$.
\end{proof}

As observed, the $g$ above all correspond to weakly special families; however,
every $g$ corresponding to a weakly special family having a translate with  a
geodesic-optimal intersection will also appear in this set, as such $g$
(by WCA) are in particular optimal among Mobius varieties. We conclude
a modular version of
Proposition \ref{prop:geofinite}.
%% \medbreak
%% \noindent
%% {\bf 7.2. Proposition.\/} 

\begin{proposition}
\label{prop:geofinitenessY1n}
 Assume WMA. Let $V\subset X$ be a subvariety. Then there is a finite 
set of basic special subvarieties such that every weakly special subvariety which 
has a geodesic-optimal component in its intersection with $V$ is a translate 
of one of these.
\end{proposition}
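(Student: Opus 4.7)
The plan is to deduce this statement directly from the preceding proposition on Mobius-optimal $g$'s, by showing that any geodesic-optimal component is in particular Mobius-optimal. Since every weakly special subvariety of type $R=(R_0,R_1,\ldots,R_k)$ is a Mobius subvariety $M^R_{(t,g)}$ with $g\in\prod_{i=1}^k{\rm GL}_2^+(\QQ)^{R_i}$ and $t\in\HH^{R_0}$, each weakly special family naturally sits inside a Mobius family parameterized by $\prod_{i=1}^k{\rm GL}_2^+(\RR)^{R_i}$.

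Now let $W$ be a weakly special subvariety such that some translate $W_t$ intersects $j^{-1}(V)$ in a geodesic-optimal component $A$, and let $g$ be the corresponding rational Mobius parameter. I would verify that $A$ is Mobius-optimal for $V$. First, since $A\subset W$ with $W$ weakly special, the smallest Mobius subvariety $L_A$ containing $A$ is contained in $\langle A\rangle_{\rm geo}\subset W$, so $\delta_M(A)\le \delta_{\rm geo}(A)$. Suppose for contradiction there were a component $B$ with $A\subsetneq B\subset j^{-1}(V)$ and $\delta_M(B)\le \delta_M(A)$. Then, using WCA (Formulation B), I would pass to an optimal component $B^\ast\supset B$, which is geodesic, so that its Zariski closure is a weakly special subvariety. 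Tracking that optimality of $B^\ast$ gives $\delta(B^\ast)\le \delta(B)$ and that for geodesic components the Zariski-closure defect, Mobius defect, and geodesic defect all coincide, one would extract a component strictly larger than $A$ with geodesic defect at most $\delta_{\rm geo}(A)$, contradicting geodesic-optimality of $A$.

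Once $A$ is known to be Mobius-optimal, the preceding proposition applies and the corresponding Mobius parameter $g$ lies in one of finitely many orbits modulo $\prod_i{\rm SL}_2(\ZZ)^{R_i}$. Since $g$ is rational and $\prod_i{\rm SL}_2(\ZZ)^{R_i}$ acts by rational transformations preserving the weakly special structure and its strongly special envelope, each orbit gives rise to translates of a single strongly special subvariety. Running over the finitely many strict partitions $R$ of $\{1,\ldots,n\}$, one obtains the finite list of basic special subvarieties asserted in the statement.

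The main obstacle is the second step: making rigorous the comparison between Mobius, Zariski-closure, and geodesic defects when invoking WCA to convert a hypothetical failure of Mobius-optimality into a contradiction with geodesic-optimality. Concretely, one must show that for a geodesic component the Mobius defect equals the geodesic defect (so that the candidate $B$ provides a genuine defect comparison), and that the optimal component produced by WCA, which is geodesic, supplies a geodesic enlargement of $A$ with no greater geodesic defect. Keeping careful track of the ambient algebraic subvariety witnessing each defect, and confirming that the smallest Mobius subvariety containing a geodesic component is itself weakly special, is where the technical care is required.
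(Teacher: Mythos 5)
Your proposal is correct and follows essentially the same route as the paper, which deduces the proposition from the preceding Mobius-optimality statement via the one-line remark that a weakly special subvariety with a geodesic-optimal intersection is in particular optimal among Mobius subvarieties (by WCA). The defect comparisons you flag as the technical core (that $\delta\le\delta_M\le\delta_{\rm geo}$ for any component, with equality for geodesic components since a weakly special subvariety is both algebraic and a Mobius fibre) are exactly the bookkeeping implicit in the paper's remark, and your chain $\delta_{\rm geo}(B^{\ast})=\delta(B^{\ast})\le\delta(B)\le\delta_M(B)\le\delta_M(A)\le\delta_{\rm geo}(A)$ carries it out correctly.
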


Finally, we observe that special subvarieties of type $R$
arise as fibres of $M^R$
of points of $M_R$ with suitable rationality properties.
Specifically, the coordinates in ${\rm GL}_2^+(\RR)^{R_i}$
are rational and those in $\HH^{R_0}$ are quadratic; let us call
these ``special points''. Of course the same fibre arises from 
non-special choices of the parameter too.

%% \bigbreak
%% \noindent
%% {\bf 7.3. Proposition.\/}
\begin{proposition}
   There is an absolute constant $c>0$
with the following property.
Let $T\subset Y(1)^n$ be a special subvariety with complexity
$\Delta(T)$ containing a point $P\in Y(1)^n$ with pre-image 
$Q\in \FF_0^n$.
Then there exists a ``special point'' $t\in M_R$ with 
$$
H(t)\le c\Delta(T)^{10}
$$
such that $M^R_t$ is a component of the pre-image of
$T$ and $Q\in M^R_t$.
\end{proposition}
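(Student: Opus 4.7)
The plan is to write down the parameters $t \in M_R$ explicitly from the coordinates of $Q$ and to bound the height of each component separately. The special subvariety $T$ has some type $R = (R_0, R_1, \ldots, R_k)$ and is cut out by quadratic points $z_i^0 \in \HH$ (for $i \in R_0$, each of discriminant of absolute value at most $\Delta(T)$) and by matrices $g_i^0 \in {\rm GL}_2^+(\QQ)$ with $N(g_i^0)\le\Delta(T)$ defining the strongly special curves in each $\HH^{R_j}$. For each $R_j$ with $j\ge 1$ fix a designated ``first'' index $m_j$. Writing $Q=(q_1,\ldots,q_n)\in\FF_0^n$, the component $M^R_t$ of $\pi^{-1}(T)$ through $Q$ is determined by $q_i$ for $i\in R_0$, together with, for each $i\in R_j$ with $j\ge 1$ and $i\ne m_j$, the essentially unique matrix $g_i\in{\rm GL}_2^+(\QQ)$ having coprime integer entries of determinant $N(g_i^0)$ and satisfying $g_iq_{m_j}=q_i$. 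Existence is standard: $\Phi_{N(g_i^0)}(j(q_{m_j}),j(q_i))=0$ since $Q\in\pi^{-1}(T)$, so some left ${\rm SL}_2(\ZZ)$-translate of any Hecke representative sends $q_{m_j}$ to $q_i$.

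Next I would bound the heights coordinatewise. For $i\in R_0$, $q_i$ is a quadratic point in $\FF_0$ whose discriminant $\Delta_i$ satisfies $|\Delta_i|\le\Delta(T)$. Its reduced minimal polynomial $aX^2+bX+c$ satisfies $|b|\le a\le c$, $\gcd(a,b,c)=1$, and $b^2-4ac=\Delta_i$; then $b^2\le ac$ gives $3ac\le|\Delta_i|$ and hence $c\le\Delta(T)/3$. Since $|q_i|\ge 1$, the Mahler measure of $q_i$ equals $c$, giving $H(q_i)=O(\Delta(T)^{1/2})$.

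The crux is to bound $H(g_i)$ in terms of $N:=N(g_i^0)\le\Delta(T)$. Denote the integer entries of $g_i$ by $a,b,c,d$ with $ad-bc=N$, and write $\re{q_{m_j}}=u_1$, $\im{q_{m_j}}=v_1$, $\re{q_i}=u_2$, $\im{q_i}=v_2$, so that $|u_1|,|u_2|\le 1/2$ and $v_1,v_2\ge\sqrt{3}/2$. The identity $\im{q_i}=Nv_1/|cq_{m_j}+d|^2$ together with $v_2\ge\sqrt{3}/2$ forces $|cq_{m_j}+d|^2\le 2Nv_1/\sqrt{3}$. I split into two cases. If $c=0$, then $ad=N$ with $a,d$ positive integers, so $a,d\le N$; the equation $q_i=(aq_{m_j}+b)/d$ gives $b=du_2-au_1$, hence $|b|\le N$. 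If $c\ne 0$, then $|cq_{m_j}+d|^2\ge c^2v_1^2\ge v_1^2$ forces $v_1\le 2N/\sqrt{3}$; applying the same reasoning to the scaled inverse $Ng_i^{-1}$ (which has coprime integer entries of determinant $N$ and sends $q_i$ back to $q_{m_j}$) yields $v_2\le 2N/\sqrt{3}$ as well, so $|q_{m_j}|,|q_i|=O(N)$; the relation $aq_{m_j}+b=q_i(cq_{m_j}+d)$ combined with the bound on $|cq_{m_j}+d|$ then gives $|a|,|b|,|c|,|d|=O(N^{5/2})$.

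Combining the two bounds yields $H(t)=O(\Delta(T)^{5/2})$, comfortably within the claimed $c\Delta(T)^{10}$. The main obstacle is the case analysis for $g_i$; the key observation is that once $q_{m_j}$ strays far up the cusp, the requirement that $q_i$ also lie in $\FF_0$ forces the lower-left entry of $g_i$ to vanish, collapsing $g_i$ to an upper-triangular Hecke representative whose entries divide $N$ outright. Everything else is routine reduction theory of binary quadratic forms and of left ${\rm SL}_2(\ZZ)$-cosets in the integer matrices of determinant $N$.
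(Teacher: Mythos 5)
Your argument is correct in substance, but it takes a more self-contained route than the paper: the paper's proof of this proposition is a one-line citation of Lemmas 5.2 and 5.3 of [\HABEGGERPILA], and those two lemmas are exactly the estimates you reprove, namely the bound on the entries of a primitive integer matrix of determinant $N$ sending one point of $\FF_0$ to another, and the bound on a quadratic point of $\FF_0$ in terms of its discriminant. Your direct proof of the matrix bound (using $\im{gz}=N\,\im{z}/|cz+d|^2$, the lower bound $\im{z}\ge\sqrt{3}/2$ on $\FF_0$, and the trick of applying the same estimate to the adjugate $Ng_i^{-1}$ to control $\im{q_i}$) is sound and in fact yields a sharper exponent than needed. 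Two small points deserve attention. First, to guarantee that $M^R_t$ is a \emph{component of} $\pi^{-1}(T)$ you should take the matrices from the component of $\pi^{-1}(T)$ through $Q$ itself: that component is an ${\rm SL}_2(\ZZ)^n$-translate of the defining weakly special subvariety, hence has exactly the shape $z_i=h_iz_{m_j}$ with $h_i$ primitive of determinant $N(g_i^0)$ and $h_iq_{m_j}=q_i$, and your height estimates apply verbatim to the $h_i$. Deriving existence only from $\Phi_N(j(q_{m_j}),j(q_i))=0$, as you do, is not quite enough: when $q_{m_j}$ is a CM point the primitive matrix of given determinant sending $q_{m_j}$ to $q_i$ need not be unique, and when a part $R_j$ has three or more indices an incompatible choice across the part produces a Mobius variety lying only in the preimage of the (possibly reducible) intersection of the pairwise modular curves, not necessarily a component of $\pi^{-1}(T)$. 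This is a one-line repair, not a real obstruction. Second, the claim $H(q_i)=O(\Delta(T)^{1/2})$ depends on which height is meant; the coordinates of $t$ that enter the counting argument are the real and imaginary parts of $q_i$, and the imaginary part satisfies $4a^2X^2-|\Delta_i|=0$, so its $2$-height is only $O(\Delta(T)^2)$ --- still far below the stated $c\Delta(T)^{10}$, so the conclusion is unaffected.
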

\begin{proof}
  This follows from Lemmas  5.2 and 5.3 of 
[\HABEGGERPILA].
\end{proof}

\section{Counting semi-rational points}
\label{sec:semirational}
%\input{semirational.tex}

%%%% Section on counting semi-rational points
%%\subsection{Formulation of the results}

In this section we will work in a fixed o-minimal structure over
$\IR$. % \cite{D:oMin}. 
Our goal is to count points on a definable set where a certain
coordinates are algebraic of bounded height and degree
 and the rest are unrestricted. We will use our result to study
 unlikely intersections in abelian varieties.

Let us first fix some notation.
Let $k\ge 1$ be an integer.
We define the $k$-height of a real number $y\in \IR$ as
\begin{alignat*}1
  \Hpoly{k}{y} = \min \big\{\max\{|a_0|,\ldots,|a_k|\};\,\,
  &a_0,\ldots,a_k\text{ coprime integers, not all zero,}\\ 
&\text{with $a_0 y^k + \cdots + a_k = 0$ }\big\}
\end{alignat*}
using the convention  $\min\emptyset = +\infty$. 
A real number has finite $k$-height if and only if it has degree at
most $k$ over $\IQ$. 
Let $m\ge 0$ be an integer. For $y=(y_1,\ldots,y_m)\in\IR^m$ we set
\begin{equation*}
  \Hpoly{k}{y} = \max\{\Hpoly{k}{y_1},\ldots,\Hpoly{k}{y_m}\}
\end{equation*}
and abbreviate $\Hpoly{}{y}=\Hpoly{1}{y}$ if
$y\in\IR^m$.
If $\defZ\subset\IR^m$ is any subset, we define 
\begin{equation*}
  {\defZ}(k,T)= \left\{y\in \defZ;\,\, \Hpoly{k}{y}\le T\right\}
\end{equation*}
for $T\ge 1$.

If $\alpha$ is a mapping between two sets, then
$\Gamma(\alpha)$ will denote the graph of $\alpha$. 
Suppose $n\ge 0$ is an  integer.
If we state that a subset $\defF\subset \IR^m\times\IR^n$ is  a family  parametrised by
$\IR^m$, then $\defF_y$ stands for the projection of $\{y\}\times\IR^n\cap
\defF$ to $\IR^n$ if $y\in\IR^m$.

Let $\defZ\subset \IR^m\times\IR^n$ be a family parametrised by $\IR^m$. 
We want to determine the distribution of
 points $(y,z)\in \defZ$ where $y$ has $k$-height at 
 most $T$ without  restricting $z$.
For a real number $T\ge 1$, we define
\begin{equation*}
  \polyt{\defZ}{k,T}= \left\{(y,z)\in \defZ;\,\, \Hpoly{k}{y}\le T\right\}.
\end{equation*}
% We note that there is no arithmetic restriction on $z$.
For technical reasons it is sometimes more convenient to work with 
\begin{equation*}
  \polytiso{\defZ}{k,T}= \left\{(y,z)\in   \polyt{\defZ}{k,T};
\,\, \text{$z$ is isolated in $\defZ_y$}\right\}.
\end{equation*}

Let $l\ge 0$ be an integer. 
We will use the second-named author's generalisation, stated
below, of the Pila-Wilkie Theorem  \cite{PilaWilkie}
 to prove the following result for definable families.

\begin{theorem}
\label{thm:relation}
 Let $\defF\subset \IR^l\times\IR^m\times\IR^n$ be a definable family
 parametrised by $\IR^l$ and  $\epsilon > 0$. 
There is a finite number $J=J(\defF,k,\epsilon)$ of block families
\begin{equation*}
\defW^{(j)}\subset \IR^{k_j}\times\IR^l\times\IR^{m},\quad j\in \{1,\ldots,J\}
\end{equation*}
parametrised by $\IR^{k_j}\times \IR^l$,
 for each such $j$ a continuous, definable function
\begin{equation*}
    \alpha^{(j)}:\defW^{(j)}\rightarrow \IR^n,
\end{equation*}
and a constant  $c=c(\defF,k,\epsilon)$ with the following properties.
\begin{enumerate}
\item [(i)] For all $j\in \{1,\ldots,J\}$ and all
 $(t,x)\in \IR^{k_j}\times\IR^{l}$ we have
    \begin{equation*}
\graph{\alpha^{(j)}}_{(t,x)} \subset \{(y,z)\in \defF_x;\,\, 
\text{$z$ is isolated in  $\defF_{(x,y)}$} \}.
\end{equation*}
\item[(ii)] Say $x\in \IR^{l}$ and $\defZ=\defF_x$. If $T\ge 1$ the set 
$\polytiso{\defZ}{k,T}$
 is contained in the union
of at most 
$c T^\epsilon$
graphs
$\graph{\alpha^{(j)}}_{(t,x)}$
  for suitable $j\in \{1,\ldots,J\}$
 and $t\in\IR^{k_j}$.
\end{enumerate}
\end{theorem}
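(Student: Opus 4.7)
The plan is to combine an o-minimal cell decomposition of the ``isolated locus'' in the $z$-coordinates with the second-named author's family version of the Pila-Wilkie counting theorem applied in the $y$-coordinates. The two ingredients are essentially independent; most of the work is bookkeeping once they are in place.

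First I would introduce the definable auxiliary family
\[
G = \{(x,y,z)\in\defF;\,\, z \text{ is isolated in } \defF_{(x,y)}\}.
\]
Isolation of a point in a fibre is expressible via local dimension (``$\dim_z\defF_{(x,y)}=0$''), which is a definable condition in any o-minimal structure, so $G$ is definable uniformly in $x$. By uniform finiteness for fibres in o-minimal families, the fibres of the projection $\pi:G\to\IR^l\times\IR^m$ have cardinality bounded independently of $(x,y)$. A parametric cell decomposition of $G$ compatible with $\pi$ then yields finitely many cells $C_1,\dots,C_N$, each of which is the graph of a continuous definable function $\beta_i:D_i\to\IR^n$ for some definable $D_i\subset\IR^l\times\IR^m$, with $G=C_1\cup\cdots\cup C_N$.

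Next, for each $i$ I would apply the stated family version of the Pila-Wilkie theorem to $D_i$, viewed as a definable family over $\IR^l$. For the given $\epsilon$ this produces finitely many block families $\defW^{(i,r)}\subset\IR^{k_{i,r}}\times\IR^l\times\IR^m$ and a constant $c_i$ such that, for every $x\in\IR^l$ and $T\ge 1$, the set $(D_i)_x(k,T)$ is covered by at most $c_iT^\epsilon$ fibres $\defW^{(i,r)}_{(t,x)}$ with $t\in\IR^{k_{i,r}}$. Re-indexing the pairs $(i,r)$ as $j=1,\dots,J$ and setting $\alpha^{(j)}:=\beta_i|_{\defW^{(i,r)}}$ produces continuous definable functions whose graph fibres lie inside $C_i\subset G$; this delivers (i). For (ii), any $(y,z)\in\polytiso{\defZ}{k,T}$ with $\defZ=\defF_x$ lies in a unique $C_i$, whence $(x,y)\in D_i$ and $\Hpoly{k}{y}\le T$. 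Consequently $y$ belongs to one of the at most $c_iT^\epsilon$ covering fibres $\defW^{(i,r)}_{(t,x)}$, and then $(y,z)=(y,\beta_i(x,y))\in\graph{\alpha^{(j)}}_{(t,x)}$ for the corresponding $j$. Summing over $i$ gives (ii) with $c=\sum_ic_i$.

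The main obstacle is the very first step: arranging that on each cell the isolated $z$-coordinate is a \emph{single-valued} continuous definable function of $(x,y)$. This is where o-minimality is genuinely used, via uniform finiteness of the fibrewise isolated locus combined with cell decomposition relative to a finite-to-one projection; an implicit-function argument in the o-minimal context then makes each $\beta_i$ continuous. Once the $\beta_i$ are in hand, everything downstream is a mechanical application of the family Pila-Wilkie theorem and a union bound, and in particular the order of operations matters: Pila-Wilkie must be applied to $D_i$ (a subset of $\IR^l\times\IR^m$), not to $G$ itself, since the block families in the conclusion live in the $y$-coordinates only.
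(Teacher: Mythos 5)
Your proposal is correct and follows essentially the same route as the paper: restrict to the definable locus where $z$ is isolated in $\defF_{(x,y)}$, write it as a finite union of graphs of continuous definable functions over definable subsets of $\IR^l\times\IR^m$, apply the family block-counting theorem (Theorem \ref{thm:pila}) to those base families, and define the $\alpha^{(j)}$ by composing with the selected functions. The only (immaterial) difference is how the finitely many continuous selections are produced: the paper iterates Definable Choice and then cell-decomposes the domains $\defE^{(i)}$ to get continuity, whereas you read the functions $\beta_i$ off directly from a cell decomposition of the isolated locus, using uniform finiteness of its fibres to force every cell to be a graph over its projection.
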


What follows is a useful corollary of the result above. Its assertion deals with
$\polyt{\defZ}{k,T}$ and not $\polytiso{\defZ}{k,T}$
which appears in the theorem.

\begin{corollary}
\label{cor:relation}
Let $\defF$ and $\epsilon$ be as in
 Theorem \ref{thm:relation}.
We let $\pi_1$ and $\pi_2$ denote the projections
$\IR^m\times\IR^n\rightarrow\IR^m$ and
$\IR^m\times\IR^n\rightarrow\IR^n$, respectively. 
There exists a constant $c = c(\defF,k,\epsilon) > 0$
  with the following property. 
Say $x\in \IR^l$ and let $\defZ= \defF_x$.
If $T\ge 1$ and $\Sigma \subset \polyt{\defZ}{k,T}$ with 
  \begin{equation}
\label{eq:manypoints}
    \#\pi_2(\Sigma)  > cT^\epsilon,
  \end{equation}
 there exists a continuous and definable  function
$\beta:[0,1]\rightarrow \defZ$ such that the following properties hold.
\begin{enumerate}
\item [(i)] The composition $\pi_1\circ\beta:[0,1]\rightarrow\IR^m$ is
  semi-algebraic and its restriction to $(0,1)$ is real analytic.
\item [(ii)] The composition $\pi_2\circ\beta:[0,1]\rightarrow\IR^n$ is non-constant.
\item[(iii)]  We have $\pi_2(\beta(0)) \in \pi_2(\Sigma)$.
\item[(iv)]
If the o-minimal structure admits analytic cell
decomposition, then $\beta|_{(0,1)}$
is real analytic. 
\end{enumerate}
\end{corollary}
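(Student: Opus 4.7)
My plan is to split into two cases depending on whether $\Sigma$ contains a point $(y_0,z_0)$ for which $z_0$ is not isolated in $\defZ_{y_0}$. In the first case the curve $\beta$ will have $\pi_1$-part constantly equal to $y_0$; in the complementary case $\Sigma\subseteq\polytiso{\defZ}{k,T}$ and I will invoke Theorem \ref{thm:relation}.

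Suppose first that some $(y_0,z_0)\in\Sigma$ has $z_0$ non-isolated in $\defZ_{y_0}$. Definable curve selection in o-minimal structures supplies a continuous definable $\gamma\colon[0,1]\to\defZ_{y_0}$ with $\gamma(0)=z_0$ and $\gamma((0,1])\subseteq\defZ_{y_0}\setminus\{z_0\}$, and this $\gamma$ can be taken real analytic on $(0,1)$ whenever the o-minimal structure admits analytic cell decomposition. Setting $\beta(s)=(y_0,\gamma(s))$, the map $\pi_1\circ\beta$ is the constant $y_0$ (trivially semi-algebraic and real analytic), $\pi_2\circ\beta=\gamma$ is non-constant, and $\pi_2(\beta(0))=z_0\in\pi_2(\Sigma)$; thus (i)--(iv) all hold.

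For the remaining case $\Sigma\subseteq\polytiso{\defZ}{k,T}$, I take $c$ to be the constant furnished by Theorem \ref{thm:relation} applied with the given $\epsilon$. That theorem covers $\polytiso{\defZ}{k,T}$ by at most $cT^\epsilon$ graphs $\graph{\alpha^{(j)}}_{(t,x)}$. Since $\#\pi_2(\Sigma)>cT^\epsilon$, pigeonhole --- applied after choosing, for each $\zeta\in\pi_2(\Sigma)$, a point $(y_\zeta,\zeta)\in\Sigma$ and a graph containing it --- produces a single graph $\graph{\alpha^{(j)}}_{(t,x)}$ holding two points $(y_1,z_1),(y_2,z_2)\in\Sigma$ with $z_1\ne z_2$. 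The block fibre $\defW^{(j)}_{(t,x)}$ is a connected semi-algebraic subset of $\IR^m$ (a structural feature of the blocks underlying Theorem \ref{thm:relation}), so semi-algebraic path-connectedness supplies a continuous semi-algebraic path $\sigma\colon[0,1]\to\defW^{(j)}_{(t,x)}$ with $\sigma(0)=y_1$ and $\sigma(1)=y_2$. Setting $\beta(s)=(\sigma(s),\alpha^{(j)}(t,x,\sigma(s)))$, the map $\pi_1\circ\beta=\sigma$ is semi-algebraic, $\pi_2\circ\beta$ takes the distinct values $z_1,z_2$ at the endpoints hence is non-constant, and $\pi_2(\beta(0))=z_1\in\pi_2(\Sigma)$; this yields (i)--(iii). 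For (iv), analytic cell decomposition applied to the definable map $s\mapsto\alpha^{(j)}(t,x,\sigma(s))$, combined with piecewise analyticity of semi-algebraic $\sigma$, delivers an analytic subinterval $[0,b]$ to which one restricts and then rescales back to $[0,1]$.

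The main obstacle I anticipate is the final reparametrisation step: one must secure an analytic subinterval having $0$ as left endpoint on which both $\sigma$ and $\alpha^{(j)}\circ\sigma$ are real analytic \emph{and} $\alpha^{(j)}\circ\sigma$ is still non-constant, so that shrinking does not destroy (ii) without disturbing (iii). Since $\alpha^{(j)}(t,x,\cdot)$ is non-constant on the connected semi-algebraic block, this can be arranged by, if necessary, refining $\sigma$ to leave the level set $\{y\in\defW^{(j)}_{(t,x)}:\alpha^{(j)}(t,x,y)=z_1\}$ immediately after $s=0$, before the first break point of the piecewise-analytic stratification of $\sigma$ and of $\alpha^{(j)}\circ\sigma$.
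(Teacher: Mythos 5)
Your overall route is the same as the paper's: split according to whether $\Sigma\subset\polytiso{\defZ}{k,T}$, handle the non-isolated case by a path inside a single fibre $\defZ_{y_0}$ (with $\pi_1\circ\beta$ constant), and in the isolated case apply Theorem \ref{thm:relation}, pigeonhole two points $(y_1,z_1),(y_2,z_2)$ with $z_1\neq z_2$ onto one graph $\graph{\alpha^{(j)}}_{(t,x)}$, connect $y_1$ to $y_2$ inside the block fibre and push forward by $\alpha^{(j)}$. Two points need repair, though. First, a block fibre $\defW^{(j)}_{(t,x)}$ is definable and connected but only \emph{locally} semi-algebraic (it is contained in a semi-algebraic set of the same dimension); it need not itself be semi-algebraic, so you cannot simply quote semi-algebraic path-connectedness. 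The paper instead takes a continuous definable path in the block and then uses compactness of $[0,1]$ together with the semi-algebraic neighbourhoods of points of the block to replace it by a semi-algebraic path. Second, your proposed remedy for the final reparametrisation --- refining $\sigma$ so that it ``leaves the level set $\{\alpha^{(j)}(t,x,\cdot)=z_1\}$ immediately after $s=0$'' --- can be impossible: if $\alpha^{(j)}(t,x,\cdot)$ is constant on a neighbourhood of $y_1$ in the block, every path starting at $y_1$ stays in that level set for an initial interval. No refinement is needed: decompose $[0,1]$ into finitely many points and open intervals on which the relevant coordinate of $\beta$ is real analytic, and take the \emph{first} interval $(a_i,a_{i+1})$ on which $\pi_2\circ\beta$ is non-constant; since $\pi_2\circ\beta$ is constant on all earlier intervals and continuous, $\pi_2(\beta(a_i))=\pi_2(\beta(0))=z_1\in\pi_2(\Sigma)$, so the linear rescaling of $[a_i,a_{i+1}]$ to $[0,1]$ preserves (iii) while giving (i), and the same device applied with analytic cell decomposition gives (iv). With these two adjustments your argument coincides with the paper's proof.
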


%% !!!QUESTION: CAN WE SUPPOSE IN (iv) THAT $\beta$ EXTENDS TO A 
%% REAL ANALYTIC FUNCTION ON $(-\epsilon,1)$?

Here is the second-named author's counting theorem involving blocks. %% It is the basis
%% of Theorem \ref{thm:relation} and Corollary \ref{cor:relation}.

\begin{theorem}[Theorem 3.6 \cite{Pila:AO}]
\label{thm:pila}
 Let $\defF\subset \IR^l\times\IR^m$ be a definable family parametrised
 by $\IR^l$ and  $\epsilon > 0$. 
There is a finite number $J=J(\defF, \epsilon)$ of block families
$$
\defW^{(j)}\subset \IR^{k_j}\times\IR^l\times\IR^{m},\quad j=1,\ldots, J,
$$
each parametrised by $\IR^{k_j}\times\IR^l$,
and a constant  $c=c(\defF,k,\epsilon)$ with the following properties.
\begin{enumerate}
\item [(i)] For all $(t,x)\in \IR^{k_j}\times\IR^{l}$ and  all 
$j\in \{1,\ldots,J\}$ 
 we have $\defW_{(t,x)}\subset \defF_x$.
\item[(ii)] For all $x\in \IR^{l}$ and $T\ge 1$ the set 
$\poly{\defF_x}{k,T}$
 is contained in the union
of at most $cT^\epsilon$
blocks of the form $\defW^{(j)}_{(t,x)}$ for suitable $j=1,\ldots,
J$ and $t\in\IR^{k_j}$.
\end{enumerate}
\end{theorem}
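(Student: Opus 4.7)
The plan is to derive Theorem \ref{thm:pila} from the original Pila--Wilkie counting theorem together with a structural analysis of ``algebraic'' points on a definable set. The extra content, compared to Pila--Wilkie, is twofold: first, the output is organised into a uniformly bounded number of \emph{definable block families} rather than a mere cardinality bound; second, everything is uniform in the parameter $x\in\IR^l$.

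First I would recall the notion of a block: a connected definable subset $W\subset \defF_x$ that is semi-algebraic (of some positive dimension, unless it is a single point), and satisfies a regularity condition ensuring it is a connected component of the smooth locus of its own Zariski closure intersected with $\defF_x$. The relevant observation is that if $y\in \defF_x$ is \emph{not} in the Pila--Wilkie transcendental part $\defF_x^{\rm trans}$, then by definition $y$ lies on some connected semi-algebraic subset of $\defF_x$ of positive dimension, and a cell decomposition argument shows that among such semi-algebraic sets one can choose blocks in the above sense. So $\defF_x$ is the disjoint union of $\defF_x^{\rm trans}$ and a family of blocks.

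The next step is to make these blocks vary definably in $x$. Applying o-minimal cell decomposition to the family $\defF\subset \IR^l\times\IR^m$, I would isolate the locus of $(x,y)$ such that $y$ lies on a positive-dimensional semi-algebraic subset of $\defF_x$; this locus is itself definable by compactness/definable choice in an o-minimal structure. Parametrising these semi-algebraic subsets by the coefficients of the polynomials cutting them out (of bounded degree, again by o-minimality), one obtains finitely many definable families $\defW^{(j)}\subset \IR^{k_j}\times\IR^l\times\IR^m$ whose fibres are blocks contained in $\defF_x$. The uniform finiteness of $J=J(\defF,\epsilon)$ is the assertion that the number of such block families needed does not depend on $x$: this follows because the construction is definable in $\defF$, and by o-minimality the parameter space is a finite union of cells.

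Finally I would combine these ingredients to count. Given $x$ and $T\ge 1$, the transcendental part $\defF_x^{\rm trans}$ contributes at most $c(\epsilon)T^\epsilon$ points of $k$-height $\le T$ by the definable-family version of the Pila--Wilkie theorem applied to $\defF$ (this is where uniformity in $x$ is crucial, and where Pila--Wilkie is really invoked). Each remaining point of height $\le T$ lies in some block fibre $\defW^{(j)}_{(t,x)}$ for suitable $j$ and $t$, giving the desired covering. The main obstacle I would expect is controlling the block families uniformly in $x$: one must ensure that a fixed finite list of families $\{\defW^{(j)}\}$ suffices for every $x$, which requires carefully applying o-minimal definable choice at the stage where blocks are parameterised by the algebraic equations defining them. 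Everything else is bookkeeping on top of Pila--Wilkie and cell decomposition.
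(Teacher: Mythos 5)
There is a genuine gap, and it sits exactly at the final counting step. The paper does not prove this theorem at all: it is quoted verbatim from [\PILAOAO, Theorem 3.6], where it is established by re-running the Pila--Wilkie machinery (the reparametrization theorem, covering the reparametrized pieces by hypersurfaces of bounded degree, and an induction on dimension that keeps track of the semi-algebraic ``blocks'' produced at each stage), not by formally invoking the statement of the Pila--Wilkie theorem. Your proposal tries to deduce it from the statement alone: transcendental part contributes at most $cT^\epsilon$ points (each a zero-dimensional block), and ``each remaining point of height $\le T$ lies in some block fibre $\defW^{(j)}_{(t,x)}$, giving the desired covering.'' But the theorem does not merely assert that every algebraic-part point lies in \emph{some} block of one of finitely many block families; it asserts that at most $cT^\epsilon$ \emph{individual blocks} (i.e.\ at most $cT^\epsilon$ choices of the pair $(j,t)$) suffice to cover all points of $k$-height $\le T$. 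Your argument gives no bound whatsoever on the number of distinct parameters $t$ needed, and this is precisely the nontrivial content of the block version.

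To see that this content cannot be recovered from Pila--Wilkie applied to $\defF_x$ itself, consider a definable surface swept out by a one-parameter family of real line segments with, say, transcendentally varying slopes: every point lies on a positive-dimensional connected semi-algebraic subset, so the transcendental part is empty and the Pila--Wilkie statement is vacuous, yet the block theorem still makes the strong claim that rational points of height $\le T$ meet only $O(T^\epsilon)$ of the segments. Proving that requires a further counting argument on the parameter space of the blocks (carried out in [\PILAOAO] by induction on dimension inside the proof, interleaved with reparametrization), which your outline never supplies. A secondary, smaller issue is your step producing ``finitely many definable block families'' by parametrizing semi-algebraic subsets through points of $\defF_x$ by coefficients of bounded-degree polynomials: the algebraic part is a union of semi-algebraic sets of a priori unbounded complexity, and obtaining a uniform, definable block decomposition is itself a theorem of Pila (the block decomposition of the algebraic part), not a routine consequence of cell decomposition and definable choice. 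So the proposal as written does not prove the statement; the correct route is either to cite [\PILAOAO] as the paper does, or to redo its dimension induction with reparametrization.
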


%% \subsection{Proofs}
%% We keep the notation from the previous subsection.

\begin{proof}[Proof of Theorem \ref{thm:relation}]
We refer  to van den Dries's treatment of cells
 in Chapter 3  \cite{D:oMin}. 
His 
  convention for  a cell $\defC\subset \IR^m \times\IR^n$
has the following advantage when considering it as
 a family parametrised by $\IR^m$. If $y\in\IR^m$  then 
$\defC_y \subset \IR^n$ is either empty or a cell of dimension $\dim \defC
 - 1$.

We begin the proof of the theorem with a reduction step. Let us consider the set
\begin{equation*}
  \defF' = \{(x,y,z)\in \defF;\,\, \text{$z$ is isolated in $\defF_{(x,y)}$} \}.
\end{equation*}
We claim that it is definable. Indeed, let $\defC_1\cup\cdots\cup \defC_N$ be
a cell decomposition of $\defF$.
Then $\defF_{(x,y)} =
(\defC_1)_{(x,y)}\cup\cdots\cup (\defC_N)_{(x,y)}$ and each $(\defC_i)_{(x,y)}$ is
 either empty or a  cell.
The dimension of a non-empty
$(\defC_i)_{(x,y)}$ does not depend on $(x,y)$ and is the same locally at
all points. Therefore, $\defF'$ is a union
of a subclass of the $\defC_i$ and hence definable.  

Since $\defF'$ is definable it suffices to complete the proof with $\defF$
replaced by $\defF'$. We thus assume that $z$ is isolated in $\defF_{(x,y)}$
 for all
$(x,y,z)\in \defF$.

By general properties of an o-minimal structure,
the number of connected components in a
definable family is finite and bounded from above uniformly. 
So $\# \defF_{(x,y)}\le c_1$ for all $(x,y)\in \IR^l\times\IR^m$
where $c_1$ is independent of $x$ and $y$.

Let $\pi:\IR^l\times\IR^m\times\IR^n\rightarrow\IR^l \times\IR^m$ denote
the natural projection. Then $\defE^{(1)}=\pi(\defF)$ is a definable set. By Definable
Choice, Chapter 6 Proposition 1.2(i) \cite{D:oMin},
 there is a definable function $f^{(1)}:\defE^{(1)}\rightarrow \IR^n$ 
whose graph $\graph{f^{(1)}}$ lies in $\defF$. This graph is definable and
so is $\defF\ssm\graph{f^{(1)}}$. But the cardinality of a fibre of
$\defF\ssm\graph{f^{(1)}}$ considered as a family over $\IR^l\times\IR^m$ 
 is at most $c_1-1$. 

If $\defF\not= \Gamma(f_1)$, 
 Definable Choice yields a definable function
$f^{(2)}:\defE^{(2)}\rightarrow\IR^n$ on $\defE^{(2)} = \pi(\defF\ssm\graph{f_1})$
whose graph is inside $\defF \ssm \Gamma(f_1)$.  
The fibre above $(x,y)$ of $\defF\ssm (\Gamma(f_1)\cup \Gamma(f_2))$ has
at most $c_1-2$ elements.

This process
 exhausts  all fibres
of $\defF$ after  $c_2\le c_1$ steps. 
We get definable families
$\defE^{(1)},\ldots,\defE^{(c_2)}\subset\IR^l\times\IR^m$
parametrised by $\IR^l$ and definable functions
\begin{equation}
\label{eq:graphunion}
  f^{(i)} :\defE^{(i)}\rightarrow\IR^n \quad \text{for}\quad i\in \{1,\ldots,c_2\}
\quad\text{with}\quad
\bigcup_{i=1}^{c_2} \graph{f^{(i)}} =  \defF. 
\end{equation}

%By the Cell Decomposition Theorem, cf. 2.11 Chapter 3 \cite{D:oMin},
We can decompose each $\defE^{(i)}$  into finitely
many cells on which $f^{(i)}$ is continuous. So after possibly increasing
$c_2$ %% and replacing $\defE^{(i)}$ by cells
we may suppose that each $f^{(i)}$ is
continuous and definable. 

If $x\in\IR^l$, not all coordinates of a point in 
 $\polyt{\defF_x}{k,T}$ need to be  algebraic.
But the first $m$ coordinates are  and lead to 
  points of $k$-height  at most $T$ 
on some  $\defE^{(i)}_x$.
These points can be treated using Theorem
\ref{thm:pila} applied to the family $\defE^{(i)}$.
%% We thus apply Theorem \ref{thm:pila} to each $\defE^{(i)}$, taken
%%  as a family
%% parametrized by $\IR^l$. 
For every $i\in \{1,\ldots,c_2\}$ 
we  obtain $J_i$ block families $\defW^{(i,j)} \subset
\IR^{k_{i,j}}\times\IR^l\times\IR^m$ parametrised by $\IR^{k_{i,j}}\times\IR^l$ 
where $j\in\{1,\ldots, J_i\}$. They
satisfy
$\defW^{(i,j)}_{(t,x)}\subset \defE^{(i)}_x$
for $(t,x)\in \IR^{k_{i,j}}\times\IR^l$
and account for
all  points of  $k$-height at most $T$
on $\defE^{(i)}_x$. 

Note that if $(t,x,y)\in \defW^{(i,j)}$, then $(x,y)\in
\defE^{(i)}$. We consider the function
\begin{equation*}
\alpha^{(i,j)}:\defW^{(i,j)}\rightarrow \IR^n\quad\text{defined by}\quad
  (t,x,y) \mapsto f^{(i)}(x,y).
\end{equation*}
It is definable, being the composition of two definable
functions: a projection and $f^{(i)}$. 
Moreover, $\alpha^{(i,j)}$ is continuous by our choice of the $\defE^{(i)}$.
Observe $\Gamma(\alpha^{(i,j)})_{(t,x)}\subset\defF_x$ for all
$(t,x)\in\IR^{k_{i,j}}\times\IR^l$
and this will yield (i). 

Suppose $x\in\IR^l$ and $(y,z)\in \polyt{\defZ}{k,T}$ with
$\defZ = \defF_x$. 
%%Recall that  $y$ is algebraic of degree at most $k$ over $\IQ$. 
The point $(x,y,z)\in \defF$   lies  on the graph of some $f^{(i)}$
by (\ref{eq:graphunion}). 
Hence $z = f^{(i)}(x,y)$ with $(x,y)\in \defE^{(i)}$. 
By definition we have 
$y\in \defE^{(i)}_x$. So $y \in \defE^{(i)}_x(k,T)$ since $y$ has $k$-height at
most $T$. 
Suppose $c(\defE^{(i)},k,\epsilon)$ is the constant from   Theorem \ref{thm:pila}. Then
 $y$ is inside some
 $\defW^{(i,j)}_{(t,x)}$ where $j$ and $t$ are allowed to vary
over $c(\defE^{(i)},k,\epsilon) T^\epsilon$ possibilities. 
Therefore, $(t,x,y)\in \defW^{(i,j)}$ and
$(t,x,y,z) \in \Gamma(\alpha^{(i,j)})$
or equivalently, $(y,z) \in \Gamma(\alpha^{(i,j)})_{(t,x)}$. 
Part (ii) and the theorem follow after renumbering the $\alpha^{(i,j)}$ and
$\defW^{(i,j)}$. 
\end{proof}

\begin{proof}[Proof of Corollary \ref{cor:relation}]
  The constant $c$ from the this corollary comes from Theorem
  \ref{thm:relation} applied to $\defF,k,$ and $\epsilon$.

Let $x\in \IR^l$ and $\defZ= \defF_x\subset\IR^m\times\IR^n$. 
Suppose $T\ge 1$ satisfies
\begin{equation}
\label{eq:toomanypoints}
  \# \pi_2(\Sigma) > c T^{\epsilon}
\end{equation}
with $\Sigma\subset \polyt{\defZ}{k,T}$ as in the hypothesis. 

First, let us suppose $\Sigma\not\subset\polytiso{\defZ}{k,T}$. 
We fix any $(y,z) \in \Sigma\ssm \polytiso{\defZ}{k,T}$; then
 $\Hpoly{k}{y}\le T$
and the connected component of 
 $\defZ_y$ containing $z$ has positive dimension.
This component is definably connected. 
So we may fix a definable and continuous path
 $\alpha:[0,1]\rightarrow \defZ_y$  connecting $\alpha(0)=z$ with any 
other point 
$\alpha(1)\not=z$ of said component.  Properties (i)-(iii) follows with 
the function $\beta(t) = (y,\alpha(t)) \in \defZ$  for $t\in [0,1]$. 
Rescaling  implies (iv).

We have reduced to the case $\Sigma\subset \polytiso{\defZ}{k,T}$.
By our Theorem \ref{thm:relation} the set
$\Sigma$
is contained in the union of
at most $c T^\epsilon$ graphs of  continuous and 
definable  functions. 
The Pigeonhole Principle and (\ref{eq:toomanypoints}) yield two
$(y,z), (y',z') \in \Sigma$ on the same graph
with
\begin{equation}
\label{eq:differentz}
  z = \pi_2(y,z)\not=\pi_2(y',z')=z'.
\end{equation}
So there is a  block family $\defW\subset
\IR^k\times\IR^l\times \IR^m$ and a continuous, definable function
$\alpha:\defW\rightarrow\IR^n$
with $(y,z),(y',z')\in \Gamma(\alpha)_{(t,x)}$ for a certain
$t\in \IR^k$. 
Moreover, 
$\Gamma(\alpha)_{(t,x)} \subset \defZ$. 

The fibre $\defW_{(t,x)}$ is  a block containing $y$ and $y'$. 
A block is connected by definition. As above this 
 means that
 there is a continuous, definable  function
$\gamma :[0,1]\rightarrow \defW_{(t,x)}$ with
$\gamma(0) = y$ and $ \gamma(1) = y'$. 
But $\defW_{(t,x)}$, being a block, is locally a semi-algebraic
set. 
That is, for any $s\in [0,1]$ the point    $\gamma(s)$ has a semi-algebraic
neighborhood in $\defW_{(t,x)}$.
Because $[0,1]$ is compact we may assume that
$\gamma$ is semi-algebraic.
 
%%  and locally coincides with a
%% regular semi-algebraic set. 
%% We apply a 
%%  standard compactness argument together with some o-minimal
%% techniques !!!REF to see that there exists a  semi-algebraic
%% and continuous
%% function $\gamma:[0,1]\rightarrow W_{(t,x)}$ with 
%% $\gamma(0) = y_1$ and $\gamma(1) = y_2$. 
We set
\begin{equation*}
  \beta(s) = (\gamma(s), \alpha(t,x,\gamma(s))) 
\end{equation*}
and this yields a  function
$\beta : [0,1]\rightarrow \defZ$ which we show to satisfy all points
in the assertion. 

The function $\beta$ is 
 continuous and definable
 as $\gamma$ and $\alpha$ possess these properties.

We note that $\pi_1\circ\beta = \gamma$  is
semi-algebraic by construction. This yields the first statement in (i).

We also note $\alpha(t,x,y)=z$, so $\beta(0) = (y,z) \in \Sigma$ 
%% Similarly, $\alpha(t,x,y')=z'$, so $\beta(1) = (y',z')$.
and (iii) follows.  

We find $\beta(1) = (y',z')$ in a analog manner.
Therefore,
(\ref{eq:differentz}) implies $\pi_2(\beta(0)) \not = \pi_2(\beta(1))$
and (ii) follows from this.

To complete the proof of (i) we use the fact that $\IRalg$
admits analytic cell decomposition.
%% By the Analytic Monotonicity Theorem,
There exist
$0=a_0 < a_1 < \cdots <a_{k+1}=1$ such that each
$\pi_1\circ \beta|_{(a_i,a_{i+1})} :(a_i,a_{i+1})\rightarrow \IR^{m}\times\IR^n$
is real analytic.
By continuity and (ii) the restriction of $\pi_2\circ\beta$ to some 
interval $(a_{i},a_{i+1})$ is non-constant.
If $i$ is minimal with this property, then 
$\pi_2(\beta(a_i))=\pi_2(\beta(0)) = z$. 
This will preserve (iii)
after a linear reparametrisation
of $(a_i,a_{i+1})$ to  $(0,1)$.
Thus we may suppose that $\pi_1\circ\beta|_{(0,1)}$ is real analytic
and this completes (i).

To prove (iv) we must assume that the ambient o-minimal structure 
admits analytic cell decomposition. As before
we cover
$[0,1]$ by finitely many open intervals and points,
such that  $\pi_2\circ\beta$
restricted to each open interval is real analytic. 
We again linearly 
rescale  the first  open interval on which
$\pi_2\circ\beta$ is non-constant to $(0,1)$. So 
 $\beta_{(0,1)}:(0,1)\rightarrow\IR^m\times\IR^n$ is
real analytic. 
\end{proof}

Say $\defZ \subset \IR^m$ is a definable. The corollary applied (with $m=n$) to the
diagonal embedding $\defF = \{(z,z);\,\, z\in \defZ \}\subset \IR^m\times\IR^m$ recovers 
Pila and Wilkie's Theorem 1.8 \cite{PilaWilkie}.

\section{Large Galois orbits}
\label{sec:lgo}
%\input{lgo.tex}

%%% Introductory section on Large Galois orbits

Let $X$ be $Y(1)^n$ or an abelian variety defined over a  field
$K$ which we take to be finitely generated over $\QQ$.
Recall that we have a notion of complexity of special subvarieties of
$X$, cf. Sections \ref{sec:degabsv} and \ref{sec:specialsubvarY1n}
for the abelian and modular cases, respectively. 
Suppose 
 $V\subset X$ is a subvariety defined over $K$.
We consider various assertions. %, where $\QQ(V)$ denotes a field of definition of $V$.

\begin{enumerate}
\item [(GO1)]For $V \subset X$ there exist $c,\eta >0$ 
such that, for all $P \in V$,
$$
[K(P):K]\ge c\Delta(\langle P\rangle)^\eta.
$$
\item[(GO2)]
As in (GO1)  but only when the component $\{P\}$ of 
$V \cap \langle P\rangle$ is optimal.
\item[(GO3)]
For $V \subset X$ there exist $c,\eta >0$ such that, 
for all components $W$ of $V \cap \langle W\rangle$,
$$
[K(W):K]\ge c\Delta(\langle W\rangle)^\eta.
$$
\end{enumerate}

Now GO3 is simply  too strong: e.g. if $V =X=Y(1)^n$ then a 
component of $V \cap T$ is $T$ and $\langle T\rangle = T$ 
for any special subvariety, but some specials have large 
complexity and  small  Galois orbits 
(e.g. any strongly special $T \subset Y(1)^n$).

Also GO1 seems to be very strong. Taking $V = X = Y(1)^n$ 
and  $P \in\QQ^n$, the existence 
of $c,\delta$ means that $\Delta(\langle P\rangle )$ is 
bounded as $P$  runs over all rational points,  
so that only finitely many  $\langle P\rangle$ arise. 
But this could be true.
E.g. in  $Y(1)^2$ we know that there are only finitely many modular 
curves with non-CM, non-cuspidal rational points (Mazur). 
It is however stronger than we need.

For $V = Y(1)^n$ we see that $P$ is a component 
if and only if it is special, and so the statement of GO2 in that case reverts to the 
conjecture on Galois orbits of  special points. Also GO1 is odd if $P$ is 
transcendental, while in GO2 it must be algebraic over $K$.
For positive dimensional components we must depend on o-minimality 
(and WCA) to bring us to finitely many families, 
and reduce to considering their  translates.

%% \medskip
%% \noindent
%% {\bf 5.1. LGO.\/} GO2.
%% \medbreak

\begin{definition}
\label{def:LGO}
Let $K$ be a field that is finitely generated over $\IQ$. 
Suppose $X$ is $Y(1)^n$ or an abelian variety defined over
$K$ and
 $\var\subset \grpx$ is a subvariety also defined over
$K$. Let $s\ge 0$.
We say that $\lgo_s(\var)$ is satisfied 
if there exists a constant $\kappa > 0$ 
with the following property. 
For any $P\in \var(\overline K)$ such that
$\{P\}$ is an optimal singleton of $\var$ with $\dim \langle P\rangle
\le s$ we have
\begin{equation}
\label{eq:deglb}
\Delta(\langle P\rangle) \le (2[K(P):K])^\kappa.
\end{equation}

If $r\ge 0$, we say that $\grpx$ satisfies $\lgo_s^r$, if
$\lgo_s(\var)$ is satisfied for all 
$\var\subset \grpx$ defined over $\Kbar$ above with $\dim \var\le r$.   

Finally, we say that $\grpx$ satisfies $\lgo$ if it satisfies
$\lgo_s^r$ for all $r,s\ge 0$. 
\end{definition}

\begin{conjecture}
Let $K$ be finitely generated over $\IQ$.
If $X$ is an abelian variety defined over $K$ or if  $X=Y(1)^n$, then 
$X$ satisfies $\lgo$. 
\end{conjecture}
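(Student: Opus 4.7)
The plan is to split the conjecture by $s=\dim\langle P\rangle$ and treat the abelian and modular settings separately, since the available inputs differ substantially. For the base case $s=0$, the optimal singleton $\{P\}$ is itself a special point, and the conjecture reduces to the classical Andr\'e--Oort Galois orbit problem: in the abelian case, $P$ is a torsion point and Masser's lower bound for $[K(P):K]$ in terms of $\ord{P}$ yields $\lgo_0$ unconditionally; in the modular case, $P$ is a CM point in each coordinate, and one invokes the GRH-conditional bounds of Tsimerman and Ullmo--Yafaev. So my first step is to dispose of $\lgo_0$ by citing these results.

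For the abelian case with $s\ge 1$, write $\langle P\rangle = T+\grpy$ with $T$ a torsion point and $\grpy\subset \grpx$ an abelian subvariety, so that $\Delta(\langle P\rangle) = \max(\Delta_{\arith}(\langle P\rangle),\deg_\cL \grpy)$. My strategy would be to control $[K(P):K]$ from below by combining (i) a polynomial bound for the field of definition of $\grpy$ in terms of $\deg_\cL \grpy$, either via Masser--W\"ustholz-type period estimates or by absorbing $\grpy$ into the finite list produced by Proposition \ref{prop:geofinite} applied to a suitable auxiliary subvariety, and (ii) Masser's polynomial lower bound for $[K(T):K]$ in terms of $\ord{T}$. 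Optimality of $\{P\}$ for $\var$ must then be invoked to prevent many Galois conjugates $\sigma P$ from collapsing into a common larger special subvariety contained in $\var$, which would violate the optimality of $\{P\}$. The combinatorics of how these two lower bounds interact under the Galois action is where the argument has to be carried out carefully, but all the analytic inputs are available.

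For the modular case with $s\ge 1$ the problem is substantially harder, and as the authors themselves observe, no variant of GRH is currently known to yield large Galois orbits for isolated points of $\var\cap T$ when $\dim T\ge 1$. A tentative approach would be to transfer the Galois action on $P$ to the finite arithmetic data parametrising $\langle P\rangle$, namely the discriminants of the CM coordinates of its special envelope together with the modular levels $N(g)$ attached to its geodesic relations, and to use optimality of $\{P\}$ to forbid many Galois conjugates from sharing a common weakly special subvariety inside $\var$. \emph{The main obstacle is precisely this modular case with $s\ge 1$}: isolated intersection points with positive-dimensional special subvarieties do not admit a natural class-field-theoretic description, and genuine progress appears to require either a new arithmetic input beyond GRH or a geometric circumvention of $\lgo$, analogous to what R\'emond's height inequality together with Masser's small-value bound achieves for curves in the abelian setting.
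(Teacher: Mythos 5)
This statement is one of the paper's open conjectures, not a theorem: the paper offers no proof of it, so there is no argument of the authors to compare yours against. Your proposal is also not a proof, and you concede as much for the modular case with $s\ge 1$; what the paper actually establishes unconditionally is much less, namely $\lgo^r_0$ for abelian varieties over number fields (torsion points, via Theorem \ref{thm:masser} and Proposition \ref{prop:hgttolgo}) and $\lgo^1_s$ for abelian varieties over number fields (the corollary to R\'emond's Theorem \ref{thm:remond}).

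Beyond that, your sketch of the abelian case with $s\ge 1$ contains a genuine gap: the claim that ``all the analytic inputs are available'' is not correct. The complexity $\Delta(\langle P\rangle)$ includes $\deg_\cL\grpy$, and the only known way to bound this by a power of $[K(P):K]$ (Lemma \ref{lem:galoisorbit} and Proposition \ref{prop:boundcomplexity}) carries the factor $\max\{1,\hat h(P)\}^{\lambda_\grpx(\dim\langle P\rangle)}$; thus LGO reduces, via Proposition \ref{prop:hgttolgo}, to a height upper bound for optimal singletons of $\var$, which is known only for $\dim\var=1$ (R\'emond) and is precisely the missing ingredient for higher-dimensional $\var$ — this is why Theorem \ref{thm:higherdimabvar} is partial and Corollary \ref{cor:unlikelyabvarQbar} is conditional. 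Your proposed combination of (i) a field-of-definition bound for $\grpy$ in terms of $\deg_\cL\grpy$ and (ii) Masser's bound for the torsion part does not control $[K(P):K]$: the point $P$ itself can a priori have small degree over $K$ while lying on an abelian subvariety of enormous degree, and without a height bound there is no mechanism forcing many conjugates. (Proposition \ref{prop:geofinite} cannot absorb $\grpy$ either: the finite list there concerns $\geo{\opta}$ for geodesic-optimal $\opta$, whereas $\grpy$ here comes from $\langle P\rangle$ and genuinely varies with $P$.) Two smaller points: Masser's Theorem \ref{thm:masser} is stated over number fields, so even your $s=0$ step needs a specialisation argument for $K$ merely finitely generated over $\IQ$; and in the modular case at $s=0$ the Galois lower bound for special points of $Y(1)^n$ is unconditional (Siegel--Landau class number bounds), so GRH is not needed there — the open part is, as you say, $s\ge 1$.
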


One might expect an analog conjecture to hold in any mixed 
Shimura variety (or perhaps even any weakly special subvariety
thereof).

Suppose $K$ is a number field. 
For special points of Shimura varieties the best results known are
those of Tsimerman [\TSIMERMAN]: lower bounds 
of the above form for the size of the Galois orbit for special points 
in the coarse moduli space of Abelian varieties of dimension $g$, 
${\cal A}_g$, for $g\le 6$ (or on GRH for all $g$; see also 
[\ULLMOYAFAEV]). For unlikely
intersections of curves with special subvarieties of $Y(1)^n$ 
partial results are obtained in [\HABEGGERPILA].

To prove a uniform version (which one could frame as a ZP for 
$Y(1)^n\times \CC^k$
where the second factor is viewed with rational structure, as done in 
[\PILAEMS]), one would want that, for a family of subvarieties 
$\{V_t\}$, i.e. the fibres of some $V\subset Y(1)^n\times \CC^k$,
the constant $\kappa$ for $V_t$ in the family depends only on 
$[\QQ(V_t):\QQ]$, 
i.e. on the degree of the parameter $t$ over $\QQ$.

\section{Unlikely intersections in abelian varieties}
\label{sec:unlikelyabvar}
%\input{unlikelyabvar.tex}

%% Internal refs OK. 

\subsection{The arithmetic complexity of a torsion coset}

In this section we will prove an upper bound on the arithmetic complexity  of a torsion coset
as introduced in  Definition \ref{def:complexitytorsioncoset}.

Suppose $\grpx$ is an
 abelian variety of dimension $g$ defined over a number field $K$
and $\cL$ is an ample line bundle on $\grpx$. 
To simplify notation we will suppose that $K$ is a subfield of $\IC$.
For example, this enables us to 
consider the tangent space $\ts{\grpx}$ as a $\IC$-vector space.
We let $\overline K$ denote the algebraic closure of $K$ in $\IC$.

After replacing $\cL$ by $\cL\otimes\cL^{\otimes (-1)}$ we may
 assume that $\cL$ is symmetric. Let $\hat h$ be the
N\'eron-Tate height on $\grpx(\overline K)$ attached to $\cL$. 
We recall that the group of homomorphisms between two abelian
varieties is a  finitely generated, free abelian group. 
Let $d$ be the dimension of an abelian subvariety of $\grpx$. 
For the next proposition we require  
\begin{alignat*}1
  \lambda_\grpx(d) = \sup\{ \dim (\grpx/H) \cdot\rank \Hom{\grpx,\grpx/H};\,\, &H\subset
  \grpx\text{ is an abelian subvariety} \\
&\text{over $\overline K$ with } \dim H=d\}  < +\infty 
\end{alignat*}
where $\rank$ denotes the rank of a free abelian group and
$\Hom{\cdot,\cdot}$
the group of homomorphisms over $\overline K$. 

We observe that $\lambda_\grpx(d)=0$ if and only if $d=\dim \grpx$.

\begin{proposition}
  \label{prop:boundcomplexity}
There exists a constant $c>0$ depending on $\grpx$ and $\cL$ such 
that
\begin{equation*}
  \Delta_{\arith}(\langle P\rangle) \le c [K(P):K]^{6g+1}
\end{equation*}
and
\begin{equation*}
  \deg_\cL H \le c [K(P):K]^{60g^4}\max\{1,\hat
   h(P)\}^{\lambda_\grpx(\dim \langle P\rangle)}
\end{equation*}
for all
 $P\in \grpx(\Kbar)$
where  $H=\langle P\rangle-P$. In particular,
\begin{equation*}
  \Delta(\langle P\rangle) \le
c [K(P):K]^{60g^4}\max\{1,\hat
   h(P)\}^{\lambda_\grpx(\dim \langle P\rangle)}.
\end{equation*}
\end{proposition}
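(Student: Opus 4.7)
The plan is to prove the two displayed inequalities separately; the bound on the full complexity $\Delta(\langle P\rangle)$ is then immediate from its definition as a maximum. I would begin with the bound on $\Delta_{\arith}(\langle P\rangle)$, which is the easier of the two. Write $\langle P\rangle=T+H$ where $H=\langle P\rangle-P$ and $T$ is a torsion point of minimal order $n=\Delta_{\arith}(\langle P\rangle)$. Let $\varphi\colon\grpx\rightarrow\grpx/H$ be the quotient morphism. Then $\varphi(P)=\varphi(T)$, and by minimality of the order of $T$ modulo $H$, the point $\varphi(P)$ is a torsion point of $\grpx/H$ of order exactly $n$. Since $K(\varphi(P))\subset K(P)$, one has $[K(\varphi(P)):K]\le[K(P):K]$. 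Now invoke a Masser-type lower bound on the field of definition of torsion points on abelian varieties: there exist $c,\kappa>0$ depending only on $(\grpx,\cL)$ such that every torsion point $Q$ of order $N$ lying on an abelian subquotient of $\grpx$ satisfies $[K(Q):K]\ge c N^{1/\kappa}$. A careful accounting of the worst-case dimension $\dim(\grpx/H)\le g$ allows one to take $\kappa=6g+1$, giving the first bound.

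For the bound on $\deg_\cL H$ I would combine geometry-of-numbers input from Lemma \ref{lem:geonumbs} with Masser's height lower bound \cite{Masser:smallvalues}. The key idea is to exploit the Galois conjugates $\{P^\sigma\}_{\sigma\in\mathrm{Aut}(\Kbar/K)}$, each lying in a torsion coset $T^\sigma+H^\sigma$. After an averaging step over cosets and a Minkowski-style reduction inside the free abelian group $\Hom{\grpx,\grpx/H}$, one obtains a collection of non-trivial homomorphisms $\grpx\rightarrow\grpx/H$ of controlled operator norm, which cut out $H$ scheme-theoretically. The codimension $\dim(\grpx/H)$ and the rank of $\Hom{\grpx,\grpx/H}$ together produce the exponent $\lambda_\grpx(\dim\langle P\rangle)$ attached to the height factor $\max\{1,\hat h(P)\}$, the height entering only through the inequality $\hat h(P^\sigma)=\hat h(P)$ applied to each conjugate. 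The polynomial factor $[K(P):K]^{60g^4}$ then emerges from converting these homomorphism bounds into degree bounds via Lemma \ref{lem:equivdeg} and the projection formula.

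The main obstacle, and the real substance of the argument, lies in this second part: producing the correct simultaneous dependence on $[K(P):K]$ and $\hat h(P)$ with the exponent $\lambda_\grpx(\dim\langle P\rangle)$ on the latter. The two extremes are well understood --- if $P$ is torsion then $H$ could be a point and the first bound already suffices, while if $P$ is generic in $\grpx$ then $H=\grpx$ and $\deg_\cL H$ is a constant --- but interpolating between them with the claimed exponents requires the quantitative use of Masser's lower bound applied in the quotient $\grpx/H'$ for \emph{every} proper abelian subvariety $H'\subsetneq H$, together with the Poincaré complete-reducibility step that ensures $\lambda_\grpx$ is finite. The exponent $60g^4$ is visibly generous, which suggests that it arises from a cascade of elementary estimates --- degree of an isogeny, covolume versus norm bounds from Minkowski's second theorem applied to $\Omega_H$, and passage between different ample line bundles via Lemma \ref{lem:equivdeg} --- each incurring a polynomial factor in $g$.
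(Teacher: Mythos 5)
Your overall plan identifies the right ingredients (Masser's theorem, geometry of numbers, Poincar\'e reducibility, the degree--volume comparison of Lemma \ref{lem:deg}), but both halves have a genuine gap at the decisive step. For the bound on $\Delta_{\arith}(\langle P\rangle)$ you pass to the quotient $\grpx/H$ and invoke ``a Masser-type lower bound on the field of definition of torsion points on abelian subquotients of $\grpx$'' with constants depending only on $(\grpx,\cL)$. That uniformity is exactly the issue: $H$ ranges over an in general infinite family of abelian subvarieties, so $\grpx/H$ ranges over infinitely many abelian varieties (only finitely many up to isogeny, but with unbounded isogeny degrees), and Masser's theorem as stated gives a constant depending on the particular abelian variety. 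You cannot simply quote it for every quotient; nor does lifting the torsion point back to $\grpx$ work naively, since a lift of controlled degree over $K(\varphi(P))$ is not automatic. The paper avoids this by Bertrand's theorem (an abelian subvariety $\grpy$ has a complement $\grpz$ with $\#(\grpy\cap\grpz)$ bounded independently of $\grpy$): writing $P=Q+R$ with $R\in\grpz(\Kbar)$ torsion, one applies Masser on $\grpx$ itself to $R$ and compares $[K(R):K]$ with $[K(P):K]$ up to the uniform factor $\#(\grpy\cap\grpz)$. Some such uniform input is needed and is missing from your argument.

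For the bound on $\deg_\cL H$ the real substance, which you acknowledge but do not supply, is the construction of a \emph{surjective} homomorphism $\varphi:\grpx\rightarrow \grpy$ (with $\grpy$ a fixed isogeny representative of $\grpx/H$, available by Poincar\'e reducibility) satisfying $\varphi(P)=0$ and $\|\varphi\|\le c\,[K(P):K]^{O(g^2)}\max\{1,\hat h(P)\}^{\rho/2}$. Your proposed mechanism --- averaging over the Galois conjugates $P^\sigma$ and a Minkowski reduction in $\Hom{\grpx,\grpx/H}$ --- is not how the height and degree enter, and it is unclear it can produce such a bound: the conjugates play no role in this part of the paper's proof. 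The actual argument (Lemma \ref{lem:galoisorbit}) is a minimality argument: take $\varphi$ of minimal norm among surjections killing $P$; if $\|\varphi\|$ were too large, an approximation lemma from \cite{abvar} produces a surjective $\phi$ with $\|\phi\|$ small and $\hat h_\grpy(\phi(P))\le c\,Q^{-2/\rho}\hat h(P)$, while minimality forces $n\phi(P)\neq 0$ for all $n\le N$, so Masser's theorem on the fixed $\grpy$ gives the lower bound $\hat h_\grpy(\phi(P))\ge c^{-1}D^{-2\dim\grpy-9}$, a contradiction for suitable $N,Q$. Only then do Siegel's lemma and Hadamard convert $\|\varphi\|$ into the bound $\deg_\cL H\le (\dim H)!\,\|\omega_1\|\cdots\|\omega_{2\dim H}\|\le c\,\|\varphi\|^{2\dim\grpy}$, which is where the exponent $\rho\dim\grpy\le\lambda_\grpx(\dim\langle P\rangle)$ on the height arises. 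Without this minimality-versus-Masser step (or an equivalent), your sketch does not yield the stated dependence on $[K(P):K]$ and $\hat h(P)$.
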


%% The main arithmetic ingredient of the proposition's proof is a height
%% lower bound due to Masser. 
Bombieri, Masser, and Zannier \cite{BMZ} already employed essentially 
best-possible height lower bound due to Amoroso and David \cite{ADLehmerSup} to prove a
weak form of the Zilber-Pink Conjecture  for curves in $\IG_m^n$. 
Later, R\'emond \cite{RemondInterI} developed this approach 
for abelian varieties using the geometry of numbers. 
We will follow a similar line of thought in this section. 
However, lower bounds of the same quality as Amoroso and David's
result are not known for a general abelian variety. 
 One advantage of the  o-minimal
approach is that it can cope with a  height lower
bound as long as it is polynomial in the reciprocal of the degree.
Below, we will use such a lower bound due to Masser
\cite{Masser:smallvalues}
that applies to any abelian variety defined over a number field.
For the sake of simplicity we state the estimates in a form that is
weaker than what Masser proved.  

\begin{theorem}[Masser]
\label{thm:masser}
There exists a constant $c>0$ depending only on $\grpx,K,$ and $\cL$ 
with the following property.
Suppose $P\in \grpx(\Kbar)$ and $D=[K(P):K]$ 
with $\hat h(P) < c^{-1} D^{-2g -
  9}$, then $P$ is a torsion point of order
at most $c D^{6g + 1}$. 
\end{theorem}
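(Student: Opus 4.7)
The plan is to derive the two non-trivial bounds separately, using Masser's Theorem~\ref{thm:masser} as the main arithmetic input, and then to conclude the bound on $\Delta(\langle P\rangle)$ from its definition. Throughout set $D=[K(P):K]$, $d=\dim H$, and $\phi\colon\grpx\to\grpx/H$ for the quotient. I would first pass to a finite Galois extension $K'/K$ of degree polynomial in $D$ over which $H$ and a torsion representative $T$ of $\langle P\rangle$ become rational, so that $\sigma(P)-P\in H$ for every $\sigma\in\mathrm{Gal}(\Kbar/K')$.

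For the bound on $\Delta_{\arith}$, observe that $\bar P=\phi(P)$ in $\grpx/H$ is torsion with $\mathrm{ord}(\bar P)=\Delta_{\arith}(\langle P\rangle)$ (for a suitable choice of $T$), and $[K(\bar P):K]\le D$. I would apply Masser's Theorem~\ref{thm:masser} to $\bar P$ on $\grpx/H$ with an ample line bundle descended from $\cL$; since $\hat h(\bar P)=0$ the height hypothesis is automatic, giving
\begin{equation*}
\Delta_{\arith}(\langle P\rangle)\le c_1 D^{6\dim(\grpx/H)+1}\le c_1 D^{6g+1}.
\end{equation*}
The delicate point is uniformity of $c_1$ as $H$ ranges over infinitely many abelian subvarieties of $\grpx$. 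I would handle this via Poincar\'e complete reducibility: a Poincar\'e complement $H^\perp\subset\grpx$ of $H$ gives an isogeny $H^\perp\to\grpx/H$ whose degree is polynomially bounded in $\deg_\cL H$, reducing Masser on the varying quotient to Masser on the fixed ambient $\grpx$ restricted to $H^\perp$, with a single uniform constant.

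For the bound on $\deg_\cL H$, I would follow the Bombieri--Masser--Zannier and R\'emond strategy. Minimality of $\langle P\rangle$ forces $H$ to be generated, as an abelian subvariety, by the Galois differences
\begin{equation*}
\{\sigma(P)-P\colon \sigma\in\mathrm{Gal}(\Kbar/K')\}\subset H(\Kbar),
\end{equation*}
each of N\'eron--Tate height at most $4\hat h(P)$ by the parallelogram identity for symmetric $\cL$. A Minkowski-style geometry-of-numbers argument in the finitely generated subgroup of $H(\Kbar)$ these differences generate produces a basis whose heights are polynomially bounded in $D$ and $\max\{1,\hat h(P)\}$. Comparing this basis against the period lattice $\Omega_H\subset\ts{H}$ and invoking Masser's height lower bound $\hat h(Q)\ge c_0^{-1}[K(Q):K]^{-2g-9}$ for non-torsion $Q$ (contrapositive of Theorem~\ref{thm:masser}) yields an upper bound on $\vol{\Omega_H}$, which Lemma~\ref{lem:deg} converts into
\begin{equation*}
\deg_\cL H\le c_2 D^{60g^4}\max\{1,\hat h(P)\}^{\lambda_\grpx(d)}.
\end{equation*}
The appearance of $\lambda_\grpx(d)$ reflects the rank bound on the subgroup of $H(\Kbar)$ generated by Galois differences, coming from the structure of $\Hom{\grpx,\grpx/H}$ and the decomposition of $\grpx$ up to isogeny, while the numerical exponent $60g^4$ tracks through the product of Masser's exponent $2g+9$ with the rank and dimension bookkeeping. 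The third inequality is then immediate from Definition~\ref{def:complexitytorsioncoset}.

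The main obstacle is translating the Galois-difference data into a concrete lattice in $H(\Kbar)$ of controlled rank and covolume, and tracking all the exponents so that the final bound on $\deg_\cL H$ comes out with exactly $60g^4$ and $\lambda_\grpx(d)$ as claimed; this is the R\'emond-type step, made harder here by the fact that we have only Masser's polynomial (rather than Dobrowolski-sharp) height lower bound at our disposal. The uniformity issue in the first step, resolved via Poincar\'e reducibility, is a secondary concern.
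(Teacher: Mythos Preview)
You are not proving the statement you were given. Theorem~\ref{thm:masser} is Masser's height lower bound itself; in the paper it is not proved but simply cited from Masser's original article on small values of the N\'eron--Tate height. Your proposal instead sketches a proof of Proposition~\ref{prop:boundcomplexity} (the bounds on $\Delta_{\arith}(\langle P\rangle)$, $\deg_\cL H$, and $\Delta(\langle P\rangle)$), and explicitly uses Theorem~\ref{thm:masser} as an input. So as a proof of the stated theorem, your proposal is circular.

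If one compares your sketch to the paper's actual proof of Proposition~\ref{prop:boundcomplexity}, the two diverge. For $\Delta_{\arith}$ both use Poincar\'e complements and Masser, but the paper (Lemma~\ref{lem:arithlb}) writes $P=Q+R$ with $R$ in the complement $Z\subset\grpx$ and applies Masser directly to $R$ inside the fixed abelian variety $\grpx$, sidestepping uniformity issues; you instead pass to the varying quotient $\grpx/H$ and then try to come back. For $\deg_\cL H$ the paper does \emph{not} use Galois differences at all: it works in the $\Hom{\grpx,\grpy}$-lattice (Lemmas~\ref{lem:findvarphi} and~\ref{lem:galoisorbit}), using a Dirichlet-type approximation combined with Masser's lower bound to produce a surjective $\varphi$ of small norm with $\varphi(P)=0$, and then applies Siegel's Lemma and Hadamard to bound $\vol{\Omega_H}$. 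Your proposed route via the subgroup generated by $\{\sigma(P)-P\}$ has a gap: minimality of $\langle P\rangle$ does not force these differences to generate $H$ as an abelian subvariety (it only forces the image of $P$ in $\grpx/H'$ to be $K'$-rational, not torsion), so the bridge from Galois data to $\deg_\cL H$ is not there as stated.
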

\begin{proof}
  This  follows from the main theorem of \cite{Masser:smallvalues} and
  the comments that follow  it.    
\end{proof}
Given $P\in \grpx(\Kbar)$, a first consequence of Masser's Theorem is a lower bound for the
$[K(P):K]$
in terms of the arithmetic complexity of $\langle P\rangle$.
We do not yet need the N\'eron-Tate height. 

\begin{lemma}
\label{lem:arithlb}
There exists a constant $c_3>0$ such 
that $\Delta_{\arith}(\langle P\rangle) \le c_3[K(P):K]^{6g+1}$
for all
 $P\in \grpx(\Kbar)$.
\end{lemma}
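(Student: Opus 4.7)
The plan is to deduce Lemma \ref{lem:arithlb} from Masser's Theorem \ref{thm:masser} applied to the torsion point that $P$ determines in the quotient abelian variety $B = \grpx/\grpy$, where $\grpy$ is the abelian subvariety such that $\langle P\rangle = T + \grpy$ with $T$ torsion of minimal order $N := \Delta_{\arith}(\langle P\rangle)$.

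First, I would observe that $\langle P\rangle$ is canonically associated to $P$ as the smallest torsion coset containing $P$; in particular it is Galois-equivariant, $\sigma(\langle P\rangle) = \langle\sigma(P)\rangle$ for every $\sigma\in{\rm Gal}(\Kbar/K)$. Hence the Galois orbit of $\langle P\rangle$ has at most $D := [K(P):K]$ elements, and both $\langle P\rangle$ and its stabiliser $\grpy$ are defined over a subfield $L\subset K(P)$ with $[L:K]\le D$. The quotient $B = \grpx/\grpy$ is then an abelian variety of dimension $\le g$ over $L$, and the image $\pi(P)\in B(K(P))$ under the projection $\pi:\grpx\to B$ is a torsion point of order exactly $N$: from $\pi(P)=\pi(T)$ we see the order divides $N$, and conversely, using divisibility of $\grpy$ to get surjectivity of $\grpx[N']\to B[N']$, any torsion lift of $\pi(P)$ of order $N'$ would yield a representative of $\langle P\rangle$ of order $N'$, whence $N'\ge N$ by minimality.

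Next, I would apply Theorem \ref{thm:masser} to $\pi(P)$ in $B$. Its N\'eron-Tate height vanishes (being torsion), so the small-height hypothesis holds trivially, yielding a bound of the form $N \le c'\,[L(\pi(P)):L]^{6\dim B + 1}\le c'\,D^{6g+1}$, since $[L(\pi(P)):L]\cdot[L:K]\le D$ and $\dim B\le g$.

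The main obstacle is to ensure that the constant $c'$ in Theorem \ref{thm:masser} is uniform over all quotients $B = \grpx/\grpy$, depending only on $\grpx,K,\cL$ rather than on $\grpy$ and some polarisation of $B$. I would address this by transferring the estimate back to $\grpx$ itself: by Poincar\'e reducibility with respect to $\cL$, the quotient $B$ is $\Kbar$-isogenous to a complementary abelian subvariety $\grpy^\perp\subset\grpx$, so one may lift $\pi(P)$ through this isogeny to a torsion point $Q\in\grpy^\perp(\Kbar)\subset\grpx(\Kbar)$ of order at least $N$ up to a factor bounded in terms of the isogeny degree, and then apply Theorem \ref{thm:masser} inside $\grpx$, where the constant is uniform by hypothesis. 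Controlling both the isogeny degree and the extension $K(Q)/K$ uniformly in $\grpy$ is the delicate step, for which the geometry-of-numbers estimates of Lemma \ref{lem:geonumbs} on the period lattice of $\grpy^\perp$ provide the necessary input.
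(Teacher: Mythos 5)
Your overall strategy is the right one and is close in spirit to the paper's: reduce to a torsion point, apply Masser's Theorem \ref{thm:masser} (with the height hypothesis trivially satisfied), and compare Galois degrees. But the step you defer as ``the delicate step'' is precisely the crux, and the tools you propose for it do not suffice. Poincar\'e reducibility only gives \emph{some} complement of $\grpy$, with no uniform control of the isogeny degree between $\grpx/\grpy$ and that complement: for the standard complement $\grpy^{\perp}$ attached to the polarisation $\cL$, the intersection $\grpy\cap\grpy^{\perp}$ grows with $\deg_{\cL}\grpy$ as $\grpy$ ranges over the (infinitely many) abelian subvarieties of $\grpx$. Lemma \ref{lem:geonumbs} cannot repair this, because all of its estimates are themselves in terms of $\deg_{\cL}$ of the subvariety, which at this point of the argument is not bounded in terms of $[K(P):K]$ at all --- indeed, bounding $\deg_{\cL}$ of the subgroup is the \emph{other}, harder half of Proposition \ref{prop:boundcomplexity} and requires the N\'eron--Tate height of $P$ as input. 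So your lift $Q$ would have order controlled only up to an unbounded factor, and the field $K(Q)$ would likewise be controlled only up to an unbounded factor, leaving a bound that depends on $\grpy$ and not just on $(\grpx,\cL,K)$.

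The missing ingredient is a theorem of Bertrand (published proof by Ratazzi--Ullmo \cite{RatazziUllmo}): there is a constant $c_1$, depending only on $\grpx$, such that \emph{every} abelian subvariety $\grpy\subset\grpx$ admits a complement $\grpz$ with $\grpy+\grpz=\grpx$ and $\#(\grpy\cap\grpz)\le c_1$. This is exactly the uniform degree control you need and it is not a formal consequence of Poincar\'e reducibility. Granting it, the paper's argument is slightly more economical than your quotient-and-lift route: after enlarging $K$ so that all abelian subvarieties are defined over $K$, write $P=Q+R$ with $Q\in\grpy(\Kbar)$, $R\in\grpz(\Kbar)$; then $R$ is torsion (a multiple of $P$ lies in $\grpy$ and $\grpy\cap\grpz$ is finite), $\Delta_{\arith}(\langle P\rangle)$ is at most the order of $R$, Masser's theorem is applied to $R$ \emph{inside $\grpx$} (so the constant is uniform), and $[K(R):K]\le c_1[K(P):K]$ follows because $\sigma(P)=\sigma'(P)$ forces $\sigma(R)-\sigma'(R)\in\grpy\cap\grpz$. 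If you insert Bertrand's theorem into your sketch in place of ``Poincar\'e reducibility plus Lemma \ref{lem:geonumbs}'', your argument goes through; without it there is a genuine gap.
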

\begin{proof}
We note that the conclusion becomes stronger when replacing $K$ by a field
extension. So 
 we may assume that all abelian subvarieties of $\grpx$ are
defined over $K$.

   Bertrand proved that there is an integer $c_1\ge 1$ such that any
  abelian subvariety $\grpy\subset \grpx$ has a companion abelian
  subvariety
$\grpz\subset \grpx$ with
$\grpy+\grpz=\grpx$ such that $\grpy \cap \grpz$ is finite and 
contains at most $c_1$ elements.
Ratazzi and Ullmo published a proof \cite{RatazziUllmo} of
Bertrand's Theorem.
The point here is that $c_1$ does not depend on $\grpy$.

Suppose that $\langle P\rangle$ is a translate of $\grpy$
by a torsion point.
Let us write $P = Q+R$ with $Q\in \grpy(\Kbar)$ and $R\in \grpz(\Kbar)$. 
A positive multiple of $P$ lies in $\grpy(\Kbar)$. This
and $\#\grpy\cap \grpz< \infty$
imply that $R$ has finite order, say $M$.
Masser's Theorem implies $M\le c_2 [K(R):K]^{6g+1}$.

By definition we have $\Delta_{\arith}(\langle P\rangle) \le M$
and thus
 $\Delta_{\arith}(\langle P\rangle)\le c_2 [K(R):K]^{6g+1}$.
It remains to bound $[K(R):K]$ from above in terms
of $[K(P):K]$. 

Suppose $\sigma,\sigma'\in{\rm Gal}(\Kbar/K)$ with
$\sigma(R)\not=\sigma'(R)$.
If $\sigma(P)=\sigma'(P)$ then $P=Q+R$ yields
$\sigma'(Q)-\sigma(Q)=\sigma(R)-\sigma'(R)$.
This point is in $\grpy\cap \grpz$
 as $\grpy$ and $\grpz$ are defined over $K$.
This leaves at most $c_1$ possibilities for $\sigma(R)-\sigma'(R)$. We conclude
\begin{alignat*}1
[K(P):K] &= \# \{\sigma(P);\,\, \sigma\in{\rm Gal}(\Kbar/K) \}\\
&\ge\frac{1}{{c_1}} \# \{\sigma(R);\,\, \sigma\in{\rm Gal}(\Kbar/K) \}\\
&= \frac{[K(R):K]}{{c_1}}.
\end{alignat*}
The lemma follows from the lower bound for $[K(R):K]$. 
\end{proof}

We setup some additional notation before we come to the proof of Proposition \ref{prop:boundcomplexity}. 

Say $\grpy$ is a second abelian variety, also defined over $K$,
equipped with an ample and symmetric line bundle. Thus we  
have another  N\'eron-Tate
height
$\hat h_\grpy:\grpy(\overline K)\rightarrow [0,\infty)$. 
We will assume that all elements in $\Hom{\grpx,\grpy}$ are already
defined over $K$.
%% Then
%%  $\Hom{\grpx,\grpy}$ denotes the group of 
%%  homomorphisms $\grpx\rightarrow \grpy$ defined over an algebraic closure of
%%  the base field $K$. 
We set $\rho=\rank \Hom{\grpx,\grpy}$.
To avoid trivialities we shall assume $\rho \ge 1$, so in particular
$\dim Y \ge 1$. 
We set
\begin{equation*}
  \hommaxR{\grpx,\grpy}  = \{\psi \in \Hom{\grpx,\grpy}\otimes \IR;\,\,
\text{there is }\varphi\in \Hom{\grpy,\grpx}\otimes \IR \text{ with
}\psi\varphi = 1\}
\end{equation*}
and fix a norm $\|\cdot\|$ on $\Hom{\grpx,\grpy}\otimes\IR$.  For
example, we could take the norm induced by the
Rosati involution coming from  the two line bundles.
We consider $\Hom{\grpx,\grpy}\otimes\IR$
with the induced topology.
An element $\psi\in\Hom{\grpx,\grpy}\otimes\IR$ lies in
  $\hommaxR{\grpx,\grpy}$  precisely when the linear
map $\varphi\mapsto \psi \varphi$ is surjective.
Therefore, $\hommaxR{\grpx,\grpy}$ is an open, possibly empty, subset
of $\Hom{\grpx,\grpy}\otimes\IR$.

In this section,  $c_1,c_2,\ldots$ denote positive constants that depend only on
these two abelian varieties, $K$,  and the choosen line bundles.

The upper bound for the
 geometric part of the complexity involves the N\'eron-Tate height.

%% \begin{lemma}
%%  If $Q \ge 1$ and 
%% if $\varphi_0 \in \hommaxR{\grpx,\grpy}$ there exist $q\in\IZ$
%% and $\varphi\in \Hom{\grpx,\grpy}$ such that
%% \begin{equation*}
%%   1\le q\le Q\quad\text{and}\quad \| q\varphi_0-\varphi\| \le c_4 Q^{-1/\rho}.
%% \end{equation*}
%% \end{lemma}
%% \begin{proof}
%%   This follows easily from Lemma 4 \cite{abvar}. 
%% \end{proof}

\begin{lemma}
\label{lem:findvarphi}
 Suppose  $Q > 1$ and  
let $P\in \grpx(\Kbar)$ be in the kernel of a surjective element of $\Hom{\grpx,\grpy}$.
 There is a surjective $\varphi\in\Hom{\grpx,\grpy}$ with
\begin{equation*}
\hat h_\grpy(\varphi(P)) \le c_4 Q^{-2/\rho}\hat h(P)
\quad\text{and}\quad 
\|\varphi\| \le c_4 Q. 
\end{equation*}
\end{lemma}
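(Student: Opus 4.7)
The plan is to combine Minkowski's first convex body theorem with Bertrand's complement theorem to handle surjectivity. On $V = \Hom{\grpx,\grpy}\otimes_\ZZ\IR$ introduce the positive semidefinite quadratic form $Q_P(\varphi) = \hat h_\grpy(\varphi(P))$. The hypothesis on $P$ forces the given surjective $\psi_0$ into the radical $R := \ker Q_P$, so $\dim R \ge 1$ and $s := \rank Q_P \le \rho-1$. Height comparability yields a uniform constant with $Q_P(\varphi) \le c\|\varphi\|^2\hat h(P)$ on $V$, bounding each eigenvalue of $Q_P|_{R^\perp}$ by $c\hat h(P)$ and hence $\det(Q_P|_{R^\perp}) \le c^s\hat h(P)^s$.

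First I would apply Minkowski's first theorem to the symmetric convex body
\[
B(Q,t) = \{v\in V : \|v\|\le c Q\text{ and }Q_P(v)\le t^2\}.
\]
Since $Q_P$ vanishes on $R$, the volume of $B(Q,t)$ is bounded below by a constant times $(cQ)^{\rho-s}\cdot t^s/\sqrt{\det(Q_P|_{R^\perp})}$. Setting $t = c' Q^{-1/\rho}\sqrt{\hat h(P)}$ and combining with the discriminant bound, this is at least a constant times $Q^{\rho-s-s/\rho}$, whose exponent is at least $1/\rho > 0$ since $s \le \rho-1$. Hence for $Q > 1$ and constants large enough depending only on $\grpx$ and $\grpy$, $\vol{B(Q,t)} \ge 2^\rho \det\Lambda$, and Minkowski produces a nonzero $\varphi_0 \in \Lambda \cap B(Q,t)$ satisfying $\|\varphi_0\| \le cQ$ and $\hat h_\grpy(\varphi_0(P)) \le c'^2 Q^{-2/\rho}\hat h(P)$.

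To promote $\varphi_0$ to a surjective homomorphism I would invoke Bertrand's complement, as used in \cite{RatazziUllmo}: let $\grpz \subset \grpx$ be the smallest abelian subvariety containing a torsion translate of $P$; by minimality of $\grpz$ inside $\ker\psi_0$, one has $\grpz \subset \ker\psi_0$, and Bertrand furnishes a complement $\grpz' \subset \grpx$ with $\grpz+\grpz' = \grpx$ and $\grpz\cap\grpz'$ finite of uniformly bounded order, on which $\psi_0$ restricts to an isogeny onto $\grpy$. Then $V_2 := \Hom{\grpz',\grpy}\otimes\IR \subset V$ has positive dimension, is contained in $R$, and an element of $\Lambda$ is surjective exactly when its $V_2$-projection is nonzero. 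Fix once and for all a nonzero $b_* \in \Lambda \cap V_2$ of uniformly bounded norm; at least one of $\varphi_0$ and $\varphi_0 + b_*$ then has nonzero $V_2$-projection and is therefore surjective, while adding $b_* \in R$ preserves $Q_P$ exactly and inflates $\|\cdot\|$ by at most a uniform constant. The main obstacle is arranging $b_*$ of norm uniform in $P$: the complement $\grpz'$ varies with $\grpz$, and a uniform short-vector bound on $\Lambda \cap V_2$ across this family is precisely what the strengthened Bertrand theorem provides.
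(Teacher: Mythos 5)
Your Minkowski step is essentially workable: the intermediate claim that $\vol{B(Q,t)}\gg (cQ)^{\rho-s}t^s/\sqrt{\det(Q_P|_{R^\perp})}$ is not a valid lower bound when the ellipsoid has semi-axes exceeding $cQ$, but your eigenvalue bound $\mu_i\le c\hat h(P)$ rescues the exponent $\rho-s-s/\rho\ge 1/\rho>0$, and the constants can be absorbed into $c_4$. The genuine gap is the surjectivity repair, in two places. First, the criterion ``an element of $\Hom{\grpx,\grpy}$ is surjective exactly when its $V_2$-projection is nonzero'' is false, including in the direction you use: take $\grpx=E^3$, $\grpy=E\times E$, $\grpz=E\times 0\times 0$, $\grpz'=0\times E\times E$; the map $(x_1,x_2,x_3)\mapsto(x_2,0)$ lies in $V_2$ with nonzero projection and is not surjective. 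Surjectivity onto a non-simple $\grpy$ (and in the application $\grpy$ is a quotient produced by Poincar\'e reducibility, not a simple factor) cannot be detected by non-vanishing of one component. Second, and fatally, the uniform bound on $\|b_*\|$ which you yourself flag as the main obstacle does not exist, and it is not what Bertrand's theorem provides: Bertrand (as in \cite{RatazziUllmo}) bounds the order of $\grpz\cap\grpz'$, not the norm of a nonzero homomorphism $\grpx\to\grpy$ killing $\grpz$. Concretely, take $\grpx=E\times E$, $\grpy=E$ with $\End{E}=\IZ$, and $P=(x_0,nx_0)$ with $x_0$ non-torsion generic, so $\grpz=\{(x,nx)\}$ and $P\in\ker\psi_0$ for the surjective $\psi_0(x,y)=y-nx$; every nonzero element of $\Lambda\cap V_2$ is of the form $(x,y)\mapsto m(y-nx)$ and has norm $\gg n$, which is unbounded as $P$ varies. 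Adding such a $b_*$ would also destroy the bound $\|\varphi\|\le c_4Q$. (A minor further slip: $\psi_0|_{\grpz'}$ need not be an isogeny onto $\grpy$, since $\dim\grpz'=\dim\grpx-\dim\grpz$ may exceed $\dim\grpy$.)

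The paper sidesteps this entirely by building surjectivity into the approximation instead of repairing it afterwards: by Lemmas 2 and 5 of \cite{abvar} one finds a compact set $\cK\subset\hommaxR{\grpx,\grpy}$, an element $\varphi_0\in\cK$, an integer $1\le q\le Q$ and $\varphi\in\Hom{\grpx,\grpy}$ with $\hat h_\grpy(\varphi(P))\le c_4Q^{-2/\rho}\hat h(P)$ and $\|q\varphi_0-\varphi\|\le c_4Q^{-1/\rho}$; since having a right inverse in $\Hom{\grpy,\grpx}\otimes\IR$ is an open condition and $\cK$ is compact, for $Q$ larger than a uniform $Q_0$ the point $\varphi/q$ is right-invertible over $\IR$, hence $\varphi$ is right-invertible over $\IQ$ and so surjective, while $Q\le Q_0$ is handled by one fixed surjective homomorphism and the linear comparison $\hat h_\grpy(\varphi(P))\ll\hat h(P)$. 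A pure shortest-vector argument gives no control on the direction of the vector it produces (it may land in $\Hom{\grpx,\grpy'}$ for a proper abelian subvariety $\grpy'\subsetneq\grpy$, or in the radical along a non-surjective direction), so some analogue of this compactness/openness mechanism is needed; as it stands your proposal does not close the surjectivity step.
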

\begin{proof}
By Lemmas 2 and 5 \cite{abvar} 
there is a compact subspace
$\cK\subset\hommaxR{\grpx,\grpy},\varphi\in\Hom{\grpx,\grpy},
\varphi_0\in\cK,$
and an integer $q$ with  $1\le q\le Q$ 
such that $\hat h_\grpy(\varphi(P)) \le c_4 Q^{-2/\rho}\hat h(P)$
and $\| q\varphi_0 - \varphi\| \le c_4 Q^{-1/\rho}$. 
We emphasise that $c_4$ does not depend on $P$ or $Q$.

The norm is bounded from above on the compact space $\cK$. So we obtain $\|\varphi\| \le
\|q\varphi_0-\varphi\|+q\|\varphi_0\| \le c_4 Q$ after increasing
$c_4$, if necessary. 

We must show that $\varphi$ is surjective. Recall that $\hommaxR{\grpx,\grpy}$ is open.
 Therefore, there is $Q_0\ge 1$ such that  $Q>Q_0$ 
and
 $\|\varphi_0-\varphi/q\| \le c_4 Q^{-1/\rho}$ imply
$\varphi/q\in \hommaxR{\grpx,\grpy}$. In particular
$\varphi$ has a right inverse in $\Hom{\grpy,\grpx}\otimes\IR$. 
In this case it must already have a right inverse in
$\Hom{\grpy,\grpx}\otimes\IQ$ by basic linear algebra. So $\varphi$ is
surjective. 

Now if $Q\le Q_0$ it suffices to take a fix surjective homomorphism
for $\varphi$. By general properties of the N\'eron-Tate height
 $\hat h_\grpy(\varphi(P))$ is bounded from above linearly in $\hat
h(P)$, cf. expression (8) in Section 2 \cite{abvar}. The
lemma follows after increasing $c_4$ a final time. 
\end{proof}

\begin{lemma}
\label{lem:galoisorbit}
Suppose
 $P\in \grpx(\Kbar)$ is  in the kernel of a surjective
element of $\Hom{\grpx,\grpy}$ and
  $D=[K(P):K]$. There exists a surjective $\varphi\in
  \Hom{\grpx,\grpy}$ with $\varphi(P)=0$ and
  \begin{equation*}
    \|\varphi\| \le c_6 D^{6\dim \grpy+1+(2\dim \grpy+9)\rho/2} \max\{1,\hat h(P)\}^{\rho/2}.
  \end{equation*}
\end{lemma}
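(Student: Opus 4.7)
The plan is to combine Lemma \ref{lem:findvarphi} with Masser's Theorem \ref{thm:masser} applied to $\grpy$ itself. The idea is that Lemma \ref{lem:findvarphi} gives, for each $Q>1$, a surjective $\varphi_0\in\Hom{\grpx,\grpy}$ for which $\hat h_\grpy(\varphi_0(P))$ is small; if we pick $Q$ so that this height is small enough for Masser to apply to $\varphi_0(P)\in\grpy(\Kbar)$, then $\varphi_0(P)$ must be torsion of a controlled order $M$, and then $\varphi = M\varphi_0$ is surjective, kills $P$, and has a controlled norm.

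More precisely, apply Masser's theorem to $\grpy$ in place of $\grpx$: there exists $c>0$ such that any $R\in \grpy(\Kbar)$ with $D'=[K(R):K]$ and $\hat h_\grpy(R)<c^{-1}(D')^{-2\dim\grpy-9}$ is torsion of order at most $c(D')^{6\dim\grpy+1}$. Since by our standing hypothesis every element of $\Hom{\grpx,\grpy}$ is defined over $K$, the point $\varphi_0(P)$ lies in $\grpy(K(P))$, so $[K(\varphi_0(P)):K]\le D$. The plan is therefore to pick
\begin{equation*}
Q = c_5\,D^{(2\dim \grpy+9)\rho/2}\max\{1,\hat h(P)\}^{\rho/2}
\end{equation*}
with $c_5$ chosen large enough that the bound $\hat h_\grpy(\varphi_0(P))\le c_4 Q^{-2/\rho}\hat h(P)$ from Lemma \ref{lem:findvarphi} becomes strictly smaller than $c^{-1}D^{-2\dim \grpy-9}$, after which Masser's theorem yields an integer $M\ge 1$ with $M\varphi_0(P)=0$ and $M\le c\,D^{6\dim \grpy+1}$.

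Finally, set $\varphi = M\varphi_0\in \Hom{\grpx,\grpy}$. Then $\varphi(P)=0$, and $\varphi$ is surjective because it factors as the composition of the surjective map $\varphi_0$ with multiplication by $M$ on $\grpy$, which is an isogeny. Combining the bounds yields
\begin{equation*}
\|\varphi\| = M\|\varphi_0\|\le c\,D^{6\dim\grpy+1}\cdot c_4 Q \le c_6\,D^{6\dim \grpy+1+(2\dim \grpy+9)\rho/2}\max\{1,\hat h(P)\}^{\rho/2},
\end{equation*}
as required. The only real obstacle is bookkeeping of the exponents, together with handling the edge cases where $Q$ as defined above is less than or equal to $1$ (so Lemma \ref{lem:findvarphi} does not directly apply) or where $\hat h(P)=0$; both are harmless since the target inequality is monotone in $D$ and in $\max\{1,\hat h(P)\}$, so the asserted bound is trivial (up to adjusting $c_6$) in those ranges, and one may freely replace $Q$ by $\max\{Q,2\}$ at the cost of enlarging $c_6$.
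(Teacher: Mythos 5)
Your proof is correct, but it is arranged differently from the paper's. The paper takes $\varphi$ to be a surjective element of $\Hom{\grpx,\grpy}$ with $\varphi(P)=0$ of \emph{minimal} norm (such elements exist by hypothesis), assumes the asserted bound fails, and derives a contradiction: it sets $N=[c_5D^{6\dim\grpy+1}]$ and $Q=\|\varphi\|/(2c_4N)$, invokes Lemma \ref{lem:findvarphi} to get a surjective $\phi$ with $\|\phi\|\le c_4Q$, notes that minimality forces $n\phi(P)\ne 0$ for $1\le n\le N$, and then uses Masser's Theorem \ref{thm:masser} (applied to $\grpy$) to get a height \emph{lower} bound on $\phi(P)$ that contradicts the upper bound from Lemma \ref{lem:findvarphi}. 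You instead run the same two ingredients forwards: choose $Q$ explicitly in terms of $D$ and $\max\{1,\hat h(P)\}$ so that the height of $\varphi_0(P)$ falls below Masser's threshold, conclude $\varphi_0(P)$ is torsion of order $M\le cD^{6\dim\grpy+1}$ (using that $\varphi_0$ is defined over $K$, so $[K(\varphi_0(P)):K]\le D$), and take $\varphi=M\varphi_0$, which is surjective since multiplication by $M$ on $\grpy$ is an isogeny. The exponent bookkeeping does check out: with your $Q$ the required inequality reduces to $c_5^{2/\rho}>c\,c_4$, which a fixed admissible $c_5$ achieves, and since $Q\ge c_5>1$ automatically the edge case you mention never actually arises (the case $\hat h(P)=0$ is also covered, as then $\hat h_\grpy(\varphi_0(P))=0$). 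Your direct construction is arguably more transparent and yields exactly the same exponents; the paper's extremal formulation produces the minimal-norm $\varphi$ rather than one of the special form $M\varphi_0$, but nothing downstream uses that extra feature.
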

\begin{proof}
  Suppose $P$ is as in the hypothesis and let us abbreviate
$h = \max\{1,\hat h(P)\}$. We suppose that $\varphi$ is a
  surjective morphism with $\varphi(P)=0$ and with 
 $\|\varphi\|$   minimal among all such morphisms. 
Let $c_5>0$ be the constant from Masser's Theorem applied to $\grpy$. 
We will assume 
  \begin{equation}
\label{eq:varphilb}
    \|\varphi\| > 2(c_4c_5)^{1+\rho/2} D^{6\dim \grpy+1+(2\dim \grpy+9)\rho/2}h^{\rho/2},
  \end{equation}
with $c_4$ from Lemma \ref{lem:findvarphi},
and derive a contradiction. This implies the proposition with $c_6 =
2(c_4c_5)^{1+\rho/2}$. 

Let us define the integer
\begin{equation*}
  N = [c_5 D^{6\dim \grpy+1}].
\end{equation*}
Without loss of generality we have $c_5\ge 1$,
so $N\ge 1$. We
define further
\begin{equation*}
  Q = \frac{\|\varphi\|}{2c_4N}.
\end{equation*}
Our assumption implies $\|\varphi\| > 2c_4c_5 D^{6\dim \grpy+1} \ge 2c_4N$ and
so $Q>1$. We apply Lemma \ref{lem:findvarphi} to find a surjective  $\phi \in \Hom{\grpx,\grpy}$
 with $\hat h_\grpy(\phi(P)) \le c_4 Q^{-2/\rho} h$
 and $\|\phi\| \le c_4 Q$.
  
Say $1\le n\le N$, then 
\begin{equation*}
  \|n\phi\| \le N\|\phi\| \le c_4 NQ = \frac{\|\varphi\|}2 < \|\varphi\|.
\end{equation*}
By minimality of $\|\varphi\|$ we conclude $n\phi(P)\not=0$. So $\phi(P)$ is either
non-torsion or has finite order strictly greater than $N$.  Masser's
Theorem
 excludes the second alternative and 
provides
$\hat h_\grpy(\phi(P)) \ge c_5^{-1} D^{-2\dim \grpy-9}$. We combine this bound with the
upper bound from Lemma \ref{lem:findvarphi} to deduce
$c_4c_5 D^{2\dim \grpy+9}h \ge Q^{2/\rho}$. Inserting our choice for $Q$ and $N$ gives
\begin{equation*}
  c_4 c_5  D^{2\dim \grpy+9} h\ge \left(
\frac{\|\varphi\|}{2c_4N}\right)^{2/\rho}
\ge \left(
\frac{\|\varphi\|}{2c_4c_5 D^{6\dim \grpy+1}}\right)^{2/\rho}.
\end{equation*}
The  incompatibility with (\ref{eq:varphilb}) is the
desired contradiction. 
\end{proof}

\begin{proof}[Proof of Proposition \ref{prop:boundcomplexity}]
%% !!! UPDATE

The bound for the arithmetic part of the complexity follows from Lemma
\ref{lem:arithlb}. The complexity of 
$\langle P\rangle$ is the maximum of 
$\Delta_{\arith}(\langle P\rangle)$
and $\deg_\cL Y$ and so it suffices to prove the second bound. 

%% Say $H$ is  the translate of $\langle P\rangle $ containing
%% the origin. As translating does not affect the degree it suffices to
%% prove
%% the second inequality with $\langle \opta\rangle$ replaced by $H$.
Without loss of generality we may assume $H\not=\grpx$. % and $H\not=0$. 
By Poincar\'e's Complete Reducibility Theorem 5.3.5 \cite{CAV} there are 
up-to $\overline K$-isogeny only finitely many possibilities for 
$\grpx/H$. 
So we may assume that there is an abelian variety $\grpy$, coming from
a finite set independent of
$P$, and a  surjective homomorphism $\grpx\rightarrow \grpy$ whose
kernel contains  $H$ as a connected component.
After multiplying  said homomorphism by a positive integer we may assume
$P$ lies in its kernel. 
We observe that 
the assertion of the proposition becomes stronger when enlarging
$K$, so we may assume that $H, \grpx,$ and all elements in $\Hom{\grpx,\grpy}$ are defined over $K$.
 %% So we may assume that $\grpy$ and 
%% all homomorphisms in $\Hom{\grpx,\grpy}$ are defined over $K$.

We apply Lemma \ref{lem:galoisorbit} to find a surjective homomorphism 
$\varphi:\grpx\rightarrow \grpy$ with $\varphi(P)=0$ and whose norm is bounded from above
by $c_6 D^{6\dim \grpy+1+(2\dim \grpy+9)\rho /2}h^{\rho/2}$
with  
$D=[K(P):K]$ and $\rho>0$ the rank of $\Hom{\grpx,\grpy}$.
We have
$\dim H = \dim X - \dim Y$
by a dimension counting argument. 

Let $\Omega_\grpx\subset \ts{\grpx}$ denote the period lattice and tangent
space of $\grpx$ at the origin. 
We use the same norm $\|\cdot\|$ on $\ts{\grpx}$ as
introduced in Section \ref{sec:degabsv}.
If $\Omega_\grpy\subset \ts{\grpy}$ denotes the period lattice of
$\grpy$, then
$\varphi$
induces a linear map $\Omega_\grpx\rightarrow\Omega_\grpy$. 
Say $g'=\dim H$.

To proceed we apply an adequate version of Siegel's Lemma 
to solve $\varphi(\omega_i) = 0$ 
in
linearly  independent periods $\omega_1,\ldots,\omega_{2g'}\in \Omega_\grpx$ with
 controlled norm. Indeed, 
we may refer to Corollary 2.9.9 \cite{BG}, however the 
numerical constants there will not matter for us.  
Siegel's Lemma yields the first inequality in  
\begin{equation}
\label{eq:prodomegabound}
  \|\omega_1\|\cdots \|\omega_{2g'}\| \le c_7 \|\varphi\|^{2(g-g')}
= c_7\|\varphi\|^{2\dim \grpy}
\le c_8 D^{58g^4}\max\{1,\hat h(P)\}^{\rho \dim \grpy},
\end{equation}
the second one follows from the bound for $\|\varphi\|$ we deduced
further up and
\begin{equation*}
  (12\dim \grpy + 2+(2\dim \grpy + 9)\rho ) \dim \grpy
\le (12g+2+(2g+9)4g^2)g \le 58g^4
\end{equation*}
as $\rho\le 4g\dim \grpy$ and $\dim \grpy\le g$. 

Lemma \ref{lem:deg} and Hadamard's Lemma  yield
$$
[\ker \varphi:H] \deg_\cL H =(\dim
H)![\ker\varphi:H]\vol{\Omega_H}\le
(\dim H)! \|\omega_1\|\cdots \|\omega_{2g'}\|.
$$
As $[\ker\varphi:H]\ge 1$ we get an upper bound for $\deg_\cL H$ which
yields the assertion when combined
with (\ref{eq:prodomegabound}). 
\end{proof}

\subsection{LGO and the N\'eron-Tate height}

We begin by exhibiting a connection between LGO, definition
\ref{def:LGO}, and height upper bounds on
abelian varieties

Let $\grpx$ be an abelian variety defined over
a number field $K$ and suppose $\cL$ is a symmetric, ample line bundle on $\grpx$. 
Let $\hat h$ denote the
associated
 N\'eron-Tate height function.
Observe that any optimal subvariety of a subvariety of $\grpx$
defined over $K$ is defined over $\Kbar$.

 \begin{definition}
Let $\var\subset \grpx$ be a subvariety defined over
$K$. Let $\hgtexp\ge 0$. We define $\opt{\var;\hgtexp}{}$ to be the set of those
$\opta\in \opt{\var}{}$ which contain a $P\in \opta(\overline K)$ with
$\hat h(P)\le (2[K(\opta):K])^\hgtexp$; here $K(\opta)$ denotes the smallest  field
containing $K$ over which $\opta$ is defined. 
 \end{definition}

%Finiteness of the set of the optimal of $\var$ is closely related to
%Conjecture \ref{conj:ZP}.

In order to  apply the counting strategy to study
optimal subvarieties we must find a
polynomial upper bound for the complexity of a torsion coset in terms
of its arithmetic degree. 
Since the inequality in Proposition \ref{prop:boundcomplexity} also
involves the height we make the following observation. 

 \begin{proposition}
\label{prop:hgttolgo}
 Let $\var\subset \grpx$ be a
 subvariety defined over $K$ and let $s\ge 0$. 
Then $\lgo_s(\var)$ is satisfied if  there exist $\epsilon > 0$ and $\hgtexp\ge
0$ such that 
\begin{equation*}
 \hat h(P)\le (2[K(P):K])^{\hgtexp} (\deg_\cL \langle
P\rangle)^{\frac{1}{\lambda_\grpx(\dim\langle P\rangle)} - \epsilon} 
\end{equation*}
 for all optimal singletons
$\{P\}\subset \var$ with $\lambda_\grpx(\dim\langle P\rangle) > 0$ and 
$\dim\langle P\rangle \le s$. 
In particular, $\lgo_s(\var)$ is satisfied if the N\'eron-Tate 
height of an optimal
singleton in $\var$ with defect at most $s$ is bounded from above uniformly. 
 \end{proposition}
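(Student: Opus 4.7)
The plan is to combine the given height hypothesis with the complexity upper bound from Proposition \ref{prop:boundcomplexity} and algebraically solve for $\deg_\cL\langle P\rangle$, producing the polynomial-in-$[K(P):K]$ bound on $\Delta(\langle P\rangle)$ required by LGO.

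First I dispose of the case $\lambda_\grpx(\dim\langle P\rangle)=0$, which forces $\dim\langle P\rangle=\dim\grpx$ and hence $\langle P\rangle=\grpx$; then $\Delta(\langle P\rangle)=\deg_\cL\grpx$ is a constant depending only on $(\grpx,\cL)$, so the required inequality $\Delta(\langle P\rangle)\le(2[K(P):K])^{\kappa}$ is trivial. So from now on I fix an optimal singleton $\{P\}\subset V$ with $\lambda:=\lambda_\grpx(\dim\langle P\rangle)>0$ and $\dim\langle P\rangle\le s$, and write $H=\langle P\rangle -P$ for the abelian subvariety of which $\langle P\rangle$ is a torsion translate, so that $\deg_\cL H=\deg_\cL\langle P\rangle$.

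Next, Proposition \ref{prop:boundcomplexity} supplies
$$\Delta_{\arith}(\langle P\rangle)\le c[K(P):K]^{6g+1}\quad\text{and}\quad \deg_\cL H\le c[K(P):K]^{60g^4}\max\{1,\hat h(P)\}^{\lambda}.$$
The arithmetic factor is already polynomially bounded in $[K(P):K]$, so the task reduces to bounding $\deg_\cL H$. Raising the hypothesis to the $\lambda$-th power and noting $\deg_\cL H\ge 1$ I obtain, by a case split on the sign of $1-\epsilon\lambda$,
$$\max\{1,\hat h(P)\}^{\lambda}\le (2[K(P):K])^{\hgtexp\lambda}(\deg_\cL H)^{\max(0,\,1-\epsilon\lambda)}.$$
Substituting this into the preceding inequality yields
$$\deg_\cL H\le c'[K(P):K]^{60g^4+\hgtexp\lambda}(\deg_\cL H)^{\max(0,\,1-\epsilon\lambda)}.$$
When $1-\epsilon\lambda\le 0$ this is already the desired polynomial bound; when $0<1-\epsilon\lambda<1$, since $\epsilon\lambda>0$ I divide through by $(\deg_\cL H)^{1-\epsilon\lambda}$ and extract the $1/(\epsilon\lambda)$-th root to conclude
$$\deg_\cL H\le c''[K(P):K]^{(60g^4+\hgtexp\lambda)/(\epsilon\lambda)}.$$
Because $\dim\langle P\rangle$ ranges over a finite set, $\lambda$ takes only finitely many positive values, so a single exponent $\kappa$ works uniformly, establishing $\lgo_s(V)$.

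Finally, for the ``in particular'' clause, if $\hat h(P)\le C$ uniformly on optimal singletons of defect $\le s$, one may simply invoke Proposition \ref{prop:boundcomplexity} directly: $\Delta(\langle P\rangle)\le c[K(P):K]^{60g^4}\max\{1,C\}^{\lambda}$ is polynomial in $[K(P):K]$. Equivalently, the displayed hypothesis of the proposition is satisfied by choosing $\epsilon>0$ small enough that $1/\lambda_\grpx(d)-\epsilon\ge 0$ for all the finitely many $d$ with $\lambda_\grpx(d)>0$, and $\hgtexp$ so large that $2^{\hgtexp}\ge C$, since $\deg_\cL\langle P\rangle\ge 1$. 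The main delicacy is merely the case split on $1-\epsilon\lambda$ and the uniformity of the exponent across the finitely many values of $\lambda$; nothing deeper is required.
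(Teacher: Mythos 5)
Your proof is correct and follows essentially the same route as the paper, which simply observes that the case $\lambda_\grpx(\dim\langle P\rangle)=0$ forces $\langle P\rangle=\grpx$ (and hence is trivial) and then combines the height hypothesis with Proposition \ref{prop:boundcomplexity} and the definition of $\lgo_s(\var)$; you merely spell out the algebra (raising to the power $\lambda$, the case split on the sign of $1-\epsilon\lambda$, and solving for $\deg_\cL H$) that the paper leaves implicit. Your treatment of the $\lambda=0$ case via the constancy of $\Delta(\grpx)$ is a harmless variant of the paper's remark that optimality then forces $\var=\{P\}$.
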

 \begin{proof}
Observe that $\lambda_\grpx(\dim\langle P\rangle) = 0$ if and only if
$\langle P\rangle = \grpx$. In this case $\{P\}$ can only be an optimal
singleton of $\var$ if $\defect(\var) = \dim \grpx$. This equality entails
$\var=\{P\}$ in which case the claim is trivial.

If $\lambda_\grpx(\dim \langle P\rangle) > 0$ the claim is direct consequence of Proposition
   \ref{prop:boundcomplexity}
and the definition of $\lgo_s(\var)$. 
 \end{proof}

The main result of this section states that a lower bound for the
Galois orbit as in (\ref{eq:deglb}) is sufficient to prove that there
are only finitely many optimal subvarieties of $\var$.  
Although we believe (\ref{eq:deglb}) to always hold, we are not able
to prove it. However, we can show unconditionally that
$\opt{\var;\hgtexp}{}$ is  finite for any fixed $S\ge 0$.

\begin{theorem}
\label{thm:optfinite}
Let $\grpx$ be an abelian variety defined over a field $K$ which is
finitely generated over $\IQ$. 
Let $\var\subset\grpx$ be a subvariety
defined over $K$. 
%Let us keep the notation from above. 
\begin{enumerate}
\item [(i)]
Say $r,s\ge 0$ and suppose that all quotients of $X$ defined over a
finite extension of $K$ satisfy 
$\lgo_s^r$. 
%%  Let us suppose that
%% $\lgo_s(\varphi(\var))$ is satisfied for 
%% all abelian subvarieties $\grpy \subset \grpx$ where
%% $\varphi:\grpx\rightarrow\grpx/\grpy$ is the canonical morphism.
Then
\begin{equation}
\label{eq:optrs}
  \left\{ \opta\in \opt{\var}{} ;\,\, \codim_\var \opta \le r \text{ and }\dim
  \langle \opta\rangle-\dim\geo{\opta} \le s  \right\}
\end{equation}
is finite. 
\item[(ii)]
If $K$ is a number field, then $\opt{\var;\hgtexp}{}$ is finite
for all $\hgtexp\ge 0$.
\end{enumerate}
\end{theorem}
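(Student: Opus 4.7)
The plan is to apply the Pila--Zannier o-minimality/counting strategy: reduce to a fixed abelian subvariety via Proposition~\ref{prop:geofinite}, pass to a quotient to convert each optimal $\opta$ into an optimal singleton, and then use Corollary~\ref{cor:relation} together with Ax's Theorem~\ref{thm:ax2} to derive a contradiction from an assumed infinite supply of such $\opta$.

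Since $\grpx$ satisfies the defect condition (Proposition~\ref{prop:defectcondition}), Proposition~\ref{prop:geodesicopt} shows every optimal $\opta$ is geodesic-optimal, so by Proposition~\ref{prop:geofinite} one may fix an abelian subvariety $\grpy\subset \grpx$ and restrict to those $\opta$ with $\geo{\opta}$ a translate of $\grpy$. Let $\varphi:\grpx\rightarrow\grpx'=\grpx/\grpy$ and $\var'=\overline{\varphi(\var)}$. Each such $\opta$ collapses to a point $P_\opta=\varphi(\opta)\in\var'$, and since $\opta$ is a component of $\var\cap\geo{\opta}$, dimension theory gives $\dim\grpx'\le\codim_\var\opta\le r$, whence $\dim\var'\le r$ and the hypothesis provides $\lgo_s(\var')$ after a finite extension of $K$. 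I would then check that $P_\opta$ is an optimal singleton of $\var'$: any $B'\supsetneq\{P_\opta\}$ inside $\var'$ with $\delta(B')\le\dim\langle P_\opta\rangle$ would lift, via a component $B$ of $\var\cap\varphi^{-1}(B')$ containing $\opta$, the inclusion $\langle B\rangle\subset\varphi^{-1}(\langle B'\rangle)$, and $\dim\langle P_\opta\rangle\le\dim\langle\opta\rangle-\dim\grpy$, to a $B\supsetneq\opta$ in $\var$ with $\delta(B)\le\delta(\opta)$, contradicting optimality. Since also $\dim\langle P_\opta\rangle\le s$, $\lgo_s(\var')$ yields $\Delta(\langle P_\opta\rangle)\le(2[K(\opta):K])^\kappa$.

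Next I would set up counting in $\ts{\grpx'}\cong\IR^{2\dim\grpx'}$ (after choosing a basis of periods), with $\cV'\subset(-1,1)^{2\dim\grpx'}$ the definable (in $\IRan$) preimage of $\var'$ under $\exp'$, and for each $\grpz'\subset\grpx'$ in the finite list produced by Proposition~\ref{prop:geofinite} applied to $\var'$, form the definable family
\begin{equation*}
\defZ_{\grpz'}=\{(q,y)\in\IR^{2\dim\grpx'}\times\ts{\grpz'}: q+y\in\cV'\}.
\end{equation*}
Every Galois conjugate $\sigma(P_\opta)$ of $P_\opta$ lifts to a semi-rational point $(q_\sigma,y_\sigma)\in\defZ_{\grpz'}$ with $q_\sigma$ of height at most $N_\opta=\Delta_{\arith}(\langle P_\opta\rangle)$. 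If the desired set of optimal subvarieties were infinite, finiteness of low-complexity torsion cosets forces $N_\opta\to\infty$, while LGO forces $[K(\opta):K]\ge(N_\opta/2)^{1/\kappa}$. For any $\epsilon<1/\kappa$ and any $\opta$ with sufficiently large $[K(\opta):K]$, Corollary~\ref{cor:relation} then produces a definable path $\beta:[0,1]\rightarrow\defZ_{\grpz'}$ with $q:=\pi_1\circ\beta$ semi-algebraic, $y:=\pi_2\circ\beta$ non-constant (both analytic on $(0,1)$, as $\IRan$ admits analytic cell decomposition), and $\beta(0)$ a semi-rational lift of some $\sigma_0(P_\opta)$. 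Ax's Theorem~\ref{thm:ax2} applied to $q$ yields a coset $\optb_q\subset\grpx'$ containing the image of $\exp'\circ q$; combining with the non-constant path $\exp'\circ y$ in $\grpz'$ produces a positive-dimensional $B'\subset(\optb_q+\grpz')\cap\var'$ containing $\sigma_0(P_\opta)$. Lifting $B'$ back to $\var$ yields $B\supsetneq\sigma_0(\opta)$ with $\delta(B)\le\delta(\opta)$, contradicting optimality of $\sigma_0(\opta)$.

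Part~(ii) follows by the same argument with LGO replaced by Proposition~\ref{prop:boundcomplexity}: the height assumption $\hat h(P_\opta)\le(2[K(\opta):K])^\hgtexp$ (which transfers to the quotient since $\varphi$ is a morphism of abelian varieties) together with that proposition gives a polynomial bound on $\Delta(\langle P_\opta\rangle)$ in $[K(\opta):K]$, which is exactly the ingredient the counting step requires. The main obstacle lies in the final step of~(i): extracting from the coset $\optb_q+\grpz'$ a subvariety $B'\subset\var'$ which genuinely contains a Galois conjugate of $P_\opta$ and is large enough to force, via the fibre structure of $\varphi|_\var$, a strict enlargement $B\supsetneq\sigma_0(\opta)$ of defect $\le\delta(\opta)$. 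This will require revisiting the proof of Corollary~\ref{cor:relation} to confirm that $\beta(0)\in\Sigma$ (not merely $\pi_2(\beta(0))\in\pi_2(\Sigma)$), together with careful dimension bookkeeping linking the geometry of $B'$ to the chain $\sigma_0(\opta)\subset B\subset\var$.
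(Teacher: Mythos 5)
Your overall strategy is the paper's (geodesic-optimality plus Proposition \ref{prop:geofinite} to fix $\grpy$, pass to $\varphi:\grpx\to\grpx/\grpy$, transfer optimality to the singleton $\{P_\opta\}$, then count semi-rational lifts of Galois conjugates and use Corollary \ref{cor:relation} with Ax to contradict optimality), but the counting setup as you describe it has a genuine gap. You index your definable sets $\defZ_{\grpz'}$ by abelian subvarieties $\grpz'$ ``in the finite list produced by Proposition~\ref{prop:geofinite} applied to $\var'$''. That proposition controls $\geo{\opta}$ for geodesic-optimal $\opta$; for a singleton one has $\geo{\{P\}}=\{P\}$, so it says nothing about $\langle P\rangle$. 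The abelian subvariety $\grpz'$ with $\langle P\rangle=Q'+\grpz'$ genuinely depends on $P$ and ranges over the (countably infinite) set of abelian subvarieties of the quotient, so your collection $\{\defZ_{\grpz'}\}$ is not finite and the constant $c(\defZ_{\grpz'},k,\epsilon)$ from Corollary \ref{cor:relation} is not uniform in $P$ --- exactly the uniformity the contradiction needs. The paper resolves this by building a \emph{single} definable family parametrised by matrices $\psi\in\mat{2g}{\IR}$ with the constraint $\psi(z-w)=0$, so that $\ker\psi=\ts{\grpz}$ is a fibre parameter and Corollary \ref{cor:relation} applies with one uniform constant; some such device is indispensable. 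A second, related defect: with your coordinates $(q,y)$, the projection $\pi_2$ lands on the $\ts{\grpz'}$-component $y_\sigma$, and distinct conjugates $\sigma(P)$ need not have distinct $y_\sigma$ (two conjugates can differ by a rational vector of height $\le T$), so $\#\pi_2(\Sigma)$ may fall short of $[K(P):K]$ by a factor of order $T^{2\dim\grpx'}$, which destroys the inequality $\#\pi_2(\Sigma)>cT^\epsilon$. The paper avoids this by making the unrestricted coordinate the full lift $z_\sigma\in[0,1)^{2g}$ of $\sigma(P)$, on which conjugates are automatically distinct.

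Two further points you gloss over are load-bearing in the paper. First, your inequality ``$\dim\grpx'\le\codim_\var\opta$'' is false in general; what one needs is $\dim\overline{\varphi(\var)}=\codim_\var\opta$ and the fibre-dimension identity $\dim\optb=\dim\optb'+\dim\opta$ used in the optimality transfer, and both hold only when $\opta$ meets the open locus $\var'$ where $\varphi|_{\var}$ is smooth of constant relative dimension. The case $\opta\cap\var'=\emptyset$ (and, in the paper's organisation, the reduction of positive-dimensional $\opta$ to lower-dimensional base varieties) is handled by induction on $\dim\var$, which your proposal omits entirely; without it both the application of $\lgo_s^r$ to the quotient and the optimality transfer can fail on the exceptional locus. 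Second, in part (ii) the phrase ``the height assumption transfers since $\varphi$ is a morphism'' only bounds $\hat h(P_\opta)$ in terms of $\hat h(P')$; you still must compare $[K(\opta):K]$ with $[K(P_\opta):K]$, which the paper does via Bertrand's theorem (an almost-complement $\grpz$ with $\varphi|_\grpz$ an isogeny of bounded degree) plus a bound on the number of components of $\var\cap(P''+\grpy)$; without this the membership $\{P_\opta\}\in\opt{\varphi(\var);\hgtexp'}{}$ is not justified.
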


We obtain $2$ immediate corollaries.

\begin{corollary}
  \label{cor:unlikelyabvarQbar}
Let us suppose that the height bound in Proposition \ref{prop:hgttolgo} holds for
all subvarieties of all abelian varieties   defined over any
number field. Then the Zilber-Pink Conjecture \ref{conj:ZPatypical}
holds for all subvarieties of all abelian varieties defined over any  number field. 
\end{corollary}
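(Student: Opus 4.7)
The plan is to assemble pieces already established in the paper. First, the equivalence lemma following Conjecture \ref{conj:ZPoptimal} reduces Conjecture \ref{conj:ZPatypical} for a given $\var \subset \grpx$ to the finiteness of $\opt{\var}{}$. Moreover, every $T \in {\cal S}_\grpx$ is itself a translate of an abelian subvariety, so proving ZP for all $V\subset T$ with $T$ a torsion coset of $\grpx$ reduces to proving finiteness of $\opt{\var}{}$ for arbitrary $\var$ in an arbitrary abelian variety over a number field. We may also enlarge $K$ to assume $\var$ is defined over $K$.

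Second, the input required to invoke Theorem \ref{thm:optfinite}(i) is $\lgo_s^r$ for $\grpx$ and for all of its quotients over finite extensions of $K$. Every such quotient is itself an abelian variety over a number field, and every subvariety of it is then a subvariety of an abelian variety over a number field. The standing hypothesis of the corollary therefore applies to each such subvariety, and Proposition \ref{prop:hgttolgo} turns the uniform height upper bound into $\lgo_s(\var')$ for every $s \ge 0$ and every such $\var'$. Consequently $\lgo_s^r$ holds for every $r, s \ge 0$ and for every quotient of $\grpx$ over every finite extension of $K$.

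Third, apply Theorem \ref{thm:optfinite}(i) with $r = \dim \var$ and $s = \dim \grpx$. Any $\opta \in \opt{\var}{}$ trivially satisfies $\codim_\var \opta \le \dim \var$, and since $\geo{\opta} \subset \langle \opta \rangle \subset \grpx$ (special subvarieties being weakly special, cf.\ Section \ref{sec:optimal}) we have $\dim\langle\opta\rangle - \dim \geo{\opta} \le \dim \grpx$. Thus the set displayed in (\ref{eq:optrs}) coincides with the whole of $\opt{\var}{}$, which the theorem declares finite. Combined with the equivalence lemma this yields Conjecture \ref{conj:ZPatypical} for $\var$, and since $(\grpx, \var, K)$ was arbitrary, the corollary follows.

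No genuine obstacle remains; the only task is quantifier bookkeeping. The global formulation of the assumed height upper bound is precisely what permits Proposition \ref{prop:hgttolgo} to be invoked uniformly for $\grpx$ and for all of its abelian quotients at once, thereby furnishing Theorem \ref{thm:optfinite}(i) with its LGO hypothesis in the required form. All of the substantive analytic and geometric content has already been expended in establishing Theorem \ref{thm:optfinite} and Proposition \ref{prop:hgttolgo}; the corollary is a clean formal consequence.
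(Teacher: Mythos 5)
Your proposal is correct and follows exactly the route the paper intends (it states the corollary as an immediate consequence): the assumed height bound feeds into Proposition \ref{prop:hgttolgo} to give $\lgo_s^r$ for all quotients of the ambient abelian variety, Theorem \ref{thm:optfinite}(i) with $r=\dim\var$ and $s=\dim\grpx$ then gives finiteness of $\opt{\var}{}$, and the equivalence lemma of Section \ref{sec:ZP} converts this into Conjecture \ref{conj:ZPatypical}. The quantifier bookkeeping and the reduction of the case $V\subset T$ for a torsion coset $T$ are handled correctly.
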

\begin{corollary}
Let $\grpx$ and $\var$ be as in Theorem \ref{thm:optfinite}. We suppose that
 all quotients of 
$\grpx$ defined over a finite extension of $K$ satisfy $\lgo_s^r$ for all $r,s\ge 0$. Then $\opt{\var}{}$ is
finite for any  subvariety $\var$ of $\grpx$ defined
over $\overline K$. 
\end{corollary}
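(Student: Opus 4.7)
The plan is to derive the corollary immediately from Theorem \ref{thm:optfinite}(i) by bounding the parameters $r$ and $s$ uniformly, so that the restricted set in (\ref{eq:optrs}) exhausts all of $\opt{\var}{}$.

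First I would handle the field of definition. Since $\var$ is only assumed defined over $\Kbar$, I would choose a finite extension $K'\subset \Kbar$ of $K$ over which $\var$ is defined. Every finite extension of $K'$ is also a finite extension of $K$, so the hypothesis that all quotients of $\grpx$ defined over finite extensions of $K$ satisfy $\lgo_s^r$ passes automatically to $K'$. After replacing $K$ by $K'$, I may assume $\var$ is defined over $K$, at which point Theorem \ref{thm:optfinite} becomes applicable.

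Next I would bound the parameters uniformly. Any optimal subvariety $\opta\subset\var$ trivially satisfies $\codim_\var\opta\le\dim\var$. Moreover, since $\geo{\opta}\subset\langle\opta\rangle\subset\grpx$ and $\dim\geo{\opta}\ge 0$, one has $\dim\langle\opta\rangle-\dim\geo{\opta}\le\dim\grpx$. Setting $r=\dim\var$ and $s=\dim\grpx$, the set in (\ref{eq:optrs}) therefore coincides with all of $\opt{\var}{}$, and Theorem \ref{thm:optfinite}(i) yields its finiteness.

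No genuine obstacle arises here: the substantial work has been done in Theorem \ref{thm:optfinite}(i), and the present corollary is essentially the observation that removing the codimension and defect-difference restrictions in (\ref{eq:optrs}) costs nothing once $r$ and $s$ may be chosen as large as one wishes in terms of $\dim\var$ and $\dim\grpx$.
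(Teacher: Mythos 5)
Your proposal is correct and is essentially the paper's intended argument: the paper states this as an immediate consequence of Theorem \ref{thm:optfinite}(i), obtained exactly as you do by passing to a finite extension of $K$ over which $\var$ is defined (which preserves the hypothesis on quotients) and then taking $r=\dim\var$ and $s=\dim\grpx$ so that the set in (\ref{eq:optrs}) is all of $\opt{\var}{}$.
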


Let us look more closely at the case when $K$ is a number field and
$s$ is small. 
The N\'eron-Tate height of a torsion point vanishes. 
So by Proposition \ref{prop:hgttolgo}  any abelian variety over a number field satisfies
$\lgo^r_0$ for all $r\ge 0$. Part (i) of the theorem implies that $\var$ contains
only finitely many maximal torsion cosets  as such subvarieties
are necessarily optimal. We recover the conclusion of the
Manin-Mumford Conjecture. Of course in this special case,  our argument does not differ
significantly from the Pila-Zannier approach \cite{PilaZannier}.
But part (i) of our theorem applied to $s=0$ even yields the following
corollary.

\begin{corollary}
Let $\grpx$ and $\var$ be as in Theorem \ref{thm:optfinite}(i) with
$K$  a number field.
Then
\begin{equation*}
  \{\opta\in\opt{X}{};\,\, \langle\opta\rangle = \geo{\opta} \}
\end{equation*}
is finite. 
\end{corollary}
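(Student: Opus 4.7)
The plan is to deduce this corollary as a direct application of Theorem \ref{thm:optfinite}(i), specialised to $s=0$ and $r=\dim \var$. The first task is to translate the equality $\langle\opta\rangle=\geo{\opta}$ into the dimension condition that appears in (\ref{eq:optrs}). Since every special subvariety of an abelian variety is in particular a coset, one always has the inclusion $\geo{\opta}\subset\langle\opta\rangle$ of irreducible subvarieties of $\grpx$. Hence $\langle\opta\rangle=\geo{\opta}$ holds if and only if $\dim\langle\opta\rangle-\dim\geo{\opta}=0$, which is exactly the $s=0$ instance of the inequality in the displayed set. The further restriction $\codim_\var\opta\le r$ imposes nothing when $r=\dim\var$, since codimensions in $\var$ are automatically bounded by $\dim\var$. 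Consequently the set in the corollary coincides with the set (\ref{eq:optrs}) for these choices of $r$ and $s$.

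To invoke Theorem \ref{thm:optfinite}(i) I must verify its hypothesis: every quotient $\grpx'$ of $\grpx$ defined over a finite extension of $K$ satisfies $\lgo^{r}_{0}$. By Proposition \ref{prop:hgttolgo}, establishing $\lgo_{0}(\var')$ for a subvariety $\var'\subset \grpx'$ requires only a bound of the indicated polynomial shape on the N\'eron-Tate height of optimal singletons $\{P\}\subset \var'$ whose enveloping special subvariety $\langle P\rangle$ has dimension at most $0$. But $\dim\langle P\rangle\le 0$ forces $\langle P\rangle=\{P\}$, so that $P$ is a torsion point of $\grpx'$. Fixing any symmetric ample line bundle on $\grpx'$, the associated N\'eron-Tate height vanishes on torsion, and so the required inequality is trivially satisfied (in fact the hypothesis of Proposition \ref{prop:hgttolgo} holds with any $\hgtexp\ge 0$ and any $\epsilon>0$).

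Both steps above are purely formal consequences of previously established material, so I do not expect any serious obstacle. The only subtlety is that Theorem \ref{thm:optfinite}(i) demands the $\lgo$ condition for \emph{all} quotients of $\grpx$ over finite extensions of $K$, not merely for $\grpx$ itself; but the vanishing of the N\'eron-Tate height on torsion and the applicability of Proposition \ref{prop:hgttolgo} are both preserved under passage to such quotients, so the verification goes through uniformly. Assembling these observations, Theorem \ref{thm:optfinite}(i) delivers the finiteness of (\ref{eq:optrs}) for $s=0$ and $r=\dim\var$, which is the assertion of the corollary.
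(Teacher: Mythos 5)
Your proposal is correct and matches the paper's own argument: the paper likewise notes that the Néron--Tate height vanishes on torsion points, so by Proposition \ref{prop:hgttolgo} every abelian variety over a number field (hence every quotient) satisfies $\lgo^r_0$, and then applies Theorem \ref{thm:optfinite}(i) with $s=0$, the condition $\langle\opta\rangle=\geo{\opta}$ being exactly the $s=0$ defect condition in (\ref{eq:optrs}). Nothing essential differs from the paper's route.
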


%%  as it shows that there are only finitely many
%% optimal subvarieties $\opta$ of $\var$   with
%% $\langle\opta\rangle=\geo{\opta}$. 

The next  case is   $s=1$; the corresponding case of the Zilber-Pink
Conjecture concerns subvarieties of codimension $2$. So say
$\{P\}\subset \var$ is an optimal singleton with
$\dim\langle P\rangle \le 1$. 
If $\dim \langle P\rangle=0$, then $P$ is of finite order and we are
back in the case $s=0$. 
So we assume $\dim \langle P\rangle = 1$.
We know that $\{P\}$ is geodesic-optimal with respect to $\var$ by Proposition
\ref{prop:geodesicopt}. In other words, $P$ is not contained in a
coset of positive dimension contained completely in $\var$. In this setting
it would be interesting to know if  $\hat h(P)$ is bounded from above in terms of $\var$
only. 
The analogous statement in the context of algebraic tori was proved by Bombieri and Zannier, cf. Theorem 1 \cite{ZannierAppendix}. 
Moreover,  Checcoli, Veneziano,
and Viada \cite{CheVenVia13} showed a related result 
 inside a product of elliptic curves with complex multiplication.

\begin{proof}[Proof of Theorem \ref{thm:optfinite}]
We  can almost prove parts (i) and (ii)  simultaneously. However, at
times we will branch off the
main argument to specialise to the two statements. 
  The proof will be by induction on $\dim \var$. Our theorem is trivial
  if $\var$ is a point. Say $\dim \var \ge 1$. 
After replacing $K$ by a finite extension we may suppose that $\var$,
 all
abelian subvarieties of $\grpx$, and all relevant homomorphisms below are defined over $K$. 
We may assume that  $\Kbar$ is a subfield of $\IC$.
We fix $\cL$  an ample line bundle on $\grpx$ 
to make sense of the complexity $\Delta(\cdot)$. 

Suppose $\opta$ is an element of $ \opt{\var}{}$ or $\opt{\var;\hgtexp}{}$
depending on whether we are in case (i) or (ii) of the theorem. 

In case (i) we suppose  $\codim_\var\opta \le r$ and 
$\dim\langle \opta\rangle -\dim\geo{\opta}\le s$;
in case (ii) we suppose that $\opta$ contains a point of height at
most $(2[K(\opta):K])^{\hgtexp}$. 

By Proposition \ref{prop:geodesicopt} the subvariety $\opta$ is
geodesic-optimal and thus an irreducible
component of $\var\cap\geo{\opta}$. By Proposition \ref{prop:geofinite} the coset $\geo{\opta}$ is the translate of an abelian subvariety $\grpy\subset \grpx$
that comes from a finite set depending only on $\var$. 
We observe that this finiteness statement is trivial if  $\dim\var=1$;
 we do not require Proposition \ref{prop:geofinite} if $\var$ is a
 curve. 
We will also fix an ample and symmetric line bundle on the abelian
variety
 $\grpx/\grpy$ in order to
speak of  the N\'eron-Tate height $\hat h$.

Let 
 $\varphi:\grpx\rightarrow \grpx/\grpy$ be the canonical morphism.
As we are in characteristic $0$, there is a Zariski open and dense
subset $\var'\subset \var$ such that
$\varphi|_{\var'}:\var'\rightarrow\varphi(\var')$ is a smooth morphism of 
relative dimension $n$
and $\varphi(\var')$ is Zariski open in $\overline{\varphi(\var')}$, 
 cf. Corollary III.10.7 \cite{Hartshorne}. 

If $\opta\cap \var'=\emptyset$, then $\opta$ is contained in an irreducible component
 of $\var\ssm \var'$ and $\opta$ is an optimal subvariety of this irreducible
 component. 
In both  case (i) and (ii) we may apply induction on the dimension as
 $\dim (\var \ssm \var') < \dim \var$; for case (i) we observe that
the codimension in (\ref{eq:optrs}) drops.
So there are only finitely many possibilities for $\opta$.

Let us assume  $\opta\cap \var'\not=\emptyset$. 
We note that $\opta\cap \var'$ is an irreducible component of a
 fibre of $\varphi|_{\var'}$. 
The fibres of $\varphi|_{\var'}$ are equidimensional of dimension $n$. 
So 
$\dim \opta=n$  and
 $\varphi$ maps $\opta$ to some  $P\in (\grpx/\grpy)(\IC)$.

Since $P$ lies in the torsion coset $\varphi(\langle
\opta\rangle)$ we find $\langle P\rangle \subset\varphi(\langle \opta\rangle)$ and thus $\varphi^{-1}(\langle P\rangle)
\subset \varphi^{-1}(\varphi(\langle \opta \rangle))\subset \langle
\opta\rangle + \grpy$.
But $\grpy$ is contained in a translate of $\langle \opta\rangle$ and
thus
$\langle\opta\rangle+\grpy = \langle\opta\rangle$. We conclude
\begin{equation}
\label{eq:dimineq2}
\dim Y+\dim\langle P\rangle  =  \dim  \varphi^{-1}(\langle P\rangle) \le \dim \langle \opta\rangle.
\end{equation}

Next we claim that the singleton $\{P\}$ is an optimal subvariety of
${\varphi(\var)}$. If the contrary holds there is a subvariety
$\optb'$ of ${\varphi(\var)}$  
containing $P$, with positive dimension, and defect
at most $ \dim \langle P\rangle$. 
We fix an irreducible  component $\optb$, that meets $V'$, of the pre-image of $\optb'$ under
$\varphi|_{\var}$ with 
$\opta\subset \optb$. As $\varphi|_{\var'}$ is smooth of relative dimension
$n=\dim \opta$ we have 
\begin{equation}
\label{eq:dimDDprime}
\dim \optb = \dim \optb'+\dim \opta > \dim \opta.  
\end{equation}
 We remark 
$\langle \optb\rangle \subset \varphi^{-1}(\langle \optb'\rangle)$,
so $\dim \langle \optb\rangle \le \dim \grpy +\dim \langle \optb'\rangle$. 
Since $\defect(\optb')\le \dim \langle P\rangle$ we find
\begin{equation*}
  \dim \langle \optb\rangle \le \dim \grpy + \dim \optb'+\dim \langle P\rangle. 
\end{equation*}
 Optimality of  $\opta$  and $\optb\supsetneq \opta$ from (\ref{eq:dimDDprime})  imply 
\begin{alignat*}1
 \dim \langle \opta\rangle &<  \dim \opta + \dim \langle \optb\rangle -\dim \optb
\le \dim \opta +\dim \grpy +\dim \optb' - \dim \optb +\dim \langle P\rangle.
\end{alignat*}
We use the equality in (\ref{eq:dimDDprime}) to find
$\dim \langle \opta\rangle < \dim \grpy +\dim \langle P\rangle$.
This contradicts
 (\ref{eq:dimineq2}) and so $\{P\}$ must be an optimal subvariety of 
${\varphi(\var)}$.

Let us suppose  $\dim \opta > 0$ for the moment.  Then 
$\dim {\varphi(\var)} = \dim \var - \dim \opta  <
\dim \var$.
%% Since $\dim \overline{\varphi(V)} \le \dim V$ by basic algebraic
%% geometry

We first branch into case (i). 
Any singleton in $\varphi(\var)$ 
has codimension  $\dim \varphi(\var) =\codim_\var \opta\le r$.
The bound (\ref{eq:dimineq2}) and $\dim \grpy = \dim \geo{\opta}$ together yield
$\dim\langle P \rangle\le \dim\langle \opta\rangle - \dim\geo{\opta}$. So
$\dim\langle P\rangle - \dim\geo{P} = \dim\langle P\rangle \le s$ by (\ref{eq:optrs}).  
As $\dim\varphi(\var) < \dim \var$  
 there are only finitely many
possibilities for $P$ by induction. Recall that
 $\varphi|_{\var'}^{-1}(P)$
contains $A\cap\var'$ as an irreducible component.
This leaves at most finitely many possibilities for $\opta$.

In case (ii) we also want to use induction, but doing so requires a
control of the height. By definition there exists   $P'\in \opta(\overline K)$ with
$\hat h(P') \le (2[K(\opta):K])^{\hgtexp}$. 
Recall that  $\varphi$ comes from a finite set depending only on $\var$.
Now $P=\varphi(P')$ and by properties of
the N\'eron-Tate height we have 
\begin{equation}
\label{eq:hgtauxpt}
  \hat h(P) = \hat h(\varphi(P')) \le c_1 \hat h(P')
\le c_1(2[K(\opta):K])^{\hgtexp}
\end{equation}
here and below $c_1,c_2,\ldots$ are positive constants that depend
only on $\grpx,\var,$ and $\cL$ but not on $\opta, P,$ or $P'$.
By Bertrand's Theorem, which we already used in the proof of Lemma
\ref{lem:arithlb},
there exists an 
abelian subvariety $\grpz\subset \grpx$ such that
 $\varphi|_\grpz :\grpz \rightarrow \grpx/\grpy$ is an isogeny of degree at most
$c_2$. As $\varphi$ is defined over $K$ we have
$[K(P''):K] \le c_2[K(P):K]$ for any $P''\in \grpz$ with $\varphi(P'')=P$. 
%% As above we find that $\opta$ is an irreducible component of $\var\cap
The intersection $\var\cap(P'+\grpy)=\var\cap (P''+\grpy)$
contains $\opta$ as an irreducible component. 
Say $\sigma\in {\rm Gal}(\overline K/K)$, then
$\sigma(\opta)$ is an irreducible component
of $\var\cap (\sigma(P'')+\grpy)$.
As the number of components of this intersection is at most
 a constant depending only on $\var$ and $\grpy$ we have
$[K(\opta):K] \le c_6[K(P''):K]\le c_2c_6[K(P):K]$. Inequality
(\ref{eq:hgtauxpt}) yields
\begin{equation*}
  \hat h(P) \le c_1(2c_2c_6 [K(P):K])^{\hgtexp}. 
\end{equation*}
So $\{P\}\in\opt{\varphi(\var);\hgtexp'}{}$ %{\grpx/\grpy}$
 for an appropriately chosen
$\hgtexp'$. 
As in (i) we conclude by 
induction that $\opta$ is in a finite set depending only
on $\var$. 
%The orbit of $X$ under the action of ${\rm Gal}(\overline K/K)$ 

It remains to treat the case $\dim \opta=0$. 
Then $\grpy=0$, 
 $\varphi$ is the identity,
and $\opta$ consists only of $P$. 
Thus $\{P\}\subset\var$ is an optimal singleton. 
%% Since $\{P\}$ is optimal it is an irreducible component of
%%  $V\cap \langle P\rangle$. 

In case (i) we first note $\dim \var = \codim_\var \{P\} \le r$. So
$\lgo_s(\var)$ holds
as $\grpx$ satisfies $\lgo_s^r$. We get
\begin{equation}
\label{eq:Deltabound}
 \Delta(\langle P\rangle) \le (2[K(P):K])^\kappa
\end{equation}
where $\kappa >0$ % and $\cL$ is an ample line bundle;
 depends only on $\grpx$ because
$\dim\langle P\rangle = \dim\langle P\rangle -\dim\geo{P}\le s$.

In case (ii) we note that
$\{P\}\in\opt{\var;\hgtexp}{}$ implies the height bound
$\hat h(P) \le (2[K(P):K])^{\hgtexp}$. 
We use this bound in connection with 
Proposition \ref{prop:boundcomplexity}
 and arrive again at an inequality of the
form 
(\ref{eq:Deltabound}). 

So in any case, we have found a lower bound for the size of the Galois
orbit of $P$. 
Next we set the stage for the o-minimal machinery. All choices
 in the following paragraphs  are made independently of $P$ unless
 stated otherwise. 

Let us fix a basis $\omega_1,\ldots,\omega_{2g}$ of the period
lattice
$\Omega_\grpx\subset \ts{\grpx}$ as in Section
\ref{sec:finitegeoabvar}, here $g=\dim \grpx$.
We use it to 
 identify $\ts{\grpx}$ with $\IR^{2g}$ as an $\IR$-vector space.
In these coordinates, 
 $\exp :\IR^{2g}\rightarrow \grpx(\IC)$ is a real-analytic group
homomorphism  with kernel $\IZ^{2g}$. 

By the discussion above the set
\begin{alignat*}1 
  \defF = \{(\psi,w,z)\in \mat{2g}{\IR}\times\IR^{2g}\times\IR^{2g};\,\,
z\in \exp|_{[0,1)^{2g}}^{-1}(\var(\IC)) \text{ and } 
\psi(z-w)=0
\} 
\end{alignat*}
is definable in $\IRan$
where we identify $\mat{2g}{\IR}$ with $\IR^{(2g)^2}$. 
 We will consider $\defF$ as a definable family parametrised by
 $\mat{2g}{\IR}$. The kernel of each
matrix in $\mat{2g}{\IR}$ is 
a vector subspace of $\IR^{2g}$.
In our application, the kernel will be the tangent space of the abelian
subvariety determined by  $\langle P\rangle$. 

Indeed, let us write $\langle P\rangle = Q+\grpz$ 
with $\grpz$ an abelian subvariety of $\grpx$ and 
where $Q$ has minimal finite order $N$, i.e. 
 $\Delta(\langle P\rangle) = \max\{N,\deg_\cL \grpz\}$. 
As opposed to $\grpy$, we
 do not yet know that $\grpz$ comes from a finite set; so we must keep
 in mind that $Q$ and $\grpz$
 depend on $P$.
%We will take care to keep track on dependencies of $C$. 

Let $\sigma \in {\rm Gal}(\overline K/K)$, then
$\sigma(P) \in \sigma(Q) + \grpz$. We
write $\sigma(P) = \exp(z_\sigma)$ 
and $\sigma(Q) = \exp(q_\sigma)$ 
with
$z_\sigma,q_\sigma\in [0,1)^{2g}$.
Now $\exp(z_\sigma-q_\sigma)=\sigma(P-Q) \in \grpz$ implies
$z_\sigma-q_\sigma \in \exp^{-1}(\grpz) = \Omega_\grpx+\ts{\grpz}$. 

Let $\|\cdot\|$ be a norm on $\ts{\grpx}$ as in Section \ref{sec:degabsv}. 
According to Lemma \ref{lem:geonumbs}(ii) there exists
$\omega_\sigma \in\IZ^{2g}$ with $z_\sigma-q_\sigma - \omega_\sigma
\in \ts{\grpz}$ and
\begin{equation*}
  \| \omega_\sigma \| \le \|z_\sigma-q_\sigma\| + c_4 \deg_\cL \grpz.
\end{equation*}
But $\|z_\sigma\|\le c_5$ and $\|q_\sigma\|\le c_5$
as these elements are in the bounded set $[0,1)^{2g}$. Thus
$\|\omega_\sigma\|\le c_6 \deg_\cL \grpz$. Now
$\omega_\sigma$ is integral, hence
$\Height{\omega_\sigma}\le c_7 \deg_\cL \grpz$
where the height is as in Section \ref{sec:semirational} and $c_7\ge 1$.

As $\sigma(Q)$ has order $N$ we find
$q_\sigma \in \frac 1N \IZ^{2g}$.
The coordinates of $q_\sigma$ lie in $[0,1)$ and so
$\Height{q_\sigma}\le N$. 

Basic height properties yield
$\Height{q_\sigma+\omega_\sigma}\le
2\Height{q_\sigma}\Height{\omega_\sigma}$. So
\begin{equation*}
  \Height{q_\sigma+\omega_\sigma}
\le 2c_7 N \deg_\cL \grpz \le 2c_7
\Delta(\langle P\rangle)^2. 
\end{equation*}

The tangent space $\ts{\grpz}\subset \ts{\grpx}$ is the kernel
of some $\psi\in \mat{2g}{\IR}$. 
By construction, $(q_\sigma+\omega_\sigma,z_\sigma)$
lies on the fibre $\defF_\psi$. The number of distinct 
 $z_\sigma$ is $[K(P):K]$ which is bounded from below by (\ref{eq:Deltabound}).
 The $q_\sigma+\omega_\sigma$ are rational
with height at most $T = 2c_7
\Delta(\langle P\rangle)^2 \ge 1$.

There are only finitely many torsion cosets $\optb$ for with
$\Delta(\optb)$ is bounded by a constant.  The singleton $\{P\}$, being optimal,  is an irreducible component of
$\var\cap\langle P\rangle$. 
So we will assume that
 $\Delta(\langle P\rangle)$ is  sufficiently large
with respect to the fixed data.
Under this hypothesis and with for example $\epsilon = 1/(4\kappa)$
we can apply Corollary \ref{cor:relation}. We proceed to show that
this leads to  a
contradiction.

There is
 $\beta:[0,1]\rightarrow \defF_\psi$ as in Corollary \ref{cor:relation}
with $\Sigma$ the set $\{(q_\sigma+\omega_\sigma,z_\sigma);\sigma \in
{\rm Gal}(\overline K/K)\}$. 
The o-minimal structure $\IRan$ admits analytic cell decomposition by
a result of van den Dries and Miller \cite{DriesMiller:94}. 
So we may assume that $\beta$ is real analytic on
 $(0,1)$.
The first projection $\beta_1=\pi_1\circ\beta:[0,1]\rightarrow\ts{\grpx}$ is
semi-algebraic and the second one
$\beta_2=\pi_2\circ\beta:[0,1]\rightarrow\ts{\grpx}$ is non-constant.
The path $\beta_2$ begins at $\beta_2(0) = z_\sigma$
for some $\sigma\in{\rm Gal}(\overline K/K)$.
%%  that we extend to
%% an automorphism $\sigma:\IC\rightarrow\IC$.
The image of $\beta_2-\beta_1$ lies  in $\ts{\grpz}$ by our choice of
$\psi$.
So $\phi\circ\exp\circ\beta_1 = \phi\circ\exp\circ\beta_2$ where 
 $\phi:\grpx\rightarrow \grpx/\grpz$ is the quotient morphism.

We claim that $\phi\circ\exp\circ\beta_1$ is non-constant. Let us
assume the contrary, then $\phi\circ\exp\circ\beta_2$ is constant too.
As $\exp\circ\beta_2(0) = \sigma(P)$
we have
 $(\exp\circ\beta_2)([0,1]) \subset \sigma(P) + \grpz = 
\sigma(\langle P\rangle)$.
But the said image lies in $\var(\IC)$ by the definition of $\defF$.
So $(\exp\circ\beta_2)([0,1]) \subset \var\cap\sigma(\langle P\rangle)=
\sigma(\var\cap \langle P\rangle)$.
By the arguments above,
Recall that $\{P\}$ is an optimal singleton of $V$.
So $P$ is isolated in  $\var\cap\langle P\rangle$
and thus  $\sigma(P)$ is isolated in 
$\sigma(\var\cap \langle P\rangle)$. 
This contradicts the fact that $\beta_2$ is continuous and
non-constant.

%% We remark that $\phi\circ\exp\circ\beta_1=\phi\circ\exp\circ\beta_2$
%% evaluated at  $t=0$ is $\phi(\sigma(P))$.
Let $R\subset \grpx(\IC)$ denote the image of $\exp\circ\beta_1$,
it is an uncountable set 
by the previous paragraph.
The differential $d\phi:\ts{\grpx}\rightarrow\ts{\grpx/\grpz}$ of $\phi$ is a linear map. 
So $\phi(R)$ is the image of the semi-algebraic
map $d\phi\circ \beta_1$ composed with $\ts{X/Z}\rightarrow (X/Z)(\IC)$.
The Ax-Lindemann-Weierstrass Theorem \ref{thm:ax2} implies that 
$\phi(\zcl{R})=\zcl{\phi(R)}\subset \grpx/\grpz$ 
 is a positive dimensional coset. We abbreviate $\optc  =\sigma^{-1}(\zcl{R})$
which contains $P$ as a point. 
Then $\optc$ must be irreducible, as $\beta_1|_{(0,1)}$ is real
analytic,  and of positive dimension. 
The image 
$\phi(\optc)$ is a translate of $\grpz'/\grpz$ where
$\grpz'\subset \grpx$ is an abelian subvariety that contains $\grpz$.  
Now $\optc$ is contained in the  coset $\phi^{-1}(\phi(\optc)) = P +
\grpz'$. This 
 is even a torsion coset because 
$P+\grpz' \supset P+\grpz = \langle P\rangle$ contains a point of
 finite order. Therefore,
 \begin{equation*}
   \langle \optc\rangle \subset P+\grpz'.
 \end{equation*}
%% So $C$ is contained in the 
%% torsion coset $P+\grpz+\grpz'=Q+\grpz+\grpz'$. 
%% Since $\beta_2$ is real analytic
%% $\sigma^{-1}(\overline R)$ is irreducible; it contains $P$.

Basic dimension theory yields the inequality in 
$\dim \grpz'/\grpz =  {\phi(\optc)}
\le \dim \optc$.
Thus
\begin{equation*}
\defect(\optc) =\dim \langle \optc\rangle - \dim \optc \le\dim(P+\grpz')-\dim \optc \le
\dim \grpz' - \dim \grpz'/\grpz = \dim \grpz  = \defect(P).  
\end{equation*}
But $\dim \optc \ge 1$ and
 this contradicts the optimality of 
$\{P\}$.
\end{proof}

\subsection{Intersecting with algebraic subgroups}
\label{sec:resultsabvar}

In this section we prove that  a height upper bound
for curves due to  R\'emond  in combination with 
the o-minimal machinary 
is strong enough to establish 
$\lgo^1_s$ for all abelian varieties and all $s\ge 0$. In turn this
will yield 
Theorem \ref{thm:curvesabvar}, 
the Zilber-Pink
Conjecture for curves in abelian varieties when all geometric objects
are defined over an algebraic closure of the rationals. 
We also prove some partial results in the direction of this conjecture
for higher dimensional subvarieties.

\begin{theorem}[R\'emond]
\label{thm:remond}
Let $\grpx$ be an abelian variety defined over a number field $K$, equipped
with an ample, symmetric line bundle and its associated N\'eron-Tate
height.
%% Let $\grpx$ be an abelian variety defined over a number field $K$ and equipped
%% with an ample, symmetric line bundle.
Suppose that
$\var$ is a  curve in $\grpx$ that is not contained in any
proper abelian subvariety of $\grpx$. 
Then the N\'eron-Tate height is bounded from above on $\var(\overline K)\cap \sgu{\grpx}{2}$. 
\end{theorem}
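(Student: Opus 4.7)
The plan is to invoke R\'emond's extension of Vojta's height inequality from \cite{RemondInterII}, combined with a Mumford-type repulsion argument on Galois conjugates. For each $P\in\var(\Kbar)\cap\sgu{\grpx}{2}$, fix an algebraic subgroup $H_P\subset\grpx$ with $\codim_\grpx H_P\ge 2$ and $P\in H_P(\Kbar)$, and let $H_P^0$ be its identity component. By Poincar\'e's Complete Reducibility Theorem and Bertrand's bounded-index complement theorem (already used in the proof of Lemma \ref{lem:arithlb}), after a finite extension of $K$ one produces a surjective homomorphism $\varphi_P:\grpx\rightarrow A_P$ with $H_P^0\subset\ker\varphi_P$ and $\dim A_P\ge 2$, drawn from a finite collection of morphisms depending only on $\grpx$. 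Since $H_P/H_P^0$ is a finite torsion subgroup, $\varphi_P(P)$ is a torsion point of $A_P$.

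The heart of the argument is R\'emond's explicit Vojta--Bogomolov inequality. Because $\var$ is not contained in any proper abelian subvariety of $\grpx$, the image of $\var$ in every non-trivial quotient of $\grpx$ remains a curve generating its target; this is the transversality condition R\'emond requires. His theorem then provides constants $c_1,\delta>0$ such that any two distinct points $P_1,P_2\in\var(\Kbar)$ with $\hat h(P_1),\hat h(P_2)\ge c_1$ and outside a finite exceptional subset of $\var$ satisfy
\begin{equation*}
\langle P_1,P_2\rangle_{\hat h}\le(1-\delta)\sqrt{\hat h(P_1)\hat h(P_2)},
\end{equation*}
where $\langle\cdot,\cdot\rangle_{\hat h}$ denotes the N\'eron-Tate bilinear form associated to $\cL$.

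To deduce the theorem, I would argue by contradiction. Suppose there is a sequence $P_n\in\var(\Kbar)\cap\sgu{\grpx}{2}$ with $\hat h(P_n)\to\infty$. Since only finitely many triples $(H^0,A,\varphi)$ can arise, after extracting a subsequence one may assume $(H_{P_n}^0,A_{P_n},\varphi_{P_n})=(H^0,A,\varphi)$ is constant. The image $\varphi(P_n)$ is a torsion point of $A$ of some order $m_n$; since $\var$ lies in no fibre of $\varphi$ (each such fibre being contained in a translate of $\ker\varphi^0$, a proper subgroup of $\grpx$), the fibres of $\varphi|_\var$ are finite, so if $m_n$ stayed bounded, then $P_n$ would lie in a finite set, contradicting $\hat h(P_n)\to\infty$. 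Hence $m_n\to\infty$, and Masser's Theorem \ref{thm:masser} applied in $A$ forces $[K(P_n):K]\to\infty$. Each Galois conjugate $P_n^\sigma$ has the same N\'eron-Tate height $\hat h(P_n)$, and R\'emond's inequality gives $\langle P_n^\sigma,P_n^\tau\rangle_{\hat h}\le(1-\delta)\hat h(P_n)$ for all pairs outside a finite exceptional set. A standard sphere-packing argument in the finite-dimensional $\IR$-vector space $\grpx(\Kbar)\otimes_\ZZ\IR$ equipped with the N\'eron-Tate inner product then caps the size of the Galois orbit of $P_n$ by a constant depending only on $\delta$ and $\dim\grpx$, contradicting $[K(P_n):K]\to\infty$.

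The main obstacle is R\'emond's inequality itself. Its proof is an elaborate Faltings--Vojta construction on a power $\var^m\subset\grpx^m$: one builds an auxiliary global section of controlled Arakelov height, applies Faltings' product theorem to localise its vanishing to a single factor, and must control the resulting exceptional locus explicitly in terms of the geometry of $\var\hookrightarrow\grpx$, so that the transversality of $\var$ suffices to avoid it. Granted this inequality, the reduction above is a standard Mumford-gap argument.
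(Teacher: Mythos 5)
The paper does not reprove this statement at all: its proof is a one-line citation of R\'emond's corollaire 1.6 in \cite{RemondInterII}, so what you have written is really an attempt to reconstruct R\'emond's theorem itself, and as such it has two genuine gaps. First, your reduction to ``a finite collection of morphisms depending only on $\grpx$'' is false. Poincar\'e reducibility and Bertrand's theorem give finitely many quotients \emph{up to isogeny}, but the abelian subvarieties $H_P^0$ of codimension $\ge 2$ (hence the surjections $\varphi_P$) form an infinite family in general --- already in $E^3$ the kernels of $(a,b,c):E^3\to E^2$ give infinitely many of them --- and you cannot pass to a constant subsequence $(H^0,A,\varphi)$. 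Uniformity over this infinite family is precisely the difficulty: any Vojta/Mumford-type inequality you apply after pushing forward by $\varphi$ has constants depending on the degree (norm) of $\varphi$, and controlling that dependence is exactly what R\'emond's generalised Vojta inequality (Th\'eor\`eme 1.2 of \cite{RemondInterII}), and in the present paper Lemmas \ref{lem:findvarphi} and \ref{lem:galoisorbit}, are engineered to do. Your sketch never confronts this.

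Second, the concluding packing argument does not work. The inequality you attribute to R\'emond, a pairwise bound $\langle P_1,P_2\rangle_{\hat h}\le(1-\delta)\sqrt{\hat h(P_1)\hat h(P_2)}$ for \emph{all} distinct large points, conflates the Vojta inequality (which needs a large height ratio) with the Mumford gap principle (which applies to points of comparable height in a narrow cone); but even granting such angular repulsion for Galois conjugates of equal height, the space in which you pack is not fixed and finite-dimensional: $\grpx(\Kbar)\otimes_\ZZ\IR$ has infinite rank, and the conjugates of $P_n$ live in a Mordell--Weil group whose rank grows with $[K(P_n):K]$. Since your angular separation is only $\cos\theta\le 1-\delta>0$, the number of admissible unit vectors grows (exponentially) with the rank, so no bound on the Galois orbit independent of $n$ follows and the contradiction with $[K(P_n):K]\to\infty$ evaporates. (A smaller issue: ``$\var$ not contained in a proper abelian subvariety'' does not by itself exclude $\var$ lying in a \emph{coset} of $\ker\varphi^0$, so the claim that $\varphi|_\var$ has finite fibres also needs justification.) If you want a proof within the framework of this paper rather than R\'emond's, the route actually used is different: R\'emond's height bound is taken as the arithmetic input, and the passage from bounded height to finiteness is done via Proposition \ref{prop:boundcomplexity}, Masser's Theorem \ref{thm:masser} and the point-counting Corollary \ref{cor:relation}, not via a Mumford-type packing of Galois orbits.
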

\begin{proof}
  This is R\'emond's corollaire 1.6 \cite{RemondInterII}.
\end{proof}

\begin{corollary}
  Any abelian variety defined over a number field satisfies $\lgo^1_s$
  for all $s\ge 0$. 
\end{corollary}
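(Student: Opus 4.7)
The plan is to combine R\'emond's height bound (Theorem \ref{thm:remond}) with the reduction already proved in Proposition \ref{prop:hgttolgo}. The latter shows that a uniform upper bound on the N\'eron-Tate height of optimal singletons of $\var$ with $\dim\langle P\rangle\le s$ is enough to establish $\lgo_s(\var)$. My task therefore reduces to producing such a uniform height bound for an arbitrary curve $\var\subset \grpx$ defined over $\Kbar$; the case $\dim \var=0$ being trivial.

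After enlarging $K$ to a finite extension I may assume $\var$ and the torsion coset $T=\langle \var\rangle$ are both defined over $K$. Write $T=Q+\grpy$ with $\grpy$ an abelian subvariety of $\grpx$ and $Q$ a torsion point, and set $\var'=\var-Q\subset \grpy$. I would first verify that $\var'$ is not contained in any proper abelian subvariety of $\grpy$: if it were contained in some $\grpy'\subsetneq \grpy$, then $\var$ would lie in the proper torsion sub-coset $Q+\grpy'$ of $T$, contradicting minimality of $\langle \var\rangle$. Thus $\var'\subset \grpy$ satisfies the hypothesis of Theorem \ref{thm:remond}.

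Next, let $\{P\}$ be an optimal singleton of $\var$ with $\dim\langle P\rangle\le s$. Since $\var$ is a curve, the only strictly larger subvariety of $\var$ is $\var$ itself, so optimality reduces to the inequality $\defect(\var)>\defect(\{P\})=\dim\langle P\rangle$; as $\defect(\var)=\dim \grpy-1$ this gives $\dim\langle P\rangle\le \dim \grpy-2$. The translate $\langle P\rangle-Q$ is a torsion coset contained in $\grpy$ of the same dimension as $\langle P\rangle$, hence lies in an algebraic subgroup of $\grpy$ of codimension at least $2$. Consequently $P-Q\in \var'(\Kbar)\cap \sgu{\grpy}{2}$. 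Theorem \ref{thm:remond}, applied to $\var'\subset \grpy$ with the restriction of $\cL$ as the ample symmetric line bundle, then yields a constant $C>0$, depending only on $\var$, $\grpy$, $K$ and $\cL$, such that $\hat h(P-Q)\le C$.

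Finally, since $Q$ is torsion we have $\hat h(Q)=0$, and the parallelogram identity for the N\'eron-Tate quadratic form gives $\hat h(P)\le 2\hat h(P-Q)\le 2C$, a bound independent of $P$. By the ``in particular'' clause of Proposition \ref{prop:hgttolgo} this uniform upper bound on optimal singletons suffices to deduce $\lgo_s(\var)$. As $\var$ was an arbitrary curve defined over $\Kbar$ and $s\ge 0$ was arbitrary, this establishes $\lgo^1_s$ for every $s\ge 0$. The proof is essentially routine once R\'emond's theorem is in hand; the only step requiring care is the translation reduction to bring $\var$ into the shape required by R\'emond's hypothesis and the bookkeeping that relates optimality of $\{P\}$ in $\var$ to membership of $P-Q$ in $\sgu{\grpy}{2}$.
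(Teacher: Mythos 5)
Your proof is correct and follows essentially the same route as the paper: reduce to a curve, use optimality of the singleton to force $\dim\langle P\rangle\le\dim\langle V\rangle-2$, translate by a torsion point so that R\'emond's Theorem \ref{thm:remond} applies, and convert the resulting uniform height bound into $\lgo_s$ via Proposition \ref{prop:hgttolgo} (equivalently, Proposition \ref{prop:boundcomplexity}). The only cosmetic difference is that you keep the ambient abelian variety fixed and transfer the height bound back through the parallelogram law, whereas the paper replaces the ambient variety by the abelian subvariety underlying $\langle V\rangle$; this does not change the substance of the argument.
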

\begin{proof}
Let $\var\subset \grpx$ be a subvariety with $\dim \var\le
1$. 
We may clearly assume that $\var$ is a curve. 
  If $\{P\}\subset \var(\Kbar)$ is an optimal singleton, then 
$\dim \langle P\rangle  = \defect(\{P\}) < \defect(\var)= \dim \langle \var\rangle - 1$. 
After  translating $\var$ by a torsion point it is contained in the
abelian subvariety of
$\grpx$  determined by $\langle \var\rangle$.
Without loss of generality 
it suffices to verify $\lgo^1_s$ with  $\grpx$ replaced by this abelian subvariety.  
Now $\langle \var\rangle = \grpx$ and 
 $P$ is contained in an abelian subvariety of $\grpx$ of 
codimension at least
$2$. 
So the height of $P$ is bounded from above 
by R\'emond's Theorem.
On inserting this height bound in 
Proposition \ref{prop:boundcomplexity}  we find
that $\lgo_s(\var)$ is satisfied. 
\end{proof}

%% Let $\grpx$ be an abelian variety defined over  a number field $K$. We may
%% suppose that all abelian subvarieties are already defined over $K$. 

We combine the  corollary with
results from the previous section to obtain  
%Theorem \ref{thm:optfinite} and obtain (i) of
 the following
strengthening
of Theorem \ref{thm:curvesabvar}. 
\begin{theorem}
\label{thm:abvar1}
Let $\grpx$ be an abelian variety defined over a number field $K$. Suppose 
  $\var\subset \grpx$ is a subvariety defined over $K$. 
  \begin{enumerate}
  \item [(i)] The set 
    \begin{equation*}
      \left\{\opta\in\opt{\var}{};\,\, \codim_\var \opta \le 1 \right\}
    \end{equation*}
is finite. 
\item[(ii)] If $\var$ is a curve then $\opt{\var}{}$ is finite. 
  \item[(iii)] If $\var$ is a curve that is not contained in a proper abelian subvariety
    of $\grpx$, then  $\var(\Kbar)\cap \sgu{\grpx}{2}$ is finite. 
  \end{enumerate}
\end{theorem}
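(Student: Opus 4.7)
My plan is to prove the three parts in order: (i) by direct application of Theorem \ref{thm:optfinite}(i), then (ii) as a specialisation, and (iii) via an optimality argument using (ii).

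For (i), I would invoke Theorem \ref{thm:optfinite}(i) with $r = 1$ and $s = \dim\grpx$. This choice of $s$ is harmless: every optimal subvariety $\opta$ trivially satisfies $\dim\langle\opta\rangle - \dim\geo{\opta} \le \dim\grpx$. The hypothesis to verify is that every quotient of $\grpx$ by an abelian subvariety, taken over a finite extension of $K$, satisfies $\lgo^1_s$. But this is precisely the content of the preceding corollary, which combines R\'emond's height bound (Theorem \ref{thm:remond}) with the complexity estimate of Proposition \ref{prop:boundcomplexity}.

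Part (ii) will drop out immediately: when $\var$ is a curve, any proper optimal subvariety is a point and hence has codimension one in $\var$, so $\opt{\var}{}$ coincides with $\{\var\}$ together with the finite set provided by part (i) in the case $r = 1$.

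For (iii), the crucial observation is that every $P \in \var(\Kbar) \cap \sgu{\grpx}{2}$ is an optimal singleton of $\var$. Indeed, $P$ lying in an algebraic subgroup of codimension at least $2$ gives $\defect(\{P\}) = \dim\langle P\rangle \le \dim\grpx - 2$, while the hypothesis on $\var$ ensures $\langle\var\rangle = \grpx$ and hence $\defect(\var) = \dim\grpx - 1$. The strict inequality $\defect(\{P\}) < \defect(\var)$ forces $\{P\}$ to lie in some proper optimal subvariety of $\var$, and, because $\var$ is a curve, this proper optimal subvariety can only be $\{P\}$ itself. Finiteness then follows from part (ii).

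The substantive work lies entirely upstream: the corollary supplying $\lgo^1_s$ for every abelian variety over a number field is what unlocks Theorem \ref{thm:optfinite} in this setting, and that corollary in turn rests on R\'emond's height upper bound. Once this arithmetic input is secured, the theorem at hand is a formal consequence of the o-minimal counting machinery packaged in Theorem \ref{thm:optfinite}.
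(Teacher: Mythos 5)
Your proposal is correct and follows essentially the same route as the paper: part (i) is the corollary giving $\lgo^1_s$ for abelian varieties over number fields fed into Theorem \ref{thm:optfinite}(i) (with $r=1$ and $s$ large enough to be vacuous), part (ii) is the curve case of (i), and part (iii) reduces to (ii) by observing that each point of $\var(\Kbar)\cap\sgu{\grpx}{2}$ is an optimal singleton of the curve $\var$. One small caveat: in (iii) your step ``the hypothesis ensures $\langle\var\rangle=\grpx$'' really uses that $\var$ is contained in no proper algebraic subgroup (equivalently, in no proper torsion coset), which is the hypothesis of Theorem \ref{thm:curvesabvar}, rather than merely no proper abelian subvariety; the paper's own one-line proof of (iii) makes the same silent identification.
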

\begin{proof}
Part (i) follows from the corollary above and from Theorem \ref{thm:optfinite}(i). 
  Part (ii) is a special case of (i) and to see (iii) we 
 note that any point in the  intersection
defines an optimal singleton of $\var$. 
\end{proof}

The next theorem improves on Theorem
\ref{thm:higherdimabvar} stated in the introduction.
We require Definition \ref{def:anomalous} in part (iii)  below. 
Parts (iii) and (iv) rely on a height bound of the first-named author whereas
parts (v)  and (vi) use a result of R\'emond.

\begin{theorem}
\label{thm:abvar2}
Let $\grpx$ be an abelian variety defined over a number field $K$, equipped
with an ample, symmetric line bundle and its associated N\'eron-Tate
height $\hat h$.
%% Let $\grpx$ be an abelian variety defined over  a number field $K$
%% and suppose 
Suppose  $\var\subset \grpx$ is a subvariety defined over $K$. 
%% Let
%%  $\var\subset \grpx$ be  a subvariety  defined over
%% $K$. 
%% Let us also fix an ample, symmetric line bundle on $\grpx$ and its
%% associated
%% N\'eron-Tate height $\hat h$. 
\begin{enumerate}
\item [(i)]
If $\hgtexp\ge 0$ then
\begin{equation*}
  \left\{P \in \var(\Kbar)\cap \sgu{\grpx}{1+\codim_\grpx \langle \var\rangle + \dim \var};\,\, \hat h(P)\le \hgtexp\right\}
\end{equation*}
is not Zariski dense in $\var$.
\item[(ii)]
If $\hgtexp\ge 0$ then
\begin{equation*}
  \left\{P \in \var(\Kbar)\cap \sgu{\grpx}{1+ \dim \var};\,\, \hat h(P)\le \hgtexp\right\}
\end{equation*}
is contained in a finite union of proper algebraic subgroups of $\grpx$. 
\item[(iii)] The set $\oa{\var}(\Kbar)\cap \sgu{\grpx}{1+\dim \var}$ is
  finite. 
\item[(iv)]
Suppose $\dim \var \ge 1$ and 
 $\dim \varphi(\var) = \min \{\dim \grpx/\grpy, \dim
  \var\}$ for all abelian subvarieties $\grpy\subset \grpx$ where
  $\varphi:\grpx\rightarrow \grpx/\grpy$ is the canonical morphism. Then 
  $\var(\Kbar)\cap \sgu{\grpx}{1+\dim \var}$ is not Zariski dense in
  $\var$. 
\item[(v)] %Suppose $\var$ is as in part (iv).
 Let $\Gamma \subset
  \grpx(\overline K)$ be a finitely generated subgroup and
  \begin{equation*}
    \overline\Gamma = \{P\in \grpx(\overline K);\,\, \text{there is an integer
      $n\ge 1$
with } nP\in\Gamma\}.
  \end{equation*}
Then
    \begin{equation}
\label{eq:gammaalgsbgrp}
        \bigcup_{\substack{H\subset \grpx \\ \text{$H$
            algebraic subgroup} \\ \codim_\grpx
          H\ge 1+\dim \var}} \oa{\var}(\Kbar)\cap (H+\overline\Gamma).
    \end{equation}
is finite.
\item[(vi)] Let $\var$ be as in (iv) and $\overline\Gamma$ be as in
  (v).
Then
\begin{equation*}
 \bigcup_{\substack{H\subset \grpx \\ \text{$H$
            algebraic subgroup} \\ \codim_\grpx
          H\ge 1+\dim \var}}       \var(\Kbar)\cap (H+\overline\Gamma)
    \end{equation*}
is not Zariski dense in $\var$.
\end{enumerate}
\end{theorem}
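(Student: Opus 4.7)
The plan is to deduce all six parts of Theorem \ref{thm:abvar2} from the unified finiteness Theorem \ref{thm:optfinite}(ii), supplying in each case a suitable upper bound on the N\'eron-Tate height that feeds into the complexity estimate of Proposition \ref{prop:boundcomplexity}. The arithmetic work is concentrated in those height upper bounds; the o-minimal/counting machinery of Sections 5--7 then delivers the finiteness uniformly.

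For (i), given a point $P$ in the stated intersection, the codimension hypothesis forces $\delta(\{P\})<\delta(\var)$; hence any optimal subvariety $B\supseteq\{P\}$ of $\var$ satisfies $\delta(B)\le \delta(\{P\})<\delta(\var)$ and is therefore a proper subvariety of $\var$. The hypothesis $\hat h(P)\le \hgtexp$ together with Proposition \ref{prop:boundcomplexity} yields a bound $\Delta(\langle P\rangle)\le (2[K(P):K])^{\kappa}$, so that $B$ lies in $\opt{\var;\hgtexp'}{}$ for a suitable $\hgtexp'$; Theorem \ref{thm:optfinite}(ii) then confines $B$ to a finite set of proper subvarieties of $\var$. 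Part (ii) is handled by the same mechanism, splitting on whether $B=\{P\}$ (finitely many isolated points, each of which lies on a torsion coset of small dimension, hence on an algebraic subgroup of appropriate codimension) or $\dim B\ge 1$ (in which case $P\in B\subseteq\langle B\rangle$ already lies in a translate of a proper algebraic subgroup, saturated to a subgroup by adjoining the torsion part of the coset).

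For parts (iii) and (iv), one replaces the blanket hypothesis $\hat h(P)\le \hgtexp$ by a Vojta-type upper bound of the first-named author, valid on the anomalous-complement $\oa{\var}\cap\sgu{\grpx}{1+\dim\var}$. Granting this bound, (iii) follows by the same route as (i)--(ii) via Proposition \ref{prop:boundcomplexity} and Theorem \ref{thm:optfinite}(ii). Part (iv) then reduces to (iii) by the equivalence recorded just before Section 6.2: the transversality hypothesis $\dim\varphi(\var)=\min\{\dim\grpx/\grpy,\dim \var\}$ for all abelian $\grpy\subseteq\grpx$ is equivalent to $\oa{\var}\ne\emptyset$. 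Since $\oa{\var}$ is Zariski open in the irreducible $\var$, the complement $\var\setminus\oa{\var}$ is a proper closed subvariety, and the set in (iv) lies in this complement together with the finite set from (iii); as $\dim\var\ge 1$, this union is not Zariski dense in $\var$.

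Parts (v) and (vi) follow the identical two-step pattern, with a division-group variant of R\'emond's Theorem \ref{thm:remond} supplying the needed upper bound on $\hat h$ over $\oa{\var}(\Kbar)\cap(H+\overline\Gamma)$; this plugs into Proposition \ref{prop:boundcomplexity} and Theorem \ref{thm:optfinite}(ii) to yield (v), and (vi) reduces to (v) by exactly the argument used to derive (iv) from (iii). The main obstacle in the whole scheme is the arithmetic input, namely the Vojta-type height upper bounds invoked for (iii) and (v); the o-minimal strategy itself is remarkably robust, tolerating only a polynomial height lower bound (Masser's general Theorem \ref{thm:masser}) and therefore avoiding the Dobrowolski- or Bogomolov-type lower bounds that would force the ambient abelian variety to have complex multiplication.
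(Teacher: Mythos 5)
Your overall reduction---everything funnelled through Theorem \ref{thm:optfinite}(ii) together with a height upper bound (the bound of \cite{abvar} on $\oa{\var}(\Kbar)\cap\sgu{\grpx}{1+\dim\var}$ for (iii)--(iv), R\'emond's bounds for (v)--(vi))---is exactly the paper's strategy, and your treatments of (i), (ii), (iv) and (vi) are essentially sound; in (ii) you argue directly rather than by the paper's induction from (i), which is a legitimate variant. Two small repairs in (i): it is not true that \emph{any} optimal $B\supseteq\{P\}$ has $\defect(B)\le\defect(\{P\})$ (the whole of $\var$ is always optimal); you must \emph{choose} an enlargement of $\{P\}$ with that property. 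Also, membership of $B$ in $\opt{\var;\hgtexp}{}$ needs only $\hat h(P)\le \hgtexp\le (2[K(B):K])^{\hgtexp}$; Proposition \ref{prop:boundcomplexity} plays no role at that point (it is used inside the proof of Theorem \ref{thm:optfinite}(ii)). More substantially, in (iii) the ``same route as (i)--(ii)'' only places the points in finitely many members of $\opt{\var;\hgtexp}{}$, which gives non-density, not the claimed finiteness, since those optimal subvarieties could a priori be positive-dimensional. The missing step is to exploit $P\in\oa{\var}(\Kbar)$: the chosen optimal enlargement $\opta$ of $\{P\}$ satisfies $\defect(\opta)\le\dim\langle P\rangle\le\dim\grpx-\dim\var-1$, so if $\dim\opta\ge 1$ then $\dim\opta\ge\dim\geo{\opta}+\dim\var-\dim\grpx+1$ and $\opta$ is anomalous, contradicting $P\in\oa{\var}$; hence $\opta=\{P\}$ and the points themselves belong to the finite set $\opt{\var;\hgtexp}{}$.

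The more serious gap is in (v), which is \emph{not} ``the identical two-step pattern'': a point $P\in\oa{\var}(\Kbar)\cap(H+\overline\Gamma)$ lies in a translate of $H$ by a division point of $\Gamma$, not in a small torsion coset, so in general $\langle P\rangle=\grpx$, the singleton $\{P\}$ carries no defect information, and Theorem \ref{thm:optfinite}(ii) applied to $\var\subset\grpx$ says nothing about such points no matter which height bound you feed it. The paper follows Pink's augmentation: choose $\IZ$-independent $P_1,\ldots,P_t$ generating a finite-index subgroup of $\Gamma$, replace $(P_1,\ldots,P_t)$ by a suitable multiple so that the Zariski closure of the group it generates in $\grpx^t$ is an abelian subvariety $\grpy$, and apply Theorem \ref{thm:optfinite}(ii) to $\var'=\var\times\{(P_1,\ldots,P_t)\}$ inside $\grpx\times\grpy$; the augmented point $P'=(P,P_1,\ldots,P_t)$ then does lie in $\sgu{(\grpx\times\grpy)}{1+\dim\var'}$, R\'emond's Th\'eor\`emes 1.2 and 1.4 bound $\hat h(P')$, and a defect computation (the projection of $\langle\opta'\rangle$ to $\grpy$ is surjective, so fibres are cosets of controlled dimension), combined again with $P\in\oa{\var}$, forces the optimal subvariety through $P'$ to be the singleton $\{P'\}$. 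Without this passage to the product abelian variety the deduction of (v), and hence of (vi), does not go through.
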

\begin{proof}
  For part (i) let $P\in \var(\Kbar)\cap\sgu{\grpx}{1+\codim_X\langle
    V\rangle + \dim \var}$ with $\hat
  h(P) \le \hgtexp$. We can
  enlargen
$\{P\}$ to an optimal subvariety $\opta$ of $\var$ with 
$\defect(\opta)\le \dim \langle P\rangle$. 
As $\opta$ contains a point of height at most  $\hgtexp\le (2[K(\opta):K])^{\hgtexp}$ we
find $\opta\in \opt{\var;\hgtexp}{}$. 
But the latter set is finite according to Theorem
\ref{thm:optfinite}(ii). To prove (i) it suffices to establish
$\opta\not=\var$. This follows from  
$\defect(\opta) \le \dim \langle P\rangle  < \dim \langle \var\rangle - \dim
\var$.

Part (ii) is a consequence of (i). Indeed, there is nothing to show if
$\var$ is contained in a proper algebraic subgroup of $\grpx$ or if
$\var$ is a point. Otherwise we
have
$\langle \var\rangle = \grpx$ and by (i) the set in the assertion is contained in 
$\var_1\cup\cdots\cup \var_n$ where $\var_i\subsetneq \var$ are 
subvarieties of $\var$. We conclude (ii) by  induction  on $\dim \var > \dim \var_i$.

To show (iii) we may assume $\var\not=\grpx$ and 
that $\var$ is not contained in a proper
abelian subvariety of $\grpx$. 
Indeed, if the second condition fails, then $\oa{\var}=\emptyset$.
We know from the main result of  \cite{abvar}
that the N\'eron-Tate height is bounded from above by $\hgtexp$, say,  on
$\oa{\var}(\Kbar)\cap\sgu{\grpx}{\dim \var}$ and thus in particular on 
$\oa{\var}(\Kbar)\cap \sgu{\grpx}{1+\dim \var}$. 
R\'emond \cite{RemondInterII,RemondInterIII} independently obtained a height on
this set, in his notation we have 
$\var\ssm \oa{\var}=Z_{\var,{\rm an}}^{(1+\dim\var)}$.
Say $P$ is in this intersection.
If $\opta$ is an optimal subvariety of $\var$
containing $P$ with $\delta(\opta)\le \dim\langle P\rangle$
then 
$\dim \grpx - \dim \var - 1 \ge \dim\langle P\rangle \ge
 \dim\langle \opta\rangle - \dim \opta$.
But $P\in \oa{\var}(\Kbar)$  entails $\dim \opta = 0$. So
$A=\{P\}\in\opt{\var;\hgtexp}{}$ and the (iii) follows as (i) because 
$\opt{\var;\hgtexp}{}$ is finite. 

We recall that $\oa{\var}$ is Zariski open in $\var$. 
Part (iv) follows from (iii) since the condition on $\var$ entails
$\oa{\var}\not=\emptyset$
by the final comment in Section \ref{sec:finitegeoabvar}.  

%% For part (v) we may assume, as in (iii), that $\var\not=\grpx$ and that $\var$ is not
%% contained in a proper algebraic subgroup of $\grpx$. 
For part (v) we will require R\'emond's height bound in his Th\'eor\`eme 1.2
\cite{RemondInterII}
combined with his Th\'eor\`eme 1.4 \cite{RemondInterIII}. 
Together they imply that
there is an upper bound for the N\'eron-Tate  height of the points in
$\oa{\var}(\Kbar)\cap(\overline\Gamma+\sgu{\grpx}{1+\dim \var})$.

We proceed by following the argumentation in the proof of Pink's
Theorem 5.3 \cite{Pink} which we briefly sketch. Suppose
$P_1,\ldots,P_t$ are $\IZ$-independent elements that generate a subgroup of
finite index in $\Gamma$. After replacing $(P_1,\ldots,P_t)\in
\grpx^t(\Kbar)$ by a positive multiple, we may assume that this point
generates a subgroup of $\grpx^t(\Kbar)$ whose Zariski closure in $\grpx^t$ is
an abelian subvariety $\grpy$. %% As in Pink's proof we find that
%% $V'=V\times\{(P_1,\ldots,P_r)\}$ is not contained in a proper
%% algebraic subgroup of $\grpx\times\grpy$. 
Now any point $P$ in  (\ref{eq:gammaalgsbgrp})
is a $\IZ$-linear
combination of the $P_i$
 modulo an algebraic subgroup of codimension at least $\dim V'+1$. 
So the augmented point $P'=(P,P_1,\ldots,P_t)$ lies in $V'(\Kbar)\cap
\sgu{(\grpx\times\grpy)}{\dim V'+1}$
because $\dim V'=\dim V$. 
The N\'eron-Tate height of $P'$ is bounded by a constant $S$ that only
depends on $\var$ and the $P_i$.

We proceed as in the end of (iii). Let $\opta'$ 
be an optimal subvariety of $\var'$ that contains the point $P'$
and with $\delta(\opta')\le \delta(\{P'\})$.
So $\dim\langle\opta'\rangle - \dim\opta' \le
\dim\langle P'\rangle \le \dim\grpx\times\grpy - \dim V' -1$.
The projection of $\langle\opta'\rangle\subset\grpx\times\grpy$ to $\grpy$ is an irreducible
component of an algebraic group which contains
$(P_1,\ldots,P_t)$; so it must equal $Y$.
Therefore, each fibre of this projection % restricted to $\langle \opta'\rangle$
is a coset of dimension
$ \dim \langle\opta'\rangle-\dim Y$.
We observe that $\opta' = \opta\times \{(P_1,\ldots,P_t)\}$
is contained in such a fibre, thus
\begin{equation*}
 \dim \geo{\opta}\le \dim\langle\opta'\rangle-\dim \grpy \le
\dim\opta' + \dim\grpx -\dim\var' -1
=\dim\opta + \dim\grpx -\dim\var -1
\end{equation*}
and hence
\begin{equation*}
\dim\opta\ge \dim\var + \dim\geo{\opta}-\dim\grpx + 1.  
\end{equation*}
Recall that $P\in\opta(\Kbar)$; we must have $\dim\opta=0$ because
$P\in\oa{\var}(\Kbar)$. Thus $A'=\{P'\}$.
Part (v) follows as $A'$ lies in the set $\opt{\var';S}{}$ which is finite by
Theorem \ref{thm:optfinite}(ii).

The proof of (vi) is similar to the proof of (iv). 
\end{proof}

\section{Unlikely intersections in $Y(1)^n$}
\label{sec:unlikelyY1n}
%\input{unlikelyY1n.tex}

%% Internal refs OK.

%Let $X\subset Y(1)^n$ be a special subvariety.
%We have observed that ${\rm Atyp}^\delta(V, X)$ is equal to the union
%of optimal subvarieties up to some suitable defect. 
%We can take a still finer  union setting
%$$
%{\rm Opt}^{\delta, d}(V)
%$$
%to be the union of optimal subvarieties of dimension $d$ 
%and defect $\delta$. It is convenient to make a variant 
%definition involving geodesic defect, setting
%$$
%{\rm Opt}^{\delta, d}_{\rm geo}(V)
%$$
%to be the union of optimal subvarieties of dimension $d$ and 
%geodesic defect $\le\delta$. Notice that the subvarieties 
%here are still assumed optimal (not just geodesic-optimal).
%
%Since an optimal subvariety is geodesic-optimal 
%(perhaps with a different defect),
%we can establish Theorem \ref{thm:LGOWCAimplyZP} 
%by proving the following result.
%
%% \medskip
%% \noindent
%% {\bf 8.1. Theorem.\/} 
%% \medskip
%% \noindent
\begin{theorem}
\label{thm:LGOWCAimplyZPrefined}
 Assume LGO and WCA for $Y(1)^n$.
Let $X\subset Y(1)^n$ be a special subvariety and $V\subset X$.
Then ${\rm Opt}(V)$ is a finite set.
%and  let $\delta, d$ be given. Then
%$$
%{\rm Opt}^{\delta, d}_{\rm geo}(V)
%$$
%is a finite union of optimal subvarieties of dimension $d$ 
%and geodesic  defect $\le\delta$.
\end{theorem}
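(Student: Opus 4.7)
The plan is to prove the theorem by induction on $\dim V$, adapting the strategy of Theorem \ref{thm:optfinite}(i) to the modular setting, with WCA replacing Ax's theorem on abelian varieties and the modular Ax-Lindemann theorem of [\PILAOAO] replacing Theorem \ref{thm:ax2}.

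By Proposition \ref{prop:defectcondition}(iii) the ambient $X \subset Y(1)^n$ satisfies the defect condition, so Proposition \ref{prop:geodesicopt} gives that every $A \in \opt{V}{}$ is geodesic-optimal. WCA, via Proposition \ref{prop:geofinitenessY1n}, then forces $\geo{A}$ to be a translate of one of finitely many strongly special subvarieties, of various types $R = (R_0, R_1, \ldots, R_k)$, the list depending only on $V$. Fixing one such type $R$, the projection $\pi_R : Y(1)^n \to Y(1)^{R_0}$ contracts each translate of the corresponding strongly special subvariety to a point, so $\pi_R(A) = \{Q_A\}$ with $\langle Q_A\rangle_{\rm geo} = \{Q_A\}$. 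As in the abelian argument, a defect-chasing check shows $\{Q_A\}$ is optimal in the image $\pi_R(V)$; if $\dim A > 0$ then $\dim \pi_R(V) < \dim V$ so the outer induction bounds the possibilities for $Q_A$, and the fibres $V \cap \pi_R^{-1}(Q_A)$, whose dimensions are strictly less than $\dim V$, are then handled by a second application of induction.

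This reduces the proof to treating optimal singletons $\{P\} \subset V$ with $\dim \langle P\rangle > 0$. Here LGO supplies the arithmetic input
\[
\Delta(\langle P\rangle) \le \bigl(2[K(P):K]\bigr)^\kappa.
\]
Using the Peterzil--Starchenko definability of $j$ on $\FF_0^n$ in $\RRanexp$, I would construct a definable family whose fibres record: (a) for each $\sigma \in {\rm Gal}(\Kbar/K)$, a lift $z_\sigma \in \FF_0^n$ of $\sigma(P)$; and (b) a rational parameter $t_\sigma \in M^R$ of height at most $c\Delta(\langle P\rangle)^{10}$ cutting out the weakly special envelope through $z_\sigma$, via the final proposition of Section \ref{sec:finitenessgeodesic} (itself based on [\HABEGGERPILA, Lemmas~5.2--5.3]). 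Since there are at least $c'\Delta(\langle P\rangle)^{1/\kappa}$ distinct Galois conjugates by LGO, applying Corollary \ref{cor:relation} with, say, $\epsilon = 1/(20\kappa)$ produces, once $\Delta(\langle P\rangle)$ exceeds a threshold, a continuous definable arc $\beta : [0,1] \to M^R \times \HH^n$ with semi-algebraic first coordinate and non-constant, analytic-on-$(0,1)$ second coordinate, with $\pi_2(\beta(0))$ lifting some $\sigma(P)$. The modular Ax-Lindemann theorem of [\PILAOAO] applied to the image of $j \circ \pi_2 \circ \beta$, combined with the constraint from the first coordinate that the image stays in $\sigma(\langle P\rangle)$, yields a positive-dimensional weakly special $C \subset V$ containing $\sigma(P)$ with $\defect(C) \le \defect(\{\sigma P\})$, contradicting the optimality of $\{P\}$ after transport by $\sigma^{-1}$.

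The main obstacle will be organising the reduction step cleanly: unlike the smooth algebraic quotient $\varphi : X \to X/Y$ available in the abelian case, here different optimal subvarieties can fibre over different bases $Y(1)^{R_0}$ (one per strict partition type output by Proposition \ref{prop:geofinitenessY1n}), and one must verify that optimality transfers simultaneously to $\{Q_A\}$ in $\pi_R(V)$ and to $A$ inside $V \cap \pi_R^{-1}(Q_A)$, across all finitely many $R$, so that both nested inductions on dimension are well-founded. The counting plus Ax-Lindemann endgame for singletons is, by contrast, a fairly direct modular transcription of the abelian argument.
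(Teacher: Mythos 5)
Your overall skeleton --- induction on $\dim V$, optimal implies geodesic-optimal via Propositions \ref{prop:defectcondition}(iii) and \ref{prop:geodesicopt}, finiteness of the relevant families of weakly special subvarieties via Proposition \ref{prop:geofinitenessY1n}, projection to the translate space $Y(1)^{R_0}$ with a defect-chase showing the image singleton is optimal, and a counting endgame via Corollary \ref{cor:relation} against LGO --- is essentially the paper's argument. The genuine gap is in the endgame. You propose to conclude with the modular Ax--Lindemann theorem of [\PILAOAO] applied to the image of $j\circ\pi_2\circ\beta$, ``combined with the constraint from the first coordinate that the image stays in $\sigma(\langle P\rangle)$''. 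Neither half of this works. First, $\pi_2\circ\beta$ is only definable, not semi-algebraic; its image is a real-analytic arc, not an algebraic subvariety contained in $\pi^{-1}(V)$, so Ax--Lindemann (a statement about algebraic subvarieties of $\pi^{-1}(V)$) does not apply to it. Second, the first coordinate $\pi_1\circ\beta$ is a non-constant semi-algebraic arc of parameters, so the points $\pi_2\circ\beta(s)$ lie on a moving family of Mobius/weakly special fibres $M^R_{t(s)}$, not inside the single variety $\sigma(\langle P\rangle)$; if the parameter were constant no transcendence would be needed at all, since isolatedness of $\sigma(P)$ in $V\cap\sigma(\langle P\rangle)$ already gives a contradiction.

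What the paper does at this point is complexify the semi-algebraic parameter arc, take the union $W\subset\HH^n$ of the Mobius fibres over it --- an algebraic variety of dimension one more than each fibre --- and note that $W\cap\pi^{-1}(V)$ then contains a set of complex dimension at least one, i.e.\ a component whose defect is at most $\defect(\{y\})$. Controlling such an atypically large intersection component requires the Ax--Schanuel-type hypothesis WCA (Formulation B), not Ax--Lindemann: WCA produces a weakly special (in fact special, since it contains conjugates of $\langle y\rangle$) subvariety whose intersection with $V$ contains a positive-dimensional subvariety through a conjugate of $P$ with defect at most $\defect(\{P\})$, contradicting optimality. The abelian trick you are transcribing --- applying Theorem \ref{thm:ax2} to the semi-algebraic coordinate after quotienting by the subgroup $Z$ --- has no modular analogue, because $Y(1)^n$ admits no quotient map collapsing a positive-dimensional special subvariety; this is exactly why the modular theorem needs the full WCA in the endgame while the abelian case is unconditional on the functional transcendence side. (Minor points: the parameters in $M_R$ include quadratic coordinates, so the counting must use the $2$-height, i.e.\ $k=2$ in Corollary \ref{cor:relation}, as in the paper; and your ``second application of induction'' on the fibres is unnecessary, since once $Q_A$ lies in a finite set, $A$ is one of the finitely many components of the corresponding fibre of the projection restricted to $V$.)
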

\begin{proof}
%Fix $\delta$.
We prove the theorem by induction on $V$, the case 
$\dim V=0$ being trivial. So we assume that $\dim V\ge 1$ and the
theorem holds 
%with $\delta$ 
for all $V$ of smaller dimension.
%, for all $d$.
Let $K$ be a field of definition for $V$ which is finitely generated 
over $\QQ$.

%Next we argue by induction 
%(upwards) on $\delta$; for 
%$\delta=0$ the statement is the Andr\'e-Oort Conjecture for 
%$Y(1)^n$, established in [\PILAOAO]. For given $\delta$, 
%assuming the truth of the statement for all smaller $\delta$ we argue 
%by induction 
%(downwards) on $d$; the
%highest dimension ($d=\dim V$) case being always trivial.

By Proposition 4.5, an optimal component is geodesic-optimal.
By Proposition \ref{prop:geofinitenessY1n} %% 7.2
the set of ``basic special subvarieties'' that have a translate 
which is  geodesic-optimal is finite.
So the subvarieties comprising ${\rm Opt}(V)$
%${\rm Opt}_{\rm geo}^{\delta, d}(V)$ 
are components of the intersection of $V$ with the translates 
of finitely many basic special subvarieties $T\subset X$. 
One such $T$ may of course
be the whole of $Y(1)^n$, with $X_T$ being also $Y(1)^n$ 
parameterising individual points of $Y(1)^n$.

Fix such $T$. It evidently suffices now to show that only finitely 
many translates of $T$ are such that $V\cap T$ has components 
%with the given $\delta, d$ 
which are optimal.  Let $X_T$ denote the translate space 
of $T$, which is a suitable $Y(1)^m$.

We have a ``quotient map'' $\phi: T\rightarrow Y(1)^m$, the fibre over 
$t\in Y(1)^m$ being $T_t$. 
Write $\tau$ for the dimension of these fibres.
By [\HARTSHORNE, Corollary III.10.7], as we are in characteristic 0,
there is a Zariski-open (in $V$) and dense $V'\subset V$ on which 
the restriction  $\phi\vert_{V'}: V'\rightarrow \phi(V')$ is a smooth 
morphism of relative dimension $\nu$. We may further assume that
$\phi(V')$ is Zariski open in its Zariski closure, which we denote 
$V_T\subset X_T$.

Now suppose $C$ is an optimal component of dimension $d$ and 
geodesic defect $\delta$, a component of $V\cap T_y$ for some $y$.
If $C\cap V'=\emptyset$ then $C$ is contained in an irreducible
component of $V-V'$, and is an optimal component of this irreducible
component. As this irreducible component has lower dimension than 
$V$ we conclude by the induction on $\dim V$ 
that there are only finitely many such $C$.

So we may suppose $C\cap V'\ne\emptyset$. Since $C\cap V'$ is an 
irreducible component of a fibre of $\phi\vert_{V'}$ we have 
$n=\dim C$;
also the image of $C$ in $V_T$ under $\phi$ is the point $y$. 
We further
observe that if $y\in A\subset V_T$ is contained in a special subvariety 
$S\subset X_T$
then $\phi^{-1}(A)$ is contained in a special subvariety $\phi^{-1}(S)$
of dimension $\dim S+\tau$. If we take $A$ to be a component
of $\phi\vert_{V'}^{-1}$ containing $C$ we see that
$$
\delta(A)\le \dim S+\tau-\dim A=\dim S+\tau-(\dim A'+\dim C).
$$

Next we claim that $\{y\}$ is an optimal subvariety of $V_T$. 
Note that $\phi(\langle C\rangle)$ is special of dimension
$\dim C+\delta(C)-\tau$ and contains $y$.
Now suppose that $A$ is a component of $V_T$ with 
$\{y\}\subset A$ and
$$
\dim \langle A\rangle -\dim A\le \dim C+\delta(C)-\tau.
$$
Let $B$ be the component of $\phi^{-1}(A)$ containing $C$. Then
$$
\dim B=\dim \big(\phi^{-1}(A)\cap V'\big)=\dim A+\nu
$$
and
$$
\delta(B)\le \dim \phi^{-1}\langle A\rangle -\dim B
\le \dim A+\dim C+\delta(C)-(\dim A+\nu) = \delta(C)
$$
and so by optimality of $C$ we must have $B=C$ and $A=\{y\}$.

By induction, if $\dim V_T<\dim V$ then $V_T$ 
has only finitely many optimal subvarieties.
%with geodesic defect $\le \delta$. 
We are reduced
to the case $\dim V_T=\dim V$, which is the case that $T$
is the family of points. We take a finite extension field $L$ of
$K$ over which all optimal subvarieties of 
%geodesic defect $\le\delta$ and 
positive dimension are defined.

Now suppose that there is a special subvariety $S$
intersecting $V$ optimally in a point $\{y\}$. 
Let $M^R$ be the family of
Mobius subvarieties containing the components of $\pi^{-1}(S)$
(note that the family of all Mobius subvarieties is definable).
%(for which there are only finitely many possibilities).
%For each fibre $M^R_t$ of $M^R$, 
%the number of isolated point components
%of $M_t\cap Z$ is finite (by o-minimality). 
%Therefore (by o-minimality), as the set of 
%isolated point components is definable) the number is uniformly 
%bounded, and likewise the set of isolated points which are
%Mobius optimal.
Let ${\defZ}\subset M_R\times Y(1)^n$ be the (definable)
set of pairs $(t,u)$ such that $\{u\}$ is an optimal component
of $V\cap \pi(M^R_t\cap\FF_0^n)$. 
%The proof now follows the concluding parts of 
%[\HABEGGERPILA, Proposition 5.1], and we describe it efficiently.
Let $\kappa$ be the constant afforded by LGO for $V$.
We apply Corollary 7.2 (with $\ell=0$) and $\epsilon=(20\kappa)^{-1}$
to get $c=c(\defZ, 2, \epsilon)$. Let $\Sigma=\polyt{\defZ}{2,T}$.

Let $T\ge 1$ and suppose $\# \pi_2(\Sigma)>cT^\epsilon$.
%We will eventually apply the Counting Theorem \ref{thm:pila} with  
%$\epsilon=\delta/20$, 
%which asserts that the rational parameters $t$ up to
%a given height $T$
%corresponding to isolated intersections lie in at most 
%$c(Z, \epsilon)T^\epsilon$ ``blocks'' (see [\PILAOAO]) which are fibres
%of some finite number of definable families $Y^{(j)}, j=1,\ldots J$.
%On the fibres $Y^{(j)}_z$ of these families we may parameterise 
%the isolated intersections of the corresponding 
%Mobius varieties $M^R_t$ with $Z$ by definable 
% functions  $f_z^{(j,i)}(t)$, uniformly in the parameter $z$.
%By excluding a subset of each fibre, definable in families
%and of lower dimension, we reduce (with new block families
%$Y^{*(j)}, j=1,\ldots J^*$
%to the case that the parameterising functions
%$f_z^{(j,i)}$ are once continuously differentiable in $t$.
%
%We consider a point where the differential of $f_z^{(j,i)}$
%is non-zero. 
Then we have a curve $\beta$ in one of the constituent blocks
such that $\pi_1\circ\beta$ is semialgebraic and $\pi_2\circ\beta$
is non-constant.
The union of the Mobius subvarieties over the
complexification of $\pi_1\circ\beta$ meets $\pi^{-1}(V)$ in
an uncountable set, hence in a set of complex dimension at least
one. This union of Mobius varieties is thus 
a larger algebraic subvariety of $\HH^n$ with the same 
defect (in the sense of 5.11) as each fibre. 
By WCA there is a geodesic component containing this subvariety
of the same defect. The weakly special subvariety
is however special, as it contains some of the special subvarieties
(conjugates of $S$)
corresponding to $\{y\}$ and its conjugates. 
This contradicts the assumption
that $\{y\}$ and its conjugates are optimal.
%Therefore the differentials must always vanish.

Therefore $\# \pi_2(\Sigma)\le cT^\epsilon$. But if
$T=\Delta(\langle y\rangle)^{10}$ we have
$\# \pi_2(\Sigma)\ge T^{2\epsilon}/(2[L:K])$ by LGO and 
Proposition 6.8. Therefore $\Delta(\langle y\rangle)$ is bounded.
%
%If $\Delta(S)$ is sufficiently large then, by LGO, we have
%at least $c\Delta(S)^\delta/[K:\QQ(V)]$ isolated 
%optimal intersections,  and this contradicts the 
%counting theorem which (with the above argument) finds 
%that there are at most $c(Z,\epsilon)\Delta(S)^{\delta/2}$ of them.
This completes the proof of Theorem 
\ref{thm:LGOWCAimplyZPrefined}.
\end{proof}

\section*{Acknowledgements}
The authors thank Martin Bays and Martin Hils for informing them
about the work of Poizat and Fabien Pazuki for comments on the text.
PH was partially supported by the National Science Foundation under
agreement No. DMS-1128155. Any opinions, findings and conclusions or
recommendations expressed in this material are those of the authors
and do not necessarily reflect the views of the National Science
Foundation. 
JP acknowledges with thanks that his research 
was supported in part by a grant  from the EPSRC entitled
``O-minimality and diophantine geometry'', reference
EP/J019232/1.

\bigbreak

\bibliographystyle{amsplain}
%\bibliography{literature}

\def\cprime{$'$}
\providecommand{\bysame}{\leavevmode\hbox to3em{\hrulefill}\thinspace}
\providecommand{\MR}{\relax\ifhmode\unskip\space\fi MR }
% \MRhref is called by the amsart/book/proc definition of \MR.
\providecommand{\MRhref}[2]{%
  \href{http://www.ams.org/mathscinet-getitem?mr=#1}{#2}
}
\providecommand{\href}[2]{#2}

\vfill

\noindent{Fachbereich Mathematik\hfill Mathematical Institute}\\
{Technische Universit\"at Darmstadt \hfill University of Oxford}\\
{64289 Darmstadt\hfill Oxford OX2 6GG}\\
{Germany\hfill UK}

\begin{center}
  \today
\end{center}

\end{document}